%% file: File_unico_ArXiv.tex
\documentclass[11pt,a4paper]{amsart}
\usepackage[utf8]{inputenc}
\usepackage{comment}
\usepackage{amsmath,amsthm,amssymb,amscd}
\usepackage{mathtools}
\usepackage{graphicx}
\usepackage{cancel} 
\usepackage[dvipsnames]{xcolor}
\usepackage{enumitem}
\usepackage{mathrsfs}
\usepackage{tikz}
\usetikzlibrary {arrows.meta}
\usepackage{adjustbox}
\usepackage[percent]{overpic}
\usepackage[margin=2.5cm,
  marginparwidth=2.4cm, 
  marginparsep=0pt 
]{geometry}
\usepackage[style=numeric-comp,sorting=nyvt,backend=biber,doi=true,url=false]{biblatex} 
\usepackage{needspace}
\usepackage{booktabs}
\usepackage{longtable}
\usepackage{array}
\usepackage{hyperref}
\usepackage{caption}
\usepackage{subcaption}
\usepackage{array} 
\newcolumntype{C}[1]{>{\centering\arraybackslash}m{#1}}

\newtheorem{thm}{Theorem}[section]
\newtheorem{prop}[thm]{Proposition}
\newtheorem{lem}[thm]{Lemma}
\newtheorem{cor}[thm]{Corollary}

\newtheorem{defn}[thm]{Definition}

\theoremstyle{definition}
\newtheorem{rem}[thm]{Remark}
\newtheorem{rems}[thm]{Remarks}
 
\newtheorem*{rmk}{Remark}

\newcommand{\bbR}{\mathbb R}
\newcommand{\bbZ}{\mathbb Z}
\newcommand{\calS}{\mathcal{S}}
\newcommand{\bbQ}{\mathbb Q}

\newcommand{\de}{\delta}
\newcommand{\si}{\sigma}

\newcommand{\ep}{\varepsilon}
\newcommand{\Ga}{\Gamma}
\newcommand{\De}{\Delta}

\DeclareMathOperator{\rot}{rot}
\DeclareMathOperator{\sgn}{sign}
\DeclareMathOperator{\wt}{wt}

\newcommand{\sid}{\mathcal{D}}
\newcommand{\fsid}{\mathcal{D}^\mathrm{free}}
\newcommand{\nfsid}{\mathcal{D}^\mathrm{inv}}
\newcommand{\gsid}{\mathcal{D}^\mathrm{good}}

\newcommand{\matsym}[1]{\mathord{\vcenter{\hbox{\adjustbox{valign=c}{\includegraphics[scale=0.25]{#1}}}}}}

\newcommand{\acirc}[1]{\raisebox{-4pt}{\begin{tikzpicture}[scale = .75]
\draw[red, very thin] (-.5,0) -- (.5,0);
\draw[thick] (0,0) circle (.3);
\node[right] at (0,.45) {#1};
\end{tikzpicture}}}
 
\newcommand{\acircs}[1]{\raisebox{-14pt}{\begin{tikzpicture}[scale = .75]
\draw[red, very thin] (-.5,0) -- (.5,0);
\draw[thick] (0,.4) circle (.2);
\node[right] at (.1,.6) {#1};
\draw[thick] (0,-.4) circle (.2);
\node[right] at (.1,-.6) {#1};
\end{tikzpicture}}}

\newcommand{\unknot}{\matsym{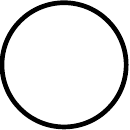}}

\newcommand{\doublecrossingoffa}{\matsym{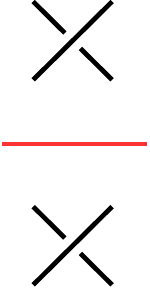}}
\newcommand{\doublecrossingoffb}{\matsym{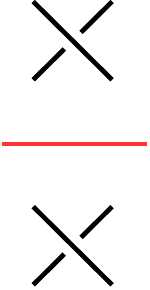}}
\newcommand{\doubleoneres}{\matsym{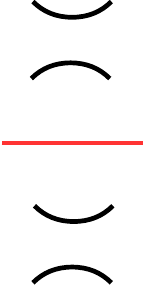}}
\newcommand{\doublezerores}{\matsym{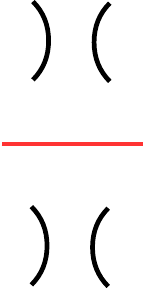}}
\newcommand{\crossingonaxisa}{\matsym{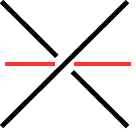}}
\newcommand{\crossingonaxisb}{\matsym{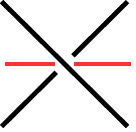}}
\newcommand{\zeroresonaxis}{\matsym{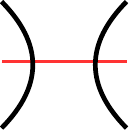}}
\newcommand{\oneresonaxis}{\matsym{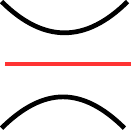}}
\newcommand{\poscrossing}{\matsym{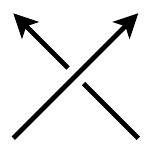}}
\newcommand{\negcrossing}{\matsym{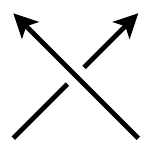}}
\newcommand{\orientedresol}{\matsym{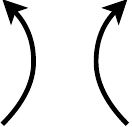}}
\newcommand{\poscrossingonaxis}{\matsym{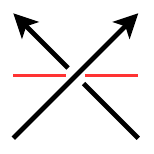}}
\newcommand{\negcrossingonaxis}{\matsym{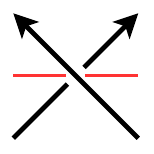}}
\newcommand{\orientedresolutiononaxis}{\matsym{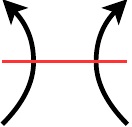}}

\newcommand{\edgeresolutiononaxis}{\matsym{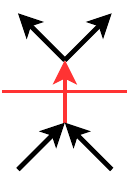}}
\newcommand{\poscrossingoffaxis}{\matsym{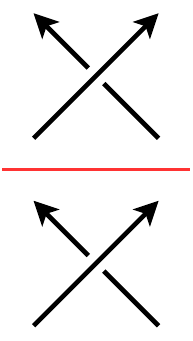}}
\newcommand{\negcrossingoffaxis}{\matsym{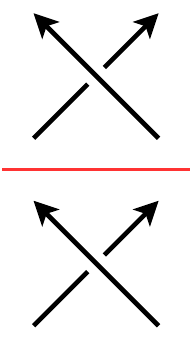}}
\newcommand{\claspRtwo}{\matsym{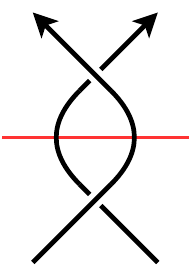}}
\newcommand{\claspresol}{\matsym{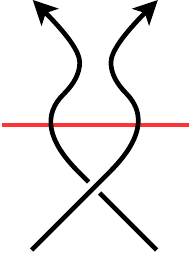}}
\newcommand{\edgebubbleoffaxis}{\matsym{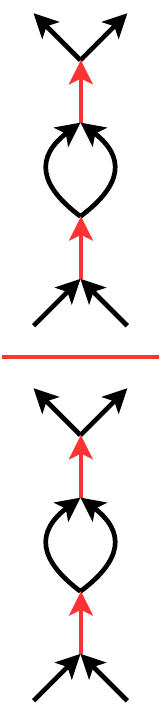}}
\newcommand{\orientedresolutionoffaxis}{\matsym{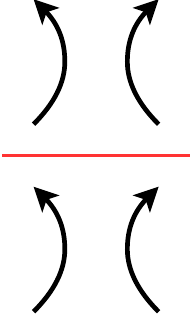}}
\newcommand{\Claspa}{\matsym{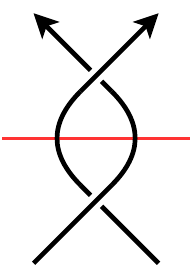}}

\newcommand{\edgeresolutionoffaxis}{\matsym{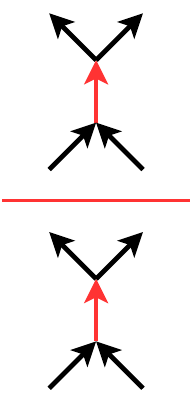}}
\newcommand{\IRoneposloop}{\matsym{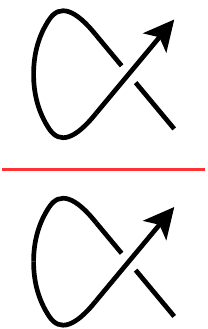}}
\newcommand{\IRonenegloop}{\matsym{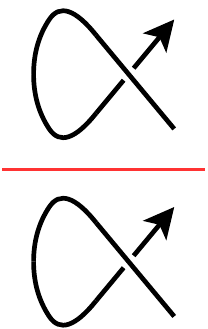}}
\newcommand{\IRtwotopleft}{\matsym{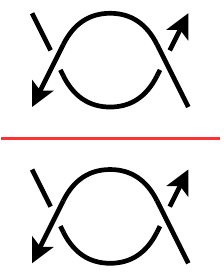}}
\newcommand{\IRtwotopright}{\matsym{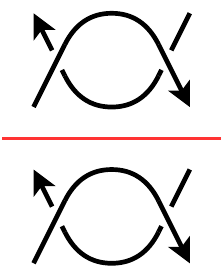}}
\newcommand{\IRtworesoo}{\matsym{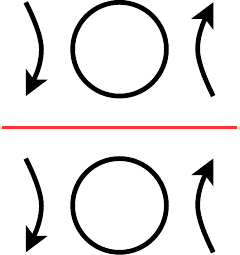}}
\newcommand{\IRtworesoob}{\matsym{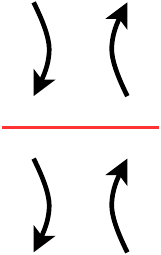}}
\newcommand{\IRtworesoi}{\matsym{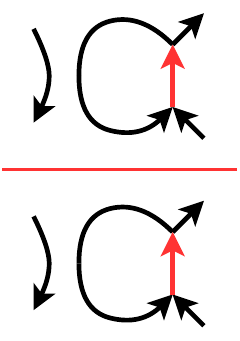}}
\newcommand{\IRtworesio}{\matsym{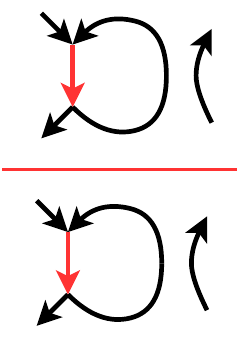}}
\newcommand{\IRtworesii}{\matsym{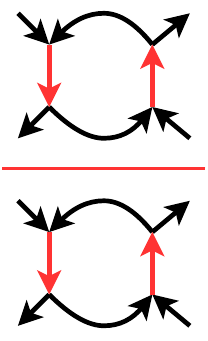}}
\newcommand{\IRtwobottomleft}{\matsym{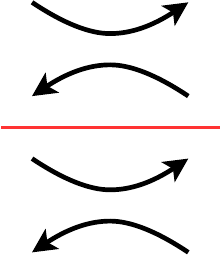}}
\newcommand{\IRtwobottomright}{\matsym{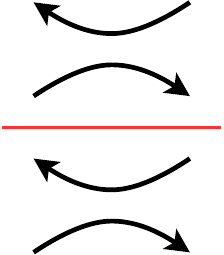}}
\newcommand{\IRthreeonethicka}{\matsym{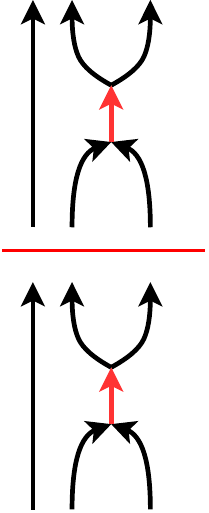}}
\newcommand{\IRthreethreethicka}{\matsym{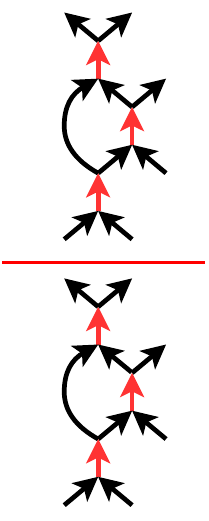}}
\newcommand{\IRthreeonethickb}{\matsym{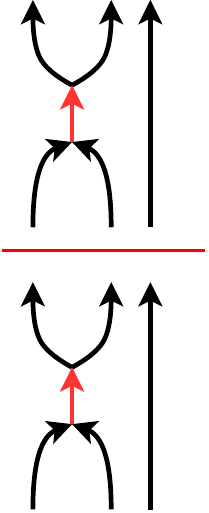}}
\newcommand{\IRthreethreethickb}{\matsym{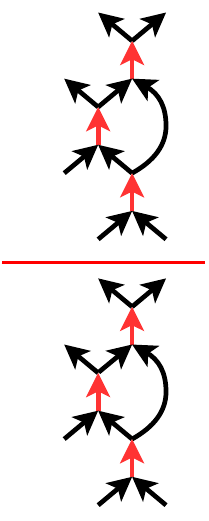}}
\newcommand{\IRthreeresooo}{\matsym{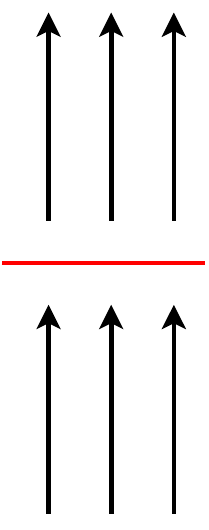}}
\newcommand{\IRthreeresioo}{\matsym{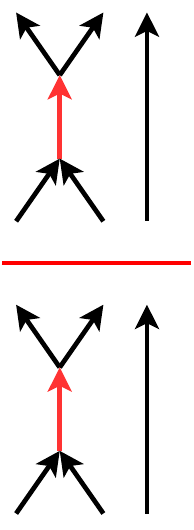}}
\newcommand{\IRthreeresoio}{\matsym{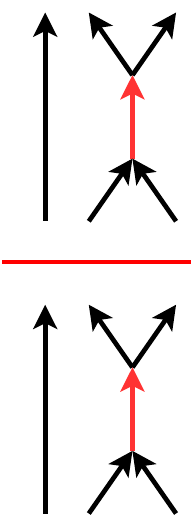}}
\newcommand{\IRthreeresiio}{\matsym{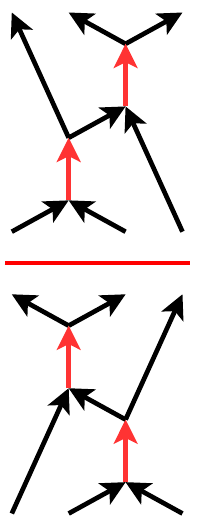}}
\newcommand{\IRthreeresoii}{\matsym{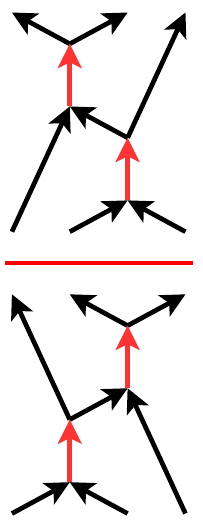}}
\newcommand{\IRthreeresiiia}{\matsym{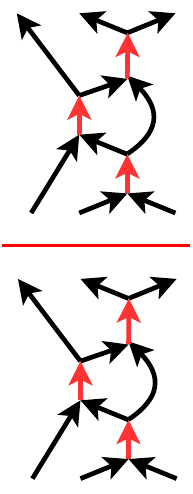}}
\newcommand{\IRthreeresiiib}{\matsym{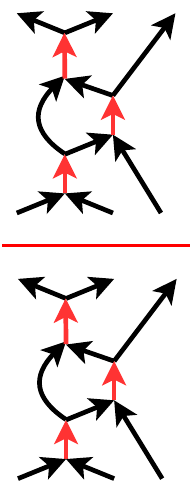}}
\newcommand{\Roneposloop}{\matsym{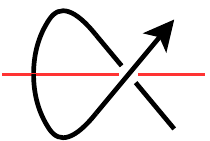}}
\newcommand{\Ronenegloop}{\matsym{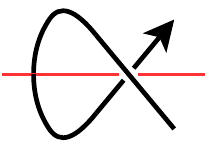}}
\newcommand{\Roneposloopb}{\matsym{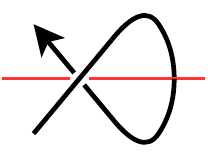}}
\newcommand{\Ronenegloopb}{\matsym{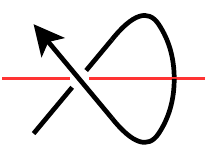}}
\newcommand{\Roneorientedres}{\matsym{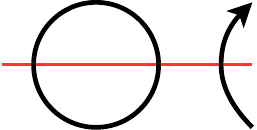}}
\newcommand{\arcuponaxis}{\matsym{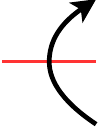}}
\newcommand{\arcuponaxisb}{\matsym{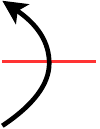}}
\newcommand{\Roneedgeresol}{\matsym{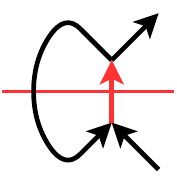}}
\newcommand{\Roneedgeresolb}{\matsym{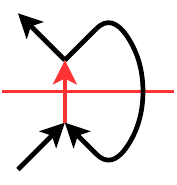}}
\newcommand{\Soneedgeresol}{\matsym{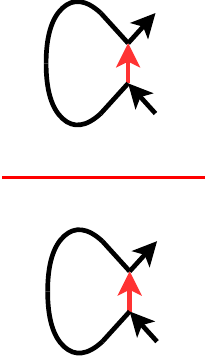}}
\newcommand{\Mtwocrossingsneg}{\matsym{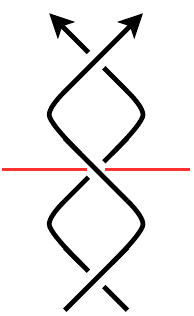}}
\newcommand{\Mtwocrossingspos}{\matsym{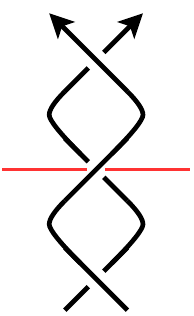}}
\newcommand{\Mtwoedgeresolutions}{\matsym{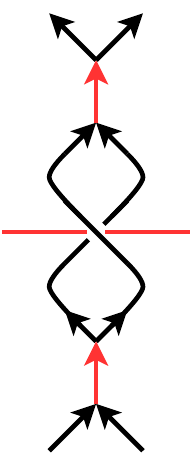}}
\newcommand{\twoedgesbubbleaxis}{\matsym{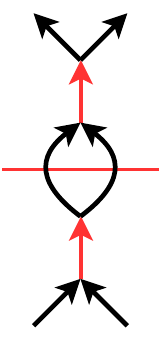}}
\newcommand{\threeedgesaxisbubbles}{\matsym{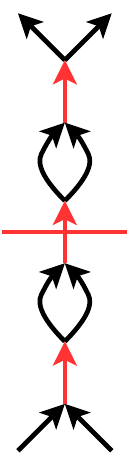}}
\newcommand{\Rtwocrossings}{\matsym{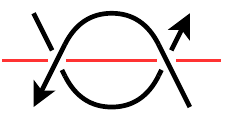}}
\newcommand{\Rtwonocrossings}{\matsym{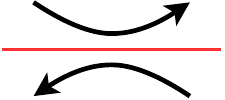}}
\newcommand{\Rtwocrossingsmirror}{\matsym{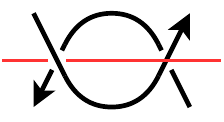}}
\newcommand{\Rtwoorientedresolution}{\matsym{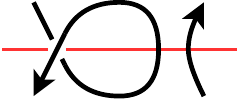}}
\newcommand{\Rtwoedgeresolution}{\matsym{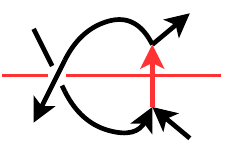}}
\newcommand{\Rtwodoubleresolution}{\matsym{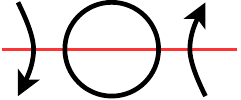}}
\newcommand{\Rtwodoubleedgeresolution}{\matsym{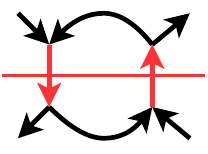}}
\newcommand{\Rtwoedgeorientedresolution}{\matsym{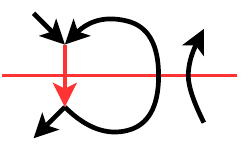}}
\newcommand{\Rtwoorientededgeresolution}{\matsym{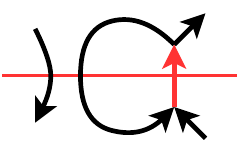}}

\newcommand{\arcsdownuponaxis}{\matsym{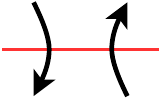}}

\newcommand{\arcsoffaxis}{\matsym{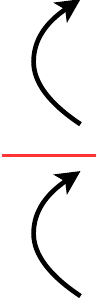}}
\newcommand{\Moneedgeondoubleedgeoff}{\matsym{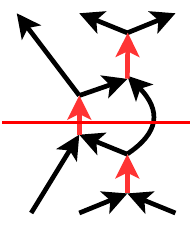}}
\newcommand{\Monedoubleedgeoffedgeon}{\matsym{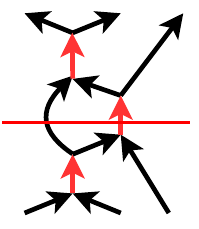}}
\newcommand{\arcedgeon}{\matsym{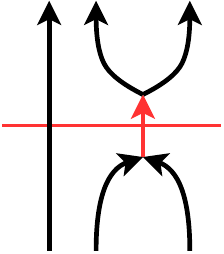}}
\newcommand{\edgeonarc}{\matsym{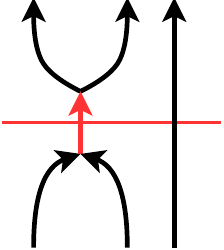}}
\newcommand{\LemmastatoneA}{\matsym{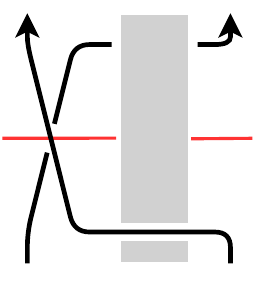}}
\newcommand{\LemmastatoneB}{\matsym{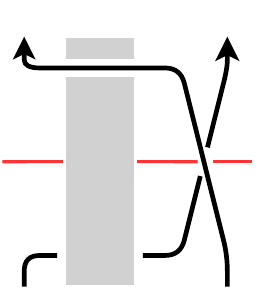}}
\newcommand{\LemmastattwoA}{\matsym{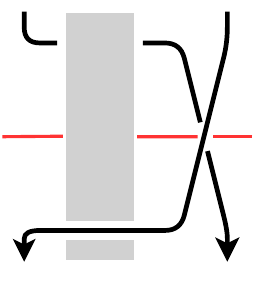}}
\newcommand{\LemmastattwoB}{\matsym{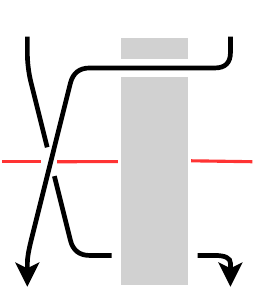}}
\newcommand{\LemmaproofA}{\matsym{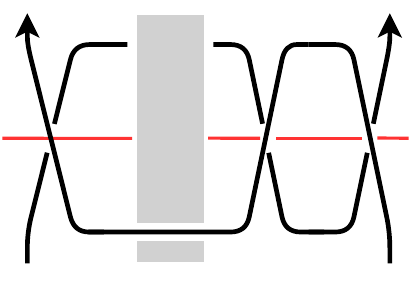}}
\newcommand{\LemmaproofB}{\matsym{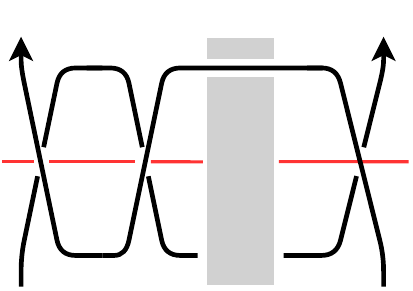}}
\newcommand{\Monecrossonfirst}{\matsym{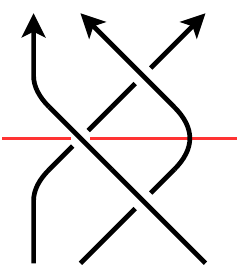}}
\newcommand{\Monecrossonsecond}{\matsym{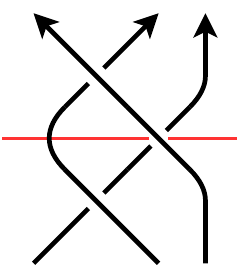}}
\newcommand{\Moneposcrossonfirst}{\matsym{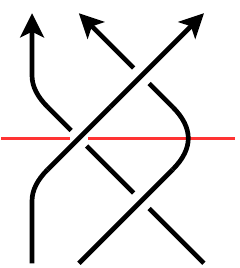}}
\newcommand{\Moneposcrossonsecond}{\matsym{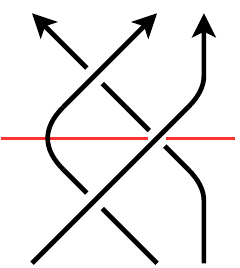}}
\newcommand{\Monecrossonfirstb}{\matsym{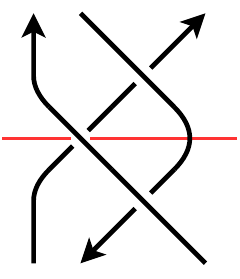}}
\newcommand{\Monecrossonsecondb}{\matsym{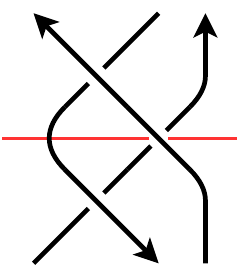}}
\newcommand{\Moneposcrossonfirstb}{\matsym{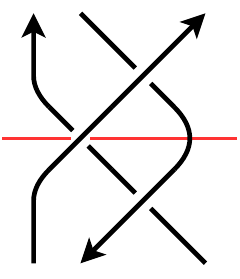}}
\newcommand{\Moneposcrossonsecondb}{\matsym{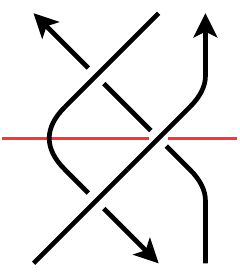}}

\newcommand{\Moneallresolvedfirst}{\matsym{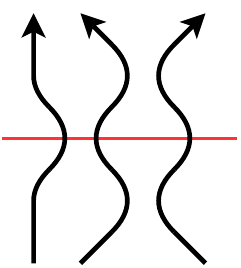}}
\newcommand{\Moneallresolvedsecond}{\matsym{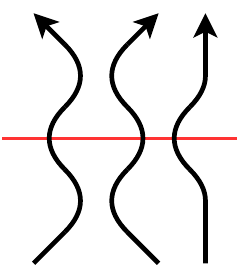}}

\newcommand{\Mthreeedgeonfirst}{\matsym{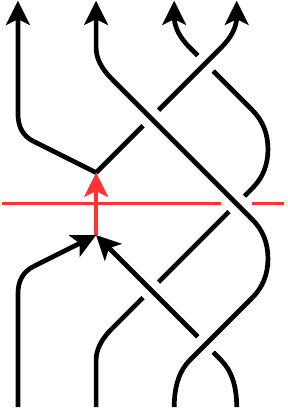}}
\newcommand{\Mthreeedgeonsecond}{\matsym{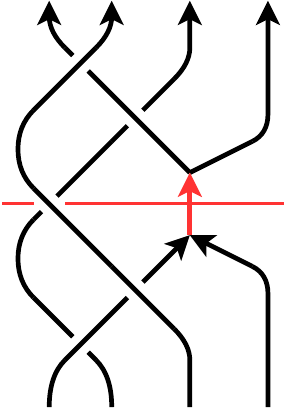}}
\newcommand{\Mthreeedgeonfirsta}{\matsym{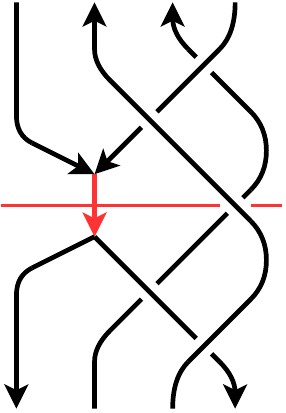}}
\newcommand{\Mthreeedgeonseconda}{\matsym{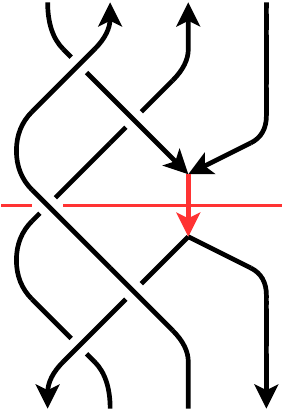}}
\newcommand{\Mthreebubblededgefirst}{\matsym{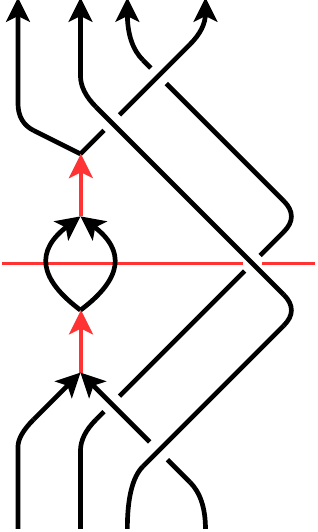}}
\newcommand{\MthreebubblededgeRtwo}{\matsym{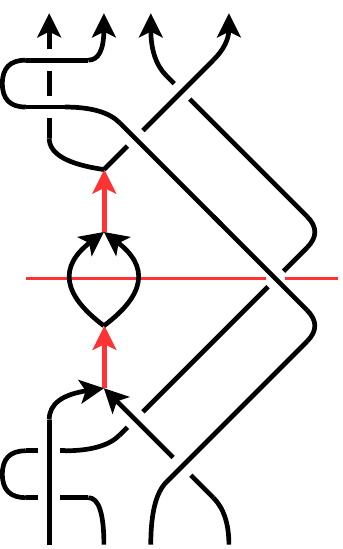}}
\newcommand{\Mthreeapplylemmafirst}{\matsym{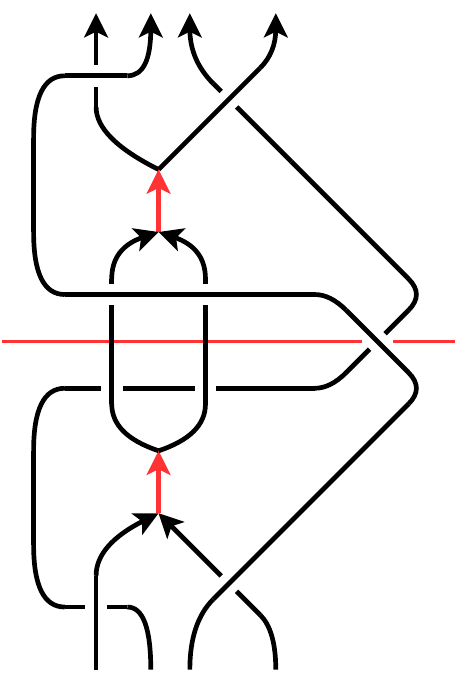}}
\newcommand{\Mthreeedgesymmetric}{\matsym{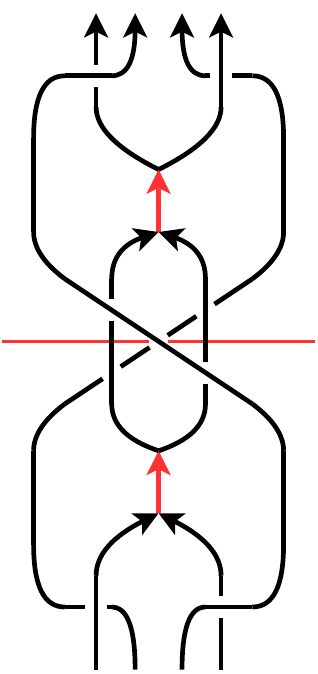}}
\newcommand{\edgeIRthreefirst}{\matsym{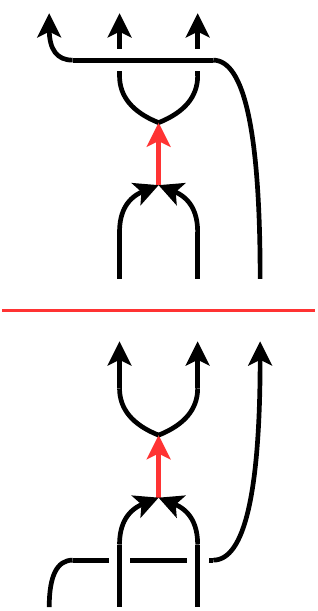}}
\newcommand{\edgeIRthreesecond}{\matsym{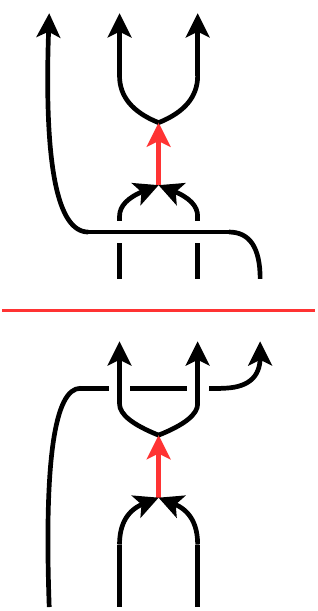}}
\newcommand{\edgeIRthreefirsta}{\matsym{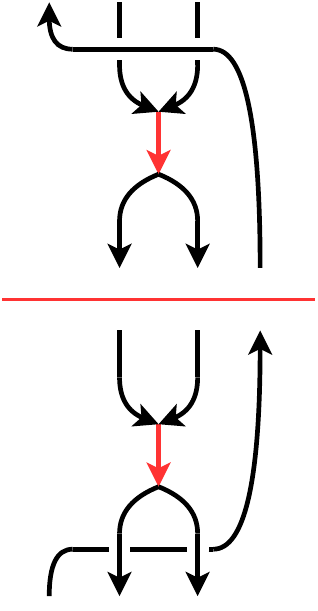}}
\newcommand{\edgeIRthreeseconda}{\matsym{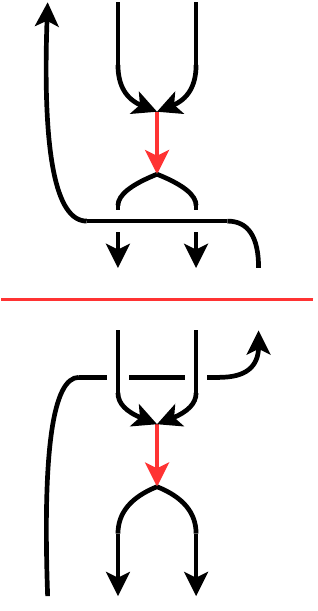}}
\newcommand{\IRthreeone}{\matsym{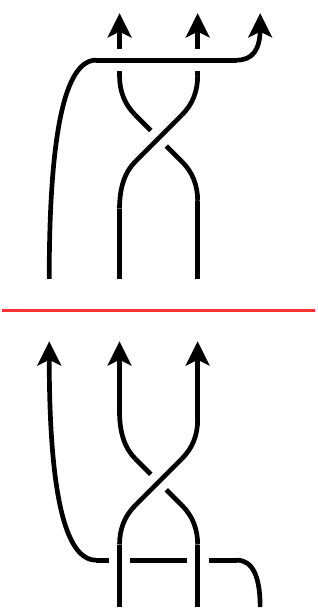}}
\newcommand{\IRthreetwo}{\matsym{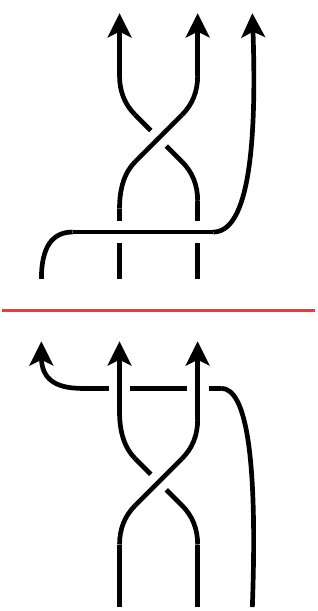}}
\newcommand{\IRthreeoneProof}{\matsym{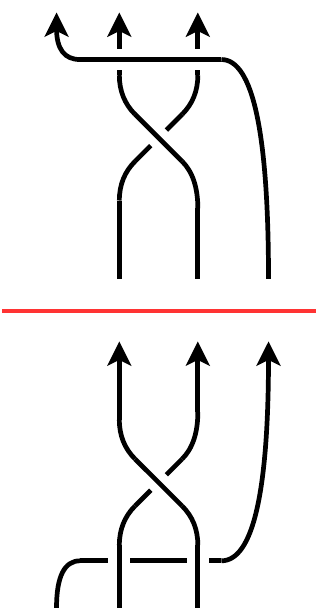}}
\newcommand{\IRthreetwoProof}{\matsym{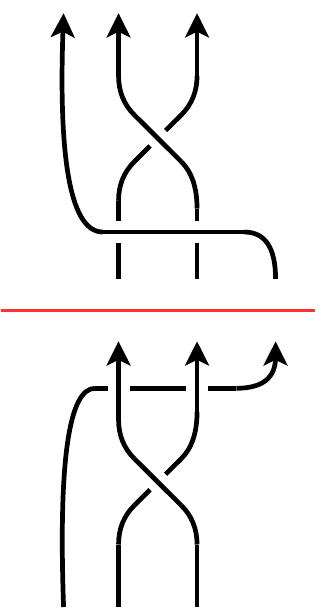}}
\newcommand{\IRthreeres}{\matsym{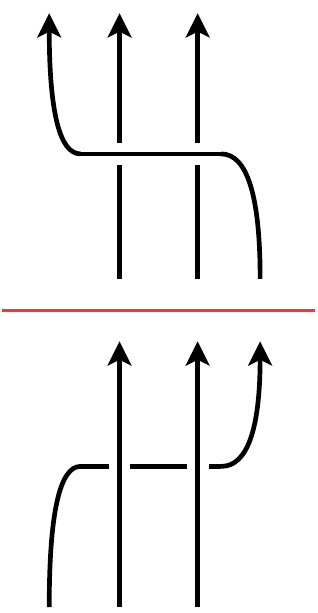}}
\newcommand{\Mthreefirst}{\matsym{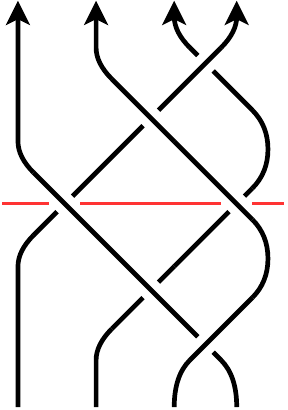}}
\newcommand{\Mthreesecond}{\matsym{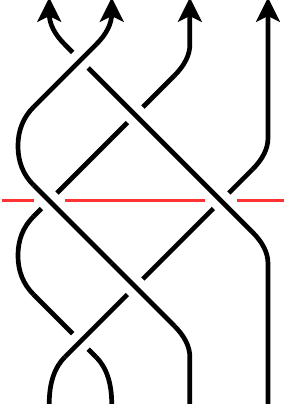}}
\newcommand{\Mthreefirsta}{\matsym{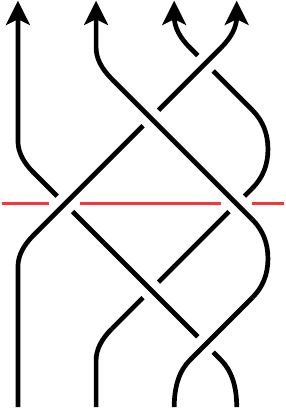}}
\newcommand{\Mthreeseconda}{\matsym{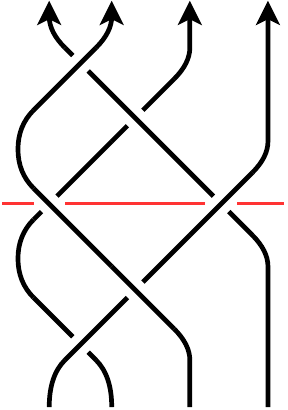}}
\newcommand{\Mthreefirstb}{\matsym{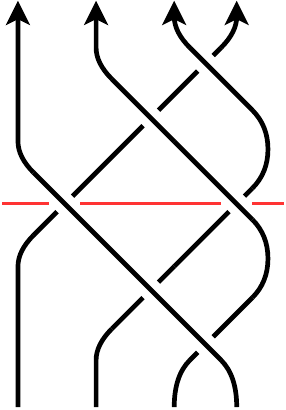}}
\newcommand{\Mthreebproof}{\matsym{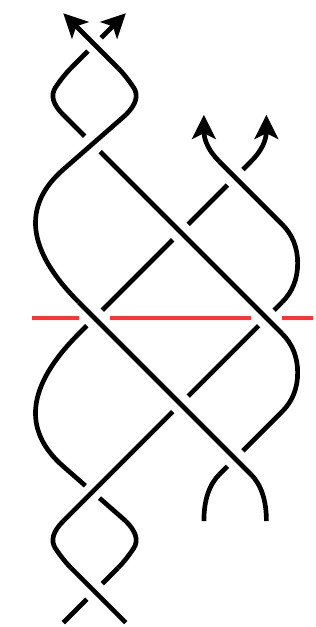}}
\newcommand{\Mthreesecondb}{\matsym{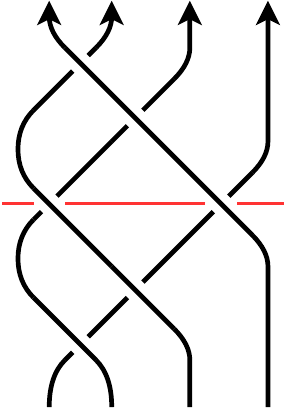}}

\newcommand{\Mthreefirstd}{\matsym{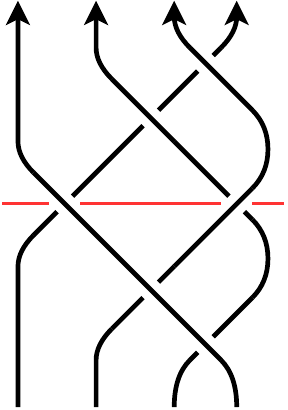}}
\newcommand{\Mthreesecondd}{\matsym{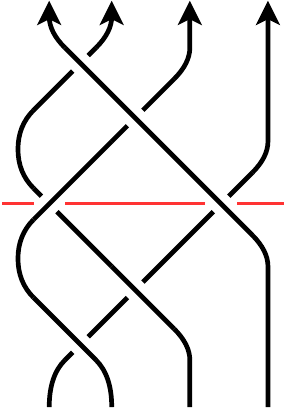}}
\newcommand{\Mthreefirstrev}{\matsym{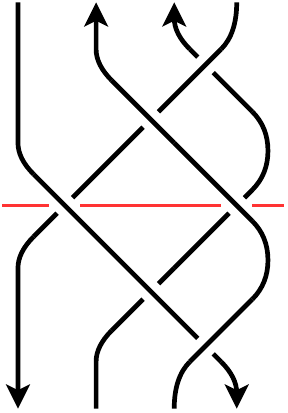}}
\newcommand{\Mthreesecondrev}{\matsym{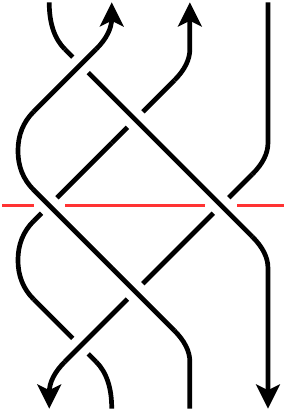}}
\newcommand{\Mthreefirstarev}{\matsym{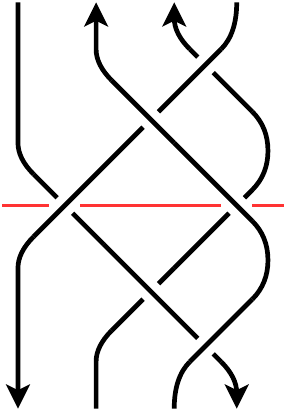}}
\newcommand{\Mthreesecondarev}{\matsym{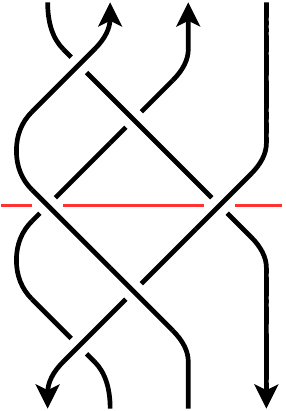}}
\newcommand{\Mthreefirstbrev}{\matsym{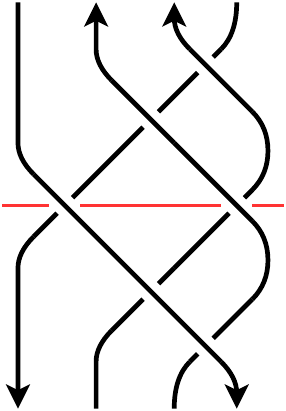}}
\newcommand{\Mthreesecondbrev}{\matsym{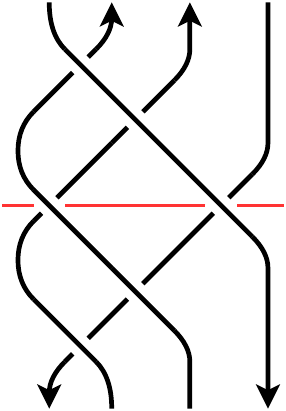}}

\newcommand{\Mthreefirstdrev}{\matsym{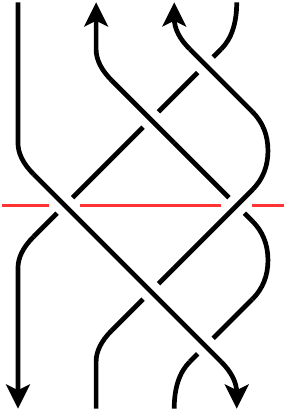}}
\newcommand{\Mthreeseconddrev}{\matsym{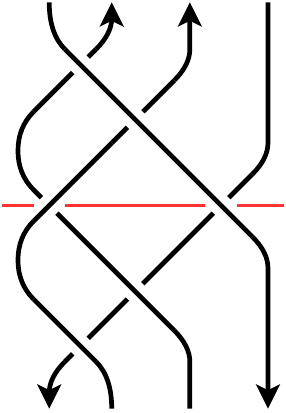}}
\newcommand{\Mthreeresonfirst}{\matsym{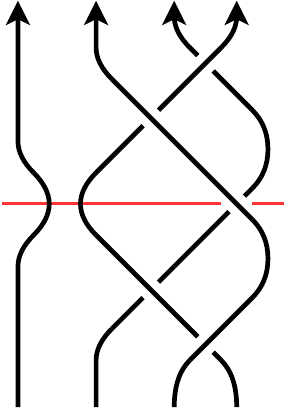}}
\newcommand{\Mthreeresonsecond}{\matsym{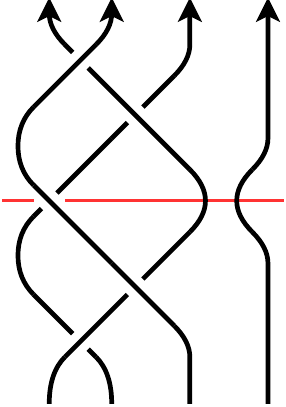}}
\newcommand{\Mthreereson}{\matsym{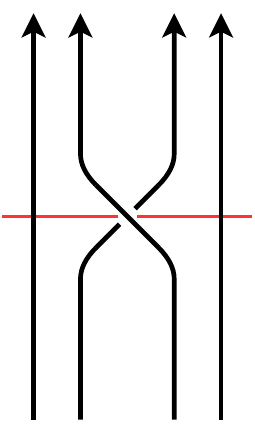}}
\newcommand{\Squaredoubleedgesoffaxis}{\matsym{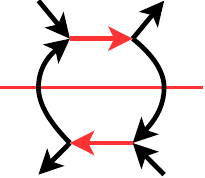}}

\newcommand{\Addresses}{ 
\bigskip
\footnotesize
\noindent\textsc{Dipartimento di Matematica e Informatica ``U. Dini'', Viale Morgagni 67/A, 50134 Firenze, Italy}\par\nopagebreak
\textit{E-mail address:}\
\texttt{carlo.collari@unifi.it}\par\nopagebreak~\\
\textsc{Dipartimento di Matematica, Largo Bruno Pontecorvo 5, 56127 Pisa, Italy}\par\nopagebreak
\textit{E-mail address:}\
\texttt{paolo.lisca@unipi.it}
}

\title{A Polynomial Invariant of Strongly Involutive Links}
\author{Carlo Collari and Paolo Lisca}
\date{\today}

\begin{document}

\begin{abstract}
We introduce a new two-variable polynomial invariant \(P^e\) of strongly involutive links,
uniquely characterized by equivariant skein relations and naturally viewed as an
equivariant analogue of the HOMFLY--PT polynomial. We prove that a specialization of
\(P^e\) recovers the graded Euler characteristic of the third page of the Lobb--Watson
\(\mathcal{G}\)-filtration spectral sequence, generalizing Couture's polynomial
invariant. We further show that, after a change of variables, \(P^e\) reduces modulo
\(2\) to the HOMFLY--PT polynomial, up to an explicit power of the skein variable,
thereby answering a generalized form of a question due to Couture. We use the resulting
skein relations to distinguish infinitely many pairs of alternating mutant knots, and
show that \(P^e\) is strictly stronger than the refined Lobb--Watson invariants on
infinitely many strongly invertible knots.
\end{abstract}

\maketitle
\section{Introduction}

\subsection*{Background and motivation} 

Polynomial invariants of knots and links play a central role in low-dimensional topology, providing computable algebraic tools for distinguishing links, detecting subtle geometric properties, and organizing large classes of examples.

Since the discovery of the Jones polynomial \cite{Jo85, Ka87}, a wide family of related invariants
has been developed, including the HOMFLY--PT polynomial \cite{FYHLMO85,PT87}
and the $\mathfrak{sl}_N$ polynomials \cite{MOY98}. 
These invariants admit recursive characterizations via skein relations and have deep
connections with representation theory, quantum topology, and categorification.

More recently, attention has shifted toward invariants that are sensitive not only to the
underlying link type, but also to \emph{additional symmetry data}.
In such situations, classical polynomial invariants fail to detect the presence or nature
of the symmetry, motivating the development of \emph{equivariant} refinements.

Strongly involutive links provide a natural framework for studying knot invariants in the presence
of symmetry. A \emph{strongly involutive link} consists of a link $L \subset S^3$ together with an
orientation-preserving involution
\[
\tau \colon S^3 \to S^3
\]
whose fixed-point set is an unknotted circle meeting $L$ transversely, and such that $\tau$
restricts to an orientation--reversing involution of $L$.
This framework encompasses strongly invertible knots, links with free involutions, and mixed
situations where some components are fixed while others are exchanged.
The topology of such links encodes additional structure that is invisible to ordinary link
invariants.

While much of the classical literature focuses on the special case of strongly
invertible knots and links \cite{Sa86, Co08, Co09, La22, HHS23}, more recent work has developed
equivariant homological and polynomial invariants in the broader involutive setting
\cite{LW21,Wa17,DMS23,BI22,MP23,Di23,Di24,DF25}.

Early approaches to invariants of strongly involutive links include the theory of signed divides developed by Couture, which produced a polynomial invariant $W$ for strongly invertible knots via a combinatorial state sum~\cite{Co08}, as well as its categorification  \cite{Co09}.
More recently, Lobb and Watson introduced a refinement of Khovanov homology for links with
involution \cite{LW21}.
In the strongly involutive setting, their construction yields a triply graded homology theory, as well as various spectral sequences whose pages are themselves invariants.
Of particular interest is the third page $E^{3}_{\mathcal{G}}$ of the
$\mathcal{G}$--filtration spectral sequence which, by \cite[Remark~4.6]{LW21}, recovers Couture's homology theory from~\cite{Co09}. 
Consequently its graded Euler characteristic~$J^e(L)$ recovers, for strongly invertible links, the polynomial invariant $W$ previously defined by Couture.

The polynomial $J^e(L)$ is, in this sense, the equivariant analogue of the Jones polynomial, and one therefore 
expects it to fit into a broader skein-theoretic picture analogous to the role played by the HOMFLY--PT 
polynomial in the classical theory.  Couture's work already contains skein relations in the language of signed 
divides, and Lobb--Watson's construction suggests that such relations should arise naturally from equivariant 
homological invariants.  What has been missing, however, is a unified framework which both produces equivariant 
$\mathfrak{sl}_N$-type specializations and packages them into a two-variable polynomial invariant.  The aim of 
this paper is to provide such a framework.

\subsection*{Main results}

We define and study a new two-variable polynomial invariant
\[
P^e(L)\in \mathbb Z[a^{\pm1},z^{\pm1}]
\]
of strongly involutive links.  The invariant \(P^e\) plays, in the equivariant setting, the role played by the 
HOMFLY--PT polynomial in the ordinary theory: it is characterized by skein relations, admits 
\(\mathfrak{sl}_N\)-type specializations, and its Jones-type specialization recovers the graded Euler 
characteristic of the Lobb--Watson invariant.

Beyond its formal analogy with the HOMFLY--PT polynomial, the invariant \(P^e\) contains genuinely new 
equivariant information.  In particular, we show that it distinguishes infinitely many pairs of strongly 
invertible knots which have identical refined Lobb--Watson invariants.  
Thus \(P^e\) is not merely a repackaging of previously known equivariant data.

Throughout the paper, we work with transvergent strongly involutive diagrams, with horizontal fixed-point axis; 
this entails no loss of generality, as any strongly involutive link admits such a representative up 
to equivariant isotopy (cf.~\cite{LW21}). 

Given a strongly involutive link $L$, we denote by $\ell(L)$ the number of connected 
components with fixed points. We construct an invariant $P^e(L)\in\bbZ[a^{\pm 1},z^{\pm 1}]$
and show that it is uniquely determined by the following properties: 

\begin{equation}\label{eq:Pskeinonaxis}
a\, P^e\left(\negcrossingonaxis\right) - a^{-1} P^e\left(\poscrossingonaxis\right)
  = P^e\left(\orientedresolutiononaxis\right),
\end{equation}
\begin{equation} \label{eq:Pskeinoffaxis}
a^2 P^e\left(\negcrossingoffaxis\right) - a^{-2} P^e\left(\poscrossingoffaxis\right)
  = z\, P^e\left(\orientedresolutionoffaxis\right),
\end{equation}
\begin{equation}\label{eq:Pskeinincoerentclasp}
P^e\left(\Claspa\right)
  = a\, z\, P^e\left(\poscrossingonaxis\right) - a^2 P^e\left(\orientedresolutiononaxis\right), 
\end{equation}
\begin{gather}
P^e\left(\acirc{}\right) = 1\quad\text{and}\quad P^e\left(\acircs{}\right) = \frac{a + a^{-1}}{z}, \label{eq:Pskeinnormalisation}\\
P^e(L \sqcup_e L') = (a - a^{-1})\,  P^e(L)\, P^e(L'), \label{eq:Pskeinunion}\\
P^e(L^*)(a,z) = (-1)^{\ell(L)+1} P^e(L)(a^{-1},-z). \label{eq:Pskeinmirror}
\end{gather}
Here by the \emph{equivariant disjoint union} $L \sqcup_e L'$ of two strongly involutive links $L$ and $L'$ we mean their disjoint union equipped with a strong involution such that, upon forgetting either of the two links, the remaining link retains its structure as a strongly involutive link.
Note that there might be inequivalent strongly involutive links satisfying the definition given, nonetheless they share the same $P^e$.
Similarly, for a strongly involutive link $L$, its \emph{equivariant mirror} $L^*$ is defined as the image of $L$ under an orientation-reversing diffeomorphism of $S^3$ that commutes with the involution $\tau$. 

Our main result establishes the existence and uniqueness of the two-variable polynomial
invariant $P^e$, as well as its relationship to $J^e(L)$.  

\begin{thm}\label{t:main}
There exists a unique invariant $P^e(L)\in \bbZ[a^{\pm 1}, z^{\pm 1}]$ of strongly involutive links satisfying 
Properties~\eqref{eq:Pskeinonaxis}--\eqref{eq:Pskeinmirror}. Moreover, 
\[
(q^2 - q^{-2}) P^e(L)|_{a = q^2,\, z = q^2-q^{-2}} = (q - q^{-1})^{\ell(L)} J^e(L)
\]
where $J^e(L)$ is the graded Euler characteristic of $E^{3}_{\mathcal{G}}$, that is the third page 
of the Lobb--Watson $\mathcal{G}$--filtration spectral sequence. 
\end{thm}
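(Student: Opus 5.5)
The plan is to build $P^e$ as a state sum over a transvergent diagram, in the spirit of the MOY/Murakami--Ohtsuki--Yamada construction of the HOMFLY--PT and $\mathfrak{sl}_N$ polynomials, adapted to the equivariant setting.

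First, I will identify the two kinds of local models in a transvergent diagram $D$ with horizontal axis: crossings on the axis, and pairs of crossings exchanged by the involution off the axis.
\begin{itemize}
\item An on-axis crossing is resolved by the oriented resolution, following \eqref{eq:Pskeinonaxis}.
\item A symmetric pair of off-axis crossings is treated as a single ``equivariant crossing'' with weight $a^{\pm 2}$. Its resolution is either the pair of oriented resolutions or a symmetric ``edge'' (wide-edge/clasp) resolution, modelled on \eqref{eq:Pskeinoffaxis} and \eqref{eq:Pskeinincoerentclasp}.
\end{itemize}
Resolving everything produces equivariant planar graphs: circles and symmetric wide edges, some meeting the axis and some exchanged in pairs.

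Next, I will assign each such graph an evaluation in $\bbZ[a^{\pm1},z^{\pm1}]$. This requires a small family of graph relations: removing a circle meeting the axis, removing a pair of exchanged circles at the cost $(a+a^{-1})/z$ as in \eqref{eq:Pskeinnormalisation}, and the equivariant analogues of the MOY bubble and square relations for edges on and off the axis. These relations reduce any graph to a multiple of the empty diagram, and I must check that the reduction is confluent. I will do this by showing the relations are consistent with a state-sum model along the axis, in which traversing the axis at a fixed point gives a factor like $a\pm a^{-1}$. Define $\langle D\rangle$ as the weighted sum over resolutions, and set $P^e(L)=(\text{writhe normalization})\cdot\langle D\rangle$, with separate writhe contributions from on-axis crossings (powers of $a$) and off-axis pairs (powers of $a^2$).

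Invariance requires checking the equivariant Reidemeister moves of \cite{LW21}: the off-axis moves R1, R2, R3 performed symmetrically, the on-axis moves IR1, IR2, IR3, and the mixed moves M1, M2, M3, where an off-axis pair interacts with the axis. Off-axis symmetric moves reduce to the usual HOMFLY--PT/MOY computations with $a$ replaced by $a^2$, so they should be routine. The main obstacle will be the mixed moves, especially M3 and IR3, where on-axis crossings and edge resolutions interact. There the needed identity comes from an equivariant square/bubble lemma relating an edge crossing the axis to its resolutions. I will first try to prove a local lemma showing that an edge on the axis can be slid past an arc on the axis, and then reduce M3 to it, mirroring the classical proof of invariance under the braid-like R3.

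Once invariance is established, the listed properties follow quickly.
\begin{itemize}
\item The skein properties \eqref{eq:Pskeinonaxis}--\eqref{eq:Pskeinincoerentclasp} hold by construction of the local expansions.
\item \eqref{eq:Pskeinunion} holds because an equivariant disjoint union meets the axis twice more, which produces the factor $a-a^{-1}$.
\item \eqref{eq:Pskeinmirror} follows by inspecting the state sum under $a\mapsto a^{-1}$, $z\mapsto -z$; each fixed component contributes a sign.
\end{itemize}

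For uniqueness, I will use induction on the number of crossings and on an equivariant unlinking complexity. The relations allow switching any on-axis crossing or any symmetric off-axis pair, and the clasp relation \eqref{eq:Pskeinincoerentclasp} handles incoherent clasps, where the off-axis relation does not apply orientation-coherently. Every diagram can thus be reduced to an equivariant split union of trivial pieces, which are evaluated using \eqref{eq:Pskeinnormalisation} and \eqref{eq:Pskeinunion}. The mirror property fixes the pieces not reachable otherwise.

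Finally, for the comparison with $J^e$, I will show that the normalized graded Euler characteristic $(q-q^{-1})^{-\ell(L)}(q^2-q^{-2})^{-1}J^e$ satisfies the specialized relations. This uses the exact triangles of the Lobb--Watson $E^3_{\mathcal{G}}$ page, equivalently Couture's skein relations \cite{Co08,Co09}. Uniqueness then gives the equality.
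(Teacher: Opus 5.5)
There is a genuine gap in the existence half, which is the substance of the theorem. Your whole construction rests on a well-defined two-variable evaluation of equivariant planar graphs. The only argument you offer for confluence of the graph relations is ``a state-sum model along the axis in which traversing the axis at a fixed point gives a factor like $a\pm a^{-1}$'', and that is not an actual model.

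Consistency of MOY-type relations (bubbles, squares, the identities of Lemma~\ref{l:M1identity}) can only be certified by exhibiting an honest state sum. Such sums exist only at a fixed rank $N$, with colours in the finite set $X_N$. Even classically, the MOY calculus yields HOMFLY--PT only through its specialisations at all $N$.

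Consequently your invariance checks, the claim that \eqref{eq:Pskeinonaxis}--\eqref{eq:Pskeinincoerentclasp} hold ``by construction'', and the mirror and union properties ``by inspection'' have nothing to rest on. Note also that your on-axis expansion lists only one resolution, whereas a state sum needs the oriented and the thick-edge resolution. The clasp relation is a derived identity, not a local expansion.

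The paper supplies exactly this missing ingredient. For each $N\ge 2$ it defines the equivariant $N$-bracket as a finite sum over equivariant $X_N$-colourings, proves the local relations and Lobb--Watson invariance, and obtains $P^e_N\in\mathbb{Z}[q^{\pm1}]$ satisfying the skein relations specialised at $a=q^N$, $z=q^2-q^{-2}$. It then runs the double induction you sketch for uniqueness (crossing number and $p^{\mathrm{on}}$, crossing-change sequences, the clasp relation). This shows every diagram is good, i.e.\ there is $Q(D)\in\mathbb{Z}[a^{\pm1},z]$ with $(q-q^{-1})^{\ell}(q^2-q^{-2})^{m}P^e_N(D)=(q^N-q^{-N})\,Q(D)|_{a=q^N,\,z=q^2-q^{-2}}$ for every $N$. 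Lemma~\ref{l:specialize-vanish} makes $Q$ unique, hence invariant, and one sets $P^e:=z^{-m}Q$. The union and mirror properties are then read off from those of $P^e_N$ through this formula.

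So your reduction argument is half of the existence proof, but it produces a two-variable polynomial only when fed the family of $N$-coloured state sums, which your plan lacks. A direct Lickorish--Millett-type existence induction would be an alternative, but it is a substantial argument you have not proposed.

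The comparison with $J^e$ also needs repair. The normalised quantity must be $(q-q^{-1})^{\ell(L)}(q^2-q^{-2})^{-1}J^e$, with exponent $+\ell$. Test it on a circle meeting the axis, where $J^e=q+q^{-1}$ and $P^e=1$.

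Since $J^e$ is a one-variable invariant, what you need is uniqueness for the relations specialised at $N=2$ over $\mathbb{Q}(q)$. These are the paper's equivariant skein invariants of type $(R,\varphi,2)$, handled by Theorem~\ref{t:gsid=sid}. Uniqueness among two-variable invariants does not suffice as stated, though your inductive argument would give the specialised version if run over that ring.

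Finally, rather than exact triangles on the $E^3_{\mathcal G}$ page, which you would first have to construct, the paper uses a more direct route. $E^2_{\mathcal G}$ is homotopy equivalent to a complex spanned by $\tau$-invariant enhanced states, so $J^e(D)=\sum_{s\in\mathcal S^e(D)}(-1)^{k(s)}q^{j(s)}$. From this, \eqref{eq:skeinJoneson}--\eqref{eq:skeinJonesmirr}, including the clasp relation, are derived via the equivariant Kauffman bracket of Lemma~\ref{l:skeinbracketpoly}, giving $J^e=P^e_2$.
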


The proof of this result proceeds in several stages. We introduce equivariant MOY graphs 
and define an equivariant $N$--bracket, adapting the approach of Murakami, Ohtsuki, and Yamada~\cite{MOY98} 
to the symmetric setting. We then construct equivariant $\mathfrak{sl}_N$ polynomial invariants 
$P^e_N\in\bbZ[q^{\pm 1}]$, establish existence and properties of an auxiliary invariant 
$Q(L)\in\bbZ[a^{\pm 1},z]$ related to the polynomials $P^e_N$ via evaluations, and finally 
we prove Theorem~\ref{t:main}.

As a key step in the proof of Theorem~\ref{t:main}, we show that the graded Euler characteristic of the Lobb--Watson invariant $J^e$ satisfies equivariant skein relations analogous to those defining $P^e(L)$. As an illustration of the applicability of the skein relations, we show that $J^e$ can be used to distinguish infinitely 
many pairs of alternating, mutant knots. 
Recall that mutant knots have the same Jones polynomial and the signature, and in the alternating 
case these invariants uniquely determine the Khovanov homology~\cite{Le05}. Similarly, the Alexander 
polynomial is a knot mutation invariant and together with the signature it determines the 
Heegaard Floer homology of alternating knots~\cite{OS04, Ra03}. Thus, alternating, mutant knots 
cannot be distinguished by Khovanov homology, nor by Heegaard Floer homology. 
Let \(K = K(a_1,\ldots,a_s)\) denote the \(s\)-strand pretzel link, and recall that \(K\) 
is a knot precisely when exactly one of the integers \(a_i\) is even.

\begin{thm}\label{t:pretzel}
For every sufficiently large $n$, the alternating mutant knot pairs $K(5,3,3,2 n,5)$ and $K(5,3,2 n,3,5)$
can be distinguished, up to (non-equivariant) isotopy, using $J^e$. 
\end{thm}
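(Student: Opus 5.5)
We plan to prove Theorem~\ref{t:pretzel} by computing $J^e$ for suitable strongly invertible representatives of the two knots via the equivariant skein relations, which by Theorem~\ref{t:main} hold for $J^e$ after the specialization $a=q^2$, $z=q^2-q^{-2}$. We then compare the resulting families.

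The first observation is that to distinguish the knots up to non-equivariant isotopy it is not enough to compare single values of $J^e$. A knot may admit several strong inversions, so we must compare the \emph{sets} of values $J^e(K,\tau)$ over all strong involutions $\tau$. For pretzel knots we will invoke the classification of symmetries of Montesinos knots (Sakuma, Boileau--Zimmermann). For generic parameters, such as $K(5,3,3,2n,5)$ and $K(5,3,2n,3,5)$ with $n$ large, the symmetry group is small and there are only finitely many (at most two or so) strong inversions up to equivariant isotopy. These inversions are the visible $\pi$-rotations of the standard pretzel diagram: one about a horizontal axis through all tangles, and possibly one about an axis passing between or through the tangles. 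Each such inversion yields a transvergent diagram with horizontal axis. The plan is therefore to show that the finite sets $\{J^e(K_1,\tau)\}$ and $\{J^e(K_2,\tau')\}$ are disjoint for $n$ large.

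The computation proceeds by induction on the even twist region $2n$. In each chosen symmetric diagram, the $2n$-twist region either crosses the axis or comes as a pair of twist regions swapped by $\tau$. In both cases relations~\eqref{eq:Pskeinonaxis} and \eqref{eq:Pskeinoffaxis} (specialized) give a linear recurrence in $n$ of order two. Its characteristic roots are powers of $q$, for instance $a^{\pm 2}$ in the off-axis case. Consequently
\[
J^e(K_i,\tau)=A_i(q)\,\lambda_1^n+B_i(q)\,\lambda_2^n,
\]
where $A_i$ and $B_i$ are determined by the values at the resolved diagrams. Those resolved diagrams are strongly involutive links (a $2$-bridge piece or the connected sum of the remaining $5,3,3,5$ tangles) whose invariants are computed once by skein relations or by hand. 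The factor $(q-q^{-1})^{\ell}$ in Theorem~\ref{t:main} must be tracked as $\ell$ changes under resolution.

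For $n$ large, the extreme $q$-degree terms are governed by the dominant root, so they are determined by $A_i$ (or $B_i$) times $q^{\pm cn}$. The final step is then to show that these leading coefficients differ between the two mutants for every pairing of inversions. This is where the non-mutation-invariant information enters: the order of the tangles $3,2n,3$ versus $3,3,2n$ changes which tangles lie on the axis, and hence which sub-links appear in the on-axis versus off-axis skein relations.

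The main obstacle is this last comparison together with the justification that we have listed all strong inversions. I would first try to compare the top-degree coefficient (or total degree span) of each $A_i$, computing these base polynomials explicitly. If that fails to separate them, I would fall back on comparing the full rational functions in $q$ for the finitely many pairs.
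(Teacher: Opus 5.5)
\textbf{Gap: the decisive comparison is never made, and the claimed recurrence is wrong.} Your outline has the right ingredients: twist-parameter recurrences from the equivariant skein relations, the Boileau--Zimmermann classification, and large-$n$ asymptotics. But it stops where the proof has to happen. The inversions are left at ``at most two or so'', and the step showing the values differ is deferred as ``the main obstacle''. The first issue disappears if you use the classification as the paper does. The symmetry group of $K_n=K(5,3,3,2n,5)$ is $\mathbb{Z}/2$, so $K_n$ has a unique strong inversion $\tau_n$: horizontal axis, with the $2n$ twist crossings forming $n$ off-axis symmetric pairs. If $K_n\cong K'_n$, any strong inversion of $K'_n$ pulls back to one equivalent to $\tau_n$. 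So it suffices to exhibit \emph{one} inversion $\tau'_n$ of $K'_n=K(5,3,2n,3,5)$ with $J^e(K'_n,\tau'_n)\neq J^e(K_n,\tau_n)$; you need neither a list of inversions of $K'_n$ nor disjointness of sets. Your remark that the tangle order changes what lies on the axis is the right one. Since $(5,3,2n,3,5)$ is palindromic, the paper takes $\tau'_n$ to be the rotation about the axis through the middle tangle, putting all $2n$ twist crossings on the axis. It then plays \eqref{eq:skeinJoneson} for $K'_n$ against \eqref{eq:skeinJonesoff} for $K_n$. It asserts homogeneous recurrences with different characteristic roots ($q^{-1},-q^{-3}$ versus $q^{-2},-q^{-6}$), so that only $R'_0\neq 0$ remains to check; $R'_0$ is a vertical sum, cf.\ Proposition~\ref{p:Pneofverticalsums}.

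That homogeneous form, which you also assert when you write $A_i\lambda_1^n+B_i\lambda_2^n$, requires the oriented smoothing to stay inside the family, and it does not. An oriented smoothing of a crossing of a knot has two components. But every $K(5,3,3,m,5)$ and $K(5,3,m,3,5)$ is a knot: for $m$ even there is exactly one even entry, and for $m$ odd all five entries are odd. Hence the two strands of the $2n$-tangle are anti-parallel, and the oriented smoothing is the $\infty$-resolution. After removing kinks this is $K(5,3,3,5)$, plus a split invariant unknot in the off-axis case, independently of $n$ and of the crossing chosen. (Your ``connected sum of the remaining tangles'' is the $0$-resolution, i.e.\ the case $n=0$.) With $R_n=J^e(K_n,\tau_n)$ and $R'_n=J^e(K'_n,\tau'_n)$, the relations actually read
\[
q^{2}R'_n-q^{-2}R'_{n-1}=(q-q^{-1})\,C',\qquad q^{4}R_n-q^{-4}R_{n-2}=(q^{2}-q^{-2})\,C .
\]
Here $C'$ is $J^e$ of $K(5,3,3,5)$ with both components invariant, and $C$ is $q+q^{-1}$ times $J^e$ of $K(5,3,3,5)$ with its components exchanged. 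Therefore
\[
R'_n=\frac{C'}{q+q^{-1}}+\beta' q^{-4n},\qquad R_n=\frac{C}{q^{2}+q^{-2}}+\bigl(\beta+(-1)^n\gamma\bigr)q^{-4n}.
\]
Both families vary at the same rate, so neither your dominant-root argument nor a distinct-roots argument separates them. What is needed is the explicit computation you postponed. You must show $C/(q^2+q^{-2})\neq C'/(q+q^{-1})$, or failing that $\beta\pm\gamma\neq\beta'$. This means computing these two invariants of $K(5,3,3,5)$ together with the initial values $R_0,R_1,R'_0$. Note that Theorem~\ref{t:congruence} and mutation invariance of the HOMFLY--PT polynomial force $R_n\equiv R'_n\pmod 2$, so this computation must detect genuinely integral information.
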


We also establish the following relationship between the invariant $P^e(L)$ of a strongly 
involutive link $L$ and the HOMFLY--PT polynomial of $L$, which shows that, modulo $2$, the 
equivariant information carried by $P^e(L)$ collapses to the ordinary HOMFLY-PT polynomial, 
up to the explicit factor $z^{\ell(L)-1}$. Denote by $P(L)\in\bbZ[a^{\pm 1},z^{\pm 1}]$ 
the HOMFLY--PT polynomial of $L$ in the standard normalization: 
\[
 a^{-1} P(\poscrossing) -  aP(\negcrossing) = z P(\orientedresol),\qquad P(\unknot) = 1,
\]
as, for instance, in \cite[p.~231]{Mu08}.
\begin{thm}\label{t:congruence}
Let $L$ be a strongly involutive link. Then, 
\[
P^e(L)\big|_{z\,\mapsto\, z^2} = z^{\ell(L) - 1} P(L)\qquad\bmod 2.
\]
\end{thm}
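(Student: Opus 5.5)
We argue by induction on the complexity of a transvergent strongly involutive diagram $D$ of $L$, using that $P^e$ is uniquely determined by \eqref{eq:Pskeinonaxis}--\eqref{eq:Pskeinmirror}. Set $R(L) := z^{\ell(L)-1}P(L)$ and $\widetilde P(L) := P^e(L)|_{z\mapsto z^2}$, both regarded in $\mathbb{F}_2[a^{\pm1},z^{\pm1}]$. The plan is to show that $R$ satisfies the mod~$2$ reductions of the equivariant skein relations (after $z\mapsto z^2$). The same unknotting argument that gives uniqueness of $P^e$ then forces $\widetilde P \equiv R$.

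\textbf{Base cases and structural properties.} For the single fixed unknot, $\ell=1$, so $R=1$. For the free pair of unknots exchanged by $\tau$, $\ell=0$ and $R = z^{-1}\cdot\frac{a^{-1}-a}{z}$, since the two-component unlink has $P=(a^{-1}-a)/z$. Modulo $2$ this equals $(a+a^{-1})/z^2$, which matches \eqref{eq:Pskeinnormalisation} after $z\mapsto z^2$. For the equivariant disjoint union, $\ell$ is additive and $P(L\sqcup L')=\frac{a^{-1}-a}{z}P(L)P(L')$. This gives $R(L\sqcup_e L') = (a^{-1}-a)R(L)R(L')$, which is \eqref{eq:Pskeinunion} modulo $2$. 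The mirror property \eqref{eq:Pskeinmirror} holds mod $2$ because $P(L^*)(a,z)=P(L)(a^{-1},-z)$ and signs disappear.

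\textbf{On-axis crossing.} Here $\ell$ changes by $\pm1$. Smoothing a crossing on the axis either merges two fixed components into one or splits one into two; in both cases the fixed arcs are joined. The HOMFLY--PT relation $a^{-1}P_+ - aP_- = zP_0$ then gives, modulo $2$,
\[ aR_- + a^{-1}R_+ = z^{\ell_\pm-1}\,zP_0 = z^{\ell_\pm - \ell_0}R_0. \]
We need the exponent $\ell_\pm-\ell_0$ to be $0$, so that this matches \eqref{eq:Pskeinonaxis}. This requires checking how $\ell$ behaves under the oriented smoothing at an axis crossing. Since the orientation is reversed by $\tau$, the two strands through an on-axis crossing belong to the same fixed component or to two fixed components in a way that makes the count work out. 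This needs careful case-checking, and a mismatch here would instead be absorbed by adjusting the exponent convention.

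\textbf{Off-axis crossings and the clasp.} Off-axis crossings come in $\tau$-symmetric pairs. Relation \eqref{eq:Pskeinoffaxis} should follow by applying the HOMFLY--PT skein relation twice, once at each crossing of the pair. Modulo $2$ we have $(a^{-1}x+ay)^2 \equiv a^{-2}x^2+a^2y^2$ (Frobenius), and the cross terms $P_{+0}, P_{0+}$ coincide by symmetry, so they cancel mod $2$. The mixed configurations must be expressed through the symmetric ones, and the resulting $z^2$ matches $z\mapsto z^2$ on the right-hand side. Since $\ell$ is unchanged by off-axis moves, the prefactor is harmless. The clasp relation \eqref{eq:Pskeinincoerentclasp} should similarly follow from HOMFLY applied to the clasp crossings, with $\ell$ possibly changing by one and producing the factor $z$.

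\textbf{Main obstacle.} I expect the main difficulty to be the off-axis pair computation together with the clasp. These need honest expansions where cross terms cancel only mod $2$ via the symmetry, and the $\ell$-bookkeeping must match the powers of $z$. A fallback is to use the relation, from Theorem~\ref{t:main}'s proof, between $Q$ and the $P^e_N$, if that turns out simpler.
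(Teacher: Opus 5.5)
Your overall strategy is viable and genuinely different from the paper's. You check the reduced relations directly for $z^{\ell-1}P$ in two variables and then invoke uniqueness. The paper never does this: it shows that $P_N|_{q\mapsto q^2}\bmod 2$ and $[N]\,P|_{a=q^N,\,z=q+q^{-1}}\bmod 2$ are both equivariant skein invariants of type $(\mathbb{Z}/2\mathbb{Z}[q^{\pm1}],\pi,N)$. It identifies them with $P^e_N\bmod 2$ via Theorem~\ref{t:gsid=sid}, then lifts to two variables using Equation~\eqref{eq:good} and Lemma~\ref{l:evalmod2}; there $z^{\ell-1}$ comes out of $(q+q^{-1})^{|D|-1}$ together with $Q=z^mP^e$.

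\textbf{The on-axis step is wrong as written.} This is exactly where the exponent $\ell-1$ is decided. You dropped a factor of $z$: in fact $aR_-+a^{-1}R_+\equiv z^{\ell_\pm-1}\cdot zP_0=z^{\ell_\pm-\ell_0+1}R_0$. So what you need is $\ell_0=\ell_\pm+1$, not $\ell_0=\ell_\pm$.

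Your case analysis is also off. The two strands at an axis crossing are exchanged by $\tau$, so they cannot lie on two different fixed components. Either they lie on a free pair $K\cup\tau(K)$, which the oriented smoothing merges into one fixed component, or they lie on one fixed component, which splits into two fixed components. More simply, $\ell=\tfrac12\#(L\cap\mathrm{Fix}\,\tau)$. Since $\tau$ reverses orientation, both strands cross the axis in the same direction, so the oriented smoothing creates two new axis points. Hence $\ell_0=\ell_\pm+1$ always, and \eqref{eq:Pskeinonaxis} holds mod $2$. No ``adjustment of the exponent convention'' is needed, and none would be legitimate, since the statement fixes $z^{\ell-1}$.

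\textbf{The clasp step.} The same count gives $\ell(\text{clasp})=\ell_0=\ell_++1$. The required identity then reduces to $P_{\mathrm{clasp}}\equiv azP_++a^2P_0$. This is HOMFLY--PT at one (positive) clasp crossing: switching it gives an R2-removable picture equal to $D_0$, and smoothing it gives a diagram isotopic to $D_+$.

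\textbf{The off-axis step.} The terms that cancel are $P_{+-}$ and $P_{-+}$, not $P_{+0}$ and $P_{0+}$. Expanding $z^2P_{00}$ by HOMFLY--PT at both crossings gives $a^{-2}P_{++}+P_{+-}+P_{-+}+a^2P_{--}\bmod 2$, and $P_{+-}=P_{-+}$ by $\tau$-symmetry together with invariance under global orientation reversal.

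\textbf{Uniqueness.} The uniqueness in Theorem~\ref{t:main} is over $\mathbb{Z}$ and is proved by specializing to $P^e_N$. It says nothing about $\mathbb{F}_2[a^{\pm1},z^{\pm1}]$-valued solutions of the reduced relations, and the paper contains no direct unknotting argument for two-variable invariants. You must supply the statement that two invariants $\mathcal{D}\to\mathbb{F}_2[a^{\pm1},z^{\pm1}]$ satisfying the reduced relations coincide. There are two ways to get it:
\begin{itemize}
\item Rerun the double induction of Section~\ref{s:Pe}, with ``good'' replaced by ``the two invariants agree on $D$''. The only coefficients you must invert are $a^{\pm1}$ and $a^{\pm2}$. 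Crossingless diagrams are fixed by normalization and union, and the mirror relation handles negative clasps.
\item Specialize $a=q^N$, $z=q+q^{-1}$, apply Theorem~\ref{t:gsid=sid} for each $N$, and lift with Lemma~\ref{l:evalmod2}. This is essentially the paper's route.
\end{itemize}

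With these repairs, your argument is a direct two-variable proof. It avoids the $\mathfrak{sl}_N$/MOY normalizations and makes the origin of $z^{\ell-1}$ transparent. The cost is re-proving a mod-$2$ uniqueness theorem rather than quoting Theorem~\ref{t:gsid=sid} verbatim.
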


\begin{rem}
In~\cite[p.~350]{Co08}, Couture asks whether his polynomial invariant $W$ coincides
with the Jones polynomial modulo \(2\). This coincidence is asserted, without
proof, in the introduction of~\cite{Co09}. In view of \cite[Remark~4.6]{LW21} and Theorem~\ref{t:main}---see also Appendix~\ref{app:tablePe}---Theorem~\ref{t:congruence} may therefore be viewed as giving a positive answer to a generalized version of Couture's question.
\end{rem}

Finally, we prove the following result, which provides evidence that, in certain cases, the invariant~\(P^e\) carries strictly more information than previously known equivariant invariants.

\begin{thm}\label{t:comparison}
There exist infinitely many pairs of strongly invertible knots with identical refined Lobb--Watson invariants 
that are nevertheless distinguished by \(P^e\).
\end{thm}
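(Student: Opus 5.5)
The claim is existential, so I would prove it by exhibiting an explicit infinite family. The plan is to produce strongly invertible knots $K_n$ and $K_n'$ whose refined Lobb--Watson invariants coincide, while their polynomials $P^e$ differ, and to certify both facts by skein computations.

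\textbf{Choosing the family.} I would start from a pair of distinct strong inversions on a single knot, or from a mutant pair, that already agree on the Lobb--Watson data. To make the family infinite, I would insert $n$ full twists in a region of the diagram lying off the fixed axis, together with its $\tau$-image. The twist region is chosen so that the two members of each pair differ only by a local equivariant move (a flip or mutation) in a tangle away from the twists.

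\textbf{Showing $P^e$ distinguishes them.} Applying the off-axis relation \eqref{eq:Pskeinoffaxis} repeatedly to the twist crossings gives a linear recursion in $n$. Its characteristic roots are determined by $a^{\pm 2}$ and $z$. Hence
\[
P^e(K_n)=\alpha\,\lambda_1^{\,n}+\beta\,\lambda_2^{\,n}, \qquad P^e(K_n')=\alpha'\,\lambda_1^{\,n}+\beta'\,\lambda_2^{\,n},
\]
with initial data $\alpha,\beta,\alpha',\beta'$ computed from finitely many small diagrams using \eqref{eq:Pskeinonaxis}--\eqref{eq:Pskeinmirror}. Showing $(\alpha,\beta)\neq(\alpha',\beta')$ for one explicit pair of base cases then yields $P^e(K_n)\neq P^e(K_n')$ for all $n$, or for all sufficiently large $n$ after comparing extreme $a$-degrees.

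\textbf{Showing the refined Lobb--Watson invariants agree.} This is the step I expect to be hardest. I would argue structurally rather than by direct computation. The refined Lobb--Watson invariants come from the triply graded complex and the $\mathcal{G}$-filtration. I would look for an equivariant Conway-type mutation or flip whose effect on the Lobb--Watson complex is a filtered chain isomorphism. For example, when the altered tangle is symmetric under the relevant rotation after forgetting the part of the equivariant structure that Lobb--Watson actually sees, the two cube complexes can be identified vertex by vertex, compatibly with the filtration. The involution is then chosen so that $P^e$ still sees the difference: $P^e$ sees off-axis crossings with weight $a^{\pm 2}$ and $z$, whereas Lobb--Watson's $E^3$ specialization fixes $z=q^2-q^{-2}$ and $a=q^2$.

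\textbf{Fallback.} If no structural isomorphism is available, I would choose pairs with thin or otherwise rigid Lobb--Watson homology. These invariants would then be determined by data such as $J^e$ and signatures, which can be checked to agree through the specialization in Theorem~\ref{t:main}. Meanwhile $P^e$ itself differs in coefficients that vanish under that specialization.
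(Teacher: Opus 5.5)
\textbf{There is a genuine gap.} The statement is existential, yet your proposal never exhibits a single pair of strongly invertible knots with identical refined Lobb--Watson invariants, let alone infinitely many. None of your three mechanisms for the Lobb--Watson side works as stated.

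First, the twist-family recursion only controls Euler characteristics. The skein relations govern $P^e$ and $J^e$, but not ${\rm Kh}_\tau$ or the pages $E^p_{\mathcal F}$, $E^q_{\mathcal G}$. So even if a base pair had identical refined invariants, nothing propagates that agreement to the twisted pairs.

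Second, the claim that ``equivariant mutation gives a filtered chain isomorphism'' is not an available result. Mutation invariance is already a nontrivial theorem for ordinary Khovanov homology, and nothing of the kind is known for the Lobb--Watson bicomplex with its $\mathcal{G}$-filtration. Theorem~\ref{t:pretzel} is a warning here: mutant pairs, with their strong inversions, are separated by $J^e$, the Euler characteristic of $E^3_{\mathcal G}$.

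Third, your fallback asserts that ``thin'' Lobb--Watson homology is determined by $J^e$ and signatures. There is no such general statement, so you need a concrete mechanism that computes the whole triply graded object. Note also that $J^e=P^e_2$ is forced to agree for any such pair, so the separation must come from some $P^e_N$ with $N\geq 3$. Nothing in your plan certifies that.

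\textbf{The missing idea is the vertical sum $L\#^v_K L^r$,} which makes both sides computable from classical data of $L$.
\begin{itemize}
\item On the $P^e$ side, every equivariant crossing change or smoothing of $D\#^vD^r$ is again a vertical sum. Induction with the equivariant skein relations then gives $P^e_N(L\#^v_KL^r)(q)=[N]\,P_N(L)(q^2)$ (Proposition~\ref{p:Pneofverticalsums}).
\item On the Lobb--Watson side, $\tau$-invariant resolutions of $D\#^vD^r$ correspond bijectively to resolutions of $D$. This identifies $\widetilde E^2_{\mathcal G}(D\#^vD^r,p_2)$, as a complex, with the reduced Khovanov complex of $D$ under $(j,k)\mapsto(k,j/2)$, so $\widetilde E^3_{\mathcal G}\cong\widetilde{\rm Kh}(L,K)$.
\item For Khovanov-thin $L$, this together with thinness of $L\#L^r$ forces both spectral sequences to collapse at page $3$. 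Hence all refined Lobb--Watson data are determined by the Jones polynomial and $s(L)$ (Propositions~\ref{p:E3Ghverticalsum} and~\ref{p:LWKhverticalsum}).
\end{itemize}
The paper then takes $L=\#^{2n}4_1$ and $L'=\#^n8_9$. These are alternating, hence thin; slice, so $s=0$; and have equal Jones polynomials but different $\mathfrak{sl}_3$ polynomials. Their vertical sums therefore share all refined Lobb--Watson invariants but have different $P^e_3$. Since $P^e$ determines every $P^e_N$, they have different $P^e$.

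Your fallback points in the right direction, since thinness is what is used, but without the vertical-sum construction neither side can actually be computed.
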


\subsection*{Structure of the paper}

Section~\ref{s:LW} reviews the Lobb--Watson refinement of Khovanov homology in the strongly involutive
setting, studies the skein relations satisfied by its graded Euler characteristic and 
proves Theorem~\ref{t:pretzel}. In Sections~\ref{s:Nbraket}--\ref{s:Npoly} we introduce equivariant MOY graphs, define 
the equivariant $N$--bracket,
and construct the equivariant $\mathfrak{sl}_N$ polynomials, proving their invariance.
The local relations satisfied by the equivariant $N$--bracket are established in Section~\ref{s:Nbraket},
with some of the more technical proofs deferred to Appendix~\ref{app:proofs}. Section~\ref{s:Pe} establishes existence 
and properties of the polynomial invariant $Q(L)$, introduces 
$P^e(L)$ and proves Theorems~\ref{t:main} and~\ref{t:congruence}. 
Finally, Section~\ref{s:comparison} compares $P^e(L)$ with 
the Lobb--Watson invariant and establishes Theorem~\ref{t:comparison}. 
In Appendix~\ref{app:tablePe} we list the values of $P^e$ for low--crossing
strongly invertible knots.

\section{The graded Euler characteristic of the Lobb-Watson invariant}\label{s:LW}

In this section we recall the Lobb–Watson refinement of Khovanov homology in the strongly involutive setting 
and establish equivariant skein relations satisfied by its graded Euler characteristic. 
These relations will later be generalized in the construction of our polynomial invariant. 
At the end of the section we prove Theorem~\ref{t:pretzel}. 
Throughout the section we will work in characteristic $2$, and hence all vector spaces will be vector spaces over the field with two elements~$\mathbb{F}_2$. 

In \cite{LW21}, Lobb and Watson introduce a refinement of Khovanov homology for links equipped with an involution. Their construction is general, but here we restrict to the strongly involutive case. The refinement consists of two spectral sequences converging to a triply graded object, ${\rm Kh}_{\tau}^{*,*,*}$. For strongly involutive links, almost every page of these spectral sequences is itself an invariant. We briefly recall their construction to establish notation, referring the reader to \cite{LW21} for details.

We start with the usual construction of Khovanov homology. 
Let $D$ be an oriented link diagram. 
Each crossing of $D$, regardless of orientation, admits two smoothings, 
labeled $0$ and~$1$, as shown in Figure~\ref{f:smoothings}.
\begin{figure}[ht]
\centering
\begin{subfigure}{0.28\textwidth}
\centering
\includegraphics[scale=0.5]{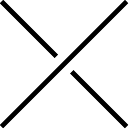}
\caption*{} 
\end{subfigure}
~
\begin{subfigure}{0.28\textwidth}
\centering
\includegraphics[scale=0.5]{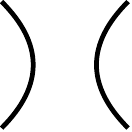}
\caption*{$0$} 
\end{subfigure}
~
\begin{subfigure}{0.28\textwidth}
\centering
\includegraphics[scale=0.5]{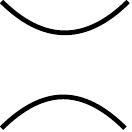}
\caption*{$1$} 
\end{subfigure}
\caption{A crossing and its two smoothings: the $0$-smoothing (centre) and the $1$-smoothing (right).}
\label{f:smoothings}
\end{figure}
A \emph{state} of $D$ is the set of circles in the plane obtained by choosing a smoothing for each crossing. 
The \emph{Khovanov chain complex} ${\rm CKh}(D)$ is the bi-graded $\mathbb{F}_2$-vector space spanned by the enhanced states of $D$, 
where an \emph{enhanced state} is a state whose circles have been labelled either by $x_{+}$ or by $x_{-}$.
Each enhanced state $s$ has a {\em homological grading}~$i(s)$ and a {\em quantum grading}~$j(s)$, which are defined as follows:
\[
i(s) = \vert\, s\, \vert - n_{-}(D)\qquad\qquad j(s) = h(s) + \vert\, s\, \vert + n_{+}(D) - 2 n_{-}(D)\,.
\]
Here, $\vert\, s\, \vert$ is the number of $1$-smoothings in the state underlying $s$; 
$h(s)$ is the number of circles labelled $x_{+}$ minus the number of circles labelled $x_{-}$; 
and $n_{+}(D)$ (resp.~$n_{-}(D)$) denotes the number of positive (resp.~negative) crossings of $D$. 
The Khovanov differential $\partial_{\rm Kh}$ increases the homological degree by~$1$ while preserving the quantum degree, 
thereby yielding a bi-graded homology.

If $D$ is a strongly involutive diagram, the involution $\tau$ acts on the set of enhanced states preserving both homological and quantum degrees. 
Furthermore, $\tau$ commutes with the differential. 
It follows that $\partial_{\tau} = {\rm id} + \tau$ and $\partial_{\rm Kh}$ are a pair of commuting differentials. 
The total complex $({\rm CKh}(D), \partial = \partial_{\rm Kh} + \partial_{\tau})$ is graded with respect to the quantum grading $j$, 
and filtered with respect to both the Khovanov homological grading $i$- and the $k${\em -grading} -- which is defined on an enhanced state $s$ as follows:
\[
k(s) = \vert\,s\, \vert^\mathrm{on} - n_-^\mathrm{on}(D) 
        + \tfrac{1}{2}\left(\, \vert\, s\, \vert^\mathrm{off} - n_{-}^\mathrm{off}(D)\, \right)\in \mathbb{Z}\,.
\]
Here the superscripts ``$\mathrm{on}$'' and ``$\mathrm{off}$'' indicate that we consider only the contributions of crossings on the axis and off the axis, respectively. 
The formula for the $k$-grading given here is a simplified version that applies to strongly involutive links; 
compare with~\cite[Definition~3.7 and Remark~3.8]{LW21}.

The two filtrations -- $\mathcal{F}$, with respect to $i$, and $\mathcal{G}$, with respect to $k$ -- 
give rise to spectral sequences converging to the associated graded objects in homology. 
Taking the associated graded object with respect to both filtrations simultaneously, together with the quantum grading, 
yields the triple grading on 
\[
{\rm Kh}_{\tau}(D) = {\rm H}({\rm CKh}(D), \partial)\,.
\]
The main result of~\cite{LW21} may be stated as follows:

\begin{thm}
The triply graded $\mathbb{F}_{2}$-vector space ${\rm Kh}_{\tau}^{*,*,*}$ 
and the bi-graded $\mathbb{F}_{2}$-vector spaces given by the pages $(E_{\mathcal{F}}^{p})^{*,*}$ ($p>1$) 
and $(E_{\mathcal{G}}^{q})^{*,*}$ ($q>2$) are invariants of strongly involutive links. 
\end{thm}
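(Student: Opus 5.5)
The invariance statement is a filtered-complex argument. It is the Lobb--Watson theorem, so the plan follows \cite{LW21}.

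\emph{Step 1: reduce to moves.} Any two transvergent strongly involutive diagrams of equivalent strongly involutive links are related by a finite sequence of equivariant Reidemeister moves. These come in two kinds:
\begin{itemize}
\item pairs of ordinary Reidemeister moves performed symmetrically off the axis;
\item the special moves supported on the axis: the on-axis R1, the R2 with both crossings on the axis, the mixed R2 moves, and the equivariant R3 moves, i.e.\ the IR moves depicted above.
\end{itemize}
Every such move $D\to D'$ must induce a map of total complexes $({\rm CKh}(D),\partial)\to({\rm CKh}(D'),\partial)$ that preserves $j$ and is filtered with respect to $i$ and to $k$, up to the global shifts already built into the normalisations by $n_\pm$ and $n_\pm^{\mathrm{on}}$, $n_\pm^{\mathrm{off}}$. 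It must also be a filtered chain homotopy equivalence on the relevant pages.

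\emph{Step 2: build the maps.} For each move, use the standard Bar-Natan/Khovanov Gaussian-elimination homotopy equivalences. For off-axis moves, apply them simultaneously to the two symmetric copies, which makes them $\tau$-equivariant. They therefore commute with $\partial_\tau={\rm id}+\tau$, and hence with $\partial$. These maps preserve $i$ and $j$ exactly. An off-axis pair changes $|s|^{\mathrm{off}}$ and $n_-^{\mathrm{off}}$ by even amounts, so $k$ is preserved as well. Consequently they induce isomorphisms from $E^1$ on for both filtrations, and in fact of the triply graded homology.

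\emph{Step 3: the on-axis moves, the main obstacle.} Here the local cancellation is not $\tau$-equivariant, since $\tau$ acts nontrivially on the local generators. Gaussian elimination must then be done with respect to the total differential $\partial$, where the cancelled pieces are acyclic for $\partial_{\rm Kh}+\partial_\tau$ only after a correction. The induced map is a filtered map that is an isomorphism only on a later page: $\mathcal{F}$-filtered maps become isomorphisms from $E^2_{\mathcal F}$, and $\mathcal{G}$-filtered maps only from $E^3_{\mathcal G}$. The on-axis R2 and R3 change $k$ by half-integer contributions of the off-axis crossings, so the comparison map has filtration shift $1$ in $k$ (respectively $1$ in $i$ for on-axis R1). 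The standard lemma is then the key input: a filtered map whose associated graded is an isomorphism after $r$ pages induces isomorphisms on all later pages and on the associated graded of the limit.

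To verify the required page index, compute the local complexes explicitly as small two-step complexes over $\mathbb{F}_2[\tau]$-modules. Then check that the $\partial_\tau$-part kills the extra generators exactly at page $2$ for $\mathcal F$ and page $3$ for $\mathcal G$. This bookkeeping of filtration shifts for the IR2 and IR3 moves is where I expect the real work to lie; I would first carry it out for IR2 and derive IR3 by a cone/braid-like argument from it.

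Finally, since all spectral sequences converge to the associated graded of ${\rm H}({\rm CKh}(D),\partial)$ and the maps are filtered quasi-isomorphisms for both filtrations, the triply graded ${\rm Kh}_\tau$ is also invariant.
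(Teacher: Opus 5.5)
\textbf{Verdict: genuine gap.} The paper gives no proof of this statement; it quotes it as the main result of \cite{LW21}. Measured against what a proof requires, your Step~2 makes a false claim and Step~3 only restates what must be proved.

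\textbf{Step 2 is false.} You conclude that the symmetric off-axis moves induce isomorphisms ``from $E^1$ on for both filtrations''. (In \cite{LW21} and in this paper these are IR1--IR3; the moves touching the axis are R1, R2, M1--M3, so your labels are swapped.)
\begin{itemize}
\item Equivariance of $f$ and $g$ only makes them morphisms of spectral sequences. To get isomorphisms on early pages you would need equivariant homotopies, and simultaneous Gaussian elimination does not produce them.
\item Split each local complex as $A\oplus K$ with $K$ contractible. The pair then contains the summand $K\otimes\tau(K)$ with the swap action. Over $\mathbb{F}_2$ this is contractible but not equivariantly contractible.
\item The diagonal classes $x\otimes x$ and $y\otimes y$ survive in $(\mathrm{id}+\tau)$-homology. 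They are cancelled only by the simultaneous change at $c$ and $\tau(c)$, i.e.\ by the differential of $E^2_{\mathcal G}$.
\item Concretely, an IR1 move on the crossingless invariant unknot raises the number of $\tau$-invariant enhanced states from $2$ to $6$. So the complex spanned by $\mathcal S^e(D)$, which the paper identifies with $E^2_{\mathcal G}$, is not an invariant. This is where the bound $q>2$ comes from.
\end{itemize}

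\textbf{The premise of Step 3 is backwards.} In the on-axis moves R1 and R2 the delooped circle is itself $\tau$-invariant and $\tau$ fixes its labels. That cancellation can therefore be done equivariantly, so these are not where the difficulty lies. The content of Step~3 (``check that the extra generators die exactly at page $2$ for $\mathcal F$ and page $3$ for $\mathcal G$'') is the theorem itself, not an argument for it.

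\textbf{What is missing is the mechanism behind the two page bounds.}
\begin{itemize}
\item For $\mathcal F$ the argument is short. $E^2_{\mathcal F}$ is Khovanov homology, as the paper itself uses in Section~\ref{s:comparison}. So it suffices that each move admits a $\tau$-equivariant, grading-preserving chain map that is a non-equivariant quasi-isomorphism. The comparison theorem then gives every $E^p_{\mathcal F}$ with $p\ge 2$, and ${\rm Kh}_\tau$ with its $\mathcal F$-filtration. No equivariant homotopies are needed.
\item For $\mathcal G$ you must prove that these maps induce quasi-isomorphisms of the invariant-state complexes $E^2_{\mathcal G}$. This is needed for the IR moves and for the mixed moves M1--M3.
\end{itemize}
One way to do the $\mathcal G$ part is to split the local complex equivariantly into three kinds of summands:
\begin{itemize}
\item the part that survives the cancellation;
\item induced (free) summands, which contribute nothing to $(\mathrm{id}+\tau)$-homology;
\item diagonal summands such as $K\otimes\tau(K)$, whose invariant part is acyclic for the $E^2_{\mathcal G}$-differential.
\end{itemize}
Throughout, you must also control the $k$-filtration under the changes of basis. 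Only after this does the comparison lemma give invariance of $E^q_{\mathcal G}$ for $q\ge 3$, and hence of the triply graded ${\rm Kh}_\tau$.
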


We will be primarily concerned with 
$(E_{\mathcal{G}}^{3})^{*,*} = \bigoplus_{k,j} (E_{\mathcal{G}}^{3})^{k,j}$, 
the third page of the spectral sequence associated with the $k$-filtration $\mathcal{G}$, 
and in particular with its graded Euler characteristic:
\[
J^{e}(L)(q) := \sum_{k,\, j\in \mathbb{Z}} (-1)^k 
\dim_{\mathbb{F}_2}\!\left((E_{\mathcal{G}}^{3})^{k,j}(L)\right)\, q^j 
\in \mathbb{Z}[q^{\pm 1}]\,.
\]
According to \cite[Remark~4.6]{LW21}, the bi-graded vector space $(E_{\mathcal{G}}^{3})^{*,*}$ 
is equivalent to the invariant introduced by Couture in \cite{Co09} using the language of signed divides. 
It follows that its graded Euler characteristic $J^e(L)$ is essentially the polynomial invariant defined by Couture in \cite{Co08}.

The page $E^2_{\mathcal{G}}(D)$ is homotopy equivalent to a bi-graded chain complex spanned by 
$\tau$-invariant enhanced states, with a differential that admits a simple combinatorial description 
(see \cite[Section~4.3]{LW21}). 
Consequently, $J^e(L)$ can be written explicitly as
\[
J^e(L)(q) = \sum_{s\in \mathcal{S}^e(D)} (-1)^{k(s)} q^{j(s)}\,,
\]
where $\mathcal{S}^e(D)$ denotes the set of all $\tau$-equivariant enhanced states of $D$. 
From the definitions of the $j$- and $k$-gradings, it follows that $J^e(L)$ is invariant under reversal 
of the orientation of $D$, and thus of $L$. 

The main result of this section is that the polynomial $J^e(L)$ satisfies the following properties:
\begin{gather}
q^{2}J^e(\negcrossingonaxis) - q^{-2}J^e(\poscrossingonaxis) = (q -q^{-1})J^e(\orientedresolutiononaxis) \label{eq:skeinJoneson}\\
q^4J^e(\negcrossingoffaxis) - q^{-4}J^e(\poscrossingoffaxis) = (q^2-q^{-2})J^e(\orientedresolutionoffaxis) \label{eq:skeinJonesoff}\\
J^e(\Claspa) = q^{2}(q+q^{-1})J^e(\poscrossingonaxis) - q^4 J^e(\orientedresolutiononaxis) \label{eq:skeinJonesclaspa}\\
J^e(\acirc{}) = q+q^{-1} \quad\text{and}\quad J^e(\acircs{}) =q^2 + q^{-2}\label{eq:skeinJonesnorm}\\
J^e(L\sqcup_e L') = J^e(L)J^e(L')\label{eq:skeinJonesmolt}\\
J^e(L^m)(q) = J^e(L)(q^{-1})\label{eq:skeinJonesmirr}
\end{gather}
where $L^m$ is the equivariant mirror image of $L$ and $\sqcup_{e}$ denotes the equivariant disjoint union.

\begin{prop}\label{p:skeinLWCpoly}
The invariant $J^e(L)\in \mathbb{Z}[q^{\pm 1}]$ satisfies Properties~\eqref{eq:skeinJoneson}--\eqref{eq:skeinJonesmirr}. 
\end{prop}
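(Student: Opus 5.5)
Throughout I use the state-sum formula
\[
J^e(L)(q)=\sum_{s\in\mathcal S^e(D)}(-1)^{k(s)}q^{j(s)},
\]
which holds by the homotopy equivalence between $E^2_{\mathcal G}(D)$ and the complex of $\tau$-invariant enhanced states. With it, each property becomes a local bookkeeping statement about equivariant enhanced states. An enhanced state is $\tau$-invariant exactly when its smoothing is $\tau$-symmetric, i.e.\ paired off-axis crossings are smoothed alike, and its labelling is $\tau$-invariant. So $\mathcal S^e(D)$ decomposes according to the choice of smoothing at each on-axis crossing and at each $\tau$-orbit of off-axis crossings. In every skein triple I will fix the smoothings at all other crossings and orbits, and compare the contributions of the partial states that agree outside the local picture.

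\textbf{Properties \eqref{eq:skeinJonesnorm}, \eqref{eq:skeinJonesmolt} and \eqref{eq:skeinJonesmirr}.} These come first because they are the easiest.
\begin{itemize}[leftmargin=*]
\item For the normalisation, the invariant circle carries two invariant labellings, giving $q+q^{-1}$. For the pair of circles exchanged by $\tau$, an invariant labelling must give both circles the same label, so the contributions are $q^{\pm 2}$.
\item Multiplicativity holds because equivariant states of a split diagram are pairs of equivariant states, and $j$, $k$ are additive.
\item For the mirror, swapping $0$- and $1$-smoothings and $x_\pm$ sends $(|s|,h)\mapsto(n-|s|,-h)$ and $n_\pm\mapsto n_\mp$. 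This negates $j$ and $k$ and preserves the parity of $k$, because $k$ is an integer.
\end{itemize}

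\textbf{Relations \eqref{eq:skeinJoneson} and \eqref{eq:skeinJonesoff}.} Here I follow the usual derivation of the Jones skein relation from the Kauffman bracket. I write the unnormalised equivariant bracket $\langle D\rangle$, with $J^e=(\text{sign})\,q^{\text{shift}}\langle D\rangle$.
\begin{itemize}[leftmargin=*]
\item For an on-axis crossing, $\langle\times\rangle=\langle 0\rangle-q\langle 1\rangle$, since an on-axis crossing changes $k$ by $1$.
\item For an off-axis orbit the two crossings are resolved together, and $k$ changes by $\tfrac12\cdot 2=1$. This gives $\langle\times\times\rangle=\langle 00\rangle-q^{2}\langle 11\rangle$, with no mixed terms since mixed smoothings are not $\tau$-symmetric.
\end{itemize}
Writing the positive and negative crossing versions and eliminating the unoriented resolution yields the skein relations. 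The oriented resolution is the $0$-smoothing for one sign and the $1$-smoothing for the other. The exponents $q^{\pm2}$ versus $q^{\pm4}$ and $(q-q^{-1})$ versus $(q^2-q^{-2})$ reflect the doubling of the writhe and $j$-shifts for an orbit. I will check the normalisation shifts $n_\pm$ and $n_\pm^{\rm on}$, $n_\pm^{\rm off}$ carefully, including the signs $(-1)^{n_-^{\rm on}+n_-^{\rm off}/2}$.

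\textbf{The clasp relation \eqref{eq:skeinJonesclaspa}.} I expect this to be the main obstacle. The clasp is two on-axis crossings, or an axis-symmetric pair, whose simultaneous resolution produces a local picture in which a component meets the axis. Here a smoothing may create a pair of exchanged arcs or a circle through the axis, and the enhanced-state count depends on how components close up.
\begin{enumerate}[leftmargin=*]
\item Expand the clasp bracket over its local equivariant smoothings.
\item Identify the non-oriented smoothing in which a small circle is exchanged or bubbled, contributing a factor $q^2+q^{-2}$ or $q+q^{-1}$.
\item Re-express all terms through $\langle$positive on-axis crossing$\rangle$ and $\langle$oriented resolution$\rangle$ using the on-axis bracket relation just proved.
\item Match the normalisation shifts, where the clasp's crossings have mixed $k$-weights, to reach $q^2(q+q^{-1})J^e(\cdot)-q^4J^e(\cdot)$.
\end{enumerate}
Throughout I will track parities of $k$ with care, since the constant $q^4$ and the sign must come out exactly.
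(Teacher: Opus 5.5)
Your proposal is correct and follows essentially the same route as the paper. Like the paper, you use the equivariant Kauffman bracket with its on-axis and off-axis resolution relations, read off the normalisation and multiplicativity directly, obtain the two skein relations by eliminating the unoriented resolution, and handle the mirror by swapping smoothings and labels (your computation even gives $k(\iota(s))=-k(s)$ exactly).

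For the clasp, settle the ambiguity you left open: the clasp $D_{++}$ is a $\tau$-symmetric pair of off-axis crossings. Its unoriented smoothing is the $1$-smoothing of an on-axis crossing together with a split $\tau$-invariant circle, contributing a factor $q+q^{-1}$. Writing $D_+$ for the positive on-axis crossing and $D_0$ for its oriented resolution, the on-axis relation then gives $\langle D_{++}\rangle^e = q(q+q^{-1})\langle D_+\rangle^e - q^2\langle D_0\rangle^e$. Combined with $\eta(D_{++})=q\,\eta(D_+)=q^2\,\eta(D_0)$, this finishes the proof exactly as in the paper.
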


To prove Proposition~\ref{p:skeinLWCpoly}, it is useful to introduce an analogue of the Kauffman bracket for $J^e$, 
together with some of its basic properties. 
We call this polynomial the \emph{equivariant Kauffman bracket}, and define it as follows:
\[
\langle D\rangle^e := \sum_{s\in \mathcal{S}^e(D)} 
(-1)^{\vert\, s\, \vert^\mathrm{on} + \tfrac{1}{2}\vert\, s\, \vert^\mathrm{off}}
q^{h(s) + \vert\, s\, \vert} 
= (-1)^{n_{-}^\mathrm{on}(D) + \tfrac{1}{2} n_{-}^\mathrm{off}(D)} 
   q^{2n_{-}(D)-n_{+}(D)} J^{e}(D)(q)\,.
\]
Note that the equivariant Kauffman bracket is defined for \emph{unoriented} involutive diagrams. 
We also remark that this bracket is closer in spirit to the modified Kauffman bracket appearing in Khovanov homology 
(cf.~\cite[Section~1.2]{Tu17}) than to the original Kauffman bracket. 

Before proceeding, we introduce some notation. A strongly involutive diagram $D$ is said to be \emph{split} if there exists an equivariant simple closed curve $\gamma$ in the plane that does not intersect $D$ and separates it into two nonempty parts. If $D'$ and $D''$ denote the two subdiagrams of $D$ determined by $\gamma$, then we say that $D$ is the \emph{equivariant disjoint union} of $D'$ and $D''$, and we write $D = D' \sqcup_e D''$. We now require the following lemma:

\begin{lem}\label{l:skeinbracketpoly}
The equivariant Kauffman bracket satisfies the following relations:
\begin{gather}
\langle \crossingonaxisa \rangle^e = \langle \zeroresonaxis \rangle^e - q\, \langle \oneresonaxis \rangle^e, 
\qquad 
\langle \crossingonaxisb \rangle^e = \langle \oneresonaxis \rangle^e - q\, \langle \zeroresonaxis \rangle^e, 
\label{eq:skeinbracketon} \\[0.5em]
\langle \doublecrossingoffa \rangle^e = \langle \doublezerores \rangle^e - q^2 \langle \doubleoneres \rangle^e, 
\qquad 
\langle \doublecrossingoffb \rangle^e = \langle \doubleoneres \rangle^e - q^2 \langle \doublezerores \rangle^e, 
\label{eq:skeinbracketoff} \\[0.5em]
\langle \acirc{} \rangle^e = q + q^{-1}, 
\qquad 
\langle \acircs{} \rangle^e = q^2 + q^{-2}, 
\label{eq:skeinbracketnorm} \\[0.5em]
\langle D \sqcup_e D' \rangle^e = \langle D \rangle^e \, \langle D' \rangle^e, 
\label{eq:skeinbracketunion}
\end{gather} 
\end{lem}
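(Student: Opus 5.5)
Every relation in the statement will be proved directly from the state-sum definition
\[
\langle D\rangle^e=\sum_{s\in\mathcal S^e(D)}(-1)^{\vert s\vert^{\mathrm{on}}+\frac12\vert s\vert^{\mathrm{off}}}q^{h(s)+\vert s\vert},
\]
by partitioning $\mathcal S^e(D)$ according to the smoothings chosen at the crossings in the local picture. The observation that organizes everything is this: an equivariant enhanced state is an enhanced state fixed by $\tau$. So at a crossing on the axis the smoothing is a free choice. Off-axis crossings come in $\tau$-exchanged pairs, and both crossings of a pair must receive the same smoothing, since $\tau$ maps a $0$-smoothing to a $0$-smoothing (it is orientation preserving on the plane of the diagram up to the standard identification used in \cite{LW21}). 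Hence $\vert s\vert^{\mathrm{off}}$ is even and the sign is well defined.

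\textbf{Crossing on the axis.} Split the sum over states of the diagram containing the crossing into those with the $0$-smoothing and those with the $1$-smoothing there. The equivariant states with the $0$-smoothing are exactly the equivariant states of the $0$-resolved diagram, with the same $h$, $\vert s\vert$ and sign. Those with the $1$-smoothing correspond to states of the $1$-resolved diagram with $\vert s\vert$ increased by one and $\vert s\vert^{\mathrm{on}}$ increased by one. This produces the factor $-q$, giving the first relation in \eqref{eq:skeinbracketon}. The second relation is the same computation with the roles of the two smoothings exchanged, because the other crossing type has its $0$- and $1$-smoothings swapped.

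\textbf{Pair of crossings off the axis.} Only the two equivariant choices $00$ and $11$ occur. Passing from $00$ to $11$ raises $\vert s\vert$ by $2$ and $\tfrac12\vert s\vert^{\mathrm{off}}$ by $1$, so the contribution acquires the factor $-q^2$. This gives \eqref{eq:skeinbracketoff}.

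\textbf{Normalizations and disjoint union.} A single circle meeting the axis is $\tau$-invariant, so both labels $x_\pm$ are equivariant, giving $q+q^{-1}$. For the two circles swapped by $\tau$, equivariance forces them to carry equal labels, giving $q^2+q^{-2}$. For $D\sqcup_e D'$, equivariant states are exactly pairs of equivariant states, because $\gamma$ is invariant and $\tau$ preserves each side. The quantities $h$, $\vert s\vert^{\mathrm{on}}$ and $\vert s\vert^{\mathrm{off}}$ are additive over the two sides, so the sum factors, which is \eqref{eq:skeinbracketunion}.

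The only delicate point is the claim used throughout: that $\tau$ preserves smoothing labels, so equivariant states at paired crossings are exactly $00$ or $11$, and that the $0/1$ labelling of the pictured crossings matches the figures. This should be checked carefully against the conventions of Figure~\ref{f:smoothings} and \cite[Section~4.3]{LW21}. Everything else is routine bookkeeping.
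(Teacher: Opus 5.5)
Your proposal is correct and follows essentially the same route as the paper. You split the equivariant states by the smoothing at the on-axis crossing (or the symmetric pair), track the shifts in $\vert s\vert$, $\vert s\vert^{\mathrm{on}}$, $\vert s\vert^{\mathrm{off}}$ and $h$, read the normalisations off the definition, and factor the sum via $\mathcal{S}^e(D\sqcup_e D')=\mathcal{S}^e(D)\times\mathcal{S}^e(D')$.

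One small correction to your parenthetical justification. On the diagram plane $\tau$ acts by the reflection $(x,y)\mapsto(x,-y)$, which is orientation-reversing, but it also exchanges over- and under-strands. The two effects cancel; equivalently, $\tau$ is an orientation-preserving rotation of $\mathbb{R}^3$. That is the actual reason smoothing types are preserved and paired off-axis crossings carry the same smoothing.
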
 
\begin{proof}
We start with Equation~\eqref{eq:skeinbracketon}. 
Let $D$ be an unoriented involutive diagram, and let $c$ be a crossing of $D$ on the axis. 
Denote by $D_0$ and $D_1$ the diagrams obtained by performing a $0$- and a $1$-smoothing on~$c$, respectively. 
There are obvious maps $\calS^e(D_0)\to\calS^e(D)$ and $\calS^e(D_1)\to\calS(D)$, yielding a bijection
\[
\varphi:\mathcal{S}^e(D_0)\sqcup \mathcal{S}^e(D_1) \longrightarrow \mathcal{S}^e(D)\,.
\]
It is straightforward to check that for any $s\in \mathcal{S}^e(D_*)$, and $*\in \{0,1\}$, we have~$\vert\varphi(s)\vert = \vert s\vert + *$, 
$\vert\varphi(s)\vert^\mathrm{on} = \vert s\vert^\mathrm{on} + *$, 
$\vert\varphi(s)\vert^\mathrm{off} = \vert s\vert^\mathrm{off}$  
and $h(\varphi(s))=h(s)$. 
It follows that
\begin{align*}
\langle D\rangle^e 
&= \sum_{s\in \mathcal{S}^e(D)} 
   (-1)^{\vert\, s\, \vert^\mathrm{on} + \tfrac{1}{2}\vert\, s\, \vert^\mathrm{off}}
   q^{h(s) + \vert\, s\, \vert} \\[0.5em]
&= \sum_{s_0\in \mathcal{S}^e(D_0)} 
   (-1)^{\vert\, \varphi(s_0)\, \vert^\mathrm{on} + \tfrac{1}{2}\vert\, \varphi(s_0)\, \vert^\mathrm{off}}
   q^{h(\varphi(s_0)) + \vert\, \varphi(s_0)\,\vert} \\[0.3em]
&\quad + \sum_{s_1\in \mathcal{S}^e(D_1)} 
   (-1)^{\vert\, \varphi(s_1)\, \vert^\mathrm{on} + \tfrac{1}{2}\vert\, \varphi(s_1)\, \vert^\mathrm{off}}
   q^{h(\varphi(s_1)) + \vert\, \varphi(s_1)\,\vert} \\[0.5em]
&= \langle D_0\rangle^e - q\, \langle D_1\rangle^e,
\end{align*}
as desired. 
The proof of Equation~\eqref{eq:skeinbracketoff} is entirely analogous, while 
Equation~\eqref{eq:skeinbracketnorm} is a direct consequence of the definition. 
Finally, Equation~\eqref{eq:skeinbracketunion} follows easily using the natural 
identification of $\calS^e(D\sqcup_e D')$ with $\calS^e(D)\times\calS^e(D')$. 
\end{proof}

\begin{proof}[Proof of Proposition~\ref{p:skeinLWCpoly}]
Equalities~\eqref{eq:skeinJoneson} and~\eqref{eq:skeinJonesoff} are immediate consequences 
of the definitions together with Equations~\eqref{eq:skeinbracketon} and~\eqref{eq:skeinbracketoff}, respectively. 

Let $D_{++}$, $D_{+}$, and $D_{0}$ be the three strongly involutive diagrams, differing only in a small ball near the axis, 
involved in Equation~\eqref{eq:skeinJonesclaspa} (see Figure~\ref{fig:skeinclaspcoerent}). 
Given an oriented strongly involutive diagram $D$, set
\[
\eta(D) := (-1)^{n_{-}^\mathrm{on}(D) + \tfrac{1}{2}n_{-}^\mathrm{off}(D)} 
            q^{\,n_{+}(D)\, -\, 2n_{-}(D)}\,,
\]
and observe that $\eta(D_{++}) = q\,\eta(D_{+}) = q^2 \eta(D_{0})$. 
The proof of Equation~\eqref{eq:skeinJonesclaspa} is then a straightforward computation:
\begin{align*} 
J^e(D_{++}) 
&= \eta(D_{++}) \langle D_{++} \rangle^e \\[0.3em]
&\overset{(\star)}{=} \eta(D_{++})
   \bigl(\langle \zeroresonaxis \rangle^e - q^2(q+q^{-1})\, \langle \oneresonaxis \rangle^e\bigr) \\[0.3em]
&= \eta(D_{++})
   \bigl(q(q+q^{-1}) \langle \crossingonaxisa \rangle^e 
        - q^2 \langle \zeroresonaxis \rangle^e \bigr) \\[0.3em]
&= q^2(q+q^{-1}) J^e(D_{+}) - q^4 J^e(D_{0})\,.
\end{align*}
In the equality marked $(\star)$ we used 
\eqref{eq:skeinbracketnorm}, \eqref{eq:skeinbracketoff}, and \eqref{eq:skeinbracketunion}, 
while the subsequent equality is a consequence of \eqref{eq:skeinbracketon}. 
Equation~\eqref{eq:skeinJonesmolt} follows immediately from \eqref{eq:skeinbracketunion}.

It remains to prove Equation~\eqref{eq:skeinJonesmirr}. 
To each enhanced state $s$ of $D$, we associate an enhanced state $\iota(s)$ of $D^m$. If we interpret each $i$-smoothed crossing in $s$ as an $(1-i)$-smoothed crossing in $\iota(s)$, the circles in the underlying states of $s$ and $\iota(s)$ can be naturally identified. 
The labels on the circles in $s$ are replaced with complementary labels in $\iota(s)$. More precisely, every circle labelled by $x_\pm$ in $s$ is labelled by $x_\mp$ in $\iota(s)$. It is clear that $\iota$ is a bijection. 
Moreover, for each $\tau$-invariant enhanced state $s$, we have
\[
j(s) = - h(\iota(s)) + n - \vert \iota(s)\vert + n_{-}(D^m) - 2n_{+}(D^m)  
     = -\, j(\iota(s))\,,
\]
and
\[
k(s) = n^\mathrm{on}(D^m) - \vert \iota(s)\vert^\mathrm{on} - n_{+}^\mathrm{on}(D^m) 
       + \tfrac{1}{2}\bigl(n^\mathrm{off}(D^m) - \vert \iota(s)\vert^\mathrm{off} - n_{+}^\mathrm{off}(D^m)\bigr) 
       \equiv k(\iota(s)) \pmod{2}.
\]
Now, Equation~\eqref{eq:skeinJonesmirr} follows directly from the state-sum definition of $J^e$.
\end{proof}

The invariant $J^e$ is quite efficient in distinguishing symmetries. For instance, $J^e$ can be used to differentiate all strongly invertible prime knots with less than $8$ crossings -- as it can be easily deduced from the tables in  \cite[Section~6.5]{LW21}. As an application of Proposition~\ref{p:skeinLWCpoly} we now prove Theorem~\ref{t:pretzel}.

\begin{proof}[Proof of Theorem~\ref{t:pretzel}]
By \cite[Corollary~1.4~b)]{BZ87}, the knot $K_n := K(5,3,3,2 n, 5)$ has symmetry group isomorphic to 
$\mathbb{Z}/2\mathbb{Z}$. Therefore, this group must be generated by the strong involution $\tau_n$ of $K_n$ with 
axis of fixed points shown in the diagram on the left-hand side of Figure~\ref{f:pretzel}.
\begin{figure}[ht]
\includegraphics[scale=0.5]{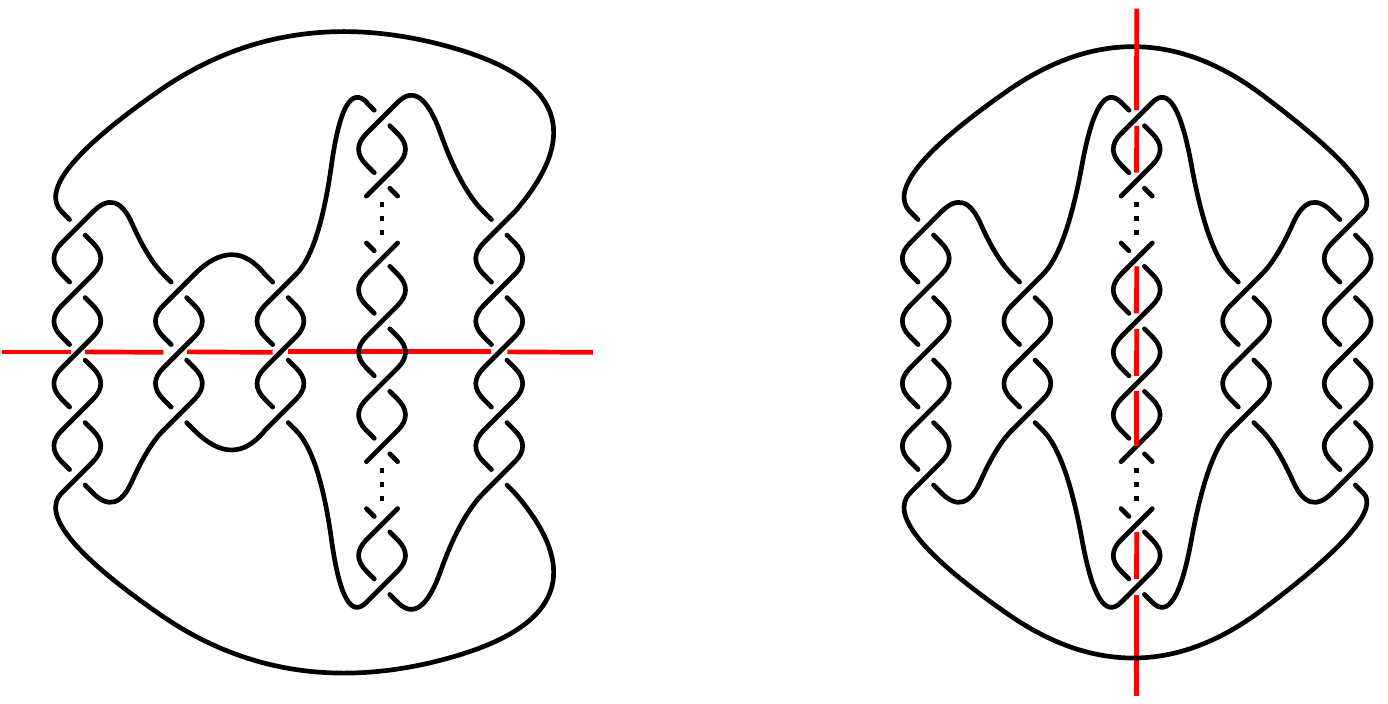}
\caption{The strongly invertible knots $(K_n,\tau_n)$ and $(K'_n,\tau'_n)$}
\label{f:pretzel}
\end{figure}
It follows that $\tau_n$ is the only strong inversion of $K_n$ up to equivalence. Now consider the strong 
involution $\tau'_n$ of $K'_n := K(5,3,2 n,3,5)$ with axis of fixed points shown in the diagram on the right-hand side of Figure~\ref{f:pretzel}. To prove the first part of the statement it suffices to show that 
$J^e(K_n,\tau_n)\neq J^e(K'_n,\tau'_n)$ for every sufficiently large $n$. 
Skein Relation~\eqref{eq:skeinJonesoff} applied to one of the $n$ symmetric pairs of (negative) crossings  
shows that, setting $R_n := J^e(K_n,\tau_n)$, we have 
\[
q^4 R_n - q^{-4} R_{n-2} = (q^2 - q^{-2})\, R_{n-1}.
\]
Thus, $R_n$ satisfies the homogeneous, second-order difference equation
\begin{equation}\label{e:difference}
x_n - q^{-4} (q^2 - q^{-2})\, x_{n-1} - q^{-8} x_{n-2} = 0.
\end{equation}
Each sequence $\{x_n\}_{n\geq 0}$ is clearly determined by the initial values $x_0$ and $x_1$, and moreover an explicit expression for $x_n$ in terms of $x_0$ and $x_1$ can be found with the following well-known procedure -- see, for instance,~\cite[Section 2.3]{El05}. 
The characteristic roots of Equation~\eqref{e:difference} are 
\[
r_{\pm} = \frac{q^2 - q^{-2} \pm (q^2 + q^{-2} )}{q^4} = 
\pm q^{-4 \pm 2}\,
\]
and the general solution of Equation~\eqref{e:difference} is of the form 
$R_n = A q^{-2n} + (-1)^n B q^{-6n}$, 
where $(A,B)$ is the unique solution of the system 
\[
\begin{cases}
	A + B &= R_0\\
	A q^{-2} - B q^{-6}  &= R_1\,.
\end{cases}
\]
By Skein Relation~\eqref{eq:skeinJoneson}, $R'_n := J^e(K'_n,\tau'_n)$ satisfies 
$q^2 R'_n - q^{-2} R'_{n-2} = (q - q^{-1})\, R'_{n-1}$,  
and arguing as above we conclude $R'_n = A' q^{-n} + (-1)^n B' q^{-3n}$,
where $(A',B')$ is the unique solution of  
\[
\begin{cases}
A' + B'  &= R'_0\\
A' q^{-1} + B' q^{-3}  &= R'_1\,.
\end{cases}
\]
Since the rational functions $A$, $B$, $A'$, $B'$ are independent of $n$, if at least one of them is nonzero it follows that $R_n\neq R'_n$ for every sufficiently large $n$. Hence, to finish the proof it suffices to check that 
$R'_0 = J^e(K'_0,\tau_0)\neq 0$. This is an easy exercise which is left to the reader -- cf.~Proposition~\ref{p:Pneofverticalsums}. 
\end{proof}

\section{The Equivariant \texorpdfstring{$N$}{N}-Bracket}\label{s:Nbraket}

In~\cite{MOY98}, Murakami, Ohtsuki, and Yamada introduced a definition of the HOMFLY–PT polynomial based on invariants of certain plane graphs. In this section, we adapt the MOY graph formalism to the strongly involutive setting by introducing equivariant MOY graphs and defining the corresponding equivariant bracket. 

A \emph{strongly involutive MOY graph} is an oriented, trivalent graph $\Gamma$ embedded in $\mathbb{R}^2$ such that:
\begin{enumerate}[label =(\roman*)]
    \item each vertex has at least one incoming and one outgoing edge;
    \item if an edge is the unique incoming edge at one of its vertices, then it is also the unique outgoing edge at its other vertex;
    \item the reflection $(x,y) \mapsto (x,-y)$ maps $\Gamma$ to itself while reversing the orientation of all edges.
\end{enumerate}
We allow $\Gamma$ to have a finite number of closed components without vertices, called \emph{loops}, which still satisfy condition~(iii).

The set $E(\Gamma)$ of edges of $\Gamma$ contains a distinguished subset $TE(\Gamma) \subset E(\Gamma)$ consisting of those edges $e \in E(\Gamma)$ that are the unique incoming edge at one of their vertices - and hence the unique outgoing edge at the other. These edges are called \emph{thick edges} and will be depicted in red, non-thick edges, depicted in black, will be called \emph{thin}; see Figure~\ref{f:thickedges}.
\begin{figure}[ht]
\centering
\includegraphics[scale=0.4]{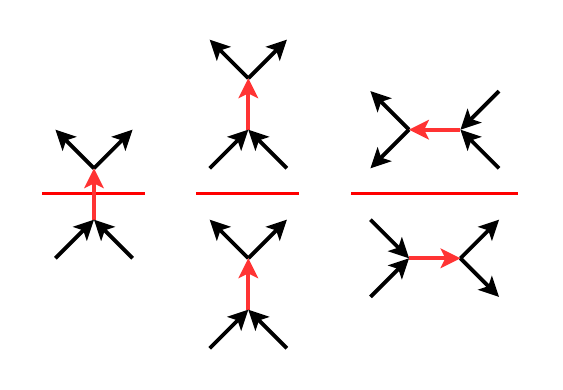}
\caption{Examples of thick edges in the equivariant setting}
\label{f:thickedges}
\end{figure}

We now define the notion of state of $\Gamma$. Fix an integer $N \geq 2$ and set
\[
X_N = \{-N+1, -N+3, \ldots, N-3, N-1\}.
\]
An \emph{equivariant $N$-state} $\sigma$ is an assignment of a subset $A \subset X_N$ to each edge $e \in E(\Gamma)$, subject to the following conditions:
\begin{itemize}
    \item if $e \in TE(\Gamma)$, then $|\sigma(e)| = 2$;
    \item if $e \in E(\Gamma) \setminus TE(\Gamma)$, then $|\sigma(e)| = 1$;
    \item if $e$ is the only thick edge at a vertex $v$, with corresponding outgoing thin edges $e_1$ and~$e_2$, then $\sigma(e) = \{\sigma(e_1), \sigma(e_2)\}$;
    \item if $e$ and $e'$ are exchanged by the reflection $(x,y) \mapsto (x,-y)$, then $\sigma(e) = \sigma(e')$.
\end{itemize}

The set of equivariant $N$-states of $\Gamma$ is denoted by $\mathcal{S}_N^e(\Gamma)$, or simply $\mathcal{S}_N^e$ when $\Gamma$ is understood. Figure~\ref{f:statexample} shows an example of an equivariant state in $\mathcal{S}_4^e(\Gamma)$.
\begin{figure}[ht]
\centering
\begin{overpic}[scale=0.4,tics=10]{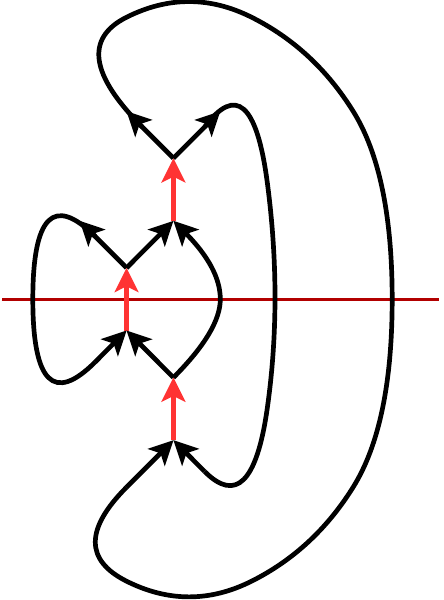}
\put(5,65){$\scriptstyle 3$}
\put(10,10){$\scriptstyle 1$}
\put(17,34){$\scriptstyle -1$}
\put(18,63){$\scriptstyle -1$}
\put(37,42){$\scriptstyle 1$}
\put(36,14){$\scriptstyle -1$}
\end{overpic}
\caption{An equivariant state of $\calS_4^e(\Ga)$}
\label{f:statexample}
\end{figure}

The \emph{equivariant $N$-bracket} $\langle\Ga\rangle^e_N$ is defined by the sum 
\[
\langle\Ga\rangle^e_N = \sum_{\si\in\calS_N^e}\left(\prod_{v\in V(\Ga)}\wt(v,\si)\right)q^{\rot(\si)}, 
\]
where $V(\Ga)$ is the set of vertices of $\Ga$ and $\wt(v,\si)$, $\rot(\si)$ are, respectively, 
the \emph{weight} associated to $v$ and $\si$ and the \emph{rotation} of $\si$ defined 
in~\cite{MOY98} as follows. The weight of $v$ and $\si$ is given by 
\[
\wt(v,\si) = q^{\frac{1}{2}\sgn(x_2 - x_1)} = 
\begin{cases}
q^{1/2}  & \text{if $x_1 < x_2$}\\
q^{-1/2}  &   \text{if $x_1 > x_2$}
\end{cases}, 
\]  
where $q$ is a formal variable and the outgoing/incoming edges are colored as in Figure~\ref{f:colorededges}.
\begin{figure}[ht]
\centering
\begin{overpic}[abs,unit=1pt,scale=0.5]{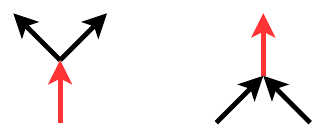}
\put(0,20){$\scriptstyle x_1$}
\put(22,20){$\scriptstyle x_2$}
\put(45,8){$\scriptstyle x_1$}
\put(73,8){$\scriptstyle x_2$}
\end{overpic}
\caption{Coloring of the outgoing/incoming edges at a vertex}
\label{f:colorededges}
\end{figure}
In order to define $\rot(\si)$ observe that at each thick edge we must have one of the configurations of Figure~\ref{f:configurations} (left). 
\begin{figure}[ht]
\centering
\begin{overpic}[abs,unit=1pt,scale=0.5]{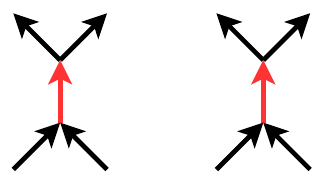}
\put(-3,10){$\scriptstyle x_1$}
\put(22,10){$\scriptstyle x_2$}
\put(-3,30){$\scriptstyle x_1$}
\put(22,30){$\scriptstyle x_2$}
\put(47,10){$\scriptstyle x_1$}
\put(71,10){$\scriptstyle x_2$}
\put(47,30){$\scriptstyle x_2$}
\put(71,30){$\scriptstyle x_1$}
\end{overpic}
\hspace{2cm}
\begin{overpic}[abs,unit=1pt,scale=0.5]{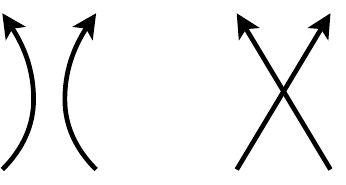}
\put(-3,20){$\scriptstyle x_1$}
\put(20,20){$\scriptstyle x_2$}
\put(50,10){$\scriptstyle x_1$}
\put(78,10){$\scriptstyle x_2$}
\end{overpic}
\caption{Configurations of the outgoing and incoming edges at a vertex}
\label{f:configurations}
\end{figure}
Replacing each of these configurations with the corresponding configuration on the right of 
Figure~\ref{f:configurations}, we obtain a finite set of simple closed curves 
$C$. Each curve $\gamma$ in $C$ is embedded,  naturally oriented and labelled by a color $x(\gamma)\in X_N$. Furthermore, the labelling of $C$ is invariant under the reflection of $\bbR^2$ across the $x$-axis -- see Figure~\ref{f:curves}.
\begin{figure}[ht]
\begin{overpic}[scale=0.4,tics=10]{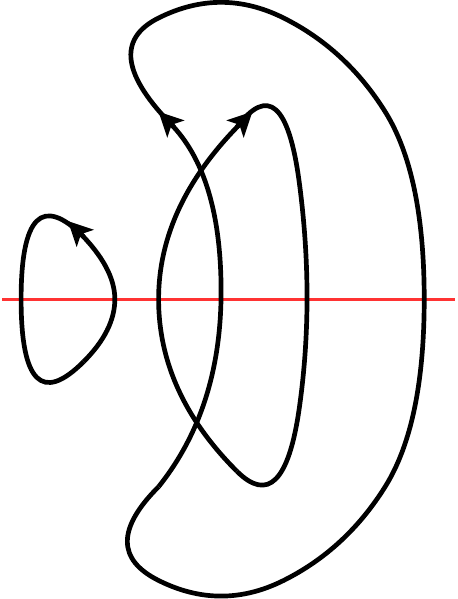}
\put(4,65){$\scriptstyle 3$}
\put(16,10){$\scriptstyle 1$}
\put(53,52){$\scriptstyle -1$}
\end{overpic}
\caption{Colored simple closed curves associated to the state in Figure~\ref{f:statexample}}
\label{f:curves}
\end{figure}
Then, the rotation number $\rot(\si)$ is defined by the formula 
\[
\rot(\si) = \sum_{\gamma\in C} x(\gamma) \rot(\gamma),
\]
where $\rot(\gamma)$ is $1$ if $\gamma$ is oriented clockwise and $-1$ otherwise.  

\medskip
\noindent\textbf{Properties of the equivariant $N$-bracket.}
Now we analyze the equivariant $N$-bracket and establish the relations it satisfies, 
preparing the ground for the construction of equivariant $\mathfrak{sl}_N$ polynomial 
invariants in the next section. 
The proofs of Lemmas~\ref{l:bubbles}--\ref{l:M1identity} are combinatorial and somewhat lengthy, and are therefore deferred to Appendix~\ref{app:proofs}.

In what follows, diagrams are identical outside the angle brackets $\langle\,\cdot\,\rangle_N^e$, and each equality remains valid if all diagram orientations are simultaneously reversed. Set 
\[
[N] := \frac{q^N - q^{-N}}{q - q^{-1}}
\quad\text{and}\quad
[N]_{q^2} := \frac{q^{2N} - q^{-2N}}{q^2 - q^{-2}}.
\]

\begin{lem}\label{l:unknots}
The following equalities hold:
\[
\langle\, \acirc{} \rangle_N^e = [N]
\quad\text{and}\quad 
\langle\, \acircs{} \rangle_N^e = [N]_{q^2}
\]
\end{lem}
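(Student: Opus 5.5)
Both equalities come straight from the definition of $\langle\Gamma\rangle^e_N$: neither graph has vertices, so all weights are empty and only the rotation numbers remain.

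\emph{One circle crossing the axis.} Here $\Gamma$ is a single loop meeting the axis in two points, and the reflection $(x,y)\mapsto(x,-y)$ maps it to itself while reversing orientation (condition~(iii)). There is one edge and no thick edges, so an equivariant $N$-state is just a color $x\in X_N$. The equivariance condition only concerns pairs of distinct edges exchanged by the reflection, so it imposes nothing here. The curve set $C$ is the loop itself, with $\rot=\pm1$ depending on its orientation; say $+1$ for clockwise. Then
\[
\langle\Gamma\rangle^e_N=\sum_{x\in X_N}q^{x}=q^{-N+1}+q^{-N+3}+\dots+q^{N-1}=[N].
\]
The sum over $X_N$ is symmetric under $x\mapsto -x$, so the result does not depend on the orientation. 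This matches the remark that each equality still holds after reversing all orientations.

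\emph{Two circles exchanged by the reflection.} Now $\Gamma$ has two loops, $\gamma$ above the axis and $\gamma'=\rho(\gamma)$ below it, where $\rho$ is the reflection. Both are thin edges, and they are exchanged by $\rho$, so equivariance forces $\sigma(\gamma)=\sigma(\gamma')=x$ for a single $x\in X_N$.

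The key point is the rotation. A reflection reverses the planar orientation, so it sends a clockwise loop to a counterclockwise one. Condition~(iii) says $\rho$ maps $\gamma$ to $\gamma'$ \emph{reversing} edge orientations, and this reversal compensates for the reflection. Hence $\rot(\gamma')=\rot(\gamma)$, i.e.\ the two loops have the same rotation number. The contribution of the state is therefore $q^{2x\rot(\gamma)}$, and
\[
\langle\Gamma\rangle^e_N=\sum_{x\in X_N}q^{2x}=\frac{q^{2N}-q^{-2N}}{q^{2}-q^{-2}}=[N]_{q^2}.
\]
Again the symmetry of $X_N$ makes the sign of $\rot(\gamma)$ irrelevant.

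The only step that needs care is checking that the two loops have equal, not opposite, rotation numbers; once that is settled, the rest is summing a geometric series. The first case also confirms the normalization used in the paper, namely that the rotation contributes $q^{\pm x}$ with no extra factors.
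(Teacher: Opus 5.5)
Your proposal is correct and follows the paper's proof: since neither graph has vertices, the bracket reduces to $\sum_{x\in X_N} q^{\pm x}$, respectively $\sum_{x\in X_N} q^{\pm 2x}$, and the orientation sign is irrelevant because both sums are symmetric under $q\mapsto q^{-1}$. Your explicit check that condition~(iii) forces the two exchanged loops to have equal, not opposite, rotation numbers is exactly the point the paper summarizes as ``the same for both, since the graph is equivariant.''
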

\begin{proof}
We prove the second equality proceeding along the lines of \cite[Lemma~2.1]{MOY98}. The first equality can be proved similarly. 
By definition we have
\[ \langle\, \acircs{} \rangle_N^e = \sum_{\sigma\in\mathcal{S}_N^e} q^{\rot (\sigma)} = \sum_{x\in X_{N}} q^{\pm 2 x}= [N]_{q^{2}}\, . \]
In the penultimate term, the sign depends on the orientation of the two circles—which is the same for both, 
since the graph is equivariant. However, this sign is irrelevant, as $[N]_{q^2}$ remains unchanged under the 
substitution $q\mapsto q^{-1}$.
\end{proof}

\begin{lem}\label{l:bubbles}
The following equalities hold:
\[
\begin{split}
\langle\Soneedgeresol\rangle_N^e= [N-1]_{q^2}\langle\arcsoffaxis\rangle_N^e\,,
\qquad 
\langle\Roneedgeresol\rangle_N^e = [N-1]\langle\arcuponaxis\rangle_N^e\,,
\qquad
\langle\Roneedgeresolb\rangle_N^e = [N-1]\langle\arcuponaxisb\rangle_N^e
\end{split}
\]
\end{lem}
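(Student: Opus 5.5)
We follow the proof of the corresponding non-equivariant identity \cite[Lemma~2.2]{MOY98} (the bubble or ``$R1$-type'' relation), adapting it to equivariant states. In each of the three pictures the left-hand graph differs from the right-hand one only inside a small disc. There, a thin arc has been replaced by a thin edge, a thick edge, and a closing loop. In the off-axis case this local piece appears twice, as a pair of mirror-symmetric pieces exchanged by the reflection. In the two on-axis cases the thick edge crosses the axis and the local picture is itself reflection-invariant.

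The first step is to set up a correspondence of states. Any equivariant state $\si$ of the right-hand graph assigns a colour $x$ to the arc. The states of the left-hand graph that agree with $\si$ outside the disc are obtained as follows: assign $x$ to the strand that continues through, and a colour $y\neq x$ to the small loop. The thick edge then carries $\{x,y\}$, and equivariance forces the same $y$ on the mirror copy in the off-axis case. So for each $\si$ we get exactly $N-1$ extensions, indexed by $y\in X_N\setminus\{x\}$.

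The second step is to compare weights and rotation numbers. Outside the disc nothing changes. After resolving thick edges as in Figure~\ref{f:configurations}, the curve coloured $x$ is isotopic to the original arc, so it contributes the same amount. The new curve coloured $y$ is a small circle whose rotation is fixed by the orientation of the bubble; say it contributes $q^{\epsilon y}$ with $\epsilon=\pm1$. The two vertices contribute $\wt=q^{\pm\frac12\sgn(y-x)}$ each. As in MOY, a direct check of the local pictures shows that the weights and the rotation of the small circle combine to give the factor $q^{\epsilon y}\cdot q^{-\epsilon\,\sgn(y-x)}$ (with the appropriate sign convention). Summing over $y\neq x$ gives
\[
\sum_{y<x} q^{\epsilon(y+1)}+\sum_{y>x} q^{\epsilon(y-1)} = [N-1],
\]
independently of $x$; this is the standard MOY computation.

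In the off-axis case the local factor is squared as a function of $y$: there are two mirror bubbles, four vertices and two small circles. The two small circles are mirror images and would have opposite rotations, but the reflection also reverses all edge orientations, so they have the same rotation sign, just as in the proof of Lemma~\ref{l:unknots}. The vertex weights likewise coincide pairwise. Hence the sum becomes $\sum_{y\ne x} q^{2\epsilon(y\mp1)}=[N-1]_{q^2}$. For the on-axis cases there is a single bubble, which is reflection-symmetric, so its loop is reflection-invariant and still carries one free colour $y$. This gives $[N-1]$, and the two on-axis pictures (the two orientations relative to the axis) are handled identically.

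The main obstacle is the careful bookkeeping of signs in the second step, especially in the on-axis cases. There the resolved small circle crosses the axis, and one must check that the rotation contribution is $\pm y$ as claimed and that it pairs correctly with the vertex weights. I would verify this by drawing the resolutions explicitly for $y<x$ and $y>x$. One should also confirm that resolving the bubble does not merge the $x$-curve with the $y$-circle, which holds since $y\neq x$ and the configuration rules in Figure~\ref{f:configurations} keep the colours separate.
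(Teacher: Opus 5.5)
Your proposal is correct and follows essentially the same route as the paper. Both arguments partition the states of the left-hand graph by the pair $(x,y)$ with $y\neq x$, note that each state of the right-hand graph has exactly $N-1$ extensions, and sum the local factor. The paper records this sum as $\sum_{y\in X_N\setminus\{x\}} q^{\sgn(x-y)+y}=[N-1]$ (your case $\epsilon=1$) and gets the off-axis case by doubling the MOY computation to obtain $[N-1]_{q^2}$. The sign bookkeeping you flag as the main obstacle is also simply asserted in the paper via the MOY conventions, so your argument is missing nothing relative to the paper's.
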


\begin{lem}\label{l:edgesbubbles}
The following equalities hold:
\[
\langle\edgebubbleoffaxis\rangle^e_N = [2]_{q^2} \langle\edgeresolutionoffaxis\rangle^e_N,
\qquad
\langle\twoedgesbubbleaxis\rangle^e_N = [2] \langle\edgeresolutiononaxis\rangle^e_N,
\qquad 
\langle\threeedgesaxisbubbles\rangle^e_N = [2]_{q^2}  \langle\edgeresolutiononaxis\rangle^e_N
\]
\end{lem}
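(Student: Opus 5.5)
We prove Lemma~\ref{l:edgesbubbles} by adapting the proof of \cite[Lemma~2.2]{MOY98}: we compare state sums directly. Each of the three pictures on the left differs from the one on the right only inside a small disk. In that disk the left side has a ``bubble'': a thick edge splits into two thin edges, which then rejoin into a thick edge. On the axis, the thick edges are replaced by symmetric pairs of thin or thick edges, according to the pictures.

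\textbf{Step 1: restricting and extending states.} Fix an equivariant state $\si'$ of the right-hand graph. The edges entering and leaving the disk then carry fixed colors. The vertex condition forces the thick edges at the ends of the bubble to carry a two-element set $\{x_1,x_2\}$ with $x_1\neq x_2$. The states $\si$ of the left-hand graph that extend $\si'$ are exactly the ways of distributing $x_1,x_2$ along the two thin edges of the bubble, subject to equivariance.
\begin{itemize}
\item In the off-axis case, the bubble and its mirror image are colored identically, so there are exactly two extensions.
\item In the on-axis case, where the reflection exchanges the two thin edges of the bubble, we must check which distributions are compatible with the reflection condition. This determines whether the count is two or, for the doubled configuration, two per bubble linked by symmetry.
\end{itemize}

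\textbf{Step 2: weights and rotation.} For each extension we compute the change in the two factors.
\begin{itemize}
\item \emph{Vertex weights.} The two new vertices of one bubble contribute $q^{\pm1/2}q^{\pm1/2}$. With the orientation conventions of Figure~\ref{f:colorededges}, this gives $q^{+1}$ for one assignment and $q^{-1}$ for the other, because the left/right order of $x_1,x_2$ is swapped.
\item \emph{Rotation.} Replacing configurations as in Figure~\ref{f:configurations}, the curve family $C$ of $\si$ coincides with that of $\si'$. The only exception is a small extra circle in the crossing-type configuration, and there the contributions cancel exactly as in \cite{MOY98}. So $\rot(\si)=\rot(\si')$ up to a shift that we absorb into the weight computation.
\end{itemize}
Summing the two extensions gives $q+q^{-1}=[2]$ per bubble. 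In the off-axis case the bubble and its mirror contribute together, giving $q^{2}+q^{-2}=[2]_{q^2}$, since both factors are equal by equivariance. The first identity follows.

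\textbf{Step 3: the axis cases.} For the second picture, the bubble straddles the axis and its two thin edges are exchanged by the reflection. Equivariance then imposes no extra constraint on the pair, beyond what the vertices on the axis impose, so we get a single factor $[2]$. For the third picture there are two mirror bubbles attached to on-axis thick data. Each extension of the upper bubble determines the lower one, and the weights double, giving $q^{\pm2}$ and hence $[2]_{q^2}$.

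\textbf{Main obstacle.} The delicate part is the careful bookkeeping in the axis cases of Step~3. We must identify exactly which colorings satisfy condition~(iii) and the state rules, and verify that the rotation numbers of the modified curve systems agree. I would handle this by enumerating, in each local picture, the two possible orders of $x_1<x_2$ and drawing the resulting curves explicitly, as in the proof of \cite[Lemma~2.2]{MOY98}.
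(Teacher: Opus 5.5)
Your overall strategy matches the paper's: compare state sums as in \cite[Lemma~2.2]{MOY98}, with two extensions of each right-hand state, a factor $q^{\pm1}$ from the two inner vertices of each digon, and unchanged rotation. Your treatment of the first identity and your outline of the third are fine.

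The gap is in the on-axis bookkeeping. This is exactly what the paper works out explicitly for the third identity, and it is what you defer. In the second picture the reflection does \emph{not} exchange the two thin edges of the bubble. The bubble sits between the two off-axis thick edges, and it is these that the reflection exchanges. Each thin edge of the bubble crosses the axis once and is mapped to itself. That is why both distributions of $\{x,y\}$ are admissible, giving $q^{\sgn(y-x)}+q^{-\sgn(y-x)}=[2]$. Under the geometry you describe, equivariance would force $\si(e_1)=\si(e_2)$, which is incompatible with the thick edge carrying a two-element set. There would then be no states at all, so your conclusion ``no extra constraint'' contradicts your own premise. Likewise there are no vertices on the axis: the reflection would swap incoming and outgoing edges at such a vertex, which is impossible at a trivalent vertex.

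What you need in both axis cases is that equivariance forces the on-axis thick edge of the right-hand graph into the parallel configuration. Its top-left and bottom-left thin edges are exchanged by the reflection, so both are colored $x$. Hence $\wt(w_0,\si)\wt(w_1,\si)=q^{\sgn(y-x)}$, and the two outermost vertices on the left-hand side carry the same colors as $w_0,w_1$. Each digon then contributes $q^{\pm\sgn(y-x)}$. In the third picture the two digons are colored identically by equivariance, so the six-vertex products are $q^{3\sgn(y-x)}$ and $q^{\sgn(x-y)}$, i.e.\ $q^{\pm2\sgn(y-x)}\wt(w_0,\si)\wt(w_1,\si)$. This is exactly the paper's computation.

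Finally, drop the ``small extra circle'' and the shift to be absorbed. The color curves of $\si$ differ from those of $\si'$ only by a local wiggle inside the disk; in the swapped assignment the $x$- and $y$-curves simply cross. So $\rot(\si)=\rot(\si')$ exactly. You need this exactness, because your factors $q^{\pm1}$ per digon yield $[2]$ and $[2]_{q^2}$ only when there is no shift.
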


\begin{lem}\label{l:square}
The following equalities holds:
\begin{gather*}
\langle\Rtwodoubleedgeresolution\rangle^e_N  = \langle\Rtwonocrossings\rangle^e_N + [N-2] \langle\arcsdownuponaxis\rangle^e_N,\qquad\qquad \langle\IRtworesii\rangle^e_N =\langle\IRtwobottomleft\rangle^e_N + [N-2]_{q^2}\langle\IRtworesoob\rangle^e_N\,,\\
\langle\Squaredoubleedgesoffaxis\rangle^e_N  = \langle\arcsdownuponaxis\rangle^e_N + [N-2] \langle\Rtwonocrossings\rangle^e_N 
\end{gather*}
\end{lem}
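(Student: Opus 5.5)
We prove Lemma~\ref{l:square} by a direct state-sum comparison, in the same spirit as the proof of Lemma~\ref{l:unknots} and of the square relation in \cite[Lemma~2.4]{MOY98}. All diagrams in each identity agree outside a small equivariant disc $B$, so every equivariant $N$-state of any of them restricts to the same data outside $B$. We therefore fix the colours on the boundary edges of $B$ (compatibly with the reflection across the axis) and compare only the local contributions. These are the product of the vertex weights inside $B$ times the contribution to $\rot(\si)$ of the arcs of the curve system $C$ running through $B$. Each identity then reduces to a finite list of identities among Laurent polynomials, one for each admissible equivariant boundary colouring.

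For the first identity (the square straddling the axis, with two thick edges on the axis), the boundary data consist of one colour $x_1$ on the arcs entering the top and, by equivariance, the same colour on the mirrored arcs. Likewise there is a colour $x_2$ on the other pair. We split into two cases.

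\emph{Case $x_1\neq x_2$.} The internal thin edges are forced. The only surviving configuration matches the resolution $\Rtwonocrossings$, and the two vertex weights $q^{\pm1/2}$ on the axis cancel. The rotation contributions agree after the replacement of Figure~\ref{f:configurations}. The arcsdownup term has no state with $x_1\neq x_2$, so it contributes nothing.

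\emph{Case $x_1=x_2=x$.} The internal edges carry either the same colour $x$ or any colour $y\neq x$. The state with internal colour $x$ reproduces the term $\Rtwonocrossings$. The remaining states (the edges on the axis are thin, each of multiplicity one) reproduce the arcsdownup term, weighted by
\[\sum_{y\in X_N\setminus\{x\}} q^{\operatorname{sgn}(y-x)\cdot(\pm 1)}\,q^{\text{rot. correction}}.\]
This sum has to be shown to equal $[N-2]$ times the factor coming from the arcs. That is exactly the telescoping computation of \cite{MOY98}: the half-integer weights combine with the rotation change $\pm(y-x)$ so that the sum becomes a geometric series over $X_N\setminus\{x\}$, which collapses to $[N-2]$ after accounting for the rotation of the arc labelled $x$.

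The third identity is the same computation with the roles of the two resolutions exchanged. The second identity is the off-axis analogue: equivariance doubles every off-axis square, so each vertex weight and each rotation contribution appears twice. Hence every exponent of $q$ doubles, giving $[N-2]_{q^2}$ in place of $[N-2]$.

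The main obstacle is the bookkeeping of $\rot(\si)$ in the case $x_1=x_2$. The curves of $C$ near $B$ change connectivity according to whether $y<x$ or $y>x$, and one has to check that the rotation contribution of the internal small loop (clockwise or counterclockwise, fixed by the orientation convention on the axis) combines with the product of vertex weights to give $q^{\pm(N-1)}\cdots$ summing exactly to $[N-2]$, without a leftover factor. I would first do this for one orientation of the square, tabulating the four vertex weights against the sign of $y-x$. I would then invoke the remark that reversing all orientations replaces $q$ by $q^{-1}$ in each local contribution, which leaves $[N-2]$ invariant, to get the other orientation for free.
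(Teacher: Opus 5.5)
\textbf{There is a genuine gap.} Your strategy is the paper's: fix the equivariant colouring of the boundary edges, then compare vertex weights and rotation contributions inside the disc. But your description of the equivariant states of the first square is wrong, so the computation you outline does not give the identity.

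In $\Rtwodoubleedgeresolution$ the colours are constrained as follows.
\begin{itemize}
\item The reflection pairs the two left boundary edges, which share a colour $a$, and the two right boundary edges, which share a colour $c$.
\item The top and bottom internal thin arcs are exchanged by the reflection, so they share a colour $b$.
\item The two thick edges on the axis carry $\{a,b\}$ and $\{b,c\}$. This forces $b\neq a$ and $b\neq c$.
\end{itemize}
Hence the internal colour is never forced, and no state has internal colour equal to the boundary colour.

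The two resolutions also behave the opposite way to what you claim. The arcs of $\Rtwonocrossings$ join the left boundary points to the right ones, so that diagram has states only when $a=c$. By contrast, $\arcsdownuponaxis$ has states for every pair $(a,c)$. So your case $x_1\neq x_2$ is backwards. There are $N-2$ admissible values of $b$. Each contributes $q^{b+\sgn(a-b)+\sgn(c-b)}$ times the corresponding state of $\arcsdownuponaxis$: the $q^b$ comes from the small $b$-coloured loop and the rest is the product of the four vertex weights. These factors sum to $[N-2]$, and $\Rtwonocrossings$ contributes nothing.

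The missing idea is how the coefficient-one term arises when $a=c$. In that case $b$ ranges over the $N-1$ colours different from $a$, and
\[
\sum_{b\in X_N\setminus\{a\}} q^{\,b+2\sgn(a-b)} = [N-2] + q^{a},
\]
not $[N-2]$. The surplus $q^a$ is exactly the $\Rtwonocrossings$ term. For states coloured $a$ on both sides, replacing the two vertical arcs by the two horizontal ones changes the rotation by $a$, i.e.\ $\rot(\si'')=\rot(\varphi(\si''))-a$. So $q^a$ times the $\arcsdownuponaxis$ contribution equals the $\Rtwonocrossings$ contribution.

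In general one has
\[
\sum_{b\in X_N\setminus\{a,c\}} q^{\,b+\sgn(a-b)+\sgn(c-b)} = [N-2]+\de_{a,c}\,q^{a},
\]
and the Kronecker-delta term is what produces the resolution with coefficient one. No extra ``state with internal colour $x$'' is involved; such a state does not exist.

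The same correction applies to your treatment of the third identity. There too, the equal-colour case contains only internal colours different from the boundary colour. So the coefficient-one term must again be extracted from the sum over those colours, not from a separate state.
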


\begin{lem}\label{l:M1identity}
The following equalities holds:
\[
\langle\Monedoubleedgeoffedgeon\rangle^e_N  + \langle\arcedgeon\rangle^e_N = 
\langle\Moneedgeondoubleedgeoff\rangle^e_N + \langle\edgeonarc\rangle^e_N ,\qquad \langle\IRthreeonethicka\rangle^e_N  + \langle \IRthreethreethicka \rangle^e_N = 
\langle\IRthreethreethickb\rangle^e_N  + \langle \IRthreeonethickb \rangle^e_N
\]
\end{lem}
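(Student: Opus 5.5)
The plan is to imitate the proof of the analogous MOY identity in \cite{MOY98}, which is the graph-level form of the Reidemeister~III / $M$-move relation. The argument is a state-by-state comparison: enumerate the equivariant $N$-states of each of the four graphs appearing in a given equality, and match them with equal contributions. Outside the local picture the states agree, so we fix the colours of the boundary (external) edges and compare only the local weights $\prod_v \wt(v,\si)$ and the local contributions to $\rot(\si)$.

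I will first check that the local rotation contribution depends only on the boundary colours. This holds because resolving thick edges as in Figure~\ref{f:configurations} produces arcs with the same endpoints and colours. Once this is established, each identity reduces to a finite identity among weights, indexed by the multiset of boundary colours.

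For the first equality (the mixed identity, with a thick edge on the axis and a symmetric pair of thick edges off the axis), I will use the reflection condition (iii). The colours on the upper half determine those on the lower half. The effect of the symmetry is that each off-axis vertex contributes its weight twice; equivalently, each factor $q^{\pm1/2}$ from a symmetric pair of vertices is squared. The on-axis thick edge carries a set $\{x,x'\}$ whose two elements are exchanged by the reflection, and its two vertices are themselves swapped by the reflection. Fixing the boundary colours, I will split into cases according to the coincidence pattern of the colours: all equal, two equal, or all distinct. In each case I will list the admissible interior colourings on both sides and check that the weighted sums agree.

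The mechanism I expect in the generic case is the one from the proof of MOY's Lemma~2.4: a contribution $q^{\epsilon}$ from one side cancels against a matching term, and the leftover terms are accounted for by the thin-arc diagrams $\langle\arcedgeon\rangle$ and $\langle\edgeonarc\rangle$, which exist exactly in the configurations where the thick-edge graphs admit no state.

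The second equality (the equivariant Reidemeister~III situation with three thick edges) is treated the same way. The only change is that the strands passing through the axis contribute non-squared weights $q^{\pm 1/2}$, while the symmetric pairs contribute squared weights. It is also convenient that reversing all orientations is a symmetry of the setup, so only one orientation needs to be checked.

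The main obstacle is the bookkeeping in the case where colours coincide partially. There, a thick edge requires two distinct colours, so some graphs have no states while others have several. In addition, the equivariance constraint forces colours on the axis edges in a way that has no non-equivariant analogue. I will organise this by tabulating, for each boundary colouring, the number of admissible interior states and their weights, and by verifying the identity as an equality of Laurent monomials in $q^{1/2}$. If the direct count becomes unwieldy, the fallback is to recognise the off-axis part as a non-equivariant MOY computation in the variable $q^2$, which reduces that part to the known identities of \cite{MOY98} with $q$ replaced by $q^2$.
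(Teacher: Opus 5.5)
Your strategy is the one the paper uses: fix the boundary colours, observe that the local rotation contribution is the same for every local configuration, and compare vertex weights case by case according to coincidences among the colours. But the proposal stops before the computation that is the actual content of the lemma, and several of the specifics you anticipate are wrong in ways that would derail it.

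First, the reflection does not exchange the two colours on the on-axis thick edge. It preserves left and right, so each strand carries the same colour below and above the axis. The boundary data for the first identity is therefore a single triple $(a,b,c)$, read left to right and equal at top and bottom, and the swapped colourings that occur in non-equivariant MOY identities are excluded. Moreover, the on-axis vertex pair and the off-axis vertex pairs enter the weights at different scales, so this identity is not the one in \cite{MOY98} with $q\mapsto q^2$, and your fallback is not available for it.

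Writing $\ep_{x,y}=1-\de_{x,y}$, the states for fixed $(a,b,c)$ fall into exactly three classes on each side. On the left they contribute $\ep_{b,c}q^{\sgn(c-b)}$ (arc plus on-axis thick edge), and $\ep_{a,b}\ep_{a,c}q^{\sgn(c-a)}$ and $\ep_{a,b}\ep_{b,c}q^{2\sgn(b-a)+\sgn(c-b)}$ (three thick edges). On the right they contribute $\ep_{a,b}q^{\sgn(b-a)}$, $\ep_{a,c}\ep_{b,c}q^{\sgn(c-a)}$ and $\ep_{a,b}\ep_{b,c}q^{\sgn(b-a)+2\sgn(c-b)}$.

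Second, the graphs consisting of an arc and an on-axis thick edge are not thin-arc diagrams. They also do not contribute only where the three-thick-edge graphs have no states.
\begin{itemize}
\item For $a,b,c$ pairwise distinct, all six terms are present. The $q^{\sgn(c-a)}$ terms cancel. With $\eta_1=\sgn(b-a)$ and $\eta_2=\sgn(c-b)$, the remaining three-thick-edge terms $q^{2\eta_1+\eta_2}$ and $q^{\eta_1+2\eta_2}$ differ whenever $\eta_1\neq\eta_2$. It is exactly the arc-plus-edge terms $q^{\eta_2}$ and $q^{\eta_1}$ that compensate, and the total difference is $(q^{\eta_2}-q^{\eta_1})(1-q^{\eta_1+\eta_2})=0$.
\item When $a=c\neq b$, both kinds of graph have states on both sides, and each side sums to $q+q^{-1}$.
\item When $a=b\neq c$ or $b=c\neq a$, a single arc-plus-edge state on one side matches a single three-thick-edge state on the other.
\item When $a=b=c$ there are no states.
\end{itemize}

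Finally, the second identity has no strands crossing the axis. Its graphs are a mirror pair of ordinary MOY graphs lying off the axis, so each vertex weight is repeated by its mirror and local rotation contributions double. That identity is therefore literally the MOY identity with $q$ replaced by $q^2$. There your fallback is the right argument, and the paper leaves this case as analogous.
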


\section{The equivariant \texorpdfstring{$\mathfrak{sl}_N$}{slN} Polynomials}\label{s:Npoly}

By~\cite[Theorem~2.3]{LW21}, diagrams $D_1$ and $D_2$ of equivalent involutive oriented links $L_1$ 
and $L_2$, respectively, 
are related by a sequence of equivariant planar isotopies and moves given in~\cite[Figure~9]{LW21} and 
called IR1, IR2, IR3, R1, R2, M1, M2 and M3. From now on, we will simply refer to these as the~\emph{Lobb–Watson moves}. 

In this section, we introduce the bracket polynomial $\langle\,\cdot\,\rangle^e_N$ for knotted involutive graph diagrams and the equivariant $\mathfrak{sl}_N$ polynomial $P^e_N$.
We examine how these polynomials behave under the oriented Lobb–Watson moves and show that $P^e_N$ remains invariant under all such moves -- 
and thus that $P_N^e$ defines an invariant of strongly involutive links. We warn the reader that in Lobb–Watson~\cite{LW21} the fixed-point axis is taken to be vertical rather than horizontal. 

Let $D$ be an oriented diagram of a knotted graph obtained by equivariantly replacing some crossings in an oriented strongly involutive link diagram with thick edges.
In analogy with~\cite[\S 3]{MOY98}, we define \emph{the bracket polynomial} $\langle D\rangle^e_N$ recursively by setting: 
\begin{align*}
\langle\poscrossingonaxis\rangle^e_N  & := 
q^{-1} \langle\orientedresolutiononaxis\rangle^e_N - 
\langle\edgeresolutiononaxis\rangle^e_N\,,
& 
\langle\negcrossingonaxis\rangle^e_N  & := 
q \langle\orientedresolutiononaxis\rangle^e_N - 
\langle\edgeresolutiononaxis\rangle^e_N\,,
\\ 
\langle\poscrossingoffaxis\rangle^e_N & := 
q^{-2} \langle\orientedresolutionoffaxis\rangle^e_N - 
\langle\edgeresolutionoffaxis\rangle^e_N\,,
& 
\langle\negcrossingoffaxis\rangle^e_N & := 
q^2 \langle\orientedresolutionoffaxis\rangle^e_N - 
\langle\edgeresolutionoffaxis\rangle^e_N\,.
\end{align*}

It follows immediately from the definition, and from the invariance under orientation reversal of the equivariant $N$-bracket, that the equivariant bracket polynomial $\langle D\rangle^e_N$ satisfies  
\begin{equation}\label{e:rev-bracket}
\langle D^r\rangle^e_N = \langle D\rangle^e_N,
\end{equation}
where $D^r$ is the diagram obtained from $D$ by reversing the orientation. Similarly, we have 
\begin{equation}\label{e:mirror-bracket}
\langle D^m\rangle^e_N (q) = \langle D\rangle^e_N (q^{-1}),
\end{equation}
where $D^m$ denotes the diagram obtained by switching all the crossings of $D$.

\begin{prop}\label{p:IR1-behaviour}
The bracket polynomial $\langle\,\cdot\, \rangle^e_N$ satisfies the following identities: 
\[
\langle\IRonenegloop\rangle^e_N = q^{2N} \langle\arcsoffaxis\rangle^e_N\,, 
\qquad
\langle\IRoneposloop\rangle^e_N = q^{-2N}\langle\arcsoffaxis\rangle^e_N\,.
\]
\end{prop}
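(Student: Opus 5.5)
We resolve the pair of symmetric off-axis crossings forming the two equivariant kinks using the defining relations of $\langle\,\cdot\,\rangle^e_N$. The pictures in Proposition~\ref{p:IR1-behaviour} show an IR1 move: two curls, exchanged by the reflection, each containing one crossing. These two crossings form a single symmetric off-axis pair, so the off-axis recursion applies to the pair as a whole.

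For the negative loop we apply
\[
\langle\negcrossingoffaxis\rangle^e_N = q^2 \langle\orientedresolutionoffaxis\rangle^e_N - \langle\edgeresolutionoffaxis\rangle^e_N .
\]
The oriented resolution of the curl pair is the trivial arcs diagram $\arcsoffaxis$ together with a split symmetric pair of small circles. By the locality of the state sum, this is the analogue of \eqref{eq:skeinbracketunion} for the $N$-bracket: the states factor. Lemma~\ref{l:unknots} then shows that this term equals $[N]_{q^2}\langle\arcsoffaxis\rangle^e_N$. The edge resolution is precisely the configuration $\Soneedgeresol$: a symmetric pair of thick edges with a thin bubble closing off each one. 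Lemma~\ref{l:bubbles} gives $[N-1]_{q^2}\langle\arcsoffaxis\rangle^e_N$ for it. Hence
\[
\langle\IRonenegloop\rangle^e_N = \bigl(q^2[N]_{q^2} - [N-1]_{q^2}\bigr)\langle\arcsoffaxis\rangle^e_N ,
\]
and the quantum integer identity $q^2[N]_{q^2} - [N-1]_{q^2} = q^{2N}$ finishes this case. To verify the identity, multiply by $q^2-q^{-2}$: the left side becomes $q^{2N+2}-q^{2-2N}-q^{2N-2}+q^{2-2N}$, which equals $q^{2N}(q^2-q^{-2})$.

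The positive loop is handled in the same way using the $q^{-2}$ relation. This gives $q^{-2}[N]_{q^2}-[N-1]_{q^2}=q^{-2N}$. Alternatively, it follows from the first identity by applying the mirror relation \eqref{e:mirror-bracket} locally, since the $N$-bracket of planar graphs is invariant under $q\mapsto q^{-1}$ up to reflection.

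The main obstacle is orientation and rotation bookkeeping. The circle produced by the oriented resolution may be oriented either clockwise or counterclockwise, and its orientation determines the sign in $q^{\pm 2x}$. The bubble in Lemma~\ref{l:bubbles} must likewise match the orientation in the picture. Both issues are harmless because $[N]_{q^2}$ and $[N-1]_{q^2}$ are symmetric under $q\mapsto q^{-1}$. We also have to check that the edge resolution really yields the bubble configuration of Lemma~\ref{l:bubbles}, and not its reversal. This is covered by the standing convention that all relations remain valid under simultaneous orientation reversal.
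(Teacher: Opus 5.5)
Your proposal is correct and matches the paper's proof, which says only to ``double'' the curl computation of \cite[Lemma~3.3]{MOY98} as in Lemma~\ref{l:bubbles}. Your steps are exactly that doubling, written out the way the paper writes out the on-axis analogue in the proof of Proposition~\ref{p:R1-behaviour}: expand the symmetric off-axis pair with the $q^{\pm 2}$ relation, evaluate the split pair of circles by Lemma~\ref{l:unknots} and the thick-edge resolution by the first identity of Lemma~\ref{l:bubbles}, and conclude with $q^{\pm 2}[N]_{q^2}-[N-1]_{q^2}=q^{\pm 2N}$.
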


\begin{proof}
The two identities are obtained by ``doubling" the proof 
of~\cite[Lemma~3.3]{MOY98} as in the proof of Lemma~\ref{l:bubbles}. 
\end{proof}

\begin{prop}\label{p:IR23-invariance}
The bracket polynomial $\langle\,\cdot\, \rangle^e_N$ satisfies the following identities: 
\[ 
\langle\IRtwotopleft\rangle^e_N = 
\langle\IRtwobottomleft\rangle^e_N\,,
\quad
\langle\IRtwotopright\rangle^e_N  = 
\langle\IRtwobottomright\rangle^e_N\,,
\quad
\langle\IRthreeone\rangle^e_N = 
\langle\IRthreetwo\rangle^e_N \]
\end{prop}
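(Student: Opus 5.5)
We follow the strategy of \cite[Section~3]{MOY98}, ``doubled'' along the axis. In each identity we expand every crossing in the local pictures using the recursive definition of $\langle\,\cdot\,\rangle^e_N$. The two sides then become $\bbZ[q^{\pm1}]$-linear combinations of brackets of MOY graphs that agree outside the local disc. Finally we compare the two expansions using the graph relations of Lemmas~\ref{l:bubbles}--\ref{l:M1identity}.

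\textbf{IR2.} The crossings in an IR2 move are off the axis and form a symmetric pair, so each resolution introduces the factor $q^{\pm2}$ together with a symmetric pair of thick edges. Expanding the two crossings gives four terms:
\begin{itemize}
\item the oriented--oriented resolution, which is the bottom picture up to a closed-off region;
\item two mixed terms, each containing a single thick edge pair;
\item the edge--edge term, which has a square of thick edges.
\end{itemize}
For the first configuration (strands oppositely oriented), we rewrite the edge--edge square with the second identity of Lemma~\ref{l:square}. This produces the bottom picture plus $[N-2]_{q^2}$ times a graph that is also obtained from the mixed terms. For the other orientation we use instead the first identity of Lemma~\ref{l:edgesbubbles}, $\langle\edgebubbleoffaxis\rangle^e_N=[2]_{q^2}\langle\edgeresolutionoffaxis\rangle^e_N$. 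The mixed terms contribute a bubble factor $[N-1]_{q^2}$ via Lemma~\ref{l:bubbles}. The identity then reduces to the quantum-integer relations
\[
q^{2}[N-1]_{q^2}\,\text{-type combinations}:\quad [2]_{q^2}[N-1]_{q^2}=[N]_{q^2}+[N-2]_{q^2},
\]
and to the cancellation of the $q^{\pm2}$ coefficients. These are routine checks of Laurent polynomials in $q^2$ exactly as in \cite[Lemma~3.4]{MOY98}, with $q$ replaced by $q^2$.

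\textbf{IR3.} For the third identity we follow the Kauffman/MOY trick. We expand only the symmetric pair of crossings between the two strands that do not pass through the remaining crossings. The oriented-resolution terms on the two sides then differ by an IR2 move, already proved. The edge-resolution terms are local graphs containing a thick edge pair passing over or under a strand. It therefore suffices to show that sliding a symmetric pair of thick edges past a symmetric pair of crossings does not change the bracket.

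To prove the slide, we expand the remaining crossings on both sides. The resulting graph terms should match termwise, except for one combination. That combination is exactly the second identity of Lemma~\ref{l:M1identity},
\[
\langle\IRthreeonethicka\rangle^e_N+\langle\IRthreethreethicka\rangle^e_N=\langle\IRthreethreethickb\rangle^e_N+\langle\IRthreeonethickb\rangle^e_N,
\]
which is the doubled version of MOY's relation used for Reidemeister III. The remaining terms are paired off by IR2 and by the square and bubble identities of Lemma~\ref{l:square} and Lemma~\ref{l:edgesbubbles}.

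We must treat all orientations and all choices of which strand passes over or under. Orientation reversal is harmless by \eqref{e:rev-bracket}, and by \eqref{e:mirror-bracket} we may also switch all crossings simultaneously. This leaves only a few cases.

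\textbf{Main obstacle.} The hard step is the thick-edge slide in IR3. There we must track precisely which of the many resolution terms cancel, and verify that the coefficients $q^{\pm 2}$ and the factors $[N-2]_{q^2}$ that appear line up with Lemma~\ref{l:M1identity}. If a direct termwise matching fails, the first alternative is to reduce to the braidlike orientation case using IR2 invariance, as MOY do, and handle only that case explicitly.
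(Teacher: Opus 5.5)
This is essentially the paper's proof. For the (anti-parallel) IR2 identity, the expansion reduces, via the first identity of Lemma~\ref{l:bubbles} and the second of Lemma~\ref{l:square}, to $[N]_{q^2}-(q^2+q^{-2})[N-1]_{q^2}+[N-2]_{q^2}=0$. Here the $[N]_{q^2}$ comes from the oriented--oriented term, which is the turned-back (cap--cup) graph plus a symmetric pair of small circles, not the bottom picture; and the second identity in the statement is just its mirror, obtained from \eqref{e:mirror-bracket}, rather than the braid-like IR2. For IR3 the paper expands all three symmetric crossing pairs at once, collapses the doubled digon with the first identity of Lemma~\ref{l:edgesbubbles}, and concludes with the second identity of Lemma~\ref{l:M1identity}. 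Your detour through a thick-edge slide is the same algebra reorganized (the paper deduces that slide afterwards, in Lemma~\ref{l:edgeR3}, from this proposition). Other orientations need not be treated here, since they are handled for $P^e_N$ via Polyak's generating set in Proposition~\ref{p:IR123-invariance}.
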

\begin{proof}
Applying first the definition and then Lemmas~\ref{l:bubbles} and~\ref{l:square} to the left-hand side of the first equality we obtain   
\[
\langle\IRtwotopleft\rangle^e_N = \langle\IRtworesoo\rangle^e_N - q^{-2}\langle\IRtworesoi\rangle^e_N - q^2\langle\IRtworesio\rangle^e_N + \langle\IRtworesii\rangle^e_N = 
\]
\[
= \left([N]_{q^2} - (q^2+q^{-2})[N-1]_{q^2}+[N-2]_{q^2}\right)\langle\IRtworesoob\rangle^e_N
+ \langle\IRtwobottomleft\rangle^e_N = \langle\IRtwobottomleft\rangle^e_N\,.
\]
The second equality is a consequence of the first one and Equation~\eqref{e:mirror-bracket}. 
To prove the third equality we observe that, thanks to the first equality in Lemma~\ref{l:edgesbubbles} and the planar isotopy 
invariance of the bracket polynomial, we have:
\[
\begin{split}
\langle\IRthreeone\rangle^e_N = 
q^{-6}\langle\IRthreeresooo\rangle^e_N -q^{-4}\left[ \langle\IRthreeresioo\rangle^e_N+\langle\IRthreeresoio\rangle^e_N \right]\\  
+ q^{-2}\left[ \langle\IRthreeresiio\rangle^e_N+\langle\IRthreeresoii\rangle^e_N \right] + \langle\IRthreeresoio\rangle^e_N - \langle\IRthreeresiiia\rangle^e_N\,.
\end{split}
\]
A completely analogous computation yields:
\[ 
\begin{split}
\langle\IRthreetwo\rangle^e_N =  
q^{-6}\langle\IRthreeresooo\rangle^e_N -q^{-4}\left[ \langle\IRthreeresioo\rangle^e_N+\langle\IRthreeresoio\rangle^e_N \right]\\ 
+ q^{-2}\left[ \langle\IRthreeresiio\rangle^e_N+\langle\IRthreeresoii\rangle^e_N \right] + \langle\IRthreeresioo\rangle^e_N - \langle\IRthreeresiiib\rangle^e_N\,. 
\end{split}
\]
Now the third, and last, equality follows immediately from Lemma~\ref{l:M1identity}.
\end{proof}

Given a diagram $D$ of an involutive oriented link $L$, \emph{the equivariant 
$\mathfrak{sl}_N$ polynomial} $P^e_N(D)$ is defined by setting 
\[
P^e_N(D) := q^{N w(D)} \langle D\rangle^e_N\, ,
\]
where $w(D)$ denotes the writhe of $D$.

\begin{prop}\label{p:IR123-invariance}
The equivariant $\mathfrak{sl}_N$ polynomial $P^e_N(D)$ remains invariant under the oriented Lobb-Watson moves of types IR1, IR2 and IR3. 
\end{prop}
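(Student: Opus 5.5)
The plan is to combine the bracket identities of Propositions~\ref{p:IR1-behaviour} and~\ref{p:IR23-invariance} with the change in writhe under each move. The invariance then reduces to checking that the normalising factor $q^{Nw(D)}$ exactly compensates the bracket's behaviour.

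\textbf{IR2 and IR3.} These moves involve a symmetric pair of off-axis strands exchanged by the involution. An IR2 move creates or removes a pair of crossings of opposite sign, and since the move is performed equivariantly on symmetric pairs, the total contribution to $w(D)$ is zero. An IR3 move preserves the signs of all crossings involved, so it also leaves $w(D)$ unchanged. By Proposition~\ref{p:IR23-invariance}, the bracket $\langle\,\cdot\,\rangle^e_N$ is invariant under both moves. Hence $P^e_N(D)=q^{Nw(D)}\langle D\rangle^e_N$ is invariant as well.

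For the orientations, Proposition~\ref{p:IR23-invariance} is stated for particular orientations. The other oriented versions follow from three facts:
\begin{itemize}
\item the convention that every identity remains valid after reversing all orientations, together with Equation~\eqref{e:rev-bracket};
\item the mirror relation~\eqref{e:mirror-bracket}, which converts statements about one crossing type into the other;
\item planar rotations of the local picture that respect the axis.
\end{itemize}
If some oriented IR2 variant, with strands oriented oppositely, is not covered by these symmetries, I would redo the first computation of Proposition~\ref{p:IR23-invariance}: expand both crossings via the defining relations and apply Lemmas~\ref{l:bubbles} and~\ref{l:square}, using the third identity of Lemma~\ref{l:square} where appropriate.

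\textbf{IR1.} An IR1 move adds a symmetric pair of kinks off the axis. These are two crossings of the same sign, so the writhe changes by $\pm2$. By Proposition~\ref{p:IR1-behaviour}, a negative pair multiplies the bracket by $q^{2N}$, while the writhe drops by $2$ and contributes $q^{-2N}$. Symmetrically, a positive pair gives $q^{-2N}\cdot q^{2N}=1$. The kinks may come with various orientations and curl directions, but these are related to the two pictures of Proposition~\ref{p:IR1-behaviour} by planar isotopy, orientation reversal and mirror symmetry. In each case only the sign of the crossings matters.

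The main obstacle is the bookkeeping. One must check that all oriented variants of the Lobb--Watson moves IR1--IR3 reduce to the drawn configurations via the symmetries above. One must also confirm that the signs of the crossings appearing in the pictures match the assumed writhe changes: in particular, that the pair in the first picture of Proposition~\ref{p:IR1-behaviour} is negative, consistent with the $q^{2N}$ factor. I would verify this first by comparing with the non-equivariant MOY computation, where a negative kink gives $q^{N}$; this doubles to $q^{2N}$ for a symmetric pair.
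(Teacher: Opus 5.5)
Your writhe bookkeeping is correct, and it is the step the paper takes implicitly. The identities of Propositions~\ref{p:IR1-behaviour} and~\ref{p:IR23-invariance}, combined with the change in $w(D)$, give invariance of $P^e_N$ under the particular oriented pictures drawn there. Your handling of the four oriented IR1 variants via~\eqref{e:rev-bracket} and~\eqref{e:mirror-bracket} is also fine.

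The gap is the claim that every other oriented IR2 and IR3 variant follows from the drawn ones by global orientation reversal, the crossing-switch mirror and planar rotations. None of these operations changes the relative orientation of the strands, or whether a Reidemeister III triangle is cyclic or braid-like.

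\textbf{IR3.} The identity in Proposition~\ref{p:IR23-invariance} is the all-positive braid-like move. Its expansion starts with $q^{-6}$, i.e.\ three positive crossings in each half, and a cyclic triangle always has mixed crossing signs. So its orbit under your symmetries contains only that move and its mirror. The mixed-sign braid-like moves, e.g.\ $\sigma_1^{-1}\sigma_2\sigma_1=\sigma_2\sigma_1\sigma_2^{-1}$, and the cyclic moves are never reached. The paper also proves no MOY-graph identities that would let you check those directly.

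\textbf{IR2.} The drawn identities are the antiparallel ones: the coefficient $[N]_{q^2}$ comes from a closed pair of circles in the all-oriented resolution. So the case your symmetries leave uncovered is the parallel one, not the antiparallel one you anticipated. That case can be checked with the first identity of Lemma~\ref{l:edgesbubbles}. The third identity of Lemma~\ref{l:square}, which you propose, is an on-axis statement with $[N-2]$ rather than $[N-2]_{q^2}$, so it is not the right tool.

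The missing ingredient is the one the paper uses: Polyak's theorem~\cite[Theorem~1.2]{Po10}, that $\{\Omega1b,\Omega1d,\Omega2c,\Omega2d,\Omega3b\}$ generates all oriented Reidemeister moves. The drawn identities together with your writhe count give invariance under equivariant pairs of exactly these moves. Polyak's derivations of the remaining oriented moves from this set are local. They can therefore be carried out in a disk disjoint from the axis and doubled by the involution, which yields every oriented IR1--IR3 move. Without this step your argument proves invariance for only two of the eight oriented IR3 moves (the all-positive braid-like one and its mirror), and it misses the parallel IR2 moves.
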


\begin{proof}
We make use of results from~\cite{Po10}, which identifies specific generating sets for the oriented Reidemeister moves. 
Adopting the notation from that paper, Propositions~\ref{p:IR1-behaviour} and~\ref{p:IR23-invariance} together imply that $P^e_N(L)$ 
is invariant under equivariant pairs of Reidemeister moves of types in the set
$S=\{\Omega1b,\Omega1d,\Omega2c,\Omega2d, \Omega3b\}$. The proposition then follows from~\cite[Theorem~1.2]{Po10}, which shows that the set 
$S$ generates all oriented Reidemeister moves.
\end{proof}

\begin{prop}\label{p:R1-behaviour}
The bracket polynomial $\langle\,\cdot\, \rangle^e_N$ satisfies the following identities: 
\[
\begin{split}
\langle\Roneposloop\rangle^e_N = q^{-N}\langle\arcuponaxis\rangle^e_N\,,
\qquad
\langle\Ronenegloop\rangle^e_N = q^N \langle\arcuponaxis\rangle^e_N\,, 
\\
\langle\Roneposloopb\rangle^e_N = q^{-N}\langle\arcuponaxisb\rangle^e_N\,,
\qquad
\langle\Ronenegloopb\rangle^e_N = q^N \langle\arcuponaxisb\rangle^e_N\,. 
\end{split}
\]
\end{prop}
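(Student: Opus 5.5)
We argue exactly as in the classical computation of \cite[Lemma~3.3]{MOY98}, since an R1 kink on the axis is a single self-symmetric crossing and its resolutions look locally like ordinary MOY resolutions. We treat the first identity in detail. By the defining relation for a positive crossing on the axis,
\[
\langle\Roneposloop\rangle^e_N = q^{-1}\langle\Roneorientedres\rangle^e_N - \langle\Roneedgeresol\rangle^e_N .
\]
In the first term, the oriented resolution of the kink leaves the arc together with a small circle. This circle crosses the axis and is disjoint from the rest of the diagram (up to equivariant planar isotopy), so it is split off equivariantly.

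The key point is a multiplicativity statement for the bracket, analogous to the disjoint-union behaviour used in the paper. We would argue it directly from the state sum. An equivariant state of the small circle is a single colour $x\in X_N$, independent of the rest of the state. The circle has no vertices, so it contributes no weights. Its contribution to $\rot$ is $\pm x$, with the sign fixed by its orientation. Hence
\[
\langle\Roneorientedres\rangle^e_N = [N]\,\langle\arcuponaxis\rangle^e_N,
\]
using the symmetry $[N](q)=[N](q^{-1})$ as in the proof of Lemma~\ref{l:unknots}.

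The second term is the thick-edge bubble on the axis, which is handled by the second identity of Lemma~\ref{l:bubbles}:
\[
\langle\Roneedgeresol\rangle^e_N=[N-1]\langle\arcuponaxis\rangle^e_N .
\]
Combining, we get
\[
\langle\Roneposloop\rangle^e_N=\bigl(q^{-1}[N]-[N-1]\bigr)\langle\arcuponaxis\rangle^e_N=q^{-N}\langle\arcuponaxis\rangle^e_N,
\]
where the last step is the elementary identity $q^{-1}(q^N-q^{-N})-(q^{N-1}-q^{1-N})=-q^{-N}(q-q^{-1})$.

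For the negative kink we use the other defining relation, which replaces the coefficient $q^{-1}$ by $q$. The same two evaluations then give $q[N]-[N-1]=q^N$. The two identities for the reversed kinks in the second line are obtained in the same way, now using the third identity of Lemma~\ref{l:bubbles}. Alternatively, they follow from \eqref{e:rev-bracket} combined with the symmetry of the picture, and from \eqref{e:mirror-bracket}, which exchanges positive and negative kinks and sends $q\mapsto q^{-1}$.

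The only delicate step is the handling of the small circle. It is a loop meeting the axis, not an equivariant pair, so we must check that it contributes the factor $[N]$ rather than $[N]_{q^2}$. We must also check that the orientation of the circle produced by the kink does not introduce an unexpected sign or power of $q$ in $\rot$. This is where the state-sum argument must be done carefully.
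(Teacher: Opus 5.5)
Your argument is correct and is essentially the paper's proof: you expand the kink with the on-axis defining relation, evaluate the split-off self-symmetric circle as $[N]$ (the paper just invokes Lemma~\ref{l:unknots}, while you justify the factorization from the state sum), and evaluate the thick-edge bubble as $[N-1]$ via Lemma~\ref{l:bubbles}. Your coefficient $q^{-1}$ for the positive kink is in fact the one consistent with the definition and the stated value $q^{-N}$, whereas the paper's displayed line uses $q$ and so really computes the negative-kink case $q[N]-[N-1]=q^N$; your only slip is a sign in the auxiliary identity, since $q^{-1}(q^N-q^{-N})-(q^{N-1}-q^{1-N})=q^{1-N}-q^{-1-N}=+q^{-N}(q-q^{-1})$, which is exactly what gives your (correct) conclusion $q^{-1}[N]-[N-1]=q^{-N}$.
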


\begin{proof} 
The equalities follow from the definition and Lemmas~\ref{l:unknots} and~\ref{l:bubbles}.
Since the proofs are similar, we give the argument only for the first equality: 
\[
\langle\Roneposloop\rangle^e_N = q \langle\Roneorientedres\rangle^e_N - \langle\Roneedgeresol\rangle^e_N
= (q [N] - [N-1]) \langle\arcuponaxis\rangle^e_N =  q^N \langle\arcuponaxis\rangle^e_N\,. 
\]
\end{proof}

\begin{prop}\label{p:R2-invariance}
The bracket polynomial $\langle\,\cdot\, \rangle^e_N$ satisfies the following identities:
\[
\langle\Rtwocrossings\rangle^e_N = \langle\Rtwonocrossings\rangle^e_N 
= \langle\Rtwocrossingsmirror\rangle^e_N
\]
\end{prop}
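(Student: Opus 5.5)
The plan is to expand both crossings in the R2 configuration using the defining relations of the bracket polynomial, and then to simplify the resulting four-term sum with Lemmas~\ref{l:bubbles}, \ref{l:edgesbubbles} and~\ref{l:square}. This mirrors the proof of the first identity in Proposition~\ref{p:IR23-invariance}, and also the classical argument of~\cite[Lemma~3.4]{MOY98}.

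In the move R2 of the Lobb--Watson list, the two crossings lie on the axis and have opposite signs, one positive and one negative. Resolving the positive crossing with weights $q^{-1}$ and $-1$, and the negative one with weights $q$ and $-1$, gives four terms:
\[
\langle\Rtwocrossings\rangle^e_N = \langle\Rtwoorientedresolution\rangle^e_N - q^{-1}\langle\Rtwoedgeorientedresolution\rangle^e_N - q\langle\Rtwoorientededgeresolution\rangle^e_N + \langle\Rtwodoubleedgeresolution\rangle^e_N .
\]
I then treat each term separately.

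The first term, with both crossings resolved in the oriented way, is planar isotopic to a diagram with a small axis loop. Depending on the orientation of the strands, either it coincides with $\langle\arcsdownuponaxis\rangle^e_N$, or it contains a circle meeting the axis; in the latter case it equals $[N]\langle\arcsdownuponaxis\rangle^e_N$ by Lemma~\ref{l:unknots} and the analogue of the multiplicativity of the bracket.

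The two mixed terms each contain a single thick edge on the axis, with a bubble attached. By Lemma~\ref{l:bubbles}, second and third equalities, each of them reduces to $[N-1]\langle\arcsdownuponaxis\rangle^e_N$. The last term, with two thick edges, is handled by the first (or third) equality of Lemma~\ref{l:square}. This gives $\langle\Rtwonocrossings\rangle^e_N + [N-2]\langle\arcsdownuponaxis\rangle^e_N$, with the roles of the two pictures possibly exchanged according to orientation.

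Collecting the coefficients of $\langle\arcsdownuponaxis\rangle^e_N$, I expect the combination $[N]-(q+q^{-1})[N-1]+[N-2]$, which vanishes by the standard quantum integer identity. What survives is $\langle\Rtwonocrossings\rangle^e_N$.

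The second equality will follow from the first by the mirror symmetry~\eqref{e:mirror-bracket}. Switching both crossings turns one R2 configuration into the other, and $\langle\Rtwonocrossings\rangle^e_N$ is mirror-invariant; this requires checking that the pieces without crossings are symmetric under $q\mapsto q^{-1}$, which holds because $[k]$ is invariant under this substitution. Alternatively, the same four-term computation with the weights swapped gives the second equality directly.

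The main obstacle is bookkeeping for orientations. R2 on the axis comes in a version with parallel strands and a version with antiparallel strands. In each case I must decide which planar picture arises after smoothing, and which of the three equalities of Lemma~\ref{l:square} applies. In the antiparallel case, the oriented smoothing produces a closed component, and the identity $\langle\Squaredoubleedgesoffaxis\rangle^e_N = \langle\arcsdownuponaxis\rangle^e_N + [N-2]\langle\Rtwonocrossings\rangle^e_N$ swaps the roles of the two pictures. I would check both versions, making sure the coefficient of the unwanted picture is always $[N]-(q+q^{-1})[N-1]+[N-2]=0$ and that of the desired picture is $1$. If the orientation-reversal statement~\eqref{e:rev-bracket} covers the remaining cases, I would use it to cut down the casework.
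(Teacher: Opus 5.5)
Your proposal is correct and follows essentially the paper's proof. You expand both on-axis crossings into four terms, evaluate the doubly oriented smoothing as $[N]\langle\arcsdownuponaxis\rangle^e_N$, the two mixed terms as $[N-1]\langle\arcsdownuponaxis\rangle^e_N$ via Lemma~\ref{l:bubbles}, and the doubly thick term via the first identity of Lemma~\ref{l:square}; everything except $\langle\Rtwonocrossings\rangle^e_N$ then cancels because $[N]-(q+q^{-1})[N-1]+[N-2]=0$, and the second equality follows from Equation~\eqref{e:mirror-bracket}.

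Two parts can be dropped. The orientation casework is unnecessary: since $\tau$ reverses orientation, the bigon of an on-axis R2 move is always coherently oriented, so only the first identity of Lemma~\ref{l:square} applies. The extra symmetry check in the mirror step is also unnecessary, because Equation~\eqref{e:mirror-bracket} is applied to the whole diagram, outside crossings included.
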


\begin{proof}
In view of Equation~\eqref{e:mirror-bracket}, it suffices to prove the first equality. 
\[
\langle\Rtwocrossings\rangle^e_N = q \langle\Rtwoorientedresolution\rangle^e_N  - \langle\Rtwoedgeresolution\rangle^e_N =
\]
\[
= q \left(q^{-1}\langle\Rtwodoubleresolution\rangle^e_N - \langle\Rtwoedgeorientedresolution\rangle^e_N\right) - 
q^{-1} \langle\Rtwoorientededgeresolution\rangle^e_N + \langle\Rtwodoubleedgeresolution\rangle^e_N =
\]
\[
= ([N] - (q+q^{-1})[N-1] + [N-2])\langle\arcsdownuponaxis\rangle^e_N + \langle\Rtwonocrossings\rangle^e_N  
= \langle\Rtwonocrossings\rangle^e_N  
\]
\end{proof}

\begin{lem}\label{l:box}
The moves 
\[ 
\LemmastatoneA\ \raisebox{1pt}{$\overset{Ma}{\longleftrightarrow}$}\ \raisebox{2pt}{$\LemmastatoneB$}
\qquad \text{and}\qquad 
\LemmastattwoA\ \raisebox{1pt}{$\overset{Mb}{\longleftrightarrow}$}\ \raisebox{2pt}{$\LemmastattwoB$} 
\]
are equivalent modulo Lobb-Watson {\rm R2}-moves and planar isotopies -- the grayed area indicates a fixed subdiagram. 
\end{lem}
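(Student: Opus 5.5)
I plan to prove this by an explicit exchange of subdiagrams using equivariant R2-moves. I cannot see the pictures, so I rely on what the surrounding text suggests. The moves $Ma$ and $Mb$ appear to be two versions of a Lobb–Watson move of type M (likely M1 or M3). They involve a strand, or a symmetric pair of strands, passing across a fixed grayed subdiagram near the axis, and they differ in which side of the grayed area the strands lie on, or in the over/under data. The statement asks that each move be derivable from the other by R2-moves and planar isotopies. I would therefore give two chains of diagrams: one starting from $Ma$ and producing $Mb$, and a symmetric one for the converse.

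\textbf{Step 1: reduce $Mb$ to $Ma$.} Starting from the left-hand side of $Mb$, I apply a Lobb–Watson R2-move on the axis. This creates a pair of canceling crossings between the arc crossing the axis and the strand adjacent to the grayed area. After this, the local picture around the grayed area becomes the left-hand side of $Ma$, up to an equivariant planar isotopy that slides the new bigon aside. Next I perform $Ma$. Finally I undo the auxiliary crossings with the inverse R2-move, possibly after another planar isotopy moving the bigon back across the axis. The result is the right-hand side of $Mb$.

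\textbf{Step 2: reduce $Ma$ to $Mb$.} I run the argument of Step 1 in reverse, or invoke its mirror/orientation-reversed version. This is legitimate because all the moves involved come in mirror pairs. Throughout, the grayed subdiagram is never touched: every modification happens in a neighbourhood of the strands entering and leaving it.

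\textbf{Main obstacle.} The difficulty is making sure each intermediate diagram is genuinely strongly involutive, and that the R2-moves used are among the allowed equivariant ones. These would be either an R2 on the axis, or a symmetric pair of off-axis Reidemeister II moves, which is IR2. Where a naive R2 would break the symmetry, I would first try to realise it as an IR2 pair applied symmetrically. If that fails, I would try to route the auxiliary strand around the grayed area through the point at infinity by a planar isotopy. Checking that the crossings introduced in the intermediate step carry the correct signs for the R2 to cancel is routine once the pictures are drawn.
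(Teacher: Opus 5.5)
Your plan matches the paper's argument: sandwich one move between on-axis Lobb--Watson R2 moves (an R2 move, then the other move, then a second R2 move). The paper writes out only the chain deriving $Ma$ from $Mb$, calls the converse similar, and uses no IR2 moves or other detours, so the fallbacks in your last paragraph are unnecessary. One correction to Step 2: reversing or mirroring the Step 1 chain does not give the converse, because that chain still applies $Ma$ (or its mirror) rather than $Mb$; you need a separate analogous chain that starts from the left side of $Ma$, uses an R2 move to exhibit the left side of $Mb$, applies $Mb$, and finishes with a second R2 move.
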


\begin{proof}
Assuming $Mb$, the following shows that $Ma$ holds. The converse is proved similarly. 
\[ 
\LemmastatoneA
\ \raisebox{1pt}{$\overset{R2}{\longleftrightarrow}$}
\ \LemmaproofA 
\ \raisebox{1pt}{$\overset{Mb}{\longleftrightarrow}$} 
\ \raisebox{2.5pt}{$\LemmaproofB$}
\ \overset{R2}{\longleftrightarrow} 
\ \raisebox{1pt}{$\LemmastatoneB$} 
\]
\end{proof}

\begin{prop}\label{p:M1-invariance}
The bracket polynomial $\langle\,\cdot\, \rangle^e_N$ satisfies the following identities:
\[
\begin{split}
\langle\Monecrossonfirst\rangle^e_N = \langle\Monecrossonsecond\rangle^e_N\,,
\quad 
\langle\Monecrossonfirstb\rangle^e_N = \langle\Monecrossonsecondb\rangle^e_N\,,
\\
\langle\Moneposcrossonfirst\rangle^e_N = \langle\Moneposcrossonsecond\rangle^e_N\,,
\quad
\langle\Moneposcrossonfirstb\rangle^e_N = \langle\Moneposcrossonsecondb\rangle^e_N\,.
\end{split}
\]
\end{prop}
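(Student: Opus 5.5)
We will prove the first identity in detail; the other three follow by the same computation or by symmetry. The two sides of each identity differ by an M1 move: a crossing on the axis slides past an equivariant pair of off-axis crossings. The second identity is the orientation-reversed or reflected version of the first, and we expect it to follow from Equation~\eqref{e:rev-bracket} together with the planar symmetry of the configuration. The third and fourth identities, which involve positive crossings, should follow from the first two via Equation~\eqref{e:mirror-bracket}, possibly combined with Lemma~\ref{l:box} to move between the variants $Ma$ and $Mb$ modulo R2-moves (already handled by Proposition~\ref{p:R2-invariance}).

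For the first identity we expand the crossings by the defining recursion. The on-axis crossing contributes a factor $q^{\pm1}$ times its oriented resolution, minus its thick-edge resolution. The off-axis pair contributes $q^{\pm 2}$ times its oriented resolution, minus its thick-edge resolution. This gives four terms on each side, which we match as follows.
\begin{enumerate}
\item The term with both crossings oriented-resolved gives planar isotopic graphs on the two sides, so these terms agree.
\item Each mixed term, with one crossing oriented-resolved and the other replaced by thick edges, is a graph in which a remaining crossing can be removed by a planar isotopy or by a Reidemeister-type move of a strand past a thick edge. The latter is handled by the usual MOY argument: expand, then apply Lemmas~\ref{l:bubbles}, \ref{l:edgesbubbles} and~\ref{l:square}.
\item After these reductions, the terms in which all three crossings are replaced by thick edges remain. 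These are exactly the two graphs in the first equality of Lemma~\ref{l:M1identity}, namely the on-axis thick edge next to the off-axis doubled thick edges in the two orders. The correction terms in that lemma should cancel against the leftover mixed terms.
\end{enumerate}

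Once the $q$-coefficients are tracked, we expect the difference of the two sides to reduce to
\[
\langle\Monedoubleedgeoffedgeon\rangle^e_N + \langle\arcedgeon\rangle^e_N - \langle\Moneedgeondoubleedgeoff\rangle^e_N - \langle\edgeonarc\rangle^e_N ,
\]
which vanishes by Lemma~\ref{l:M1identity}.

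The main obstacle is the bookkeeping in step~3. We must check that the powers of $q$ produced by an on-axis crossing (exponent $\pm1$) and by an off-axis pair (exponent $\pm2$) combine so that the mixed terms on the two sides differ exactly by the correction graphs in Lemma~\ref{l:M1identity}. We would first try to mimic the proof of the third identity in Proposition~\ref{p:IR23-invariance}, which had exactly this shape: expand both sides in the same resolution order, cancel the terms that coincide by planar isotopy, and verify that the residual terms are the ones in Lemma~\ref{l:M1identity}. If the coefficients do not match directly, we would use Lemma~\ref{l:edgesbubbles} to absorb the bubbles created by adjacent thick edges. The factors $[2]$ and $[2]_{q^2}$ from that lemma should reconcile the coefficients.
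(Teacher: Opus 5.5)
Your plan is the paper's proof. You expand the on-axis crossing and the off-axis pair on both sides, identify the fully oriented resolutions by planar isotopy, cancel the remaining terms with Lemma~\ref{l:M1identity}, and reduce the other three identities to the first via \eqref{e:rev-bracket}, \eqref{e:mirror-bracket} and Lemma~\ref{l:box}. The one thing to fix is your step~2: no crossings survive the expansion, and the only reduction needed is the second identity of Lemma~\ref{l:edgesbubbles}, which collapses the digon straddling the axis (in the term where the on-axis crossing is oriented-resolved and the off-axis pair is made thick) to $[2]$ times a single on-axis thick edge, so each side's mixed terms are $-q[2]$ and $-q^{2}$ times the two single-thick-edge graphs of Lemma~\ref{l:M1identity}, attached in opposite order on the two sides (the paper's display prints this $-q^{2}$ as $-q$, a misprint), and since $q[2]-q^{2}=1$ their difference is exactly the correction term that lemma supplies.
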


\begin{proof}
In view of Equations~\eqref{e:rev-bracket} and~\eqref{e:mirror-bracket} and Lemma~\ref{l:box}, 
it suffices to prove the first equality. By the definition of the bracket polynomial we get 
\[
\begin{split}
\langle\Monecrossonfirst\rangle^e_N = 
q^3 \langle\Moneallresolvedfirst\rangle^e_N - q [2] \langle\arcedgeon\rangle^e_N 
- q\langle\edgeonarc\rangle^e_N + \langle\Moneedgeondoubleedgeoff\rangle^e_N\,,
\\
\langle\Monecrossonsecond\rangle^e_N =
q^3 \langle\Moneallresolvedsecond\rangle^e_N - q \langle\arcedgeon\rangle^e_N 
- q [2] \langle\edgeonarc\rangle^e_N + \langle\Monedoubleedgeoffedgeon\rangle^e_N\,.
\end{split}
\]
The equality follows applying Lemma~\ref{l:M1identity}.
\end{proof}

\begin{prop}\label{p:M2invariance}
The bracket polynomial $\langle\,\cdot\, \rangle^e_N$ satisfies the following identities: 
\[
\langle\Mtwocrossingsneg\rangle^e_N = \langle\poscrossingonaxis\rangle^e_N
\quad\text{and}\quad 
\langle\Mtwocrossingspos\rangle^e_N = \langle\negcrossingonaxis\rangle^e_N.
\]
\end{prop}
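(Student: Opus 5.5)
We prove Proposition~\ref{p:M2invariance} in the same way as Propositions~\ref{p:R2-invariance} and~\ref{p:M1-invariance}: expand the crossings by the defining relations, then simplify the MOY graphs with Lemmas~\ref{l:bubbles}--\ref{l:square}. By Equation~\eqref{e:mirror-bracket}, the second identity is the mirror of the first, so it suffices to treat the first.

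The left-hand side of the first identity is the M2 configuration: a pair of off-axis crossings exchanged by the reflection, forming a clasp across the axis. We treat that pair as a single off-axis crossing and expand it with
\[
\langle\negcrossingoffaxis\rangle^e_N = q^2\langle\orientedresolutionoffaxis\rangle^e_N - \langle\edgeresolutionoffaxis\rangle^e_N .
\]
The sign is fixed by matching the negative sign in $\Mtwocrossingsneg$ with the $q^{\pm}$ factors on the right, so that the powers come out to $q^{-1}$.

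1. In the oriented resolution, the two strands through the axis become a single arc crossing the axis twice. Planar isotopy makes it look like the on-axis oriented resolution $\orientedresolutiononaxis$ with an extra kink. We expect that this term, possibly after one further application of the R1-type identities of Proposition~\ref{p:R1-behaviour} or Lemma~\ref{l:bubbles}, reduces to a multiple of $\langle\orientedresolutiononaxis\rangle^e_N$.

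2. In the thick-edge resolution, the pair of symmetric thick edges off the axis, together with the axis strands, should collapse to the on-axis thick edge $\edgeresolutiononaxis$. We do this with Lemma~\ref{l:edgesbubbles}: the identity $\langle\threeedgesaxisbubbles\rangle^e_N=[2]_{q^2}\langle\edgeresolutiononaxis\rangle^e_N$, or the $[2]$ version. Alternatively, if a square appears, we use the third identity of Lemma~\ref{l:square}.

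3. Collecting coefficients, we need the result
\[
q^{-1}\langle\orientedresolutiononaxis\rangle^e_N - \langle\edgeresolutiononaxis\rangle^e_N ,
\]
which is precisely $\langle\poscrossingonaxis\rangle^e_N$ by definition. The quantum-integer cancellations should be of the same type as $q[N]-[N-1]=q^N$ and $[N]-(q+q^{-1})[N-1]+[N-2]=0$.

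The main obstacle is the bookkeeping in steps 1 and 2: identifying correctly which MOY graph appears after resolving the symmetric pair, and checking that the bubble and square coefficients combine to exactly $q^{-1}$ and $-1$. If the direct collapse fails, the fallback is to expand both sides fully into the basic graphs $\orientedresolutiononaxis$ and $\edgeresolutiononaxis$ using Lemmas~\ref{l:square} and~\ref{l:M1identity}, and compare coefficients.
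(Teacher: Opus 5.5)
Your proposal has a genuine gap: it misreads the left-hand side, so steps 1--2 cannot produce the required coefficients.

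\textbf{What the diagram actually is.} The diagram $\Mtwocrossingsneg$ is not just a symmetric clasp. It is a \emph{negative crossing on the axis} together with a symmetric pair of \emph{positive} off-axis crossings; its writhe is $-1+2=+1$, matching the right-hand side. Consequently:
\begin{itemize}
\item The first expansion must use the positive off-axis rule, with coefficient $q^{-2}$, not the $q^{2}$ rule you chose.
\item The oriented resolution of that pair is exactly $\negcrossingonaxis$. It is not a kinked arc to be removed by Proposition~\ref{p:R1-behaviour} or Lemma~\ref{l:bubbles}, and it is not a multiple of $\langle\orientedresolutiononaxis\rangle^e_N$.
\item The thick-edge resolution $\Mtwoedgeresolutions$ still contains the on-axis crossing, so it does not collapse by a single bubble identity.
\end{itemize}
Your remark that ``the sign is fixed so that the powers come out to $q^{-1}$'' is reverse-engineered from the target. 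With your sign the oriented term carries $q^{2}$, and none of the identities you invoke supplies the correction that would be needed.

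\textbf{The missing step.} Expand the remaining on-axis negative crossing in both terms:
\[
\langle\Mtwocrossingsneg\rangle^e_N
= q^{-2}\bigl(q\langle\orientedresolutiononaxis\rangle^e_N - \langle\edgeresolutiononaxis\rangle^e_N\bigr)
- \bigl(q\langle\twoedgesbubbleaxis\rangle^e_N - \langle\threeedgesaxisbubbles\rangle^e_N\bigr).
\]
Then apply \emph{both} the $[2]$ and the $[2]_{q^2}$ identities of Lemma~\ref{l:edgesbubbles}. The coefficient of $\langle\orientedresolutiononaxis\rangle^e_N$ is $q^{-2}\cdot q=q^{-1}$. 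The coefficient of $\langle\edgeresolutiononaxis\rangle^e_N$ is $-q^{-2}-q[2]+[2]_{q^2}=-1$, and all three contributions are needed for this cancellation. No $N$-dependent identities enter: not Lemma~\ref{l:bubbles}, not Lemma~\ref{l:square}, and nothing like $q[N]-[N-1]=q^N$.

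Your reduction of the second identity to the first via Equation~\eqref{e:mirror-bracket} is correct and matches the paper.
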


\begin{proof}
Applying the recursive definition and Lemma~\ref{l:edgesbubbles} we get 
\[
\langle\Mtwocrossingsneg\rangle^e_N = q^{-2} \langle\negcrossingonaxis\rangle^e_N - 
\langle\Mtwoedgeresolutions\rangle^e_N = 
q^{-2}\left(q\langle\orientedresolutiononaxis\rangle^e_N - \langle\edgeresolutiononaxis\rangle^e_N\right) 
- \left(q\langle\twoedgesbubbleaxis\rangle^e_N - \langle\threeedgesaxisbubbles\rangle^e_N\right)
\]
\[
= q^{-1}\langle\orientedresolutiononaxis\rangle^e_N + 
\left(-q^{-2} - q [2] + [2]_{q^2}\right) \langle\edgeresolutiononaxis\rangle^e_N 
= q^{-1}\langle\orientedresolutiononaxis\rangle^e_N - \langle\edgeresolutiononaxis\rangle^e_N
= \langle\poscrossingonaxis\rangle^e_N
\]
This yields the first equality. The second equality can be proved applying Equation~\eqref{e:mirror-bracket}.
\end{proof}

\begin{lem}\label{l:edgeR3}
The bracket polynomial $\langle\,\cdot\, \rangle^e_N$ satisfies the following identities: 
\[
\langle\edgeIRthreefirst\rangle^e_N = \langle\edgeIRthreesecond\rangle^e_N\,,
\qquad
\langle\edgeIRthreefirsta\rangle^e_N = \langle\edgeIRthreeseconda\rangle^e_N\,.
\]
\end{lem}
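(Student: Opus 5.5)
The plan is to mimic the proof of the analogous statement in \cite[\S 3]{MOY98}, where a thick edge is slid across a strand, and to reduce the equivariant version to the local relations already established for the equivariant $N$-bracket. First I will use Equations~\eqref{e:rev-bracket} and~\eqref{e:mirror-bracket} to cut down the number of cases. If the second diagram in each pair is the mirror or the orientation reverse of the first (with the thick edge untouched, since it carries no crossing data), then the second identity follows from the first. In that case only one identity needs an actual computation. If not, the second identity is handled by the same computation with the exponents $q^{\pm1}, q^{\pm2}$ interchanged.

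For the first identity, I will expand the equivariant pair(s) of crossings on both sides using the recursive definition of $\langle\,\cdot\,\rangle^e_N$, leaving the thick edge in place. Each crossing on the axis contributes a factor $q^{\pm1}$ times its oriented resolution minus its edge resolution. Each symmetric pair off the axis contributes $q^{\pm2}$ times its oriented resolution minus its doubled edge resolution. This writes each side as a short linear combination of crossingless equivariant MOY graphs, with coefficients in $\{1,-q^{\pm1},-q^{\pm2},q^{\pm3},\dots\}$.

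The term in which every crossing receives its oriented resolution will coincide on the two sides after a planar isotopy, and so will several of the mixed terms. The remaining terms are graphs with two or three adjacent thick edges. On these I will use the first relation of Lemma~\ref{l:edgesbubbles} and the relations of Lemma~\ref{l:bubbles} to collapse bigons. I will then use Lemma~\ref{l:square} to replace squares of thick edges by sums of simpler graphs. After these reductions I expect the two sides to differ exactly by the two sides of one of the identities in Lemma~\ref{l:M1identity}; this is the same mechanism as in the proofs of Propositions~\ref{p:IR23-invariance} and~\ref{p:M1-invariance}. The scalar coefficients that appear should cancel through identities of the type $[N]-(q+q^{-1})[N-1]+[N-2]=0$ (and its $q^2$-version), or through $-q^{-2}-q[2]+[2]_{q^2}=0$ as in Proposition~\ref{p:M2invariance}.

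The main obstacle is the bookkeeping in matching the leftover graphs. I need to determine which resolutions produce on-axis thick edges and which produce symmetric off-axis pairs, so that the relevant case of Lemma~\ref{l:square} or Lemma~\ref{l:M1identity} (on-axis versus off-axis) applies with the correct quantum integer, $[\,\cdot\,]$ or $[\,\cdot\,]_{q^2}$. If a direct match fails, I will first also expand the thick edge's neighbourhood into its square form using Lemma~\ref{l:square} in reverse. The statement then becomes an IR3/R3-type identity between fully resolved graphs, which the second relation of Lemma~\ref{l:M1identity} is designed to handle.
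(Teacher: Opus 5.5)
Your direct expansion into MOY graphs differs from the paper's route, and it does go through, but the cancellation pattern you predict is wrong at the key point. The all-oriented terms do \emph{not} agree up to planar isotopy. Each is the graph containing only the original thick-edge pair, and on the two sides that pair joins different pairs of strands.

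Since both legs on one side of a thick edge point the same way, the configuration is braid-like. Write $E_1$, $E_2$ for the (doubled) local graphs with a thick-edge pair joining strands $1,2$, respectively $2,3$, with juxtaposition meaning stacking from bottom to top. Let $t\in\{q^{2},q^{-2}\}$ be the coefficient of the two crossing pairs. Then the two sides expand as
\[
E_1(t-E_2)(t-E_1)=t^2E_1-tE_1E_2-tE_1^2+E_1E_2E_1,\qquad (t-E_2)(t-E_1)E_2=t^2E_2-tE_1E_2-tE_2^2+E_2E_1E_2,
\]
or the analogous words, depending on which side of the thick edge the strand crosses.

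Only the terms $tE_1E_2$ match directly. The bigons collapse as $E_i^2=[2]_{q^2}E_i$ by the first identity of Lemma~\ref{l:edgesbubbles}, and $t^2-t[2]_{q^2}=-1$ in both cases. So the difference is $(E_2+E_1E_2E_1)-(E_1+E_2E_1E_2)$, which vanishes by the second identity of Lemma~\ref{l:M1identity}. The mismatched single-thick-edge terms are therefore exactly the one-thick-edge graphs of that relation. Lemmas~\ref{l:bubbles} and~\ref{l:square} play no role, and the scalar identities you list are not the ones that occur.

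The paper expands nothing. It reads the defining relation backwards: the thick-edge diagram equals $q^{2}$ times its oriented resolution, minus the diagram with a crossing pair restored. The oriented resolution is the same diagram on both sides up to planar isotopy. The two restored diagrams differ by an IR3 move, which is covered by Proposition~\ref{p:IR23-invariance} for the first identity. For the second identity the paper uses Proposition~\ref{p:IR123-invariance} together with the fact that IR3 preserves the writhe. It does not appeal to mirror or reversal symmetry, which suggests your symmetry reduction does not apply there.

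The paper's argument is two lines, but for the second identity it depends on full oriented IR3 invariance, hence on Polyak's generating set. Your computation is in effect the half of the IR3 computation in which one crossing pair is edge-resolved. It is self-contained at the level of the local graph relations. It also treats both identities uniformly, since every such fork slide is braid-like.
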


\begin{proof}
By the definition of the equivariant bracket and Proposition~\ref{p:IR23-invariance} we have 
\[
q^2 \langle\IRthreeres\rangle^e_N - \langle\edgeIRthreefirst\rangle^e_N = \langle\IRthreeoneProof\rangle^e_N = 
\langle\IRthreetwoProof\rangle^e_N = q^2 \langle\IRthreeres\rangle^e_N - \langle\edgeIRthreesecond\rangle^e_N,
\]
which immediately implies the first equality. The second equality follows similarly, except that one must apply 
Proposition~\ref{p:IR123-invariance} in place of Proposition~\ref{p:IR23-invariance}, using also the fact that IR3 
moves preserve the writhe and thus leave $\langle\,D\, \rangle^e_N = q^{-N w(D)} P^e_N(D)$ invariant.
\end{proof}

\begin{lem}\label{l:edgeM3}
The bracket polynomial $\langle\,\cdot\, \rangle^e_N$ satisfies the following identities:
\[
\langle\Mthreeedgeonfirst\rangle^e_N = \langle\Mthreeedgeonsecond\rangle^e_N\,,
\qquad
\langle\Mthreeedgeonfirsta\rangle^e_N = \langle\Mthreeedgeonseconda\rangle^e_N\,.
\]
\end{lem}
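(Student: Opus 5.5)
We will argue exactly as in the proof of Lemma~\ref{l:edgeR3}, with the M3 move playing the role of IR3. The idea is to realize each thick edge lying on the axis as the ``error term'' in the recursive definition of the bracket at a crossing on the axis. Concretely, the on-axis crossing relation gives
\[
\langle\poscrossingonaxis\rangle^e_N = q^{-1}\langle\orientedresolutiononaxis\rangle^e_N - \langle\edgeresolutiononaxis\rangle^e_N .
\]
For the first identity we therefore consider the two diagrams obtained from $\Mthreeedgeonfirst$ and $\Mthreeedgeonsecond$ by replacing the on-axis thick edge with a positive crossing on the axis. These two diagrams are exactly the two sides of an oriented Lobb--Watson M3 move.

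Expanding that crossing in each diagram gives
\[
q^{-1}\langle\Mthreeresonfirst\rangle^e_N - \langle\Mthreeedgeonfirst\rangle^e_N
\quad\text{and}\quad
q^{-1}\langle\Mthreeresonsecond\rangle^e_N - \langle\Mthreeedgeonsecond\rangle^e_N .
\]
The oriented resolutions $\Mthreeresonfirst$ and $\Mthreeresonsecond$ should be related by a sequence of equivariant R2/IR2-type moves and planar isotopy: once the central crossing is smoothed, the remaining strand slides freely past the oriented resolution. Hence Propositions~\ref{p:R2-invariance} and~\ref{p:IR23-invariance} show that these two terms coincide. The first identity of the lemma is then equivalent to the invariance of $\langle\,\cdot\,\rangle^e_N$ under the M3 move in the crossing form.

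\textbf{Main obstacle.} Invariance under M3 itself has not been established yet; this lemma is clearly meant as its input, so the argument above is circular as it stands. We therefore plan to reverse the roles and prove the graph identity directly.

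\textbf{Direct proof of the graph identity.} Both sides contain an off-axis symmetric pair of crossings passing over (or under) the thick on-axis edge. We expand that pair using the off-axis definition
\[
\langle\poscrossingoffaxis\rangle^e_N = q^{-2}\langle\orientedresolutionoffaxis\rangle^e_N - \langle\edgeresolutionoffaxis\rangle^e_N .
\]
In each of the resulting terms the strand crossing the thick edge becomes either an oriented smoothing, which can be isotoped to the other side, or a pair of symmetric thick edges adjacent to the on-axis thick edge. The terms of the second kind are compared using the graph identities of Lemmas~\ref{l:edgesbubbles}, \ref{l:square} and especially the first identity of Lemma~\ref{l:M1identity}, whose pictures ($\Monedoubleedgeoffedgeon$, $\Moneedgeondoubleedgeoff$) are precisely configurations of an on-axis thick edge adjacent to an off-axis thick pair.

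\textbf{Bookkeeping.} Where a crossing of the passing strand sits beside the thick edge, we first slide it across using Lemma~\ref{l:edgeR3} and Proposition~\ref{p:M1-invariance}. We then match terms of equal $q$-power on both sides, as in the proof of Proposition~\ref{p:M1-invariance}. If some residual terms fail to cancel, the fallback is to verify the identity directly at the level of equivariant $N$-states. We would fix colors on the boundary edges and check that the weighted state sums agree, as in the appendix proofs of Lemma~\ref{l:M1identity}.

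\textbf{Second identity.} This follows from the first by mirroring via Equation~\eqref{e:mirror-bracket} and reversing orientations via Equation~\eqref{e:rev-bracket}. Lemma~\ref{l:box} is used, if needed, to switch which side the passing strand lies on.
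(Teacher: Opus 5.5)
\textbf{There is a genuine gap.} You are right that your first route is circular. The paper proves M3 invariance from this lemma, and there the two oriented-resolution terms are related by M1 moves, not by R2/IR2 moves.

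Your replacement ``direct proof'' does not fix this, because it contains no step that actually forces the two sides to agree.
\begin{itemize}
\item \emph{Oriented smoothings do not match.} When you expand a symmetric pair of crossings between the passing strands and the thin edges at the on-axis thick edge, the oriented smoothing splices the passing strand into those thin edges. The two sides of the lemma differ precisely in which thin edges at the thick edge the passing strands cross. So after smoothing, the thick edge is joined to different boundary points of the local picture. The two smoothed graphs are different graphs rel boundary, not isotopic ones, and nothing in the paper equates their brackets. In particular, the claim that the smoothed strand ``can be isotoped to the other side'' is false.
\item \emph{Thick-edge terms do not match either.} Lemma~\ref{l:M1identity} is a four-term relation, not an equality of two graphs. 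It helps only if its correction graphs are produced elsewhere with matching coefficients, and your expansion does not produce them.
\item \emph{The fallback is not an argument.} With no matching terms, ``match terms of equal $q$-power'' has nothing to match, and ``compare the state sums directly'' just restates the claim.
\item \emph{Lemma~\ref{l:edgeR3} does not apply where you need it.} It is the thick-edge form of IR3: it moves strands past an \emph{off-axis} pair of thick edges. It cannot move anything across the on-axis thick edge, which is exactly what your bookkeeping step requires.
\end{itemize}

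\textbf{The missing idea} is to make Lemma~\ref{l:edgeR3} applicable by trading the on-axis thick edge for off-axis ones, at the cost of a factor $[2]$.
\begin{enumerate}
\item By the second identity of Lemma~\ref{l:edgesbubbles}, $[2]$ times the left-hand side equals the graph in which the on-axis thick edge is replaced by two symmetric off-axis thick edges joined by a digon crossing the axis.
\item In that graph the passing strands meet only off-axis thick edges. So an IR2 move (Proposition~\ref{p:IR23-invariance}) followed by Lemma~\ref{l:edgeR3} pushes them through.
\item Proposition~\ref{p:M1-invariance} then yields a diagram symmetric under the $\pi$-rotation about a vertical axis in the plane.
\item Running the rotated chain shows that $[2]$ times the right-hand side equals the same symmetric diagram.
\item Cancel $[2]$ in the domain $\bbZ[q^{\pm 1}]$.
\end{enumerate}

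For the second identity, the paper reruns this chain, using Equation~\eqref{e:rev-bracket} in the bubble step and Proposition~\ref{p:IR123-invariance} for the IR2 moves. It does not use a global mirror or reversal. Those operations leave relative orientations unchanged, so your reduction of the second identity to the first would also need a separate justification.
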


\begin{proof}
We claim that the following holds:
\[
[2]\langle\Mthreeedgeonfirst\rangle^e_N 
= \langle\Mthreebubblededgefirst\rangle^e_N 
= \langle\MthreebubblededgeRtwo\rangle^e_N 
= \langle\Mthreeapplylemmafirst\rangle^e_N 
= \langle\Mthreeedgesymmetric\rangle^e_N\,.
\]
Indeed, the first equality follows from Lemma~\ref{l:edgesbubbles}, the second one by Proposition~\ref{p:IR23-invariance}, 
the third equality from Lemma~\ref{l:edgeR3} and the last one by Proposition~\ref{p:M1-invariance}. 
Applying a $\pi$-rotation about a vertical axis in the plane to the diagrams above produces a similar sequence of equalities, leading~to
\[
[2]  \langle\Mthreeedgeonsecond\rangle^e_N  = \langle\Mthreeedgesymmetric\rangle^e_N = [2]\langle\Mthreeedgeonfirst\rangle^e_N.
\]
Since the ring $\mathbb{Z}[q, q^{-1}]$ is a domain, we can cancel the factor~ $[2]$, thereby establishing the first equality. 
The proof of the second equality follows the same reasoning, with the only difference being that Lemma~\ref{l:edgesbubbles} 
must be used in conjunction with Equation~\eqref{e:rev-bracket}, and Proposition~\ref{p:IR23-invariance} must be 
replaced by Proposition~\ref{p:IR123-invariance}.
\end{proof}

\begin{prop}\label{p:M3-invariance-1}
The bracket polynomial $\langle\,\cdot\, \rangle^e_N$ satisfies the following identities:
\[
\begin{split}
\langle\Mthreefirst\rangle^e_N = \langle\Mthreesecond\rangle^e_N\,,
\qquad
\langle\Mthreefirsta\rangle^e_N = \langle\Mthreeseconda\rangle^e_N\,,
\\
\langle\Mthreefirstb\rangle^e_N = \langle\Mthreesecondb\rangle^e_N\,,
\qquad
\langle\Mthreefirstd\rangle^e_N = \langle\Mthreesecondd\rangle^e_N\,.
\end{split}
\]
\end{prop}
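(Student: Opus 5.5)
We plan to prove Proposition~\ref{p:M3-invariance-1} following the same pattern as the R2 and M1 computations. We expand the crossings that are not on the axis via the recursive definition of the bracket polynomial, and then reduce the resulting identities to statements about diagrams containing thick edges, which are already handled by Lemmas~\ref{l:edgeR3} and~\ref{l:edgeM3}.

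\textbf{Reductions.} Each M3 picture contains one crossing on the axis and a symmetric pair of off-axis crossings, or a pair of arcs crossing an on-axis strand.

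By Equations~\eqref{e:rev-bracket} and~\eqref{e:mirror-bracket}, reversing all orientations or switching all crossings preserves the validity of the identity; mirroring uses $q\mapsto q^{-1}$, and both sides transform in the same way. Together with the symmetry of Lemma~\ref{l:box} (moves equivalent modulo R2, which is already an invariance by Proposition~\ref{p:R2-invariance}), this should cut the four identities down to at most two essentially different ones. The plan is therefore to prove the first identity in full and to obtain the others either by these symmetries or by the verbatim analogous computation.

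\textbf{Expansion.} For the first identity, we resolve the symmetric pair of off-axis crossings on both sides using
\[
\langle\poscrossingoffaxis\rangle^e_N = q^{-2}\langle\orientedresolutionoffaxis\rangle^e_N - \langle\edgeresolutionoffaxis\rangle^e_N.
\]
This gives $\langle D\rangle^e_N = q^{\pm 2}\langle D_0\rangle^e_N - \langle D_e\rangle^e_N$ for each side, where:
\begin{itemize}
\item $D_0$ is the oriented resolution. After a planar isotopy the on-axis crossing is no longer interacting, so the two $D_0$ diagrams coincide. If an extra equivariant R2 or R1 configuration appears instead, Propositions~\ref{p:R2-invariance} and~\ref{p:R1-behaviour} identify them.
\item $D_e$ carries a symmetric pair of thick edges off the axis, with an on-axis crossing (or strand) sliding past them. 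This is precisely the situation of Lemma~\ref{l:edgeM3}, which gives $\langle D_e\rangle^e_N=\langle D_e'\rangle^e_N$.
\end{itemize}
Subtracting then yields the first equality.

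If the coefficient on the $D_0$ terms differs between the two sides, we instead expand the on-axis crossing, using Lemma~\ref{l:edgeR3} for the resulting thick on-axis edge. Should a mismatch of framing remain, we would absorb it through Proposition~\ref{p:M2invariance}.

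\textbf{Main obstacle.} The expected difficulty is the case where the on-axis strand passes over rather than under the pair. There the thick-edge configuration produced may not literally match the pictures in Lemma~\ref{l:edgeM3}. We would first try to convert it using Lemma~\ref{l:box} and an R2 move. Failing that, we would prove the needed thick-edge slide directly, by the same $[2]$-cancellation trick used in Lemma~\ref{l:edgeM3}: bubble the edge via Lemma~\ref{l:edgesbubbles}, slide each strand with Propositions~\ref{p:IR23-invariance} and~\ref{p:M1-invariance}, and then cancel $[2]$ in the domain $\bbZ[q^{\pm1}]$.
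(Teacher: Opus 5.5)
There is a genuine gap, and it starts with the configuration. What you describe (one on-axis crossing together with one symmetric off-axis pair) is the M1 configuration, whose expansion has leading coefficient $q^3 = q\cdot q^2$. The M3 pictures contain more than one crossing on the axis. The paper's proof shows this: it smooths an on-axis crossing, $\langle D\rangle^e_N = q\langle D_0\rangle^e_N - \langle D_e\rangle^e_N$. The oriented smoothing $D_0$ still contains an on-axis crossing with an off-axis pair, and it is carried to the other side by Proposition~\ref{p:M1-invariance}. The term $D_e$ now carries a thick edge \emph{on} the axis and is handled by the first identity of Lemma~\ref{l:edgeM3}.

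Your main step breaks at exactly this point. Lemma~\ref{l:edgeM3} is a statement about an on-axis thick edge: its proof starts from the on-axis bubble identity (second equality of Lemma~\ref{l:edgesbubbles}) and cancels $[2]$, not $[2]_{q^2}$. Smoothing the off-axis pair, as you propose, produces a symmetric pair of off-axis thick edges with the on-axis crossings still present. Neither Lemma~\ref{l:edgeM3} nor Lemma~\ref{l:edgeR3} (an IR3-type slide away from the axis) covers that configuration. Your claim that the two $D_0$ diagrams agree up to planar isotopy or R1/R2 is also unsupported, since the on-axis crossings survive the smoothing and relating the two sides needs a mixed move.

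Your fallback is the right idea with the wrong tools. After expanding the on-axis crossing, you need M1-invariance for the oriented smoothing, and Lemma~\ref{l:edgeM3} (not Lemma~\ref{l:edgeR3}) for the on-axis thick edge.

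Finally, the other three identities do not follow from global mirror/reversal plus Lemma~\ref{l:box}. Only the second is the $q\mapsto q^{-1}$ version of the first. The third needs the first together with IR2-invariance of $\langle\cdot\rangle^e_N$; this holds because IR2 preserves the writhe, so Proposition~\ref{p:IR123-invariance} applies. The fourth is deduced from the third via Proposition~\ref{p:IR123-invariance} and M2-invariance (Proposition~\ref{p:M2invariance}) applied at an on-axis crossing. These variants differ in local crossing data that no global symmetry produces.
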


\begin{proof}
By Proposition~\ref{p:M1-invariance} and the first identity of Lemma~\ref{l:edgeM3} we have 
\[
\langle\Mthreefirst\rangle^e_N = q \langle\Mthreeresonfirst\rangle^e_N 
- \langle\Mthreeedgeonfirst\rangle^e_N = q \langle\Mthreereson\rangle^e_N - 
\langle\Mthreeedgeonsecond\rangle^e_N = 
\]
\[
q \langle\Mthreeresonsecond\rangle^e_N - \langle\Mthreeedgeonsecond\rangle^e_N = 
\langle\Mthreesecond\rangle^e_N\,. 
\]
This completes the proof of the first equality. The second follows in the same way, simply by replacing 
$q$ with $q^{-1}$ in the argument above. To prove the third equality, observe that by Proposition~\ref{p:IR123-invariance}, 
the bracket polynomial  $\langle\,\cdot\, \rangle^e_N = q^{-N w(D)} P^e_N$ is invariant under any oriented IR2 move. 
Therefore, applying the first equality together with IR2 invariance, we obtain 
\[
\langle\Mthreefirstb\rangle^e_N = 
\langle\Mthreebproof\rangle^e_N =
\langle\Mthreesecondb\rangle^e_N\,.
\]
The fourth equality follows from the third by combining Propositions~\ref{p:IR123-invariance} and~\ref{p:M2invariance}. 
Specifically, one begins by applying M2 invariance at the rightmost crossing on the axis in the left-hand side diagram.
\end{proof}

\begin{prop}\label{p:M3-invariance-2}
The bracket polynomial $\langle\,\cdot\, \rangle^e_N$ satisfies the following identities:
\[
\begin{split}
\langle\Mthreefirstrev\rangle^e_N = \langle\Mthreesecondrev\rangle^e_N\,,
\qquad
\langle\Mthreefirstarev\rangle^e_N = \langle\Mthreesecondarev\rangle^e_N\,,
\\
\langle\Mthreefirstbrev\rangle^e_N = \langle\Mthreesecondbrev\rangle^e_N\,,
\qquad
\langle\Mthreefirstdrev\rangle^e_N = \langle\Mthreeseconddrev\rangle^e_N\,.
\end{split}
\]
\end{prop}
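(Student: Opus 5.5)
The plan is to reduce Proposition~\ref{p:M3-invariance-2} to Proposition~\ref{p:M3-invariance-1}. The diagrams in the new statement differ from those of Proposition~\ref{p:M3-invariance-1} only in how the off-axis pair of strands is oriented relative to the strand crossing the axis. So the same skein-theoretic argument should go through once we have the ``reversed'' versions of the auxiliary identities. Concretely, for the first equality I would resolve the crossing on the axis by the recursive definition of $\langle\,\cdot\,\rangle^e_N$:
\[
\langle \text{LHS}\rangle^e_N = q^{\pm1}\langle \text{oriented resolution}\rangle^e_N - \langle \text{thick edge on axis}\rangle^e_N,
\]
and do the same on the right-hand side.

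I would then show that each pair of corresponding terms agrees.
\begin{itemize}
\item \emph{Oriented-resolution terms.} After resolving, the on-axis strand no longer crosses, and the two diagrams differ by an M1-type slide of the off-axis symmetric pair. These terms are therefore equal by Proposition~\ref{p:M1-invariance}, which covers all the relevant orientation patterns; where needed I would combine it with Lemma~\ref{l:box} and Equation~\eqref{e:rev-bracket}.
\item \emph{Thick-edge terms.} For these I need the reversed analogue of Lemma~\ref{l:edgeM3}, with the on-axis thick edge oriented the other way relative to the off-axis pair.
\end{itemize}

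To obtain this reversed analogue I would repeat the proof of Lemma~\ref{l:edgeM3} verbatim:
\begin{itemize}
\item inflate the thick edge into a bubble using Lemma~\ref{l:edgesbubbles} together with Equation~\eqref{e:rev-bracket}, which costs a factor $[2]$;
\item pass the off-axis pair across using IR2 (Proposition~\ref{p:IR23-invariance}, or Proposition~\ref{p:IR123-invariance} with writhe bookkeeping);
\item apply Lemma~\ref{l:edgeR3} for the edge--IR3 move and Proposition~\ref{p:M1-invariance};
\item use the $\pi$-rotation symmetry to reach the mirrored side;
\item cancel $[2]$, which is allowed because $\mathbb{Z}[q^{\pm1}]$ is a domain.
\end{itemize}

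With the first equality established, the remaining three follow as before:
\begin{itemize}
\item the second by substituting $q\mapsto q^{-1}$, i.e.\ Equation~\eqref{e:mirror-bracket};
\item the third from the first by inserting an IR2 move, which is legitimate since $\langle D\rangle^e_N=q^{-Nw(D)}P^e_N(D)$ is IR2-invariant by Proposition~\ref{p:IR123-invariance} and IR2 preserves the writhe;
\item the fourth from the third using M2 invariance (Proposition~\ref{p:M2invariance}) at the rightmost on-axis crossing, together with Proposition~\ref{p:IR123-invariance}.
\end{itemize}

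An alternative I would check first is whether each reversed diagram is obtained from an unreversed one of Proposition~\ref{p:M3-invariance-1} by a global orientation reversal (Equation~\eqref{e:rev-bracket}) composed with a planar rotation or reflection commuting with the axis symmetry. If so, the proposition is immediate. I expect this to fail, because reversing all orientations also reverses the relative orientation, so the genuine work is the reversed version of Lemma~\ref{l:edgeM3}.

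The main obstacle is making sure that the reversed Lemma~\ref{l:edgeM3} really only uses moves already established. In particular, the intermediate thick-edge configuration must be one covered by Lemma~\ref{l:edgeR3} (its second identity) and by the second form of Lemma~\ref{l:edgesbubbles}. If some orientation of the bubble is not covered, I would derive it the same way Lemma~\ref{l:edgeR3} was derived: expand a crossing in the already-proven IR3 invariance (Proposition~\ref{p:IR123-invariance}) and cancel the common oriented-resolution term.
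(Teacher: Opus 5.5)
For the first two identities your plan is the paper's proof. The paper resolves the on-axis crossing, matches the oriented-resolution terms with Proposition~\ref{p:M1-invariance}, and matches the thick-edge terms with Lemma~\ref{l:edgeM3}. The ``reversed analogue'' of Lemma~\ref{l:edgeM3} that you set out to prove is already its second identity. The paper proves it exactly as you sketch: Lemma~\ref{l:edgesbubbles} combined with \eqref{e:rev-bracket}, and Proposition~\ref{p:IR123-invariance} in place of Proposition~\ref{p:IR23-invariance}. So that part of your plan is a re-derivation, not new work.

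For the last two identities you take a different route.
\begin{itemize}
\item You rerun the IR2 and M2 arguments of Proposition~\ref{p:M3-invariance-1}, starting from the reversed first identity.
\item The paper instead obtains them directly from the third and fourth identities of Proposition~\ref{p:M3-invariance-1} via Lemma~\ref{l:box}. That is, it conjugates the unreversed moves by on-axis R2 moves, which leave the bracket unchanged by Proposition~\ref{p:R2-invariance}.
\end{itemize}

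The paper's route is shorter. It reuses identities already proved, and it avoids drawing reversed versions of the intermediate IR2 diagram and re-checking the M2 step in the new orientation. Your route stays inside the reversed family and works as well. It needs only two facts, both available: Proposition~\ref{p:IR123-invariance} covers every orientation of IR2 while IR2 preserves the writhe, and Proposition~\ref{p:M2invariance} together with \eqref{e:rev-bracket} handles the on-axis crossing in either orientation.

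You also considered relating reversed and unreversed M3 moves by some symmetry, and rejected global orientation reversal. That instinct was sound. The device that does the job is Lemma~\ref{l:box} (R2 conjugation), not a symmetry of the plane combined with \eqref{e:rev-bracket}.
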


\begin{proof}
The first two identities can be proved in the same way as those in Proposition~\ref{p:M3-invariance-1}, 
using Proposition~\ref{p:M1-invariance} together with Lemma~\ref{l:edgeM3}. The last two identities then 
follow directly from the corresponding identities in Proposition~\ref{p:M3-invariance-1}, by applying Lemma~\ref{l:box}.
\end{proof}

\begin{thm}\label{t:LW-invariance}
The equivariant $\mathfrak{sl}_N$ polynomial $P^e_N(D)$ is invariant under all the oriented Lobb-Watson moves. 
\end{thm}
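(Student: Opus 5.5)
The plan is to assemble the invariance statement from the propositions already proved, going through the Lobb–Watson moves IR1, IR2, IR3, R1, R2, M1, M2, M3 one at a time. Recall that $P^e_N(D)=q^{Nw(D)}\langle D\rangle^e_N$. Each move of type IR2, IR3, R2, M1 and M3 preserves the writhe, and so does an M2 move, since it replaces two crossings of one sign by a single axis crossing of the other sign together with an equivariant off-axis pair. (Whether M2 preserves $w$ is something I would verify directly from the pictures, together with the sign conventions.) For these moves it is therefore enough to show that the bracket $\langle\,\cdot\,\rangle^e_N$ itself is unchanged. The IR1 and R1 moves change the writhe, and for these the writhe factor has to cancel the scalar picked up by the bracket.

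\textbf{IR-moves.} The moves IR1, IR2 and IR3 are already covered by Proposition~\ref{p:IR123-invariance}, which relies on Polyak's generating set of oriented Reidemeister moves.

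\textbf{R1.} An R1 move on the axis adds a single crossing of sign $\pm1$, which changes $w$ by $\pm1$. By Proposition~\ref{p:R1-behaviour} the bracket changes by the factor $q^{\mp N}$, so this cancels against $q^{Nw}$. All four local pictures need to be checked, covering both orientations and both sides of the axis.

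\textbf{R2.} The moves R2 follow from Proposition~\ref{p:R2-invariance}.

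\textbf{M1.} The moves M1 follow from Proposition~\ref{p:M1-invariance}.

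\textbf{M2.} For M2 I would use Proposition~\ref{p:M2invariance}, checking that the writhe bookkeeping agrees. On the left, the two off-axis crossings contribute $\mp2$ and the axis crossing contributes $\pm1$. On the right there is one crossing contributing $\mp1$. So the writhe is preserved, as claimed above.

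\textbf{M3.} For M3, Propositions~\ref{p:M3-invariance-1} and~\ref{p:M3-invariance-2} cover the versions with a negative and a positive crossing on the axis, and with both orientations of the strands.

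The main obstacle is completeness of the case list. The Lobb–Watson moves are stated in \cite[Figure~9]{LW21} unoriented and with a vertical axis, so I must show that every oriented version of each move reduces to one of the displayed identities. To do this I would use the following symmetries:
\begin{itemize}
\item global orientation reversal, which leaves the bracket unchanged by Equation~\eqref{e:rev-bracket};
\item the mirror relation, Equation~\eqref{e:mirror-bracket}, which replaces $q$ by $q^{-1}$ and is compatible with $w\mapsto -w$;
\item rotation by $\pi$ in the plane, which commutes with the reflection in the axis and preserves the bracket, since the equivariant $N$-bracket is defined up to planar isotopy and rotation numbers are preserved by orientation-preserving maps of the plane;
\item Lemma~\ref{l:box}, which trades the Ma and Mb forms of a move modulo R2.
\end{itemize}
Using these I would list the orientations and crossing signs allowed for each of M1, M2 and M3 and match each one to a proved identity. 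Any leftover configurations, for instance M3 moves in which the strand passing through the axis runs the other way, would be obtained by conjugating a proved case with IR2 and R2 moves, exactly as in the proof of the third identity of Proposition~\ref{p:M3-invariance-1}.
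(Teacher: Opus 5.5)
Your proposal is correct and is essentially the paper's argument: the theorem is assembled from Propositions~\ref{p:IR123-invariance}, \ref{p:R1-behaviour}, \ref{p:R2-invariance}, \ref{p:M1-invariance}, \ref{p:M2invariance}, \ref{p:M3-invariance-1} and~\ref{p:M3-invariance-2}, with orientation reversal and the mirror relation (Equations~\eqref{e:rev-bracket} and~\eqref{e:mirror-bracket}) covering the remaining oriented versions. Your explicit writhe bookkeeping, namely the R1 cancellation and the check that M2 preserves $w$, spells out what the paper leaves implicit; however, your one-line description of M2 in the opening paragraph is garbled, and the later count is the correct one (an off-axis pair plus an axis crossing of the opposite sign on one side, versus a single axis crossing of the pair's sign on the other).
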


\begin{proof}
Invariance under the oriented moves IR1, IR2, and IR3 is established in Proposition~\ref{p:IR123-invariance}. 
Combining Equations~\eqref{e:rev-bracket} and~\eqref{e:mirror-bracket} with Propositions~\ref{p:R1-behaviour}, 
\ref{p:R2-invariance}, \ref{p:M1-invariance}, \ref{p:M2invariance}, \ref{p:M3-invariance-1}, and~\ref{p:M3-invariance-2}, 
we conclude that the equivariant $\mathfrak{sl}_N$ polynomial $P^e_N(L)$ is invariant under all oriented R1, R2, R3, M1, M2, and M3 moves. 
This completes the proof. 
\end{proof}

\begin{rem}
In \cite{Sa86}, Sakuma defines two strongly invertible knots to be equivalent if there exists an orientation-preserving diffeomorphism of $S^3$ that commutes with the involution and sends one knot to the other.
Invariance under Lobb--Watson moves is slightly weaker. Indeed, by \cite[Proposition 2.4]{LW21}, two diagrams represent Sakuma-equivalent knots if and only if they are related by a sequence of Lobb--Watson moves, possibly followed by a $\pi$-rotation about a point on the axis.
Since $P^{e}_N$ is defined as a suitable rescaling of a state sum, and such a $\pi$-rotation affects neither the rescaling factor nor the state sum, it follows that $P^{e}_N$ is invariant under Sakuma equivalence of strongly invertible knots. 
\end{rem}

Let $\Lambda$ denote the ring $\bbZ[q,q^{-1}]$ of Laurent polynomials with integer 
coefficients, endowed with the ring involution $*:\Lambda\to\Lambda$ sending $q$ to $q^{-1}$. 
Let $(R,\varphi)$ be a pair consisting of an integral, unital domain 
$R$ with an involution $*:R\to R$ together with a homomorphism $\varphi:\Lambda\to R$ of rings 
with involution. We regard $R$ as a $\Lambda$-algebra via $\varphi$. 
When $\varphi$ is understood we write $\lambda\, r$ instead of $\varphi(\lambda)\, r$. 

Let $\sid$ be the set of strongly involutive diagrams and $N\geq 2$ an integer. 
Given $D,\,D'\in \sid$ we write $D\stackrel{\rm LW}{\sim} D'$ if the two diagrams are 
related by a finite sequence of Lobb-Watson moves and equivariant planar isotopies. 
Given a pair $(R,\varphi)$ as above, a map \(I_N \colon \sid \to R\) 
will be called an \emph{equivariant skein invariant of type $(R,\varphi,N)$} 
if it satisfies 
\begin{equation}\label{eq:lwinvariance}
I_N(D) = I_N(D')\quad\text{whenever}\quad D\stackrel{\rm LW}{\sim} D' 
\end{equation}
as well as the following properties: 
\begin{gather}
q^N I_N(\negcrossingonaxis) - q^{-N} I_N(\poscrossingonaxis) = (q-q^{-1}) I_N(\orientedresolutiononaxis) \label{eq:PNskeinonaxis}\\
q^{2N}I_N(\negcrossingoffaxis) - q^{-2N}I_N(\poscrossingoffaxis) = (q^2 - q^{-2})I_N(\orientedresolutionoffaxis) \label{eq:PNskeinoffaxis}\\
I_N(\Claspa) = (q+q^{-1})q^N I_N(\poscrossingonaxis) - q^{2N} I_N(\orientedresolutiononaxis) \label{eq:PNskeincoerentclasp}\\
I_N(\acirc{}) = \varphi([N])\quad\text{and}\quad I_N(\acircs{}) = \varphi([N]_{q^2})\label{eq:PNskeinnormalisation}\\
I_N(D\sqcup_e D') =  I_N(D) I_N(D')\label{eq:PNskeinunion}\\
I_N(D^*) =  *\, I_N(D) \label{eq:PNskeinmirror}
\end{gather}
where $D^*$ is the mirror image of $D$ and $\sqcup_{e}$ denotes the equivariant disjoint union.

\begin{prop}\label{p:skeinPN}
For each $N\geq 2$, the polynomial invariant $P^e_N$ is an equivariant skein invariant 
of type $(\Lambda,{\rm id}_\Lambda,N)$. 
\end{prop}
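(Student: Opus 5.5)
The plan is to verify the defining properties of an equivariant skein invariant of type $(\Lambda,{\rm id}_\Lambda,N)$ one at a time for $P^e_N(D)=q^{Nw(D)}\langle D\rangle^e_N$. Two of them are already in hand. Invariance~\eqref{eq:lwinvariance} under Lobb--Watson moves and equivariant planar isotopies is Theorem~\ref{t:LW-invariance}; planar isotopy changes neither the state sum nor the writhe. For normalisation~\eqref{eq:PNskeinnormalisation}, both unknot diagrams have writhe $0$, so Lemma~\ref{l:unknots} gives the values $[N]$ and $[N]_{q^2}$ directly.

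\emph{Skein relations~\eqref{eq:PNskeinonaxis} and~\eqref{eq:PNskeinoffaxis}.} Let $w_0$ be the writhe of the oriented resolution $D_0$. An on-axis crossing change gives $w(D_\pm)=w_0\pm1$. Hence
\[
q^NP^e_N(D_-)-q^{-N}P^e_N(D_+)=q^{Nw_0}\bigl(\langle D_-\rangle^e_N-\langle D_+\rangle^e_N\bigr).
\]
By the recursive definition, the thick-edge terms cancel, and the difference of the remaining terms is $(q-q^{-1})\langle D_0\rangle^e_N$. The off-axis case is the same computation. There the symmetric pair changes the writhe by $\pm2$, and the coefficients are $q^{\pm2}$, which yields the factor $q^2-q^{-2}$.

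\emph{Clasp relation~\eqref{eq:PNskeincoerentclasp}.} Write $D_{++}$ for the clasp diagram, whose two crossings form a symmetric off-axis pair, and $D_+$, $D_0$ as in Proposition~\ref{p:skeinLWCpoly}. The writhes are $w(D_{++})=w_0+2$ and $w(D_+)=w_0+1$. After dividing by $q^{N(w_0+2)}$, the claim becomes
\[
\langle D_{++}\rangle^e_N=(q+q^{-1})\langle D_+\rangle^e_N-\langle D_0\rangle^e_N .
\]
I then substitute the definition $\langle D_+\rangle^e_N=q^{-1}\langle D_0\rangle^e_N-\langle E\rangle^e_N$, where $E$ is $D_+$ with the on-axis crossing replaced by a thick edge. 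The right-hand side becomes $q^{-2}\langle D_0\rangle^e_N-[2]\langle E\rangle^e_N$.

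On the left, I expand the off-axis pair of $D_{++}$ with the off-axis recursive rule. This gives $q^{-2}$ times the oriented resolution, which is planar isotopic to $D_0$, minus the diagram with a symmetric pair of thick edges around the axis. The latter should be exactly the ``two edges bubbled on the axis'' picture. The second identity of Lemma~\ref{l:edgesbubbles} then turns it into $[2]\langle E\rangle^e_N$, and the two sides agree.

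I expect this step to be the main obstacle. It needs a careful match between the local pictures: the clasp's off-axis resolution must genuinely be isotopic to $D_0$, and the thick-edge resolution must coincide with the configuration in Lemma~\ref{l:edgesbubbles}, with the orientations matching. If the orientations do not match, I would first apply Equation~\eqref{e:rev-bracket}.

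\emph{Union and mirror.} For~\eqref{eq:PNskeinunion}, I resolve all crossings. The writhe is additive, and resolutions of $D\sqcup_eD'$ correspond to pairs of resolutions of $D$ and $D'$. For a split graph, equivariant states factor as pairs of states. Vertex weights multiply, and rotation numbers add, since the curve families are disjoint and separated. Hence the bracket is multiplicative. For~\eqref{eq:PNskeinmirror}, Equation~\eqref{e:mirror-bracket} together with $w(D^*)=-w(D)$ gives
\[
P^e_N(D^*)(q)=q^{-Nw(D)}\langle D\rangle^e_N(q^{-1})=P^e_N(D)(q^{-1}).
\]
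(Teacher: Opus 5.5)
Your proposal is correct and follows essentially the same route as the paper: for the clasp relation, the paper also expands the symmetric off-axis pair of $D_{++}$ with the off-axis rule, turns the thick-edge term into $[2]$ times the on-axis thick-edge diagram via the second identity of Lemma~\ref{l:edgesbubbles}, and regroups using the on-axis definition of $\langle D_+\rangle^e_N$ (equivalently, $q^{-2}-q^{-1}[2]=-1$). The remaining properties, which the paper dismisses as ``exactly as in the non-equivariant case,'' are handled by the same writhe bookkeeping you give.
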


\begin{proof}
Property~\eqref{eq:PNskeinnormalisation} follows immediately from the definition.
Since Properties~\eqref{eq:PNskeinonaxis}, \eqref{eq:PNskeinoffaxis},  \eqref{eq:PNskeinunion}, 
and \eqref{eq:PNskeinmirror} are proven exactly as in the non-equivariant case, we concentrate 
on the remaining Property~\eqref{eq:PNskeincoerentclasp}. 
Let $D_{++}$, $D_{+}$ and $D_{0}$ be three strongly involutive diagrams differing only in small ball near the axis as illustrated in Figure~\ref{fig:skeinclaspcoerent}.
\begin{figure}[ht]
\centering
\begin{subfigure}[h]{0.3\textwidth}
\centering
\includegraphics[scale=0.5]{figurepdf/ClaspCoer.pdf}
\caption*{$D_{++}$}
\end{subfigure}
~
\begin{subfigure}[h]{0.3\textwidth}
\centering
\includegraphics[scale=0.5]{figurepdf/poscrossingonaxis.pdf}
\caption*{$D_{+}$}
\end{subfigure}
~
\begin{subfigure}[h]{0.3\textwidth}
\centering
\includegraphics[scale=0.5]{figurepdf/orientedresolutiononaxis.pdf}
\caption*{$D_{0}$}
\end{subfigure}
\caption{The local differences in the diagrams involved in Property~\eqref{eq:PNskeincoerentclasp}.}
\label{fig:skeinclaspcoerent}
\end{figure}

Then, we have
\begin{align*} P_{N}^e(D_{++}) &= q^{Nw(D_{++})}\langle \Claspa \rangle_{N}^e =q^{Nw(D_{++})}\left( q^{-2}\langle \orientedresolutiononaxis\rangle_{N}^e - \langle\twoedgesbubbleaxis\rangle^e_N \right) =\\ & =q^{Nw(D_{++})}\left( (q^{-2}- q^{-1}[2])\langle \orientedresolutiononaxis\rangle_{N}^e + [2] \left(q^{-1}\langle \orientedresolutiononaxis\rangle_{N}^e-  \langle\edgeresolutiononaxis\rangle^e_N \right)\right)
\\ & =q^{Nw(D_{++})}\left( -\langle \orientedresolutiononaxis\rangle_{N}^e + [2] \langle\poscrossingonaxis\rangle^e_N \right) = -q^{2N}P_{N}^e(D_{0}) + (q+q^{-1})q^{N}P_{N}^e(D_{+})
\end{align*}
which completes the proof.
\end{proof}

\begin{defn}\label{d:slnpol}
In view of Theorem~\ref{t:LW-invariance}, given a strongly involutive link $L$ we define its 
\emph{equivariant $\mathfrak{sl}_N$ polynomial $P^e_N(L)$} to be the equivariant 
$\mathfrak{sl}_N$ polynomial $P^e_N(D)$, where $D$ is any diagram of $L$. 
\end{defn}

\section{The Polynomial Invariant \texorpdfstring{$P^e$}{Pe}}\label{s:Pe}

In this section we prove uniqueness of the polynomials $P_N^e$, 
construct the auxiliary two-variable polynomial invariant $Q$ of strongly involutive links 
and prove Theorem~\ref{t:main}.
 
If a strongly involutive diagram $D\in\sid$ represents a strongly involutive link $L$,  
we denote by~$\ell(D)$, respectively $2 m(D)$, the number of connected components of $L$ with, 
respectively without, fixed points. With these conventions in place, we construct a map 
$Q:\sid\to\bbZ[a^{\pm 1}, z]$ such that $Q(D)=Q(D')$ if $D\stackrel{\rm LW}{\sim} D'$, 
and show that it satisfies the following properties:
\begin{equation}\label{eq:skeinonaxis}
a\, Q\left(\negcrossingonaxis\right) - a^{-1} Q\left(\poscrossingonaxis\right)
  = z^{m - m_0}\, Q\left(\orientedresolutiononaxis\right),
\end{equation}
where 
\[
m = m\left(\negcrossingonaxis\right) = m\left(\poscrossingonaxis\right),\quad 
m_0 = m\left(\orientedresolutiononaxis\right),
\]
\begin{equation} \label{eq:skeinoffaxis}
a^2 Q\left(\negcrossingoffaxis\right) - a^{-2} Q\left(\poscrossingoffaxis\right)
  = z^{m-m_0+1}\, Q\left(\orientedresolutionoffaxis\right),
\end{equation}
where
\[
m = m\left(\negcrossingoffaxis\right) = m\left(\poscrossingoffaxis\right),\quad 
m_0 = m\left(\orientedresolutionoffaxis\right),
\]
\begin{equation}\label{eq:skeinincoerentclasp}
Q\left(\Claspa\right)
  = a\, z^{m - m_0 +1}\, Q\left(\poscrossingonaxis\right) - a^2 Q\left(\orientedresolutiononaxis\right), 
\end{equation}
where 
\[
m = m\left(\Claspa\right) = m\left(\orientedresolutiononaxis\right),\quad 
m_0 = m\left(\poscrossingonaxis\right),
\]
\begin{gather} 
Q\left(\acirc{}\right) = 1\quad\text{and}\quad Q\left(\acircs{}\right) = a + a^{-1}, \label{eq:skeinnormalisation}\\
Q(D \sqcup_e D') = (a - a^{-1})\,  Q(D)\, Q(D'), \label{eq:skeinunion}\\
Q(D^*)(a,z) = (-1)^{\ell(L)+m(L)+1} Q(D)(a^{-1},-z). \label{eq:skeinmirror}
\end{gather}

\medskip
\noindent\textbf{Outline of the proof.}
The argument proceeds by reducing the general case to progressively simpler
configurations, combining equivariant skein relations with a double
induction on the complexity of the diagram. We denote by \(\fsid\subset\sid\) the subset of \emph{free} involutive diagrams, i.e.~those diagrams $D\in\sid$ with $\ell(D)=0$, and by \(\nfsid\subset\sid\) the subset of \emph{strongly invertible} involutive diagrams, i.e.~those diagrams with $m(D)=0$. 

We first introduce the notion of a \emph{good} diagram and show that
goodness propagates along equivariant crossing changes and skein
relations. This allows us to reduce the proof to a class of diagrams
that are simpler from the equivariant viewpoint.

Next, we prove that all free involutive diagrams are good and show that,
after suitable equivariant crossing changes, any diagram can be separated
into its strongly invertible and free parts. This reduces the problem to
the case of strongly invertible diagrams.

For strongly invertible diagrams, we eliminate crossings on the axis and
then argue by induction on the number $p^{\mathrm{on}}(D)$ of interleaving
connected pairs of fixed points. When $p^{\mathrm{on}}(D)=0$, the diagram
can be reduced to a crossingless unlink. In the general case, a minimal
interleaving configuration gives rise to a clasp, and the equivariant
skein relation reduces the crossing number. This completes the induction 
and yields the desired uniqueness result,
as well as the existence and uniqueness of the polynomial $Q(D)$.
\medskip

We denote by \(|D| = \ell(D) + 2m(D)\) the total number of connected components of $L$, by \(n(D)\) the total number of crossings of $D$, and by \(n^\mathrm{on}(D)\) the number of crossings of $D$ on the axis. A strongly involutive diagram is \emph{split} if it can be separated by an equivariant simple closed curve.

\begin{defn}\label{d:good}
A strongly involutive diagram $D\in\sid$ is \emph{good} if, 
for each $N\geq 2$, it satisfies the following two conditions:
\begin{itemize}
\item 
if $I_N$ is an equivariant skein invariant of type $(R,\varphi,N)$, then $I_N(D)=\varphi(P_N^e(D))$;
\item 
there exists a polynomial $Q(D)\in\bbZ[a^{\pm 1}, z]$ such that 
\begin{equation}\label{eq:good}
(q - q^{-1})^{\ell(D)} (q^2 - q^{-2})^{m(D)} P^e_N(D) = 
(q^N - q^{-N}) Q(D)\big|_{a=q^N,\ z=q^2-q^{-2}}
\end{equation}
\end{itemize}
\end{defn}

Let $\gsid \subseteq \sid$ denote the subset of good diagrams. We will prove that $\gsid = \sid$. This result 
has two important consequences. First, $P_N^e$ is the unique equivariant skein invariant of type $(\Lambda,{\rm id}_\Lambda, N)$ for each 
$N \ge 2$. Second, there exists a map $Q$ assigning to every strongly involutive link diagram $D$ a  
polynomial $Q(D) \in \mathbb{Z}[a^{\pm 1}, z]$ such that Equation~\eqref{eq:good} is satisfied.

\begin{lem}\label{l:specialize-vanish}
Let \(R\) be an integral domain. If
\[
P(a,q)\in R[a^{\pm1},q^{\pm1}]
\]
satisfies
\[
P(q^N,q)=0\in R[q^{\pm1}]
\]
for all integers \(N\ge 2\), then \(P(a,q)=0\).
\end{lem}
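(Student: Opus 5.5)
Write $P(a,q)=\sum_{i=r}^{s} a^{i} p_i(q)$ with $p_i\in R[q^{\pm1}]$, and suppose, for contradiction, that $P\neq 0$. Multiplying by a power of $a$ and of $q$ (both units, and the substitution $a=q^N$ turns $a^{k}$ into the unit $q^{Nk}$), we may assume $r=0$ and that every $p_i$ lies in $R[q]$. Among the nonzero coefficients, let $p_s$ be the one with the largest index. Let $d_i=\deg_q p_i$ for $p_i\neq 0$, and let $c_s\neq 0$ be the leading coefficient of $p_s$.

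After substitution we get $P(q^N,q)=\sum_i q^{Ni}p_i(q)$. The term $i$ contributes monomials of $q$-degree at most $Ni+d_i$. The top term $i=s$ contributes exactly the monomial $c_s q^{Ns+d_s}$ in degree $Ns+d_s$. For $i<s$ we have $Ni+d_i\le N(s-1)+D$, where $D=\max_i d_i$.

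Now choose $N\ge 2$ with $N>D-d_s$. Then $Ns+d_s> N(s-1)+D$, so no other term reaches degree $Ns+d_s$. Hence the coefficient of $q^{Ns+d_s}$ in $P(q^N,q)$ is $c_s\neq 0$, contradicting $P(q^N,q)=0$. Therefore $P=0$.

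This argument only uses that $R$ is a nonzero ring, since it reads off a single coefficient and never multiplies two nonzero coefficients; the integral-domain hypothesis is not needed. Vanishing is also only required for infinitely many $N$, indeed for a single sufficiently large $N$. The only care needed is the initial normalization: replacing $P$ by $a^{-r}q^{k}P$ preserves both vanishing and nonvanishing, because $q^{-Nr}q^{k}$ is a unit in $R[q^{\pm1}]$. There is no real obstacle beyond this degree bookkeeping.
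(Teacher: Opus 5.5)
Your proof is correct and follows essentially the same route as the paper. Both normalize to a genuine polynomial in $a$ and $q$, substitute $a=q^N$, and note that for $N$ large the monomial of top $q$-degree comes only from $q^{Ns}p_s(q)$, so it cannot cancel; your extra remarks (a single sufficiently large $N$ suffices, and the integral-domain hypothesis is never used) are also correct.
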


\begin{proof}
Note that, since $a,\,q\in R[a^{\pm 1} ,q^{\pm 1}]$ and $q\in R[q^{\pm 1}]$ are invertible, there is no loss of generality in working with $a^r q^s P(a,q)\in R[a,q]$ in lieu of $P(a,q)$. Thus, we may assume that~$P(a,q)$ is a polynomial, and not just a Laurent polynomial, in $a$ and $q$. Therefore, assuming by contradiction \( P(a,q) \neq 0\), we can write:
\[
P(a,q)=\sum_{i=0}^d a^i f_i(q),\qquad f_i(q)\in R[q],\qquad f_d\neq 0.
\]
Substituting \(a=q^N\), and using the hypothesis, gives
\[
0=P(q^N,q)=\sum_{i=0}^d q^{Ni} f_i(q)\in R[q]
\qquad\text{for all }N\ge 2.
\]
Let \(M_i=\deg f_i\). The maximal \(q\)-degree of the summand \(q^{Ni}f_i(q)\) equals \(Ni+M_i\).
For \(N\) sufficiently large, the maximum of \(Ni+M_i\) over \(i=0,\dots,d\) is achieved uniquely at \(i=d\)
(since \(Ni\) grows strictly with \(i\)). Hence, for all such \(N\), the highest-degree term of
\(\sum_{i=0}^d q^{Ni} f_i(q)\) comes only from \(q^{Nd}f_d(q)\) and cannot cancel with the other summands.
This forces \(f_d(q)=0\), a contradiction. Therefore \(P(a,q)=0\).
\end{proof}

\begin{rems}\label{rs:Q-LW} We collect here some easy consequences of the definitions which will be needed afterwards.
\begin{enumerate}
\item[(a)] If $D$ is a good, strongly involutive link diagram, then the polynomial $Q(D)$ 
satisfying Equation~\eqref{eq:good} is unique. In fact, if $Q_1(D)$ and $Q_2(D)$ are two 
such polynomials, the fact that $P(D) = Q_1(D) - Q_2(D)$ vanishes follows immediately applying 
Lemma~\ref{l:specialize-vanish} to the Laurent polynomial $P(a,q) = P(D)|_{z=q^2-
q^{-2}}\in\bbZ[a^{\pm 1},q^{\pm 1}]$ and using the fact that 
$q^2-q^{-2}\in\bbZ[a^{\pm 1},q^{\pm 1}]$ is not the root of 
a non-zero polynomial with coefficients in $\bbZ[a^{\pm 1}]$.
\item[(b)]
Let $D, D'$ be strongly involutive link diagrams with $D\stackrel{\rm LW}{\sim} D'$. 
Then, $D$ is good if and only if $D'$ is good. In fact, if $D$ is good and $I_N$ is 
an equivariant skein invariant of type $(R,\varphi,N)$, then 
$\varphi(P_N^e(D')) = \varphi(P_N^e(D)) = I_N(D) = I_N(D')$, and since $\ell(D)=\ell(D')$ and  
$m(D)=m(D')$  the polynomial $Q(D)$ satisfying Equation~\eqref{eq:good} for $D$, 
satisfies the same equation for $D'$. 
\item[(c)]
If follows easily from Property~\eqref{eq:PNskeinunion} and Equation~\eqref{eq:good}
that if $D, D'$ are strongly involutive, good link diagrams, then $D\sqcup_e D'\in\gsid$.
\item[(d)]
Property~\eqref{eq:PNskeinmirror} and Equation~\eqref{eq:good} imply  
$D\in\gsid\Longleftrightarrow D^*\in\gsid$.
\end{enumerate}
\end{rems}

\medskip
\noindent\textbf{Propagation of goodness.}
We begin by establishing that goodness is preserved under the equivariant
skein relations and crossing changes. These results allow us to deduce
goodness of a diagram from that of simpler diagrams obtained by
local modifications.

For the rest of this section we denote by $I_N$ an equivariant skein invariant of type $(R,\varphi,N)$, for some $N\geq 2$. Consider a triple of diagrams $D_+, D_-, D_0\in\sid$, where $D_+$ and $D_-$ are obtained on from the other by changing a crossing on the axis, and $D_0$ is obtained by smoothing that same crossing. Assuming that $D_\pm$ and $D_0$ are good, we want to show that $D_\mp$ is good. Equation~\eqref{eq:PNskeinonaxis}
can be written as follows: 
\[
q^{\pm N} I_N(D_\mp) = q^{\mp N} I_N(D_\pm) + (q-q^{-1}) I_N(D_0).
\]
Since by assumption $\varphi(P_N^e(D_\pm))=I_N(D_\pm)$ and $\varphi(P_N^e(D_0))=I_N(D_0)$ and 
by Proposition~\ref{p:skeinPN} we have that $P_N^e$ is an equivariant skein invariant of type 
$(\Lambda,{\rm id}_\Lambda,N)$, this implies $\varphi(P_N^e(D_\mp))=I_N(D_\mp)$.  
It is easy to check that $\ell(D_0)=\ell(D_\pm)+1$ and $m(D_\pm)\geq m(D_0)$. Multiplying both sides 
of the equation by $q^{\mp N}(q - q^{-1})^{\ell(D_\pm)} (q^2 - q^{-2})^{m(D_\pm)}$ and using the assumption that that $D_\pm$ and $D_0$ are good yields 
\begin{multline*}
(q - q^{-1})^{\ell(D_\pm)} (q^2 - q^{-2})^{m(D_\pm)} P_N^e(D_\mp) = \\
(q^N - q^{-N}) \left(q^{\mp 2 N} Q(D_\pm)\big|_{a=q^N,\ z=q^2-q^{-2}} 
+ q^{\mp N} (q^2-q^{-2})^{m(D_\pm)-m(D_0)} Q(D_0)\big|_{a=q^N,\ z=q^2-q^{-2}}\right).
\end{multline*}
If we define $Q(D_\mp)\in\bbZ[a^{\pm 1},z]$ as the polynomial 
\begin{equation*}
a^{\mp 2} Q(D_\pm) + a^{\mp 1} z^{m(D_\pm)-m(D_0)} Q(D_0),
\end{equation*}
then Equation~\eqref{eq:good} holds for $D_\mp$ and we conclude   
\begin{equation}\label{e:treenode1-good}
	D_\pm, D_0\in\gsid\quad\Longrightarrow\quad D_\mp\in\gsid\,. 
\end{equation}
Similarly, let $D_{++}, D_{--}, D_{00}\in\sid$ be a triple obtained by equivariantly 
changing and resolving crossings $c,\tau(c)\in D\in\{D_{++},D_{--}\}$. 
Assuming that $D_{\pm\pm}$ and $D_{00}$ are good, we want to show that $D_{\mp\mp}$ is good. 
Equation~\eqref{eq:PNskeinoffaxis} can be written as follows: 
\[
q^{\pm 2N} I_N(D_{\mp\mp}) = q^{\mp 2N} I_N(D_{\pm\pm}) + (q^2-q^{-2}) I_N(D_{00}).
\]
Since $D_{\pm\pm}$ and $D_{00}$ are good, and since, by definition, $I_N$ satisfies Equation~\eqref{eq:PNskeinoffaxis}, we have the equality $\varphi(P_N^e(D_{\mp\mp}))=I_N(D_{\mp\mp})$. 
Since the number of fixed points is the same in the three diagrams,  $\ell(D_{00})=\ell(D_{\pm\pm})$. Moreover, a simple check shows that~$m(D_{\pm\pm})+1\geq m(D_{00})$. Multiplying both sides 
by $q^{\mp 2N}(q - q^{-1})^{\ell(D_{\pm\pm})} (q^2 - q^{-2})^{m(D_{\pm\pm})}$ and using the assumption that 
that $D_{\pm\pm}$ and $D_{00}$ are good yields 
\begin{multline*}
(q - q^{-1})^{\ell(D_{\pm\pm})} (q^2 - q^{-2})^{m(D_{\pm\pm})} P_N^e(D_{\mp\mp}) = 
(q^N - q^{-N})\left(q^{\mp 4N} Q(D_{\pm\pm})\big|_{a=q^N,\ z=q^2-q^{-2}} +\right.\\
\left. q^{\mp 2N} (q^2-q^{-2})^{m(D_{\pm\pm})+1-m(D_{00})} Q(D_{00})\big|_{a=q^N,\ z=q^2-q^{-2}}\right).
\end{multline*}
If we define $Q(D_{\mp\mp})\in\bbZ[a^{\pm 1},z]$ as the polynomial  
\begin{equation*}
a^{\mp 4} Q(D_{\pm\pm}) + a^{\mp 2} z^{m(D_{\pm\pm})+1-m(D_{00})} Q(D_{00}),
\end{equation*}
then Equation~\eqref{eq:good} holds for $D_{\mp\mp}$ and  and we conclude   
\begin{equation}\label{e:treenode2-good}
	D_{\pm\pm}, D_{00}\in\gsid\quad\Longrightarrow\quad D_{\mp\mp}\in\gsid\,. 
\end{equation}

\begin{rem}\label{r:skeinonoffaxis}
The arguments just provided also show that if $D_+, D_-, D_0\in\sid$ are obtained 
by changing and resolving a crossing $c\in D_\pm$ on the axis, then 
Skein Relation~\eqref{eq:skeinonaxis} holds. Similarly, if $D_{++}, D_{--}, D_{00}\in\sid$ is 
a triple obtained by equivariantly changing and resolving an equivariant pair of crossings $c,\, \tau(c)$ in $D\in\{D_{++},D_{--}\}$, then Skein Relation~\eqref{eq:skeinoffaxis} holds. 
\end{rem}

The following lemma establishes the base case of the inductive arguments that follow.

\begin{lem}\label{l:crossingless} 
Every crossingless strongly involutive diagram is good.
\end{lem}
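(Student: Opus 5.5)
A crossingless strongly involutive diagram $D$ is a disjoint union of embedded circles in the plane, each preserved or exchanged by the reflection across the horizontal axis. The plan is to reduce to the two basic pieces $\acirc{}$ and $\acircs{}$ via equivariant planar isotopies and equivariant disjoint unions, check goodness for these two directly, and then use Remarks~\ref{rs:Q-LW}(b),(c).

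\emph{Step 1 (decomposition).} First classify the components. A circle meeting the axis is $\tau$-invariant and meets it transversally in exactly two points. A circle disjoint from the axis lies in one half-plane, and its mirror image is a different circle, giving an exchanged pair. Circles are pairwise disjoint, so the nesting structure is a forest. Argue by induction on the number of components. If some circle is not nested inside any other, it is either a fixed circle or, together with its mirror, one of a pair in opposite half-planes.
\begin{itemize}
\item If the outermost piece bounds nothing (an innermost-outermost component), an equivariant simple closed curve separates it, so $D=D_1\sqcup_e D'$ with $D_1$ equal to $\acirc{}$ or $\acircs{}$.
\item In general, the region inside an invariant circle can be equivariantly isotoped away. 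Nested configurations in the plane are planar-isotopic (equivariantly, after viewing the plane as $S^2$ minus a point on the axis) to split ones. Alternatively, and more safely, one can avoid isotopy: the equivariant state sum of a crossingless diagram, and the normalization, depend only on each component being a fixed or free circle.
\end{itemize}
The cleanest route is to invoke Property~\eqref{eq:PNskeinunion} for $I_N$ in the split form, after noting that nested crossingless circles are related to unnested ones by Lobb--Watson moves. Concretely, pull an inner circle over an outer one using equivariant R2/IR2 moves and then undo, which is standard. I expect this step, justifying that any crossingless diagram is $\stackrel{\rm LW}{\sim}$ to an equivariant disjoint union of copies of $\acirc{}$ and $\acircs{}$, to be the main obstacle. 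I would handle it by an innermost-circle argument: move an innermost component (or equivariant pair) across its enclosing circle by an equivariant pair of R2 moves (IR2 off-axis), or by R2 moves through the axis followed by cancellation.

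\emph{Step 2 (base cases).} For $D=\acirc{}$ we have $\ell=1$ and $m=0$. The first condition holds by Property~\eqref{eq:PNskeinnormalisation} together with Lemma~\ref{l:unknots} (the writhe is $0$), since $I_N(D)=\varphi([N])=\varphi(P^e_N(D))$. For the second, take $Q=1$: then $(q-q^{-1})[N]=q^N-q^{-N}$, as required. For $D=\acircs{}$ we have $\ell=0$ and $m=1$, and $I_N(D)=\varphi([N]_{q^2})=\varphi(P_N^e(D))$. Take $Q=a+a^{-1}$: then $(q^2-q^{-2})[N]_{q^2}=q^{2N}-q^{-2N}=(q^N-q^{-N})(q^N+q^{-N})$, which is Equation~\eqref{eq:good}.

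\emph{Step 3 (assembly).} By Remark~\ref{rs:Q-LW}(c), equivariant disjoint unions of good diagrams are good. Induction on the number of pieces then shows that the split crossingless diagrams from Step 1 are good. Finally, Remark~\ref{rs:Q-LW}(b) transfers goodness back to the original $D$.
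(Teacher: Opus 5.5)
Your proposal is correct and follows essentially the paper's argument: write the crossingless diagram as an equivariant disjoint union of invariant circles and symmetric pairs, then use normalization and multiplicativity. The paper simply writes down $Q(D)=(a-a^{-1})^{\ell-1}(a^2-a^{-2})^{m}$ and checks it directly, which is the same polynomial your induction via Remark~\ref{rs:Q-LW}(c) produces. Your IR2 unnesting step makes explicit a point the paper takes for granted. However, the planar-isotopy claim in your second bullet is false for a free circle nested inside another free circle in the same half-plane, so the IR2 route is the one to keep.
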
 

\begin{proof}
A crossingless strongly involutive diagram $D$ is an equivariant disjoint
union of $\tau$-invariant circles on the axis and pairs of symmetric circles
disjoint from the axis. 

Then, if $\ell$ and $2m$ denote, respectively, the number of connected components of $D$ belonging to $\fsid$, 
respectively $\nfsid$, using Property~\eqref{eq:PNskeinnormalisation} one can check directly that the polynomial 
	\[
	Q(D) := (a - a^{-1})^{\ell - 1} (a^2 - a^{-2})^{m}
	\]
satisfies the requirements of Definition~\ref{d:good}, and moreover \(\varphi(P_N^e(D))=I_N(D)\) -- since both $\varphi\circ P_N^e$ and $I_N$ satisfy Equations \eqref{eq:PNskeinnormalisation} and \eqref{eq:PNskeinunion}. 
Therefore, $D$ is good. 
\end{proof} 

A finite sequence $D_1,D_2,\ldots,D_k\in\sid$ such that $D_{i+1}$ is obtained 
from $D_i$ by either a crossing change on the axis or an equivariant pair of crossing changes, 
for $i=1,\ldots, k-1$, will be called a \emph{crossing change sequence}. We shall repeatedly use the following lemma. 

\begin{lem}\label{l:good-ccs}
	Let $D_1,D_2,\ldots,D_k\in\sid$ be a crossing change sequence. If for \(n(D') < n(D_1)\) we have \(D'\in\gsid\), then
\[
D_k\in\gsid\quad\Longrightarrow\quad D_1\in\gsid
\] 
\end{lem}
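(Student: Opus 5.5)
We argue by downward induction along the sequence, proving that $D_i\in\gsid$ for $i=k,k-1,\ldots,1$. The starting case $i=k$ is the hypothesis. For the inductive step, assume $D_{i+1}\in\gsid$ for some $1\le i<k$. The diagrams $D_i$ and $D_{i+1}$ differ either by a single crossing change at a crossing $c$ on the axis, or by an equivariant pair of crossing changes at $c,\tau(c)$. Crossing changes do not alter the underlying projection, so $n(D_i)=n(D_{i+1})=n(D_1)$.

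In the on-axis case, let $D_0$ be the diagram obtained from $D_i$, equivalently from $D_{i+1}$, by taking the oriented smoothing of $c$. Then $D_0$ is again a strongly involutive diagram: smoothing a crossing on the axis in the oriented way is $\tau$-equivariant. Moreover $n(D_0)=n(D_1)-1<n(D_1)$, so $D_0\in\gsid$ by the hypothesis of the lemma. Setting $\{D_+,D_-\}=\{D_i,D_{i+1}\}$ according to the sign of $c$ in each, we are exactly in the situation of Implication~\eqref{e:treenode1-good}: $D_{i+1}$ and $D_0$ are good, hence $D_i$ is good.

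In the off-axis case, let $D_{00}$ be the diagram obtained by equivariantly smoothing both $c$ and $\tau(c)$. Again $D_{00}\in\sid$, and $n(D_{00})=n(D_1)-2<n(D_1)$, so $D_{00}$ is good. Implication~\eqref{e:treenode2-good}, applied with $\{D_{++},D_{--}\}=\{D_i,D_{i+1}\}$, then gives $D_i\in\gsid$. This completes the induction, and in particular $D_1\in\gsid$.

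The only point that needs care is a consistency check. The equivariant crossing change must change both $c$ and $\tau(c)$ to the same sign, so that the triple genuinely has the shape $D_{++},D_{--},D_{00}$ appearing in Skein Relation~\eqref{eq:PNskeinoffaxis}. This holds because $\tau$ reverses the orientation of $L$ and is orientation preserving on $S^3$, so it preserves crossing signs. We also need the implications \eqref{e:treenode1-good}--\eqref{e:treenode2-good} in both directions, that is, deducing $D_\mp$ from $D_\pm$ for either choice of sign. The paper established them for both signs, so no further work is needed there.
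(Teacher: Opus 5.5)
Your proof is correct and is essentially the paper's argument: induction along the sequence (equivalently, on its length), using that crossing changes preserve $n$, that the smoothings $D_0$ and $D_{00}$ have fewer crossings and so are good by hypothesis, and Implications~\eqref{e:treenode1-good}--\eqref{e:treenode2-good}. The paper additionally wraps this in an outer induction on $n(D_1)$ with a crossingless base case; your version shows that this is unnecessary given the lemma's hypothesis.
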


\begin{proof} 
We argue by induction on $n(D_1)$. If $n(D_1)=0$, then $D_1$ is crossingless and therefore good by Lemma~\ref{l:crossingless}.
This proves the base case.

Note that the crossing number does not change during a crossing change sequence. The inductive step follows by induction on the length of the crossing change sequence, using the assumption $D_k\in\gsid$,  Implications~\eqref{e:treenode1-good} and~\eqref{e:treenode2-good}, and the relations $n(D_0)=n(D_\pm)-1$ 
and~$n(D_{00})=n(D_{\pm\pm})-2$.
\end{proof}

\begin{prop}\label{p:fsid-good}
Every diagram $D\in\fsid$ is good.
\end{prop}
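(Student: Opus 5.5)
We argue by induction on the crossing number $n(D)$ of $D\in\fsid$, as in the classical proof that the HOMFLY--PT skein relation determines the invariant. The base case $n(D)=0$ is Lemma~\ref{l:crossingless}. For the inductive step we assume every diagram $D'\in\sid$ with $n(D')<n(D)$ is good; we only need this for free diagrams and their smoothings. Since $\ell(D)=0$, the axis meets no component of $D$, so there are no crossings on the axis. All crossings of $D$ therefore come in equivariant pairs $\{c,\tau(c)\}$. The only moves available are equivariant pair changes, and their smoothings $D_{00}$ remain free with $n(D_{00})=n(D)-2$. By Lemma~\ref{l:good-ccs} it is enough to find a crossing change sequence, using only pair changes, from $D$ to a diagram $D_k$ that we already know is good.

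To build $D_k$, we use that the components of a free diagram split into $\tau$-exchanged pairs $\{K_i,\tau(K_i)\}$, $i=1,\dots,m$, all lying off the axis. Since $\tau$ acts on the plane as reflection across the axis, we may choose a $\tau$-invariant ordering of the components and a basepoint on each $K_i$, with $\tau$ of that basepoint as basepoint on $\tau(K_i)$. We then change crossings, always in equivariant pairs, to make the diagram \emph{descending}: $K_1$ and $\tau(K_1)$ lie above all later pairs, and each component is self-descending from its basepoint. We must check that this is consistent. A crossing $c$ between $K_i$ and $\tau(K_i)$ is sent by $\tau$ to another such crossing, possibly $c$ itself. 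Since $c$ is off the axis, $\tau(c)\neq c$ as a point, so the pair change is well defined. What has to be verified is that the descending prescription at $c$ and at $\tau(c)$ is compatible with $\tau$. The safest choice is to push each $K_i$ into the half-plane $y>0$ by an equivariant isotopy, so that $\tau(K_i)$ lies in $y<0$.

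The cleanest version of this step is to do the crossing changes and then apply equivariant isotopies (Lobb--Watson moves) so that $D_k$ becomes the equivariant disjoint union $D'\sqcup_e\tau(D')$ of a classical descending diagram in the upper half-plane and its mirror. A descending diagram is a trivial link, so $D_k$ is LW-equivalent to a crossingless diagram, namely $m$ pairs of symmetric circles. That diagram is good by Lemma~\ref{l:crossingless}, so $D_k$ is good by Remarks~\ref{rs:Q-LW}(b). Lemma~\ref{l:good-ccs} then gives $D\in\gsid$.

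The main obstacle is the geometric claim that, once $D$ is unlinked in this way, one can reach a crossingless diagram by Lobb--Watson moves while staying inside $\fsid$. Concretely, the issue is separating $K_i$ from $\tau(K_i)$ across the axis. After making each $K_i$ lie above $\tau(K_i)$ at every one of their mutual crossings, the pairs of crossings between them must be removable by IR2 moves, and arcs crossing the axis must be movable off it by the M-moves. We would first try an equivariant version of the standard argument that a descending diagram unknots by Reidemeister moves, performed simultaneously on $K_i$ and $\tau(K_i)$. The subtle point is that $\tau$ reflects the plane but fixes the over/under data, so an edge crossing the axis is matched with itself. Handling these edges requires the M1--M3 moves listed in \cite[Figure~9]{LW21}.
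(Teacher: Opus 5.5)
There is a genuine gap. It is in your claim that $\ell(D)=0$ rules out crossings on the axis. Freeness only says that no component meets the fixed axis in $\bbR^3$. The projection of $K_i$ can still cross the $x$-axis, and since $\tau$ negates height, over such a point $K_i$ sits at some height $h\neq 0$ while $\tau(K_i)$ sits at height $-h$. So every point where $K_i$ crosses the axis is a $\tau$-fixed crossing on the axis between $K_i$ and $\tau(K_i)$. These are the crossings the paper means when it says that $D''$ lies above the axis at each intersection with it.

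Such crossings cannot be changed by pair changes, and they cannot be avoided. Take the Hopf link whose two components are exchanged by $\tau$, drawn as two circles symmetric about the axis that meet in two points of the axis, with $K$ over at one and under at the other. This diagram lies in $\fsid$ and has two crossings, both on the axis, and no off-axis pairs. Your procedure has nothing to change, yet the link is not trivial. More generally, changing an off-axis pair $\{c,\tau(c)\}$ of crossings between $K_i$ and $\tau(K_i)$ alters $\mathrm{lk}(K_i,\tau(K_i))$ by $\pm 2$, because the two crossings have the same sign; other pair changes leave it unchanged, and only a single on-axis change alters it by $\pm 1$. Hence from a pair with odd linking number, no sequence of pair changes and Lobb--Watson moves reaches a trivial link.

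For the same reason you cannot push $K_i$ into $\{y>0\}$ equivariantly unless the pair is split. Your ordering ``$K_1$ and $\tau(K_1)$ above all later pairs'' is also incompatible with $\tau$: $K_1$ over $K_2$ at $c$ forces $\tau(K_1)$ under $\tau(K_2)$ at $\tau(c)$. The only equivariant descending order is the nested one $K_1>\dots>K_m>\tau(K_m)>\dots>\tau(K_1)$, and that order requires $K_i$ to be over $\tau(K_i)$ at the on-axis crossings as well.

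So the obstacle in your last paragraph is not about M-moves. What is missing is on-axis crossing changes, which the definition of a crossing change sequence allows. This is what the paper does:
\begin{enumerate}
\item It makes $D'$ the overstrand at every crossing with $\tau(D')$, both on and off the axis.
\item It unknots $D'$ by pair changes at its self-crossings.
\item It translates $D'$ upward and $\tau(D')$ downward in height. This is an equivariant isotopy because all over/under relations between them are preserved.
\item $D'$ then bounds a disc in the upper half-space, disjoint from its image, and \cite[Theorem~2.3]{LW21} gives LW-equivalence to a crossingless diagram.
\item For $|D|>2$ it splits off one pair at a time and applies Remark~\ref{rs:Q-LW}(c).
\end{enumerate}

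The price is that the smoothing at an on-axis crossing merges $K_i$ and $\tau(K_i)$ into one component meeting the axis. That smoothing leaves $\fsid$, and Lemma~\ref{l:good-ccs} needs such non-free diagrams with fewer crossings to be good. Your inductive hypothesis, that every diagram in $\sid$ with fewer crossings is good, supplies exactly this. But then the statement is proved only inside the global induction on crossing number of Theorem~\ref{t:gsid=sid}, not by a self-contained induction within $\fsid$ as you intended. The paper's own argument, whose inductive hypothesis is stated only for free diagrams, has to be read the same way.
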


\begin{proof}
We proceed by induction on the number of crossings $n(D)$.
If $n(D)=0$, then $D$ is crossingless and therefore good by Lemma~\ref{l:crossingless}. 

We now assume that the statement holds for each $D'\in \fsid$ such that $n(D')<n(D)$.
We claim that, given $D\in\fsid$, there exists a crossing change sequence $D=D_1,\ldots,D_k\in\fsid$ such that $D_k$ is good. By Lemma~\ref{l:good-ccs}, this implies that $D$ is good.  
We prove the claim by induction on the number of connected components $|D|\geq 2$ (note that, as $D$ is free, $|D|$ is even). 
	
If $|D|=2$, then $D = D'\cup\tau(D')$ for some connected component $D'\subset D$. 
Applying a sequence of equivariant crossing changes, we may arrange that at each crossing $c\in D$ the overstrand belongs to $D'$. Accordingly, at each crossing $\tau(c)\in D$, the understrand belongs to $\tau(D')$. Further equivariant crossing changes turn $D'$ into the diagram $D''$ of an unknot, with the property that at every crossing $c\in D''\cup\tau(D'')$ the overstrand lies in $D''$. Consequently, $D''$ also lies above the axis at each intersection with the axis, and hence it represents an unknot $U\subset\mathbb{R}^3$ disjoint from the $xz$ plane, bounding a disk $\Delta$ also disjoint from the $xz$ plane. Then $\tau(U)$ bounds the disk $\tau(\Delta)$, disjoint from $\Delta$, and there is an equivariant isotopy taking $U$ to an unknot which projects to a crossingless diagram $D'''$ disjoint from the $x$ axis. 
By Lemma~\ref{l:crossingless} $D'''\cup\tau(D''')$ is good. Moreover, by~\cite[Theorem~2.3]{LW21}, $D''\cup\tau(D'')$ and $D'''\cup\tau(D''')$ are related by a sequence of Lobb--Watson moves, therefore by Remark~\ref{rs:Q-LW}(b) we have that $D''\cup\tau(D'')$ is good.  
Since there is a crossing change sequence from $D=D'\cup\tau(D')$ to $D''\cup\tau(D'')$, using the induction hypothesis on the number of crossings, by Lemma~\ref{l:good-ccs} the claim holds when $|D|=2$. 
	
If $|D|>2$ and $D'\subset D$ is a connected component, we first apply equivariant crossing changes so that at every crossing $c\in D'$ the overstrand belongs to $D'$. 
It is then straightforward to check that the subdiagram $D'\cup\tau(D')\neq D$ can be separated from the rest of $D$ by a sequence of Lobb--Watson moves. 
Thus, there is a crossing change sequence $D=D_1,\ldots,D_k$ such that $D_k\stackrel{\rm LW}{\sim}\tilde D$, where $\tilde D$ is a split diagram with $|\tilde D|=|D|$. 
Hence $\tilde D = \tilde D_1\sqcup_e \tilde D_2$ with $|\tilde D_s|<|D|$, for $s=1,2$. By the induction hypotheses, each $\tilde D_s$ is already a good diagram, if $n(\tilde D_s)< n(D)$, or it admits a crossing change sequence to a good diagram -- and hence it is a good diagram by Lemma~\ref{l:good-ccs}. Therefore by Remark~\ref{rs:Q-LW}(c) the diagram $D$ is good, and the proof is complete. 
\end{proof}

\medskip
\noindent\textbf{Reduction to the strongly invertible case.}
We next reduce the general case to strongly invertible diagrams.
This is achieved by separating the free and strongly invertible parts
of a diagram via equivariant crossing changes.

First, we wish to reduce the proof of $\sid \subseteq \gsid$ to the proof that $\nfsid\subseteq \gsid$. 
In order to do so, for each diagram $D$, we are going to separate the subdiagram $D_{\rm inv} \subseteq D$, representing the union strongly invertible components, from the subdiagram $D_{\rm free} \subseteq D$, where $\tau$ acts freely.

We say that a crossing change sequence $D = D_1$, ..., $D_k$ \emph{separates the strongly invertible part of~$D$} if $D_k \stackrel{\rm LW}{\sim} D_{\rm inv} \sqcup_e D'$ with $D'\in \fsid$.

\begin{lem}
For each $D\in \sid$ there is a crossing change sequence that separates the strongly invertible part.   
\end{lem}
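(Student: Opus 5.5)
The plan is to mimic the separation step used for $|D|>2$ in the proof of Proposition~\ref{p:fsid-good}, now with the strongly invertible part playing the role of the distinguished subdiagram. Write $D = D_{\rm inv}\cup D_{\rm free}$, where $D_{\rm inv}$ is the union of the components meeting the axis and $D_{\rm free}$ is the union of the components on which $\tau$ acts freely. Both subdiagrams are $\tau$-invariant. Crossings between a strand of $D_{\rm inv}$ and a strand of $D_{\rm free}$ never lie on the axis, because a crossing on the axis is a self-crossing of a single $\tau$-invariant arc. Hence such mixed crossings come in equivariant pairs $c,\tau(c)$, and $\tau(c)$ is again a mixed crossing.

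First, I will perform equivariant pairs of crossing changes at every mixed pair whose overstrand lies in $D_{\rm free}$. After this, at every mixed crossing the overstrand belongs to $D_{\rm inv}$. Since $\tau$ maps $D_{\rm inv}$ to itself, this condition is $\tau$-compatible: if $D_{\rm inv}$ is over at $c$, then it is also over at $\tau(c)$. This yields a crossing change sequence $D=D_1,\ldots,D_k$ that changes only off-axis pairs. Consequently $\ell$, $m$, and the component structure are unchanged.

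Next, I will show that in $D_k$ the subdiagram $D_{\rm inv}$ lies entirely above $D_{\rm free}$. I will realise $D_k$ in $\mathbb{R}^3$, with the axis of $\tau$ as the $x$-axis and $\tau$ the rotation by $\pi$ about it. I then push $D_{\rm inv}$ towards the axis and $D_{\rm free}$ away from it.

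More concretely, I will choose the equivariant model so that $D_{\rm free}$ lies in the region $\{y^2+z^2\geq R^2\}$ far from the axis, while $D_{\rm inv}$ lies inside a thin $\tau$-invariant tube around a neighbourhood of the axis. The tube is only thin over the planar region of the diagram, not across it. Here the height convention has to be handled carefully. The condition ``$D_{\rm inv}$ over $D_{\rm free}$ at every crossing'' means that in the projection direction $z$, the layer containing $D_{\rm inv}$ sits above the layer containing $D_{\rm free}$ at every point where they overlap. Because $\tau$ flips $z$, I must phrase this instead as: $D_{\rm inv}$ is closer to the $xy$-plane than $D_{\rm free}$, on the same side over each point. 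This is exactly the symmetric condition preserved by $\tau$.

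Using this, I will radially shrink $D_{\rm inv}$ in the $y$-direction towards the axis. This is an equivariant isotopy, since the radial shrink $(x,y,z)\mapsto(x,ty,z)$ commutes with $\tau$. I will also equivariantly push $D_{\rm free}$ away, so that $D_{\rm inv}$ projects into a thin band around the $x$-axis. Then I will slide the crossings of $D_{\rm free}$ off that band. Since at overlaps $D_{\rm free}$ is always the outer layer, each of these isotopies is realised diagrammatically by equivariant pairs of R2 and R3 moves (the IR2 and IR3 Lobb--Watson moves), together with planar isotopy. By \cite[Theorem~2.3]{LW21}, any equivariant isotopy is in any case realised by Lobb--Watson moves, so it suffices to exhibit the isotopy.

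The result is a split diagram $D_{\rm inv}\sqcup_e D'$. Here $D'$ is $D_{\rm free}$ after the moves, so it has $\ell(D')=0$, that is, $D'\in\fsid$. The subdiagram $D_{\rm inv}$ is LW-equivalent to the original strongly invertible part, up to the planar shrinking, and I will take that as its representative. This shows that the crossing change sequence separates the strongly invertible part.

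The main obstacle is the third-paragraph claim that ``the invariant part is uniformly over the free part'' really yields an equivariant splitting isotopy, given that $\tau$ reverses the projection direction. I expect to handle it by the layered model above: the free components are confined to $|z|$ large and the invariant ones to $|z|$ small, with a separating $\tau$-invariant surface. This makes the separation explicit, in the same spirit as the disk argument in Proposition~\ref{p:fsid-good}.
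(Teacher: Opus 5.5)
There is a genuine gap in your second paragraph, and everything after it depends on it. In the Lobb--Watson setting $\tau$ is the rotation $(x,y,z)\mapsto(x,-y,-z)$ about an axis lying in the projection plane, so it reverses the projection direction. If $s_1$ passes over $s_2$ at $c$, then $\tau(s_1)$ passes \emph{under} $\tau(s_2)$ at $\tau(c)$; this is why the paper always pairs ``overstrand at $c$'' with ``understrand at $\tau(c)$''. Hence your claim ``if $D_{\rm inv}$ is over at $c$, then it is also over at $\tau(c)$'' is false. In every mixed pair $\{c,\tau(c)\}$, $D_{\rm inv}$ is over at exactly one of the two crossings, and an equivariant pair of crossing changes only swaps which one. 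So the normalization ``$D_{\rm inv}$ over $D_{\rm free}$ at every mixed crossing'' can never be reached. Likewise ``$D_{\rm inv}$ lies entirely above $D_{\rm free}$'' is impossible as soon as a mixed crossing exists. You do notice the $z$-flip afterwards, but you then simply postulate the layered model (free part at large $|z|$, invariant part at small $|z|$) without any crossing changes that produce it. Your radial shrinking of $D_{\rm inv}$ towards the axis also presupposes that the two parts are already separated.

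The missing idea, which is what the paper uses, is to break the symmetry inside each free pair. Write $D_{\rm free}=\bigcup_i F_i\cup\tau(F_i)$ and choose a distinguished component $F_i$ in each pair. By equivariant crossing changes, make $F_i$ the overstrand at each of its crossings with $D_{\rm inv}$ and with every non-distinguished component $\tau(F_j)$, including $\tau(F_i)$ itself. The latter may need single crossing changes on the axis, since $F_i$ and $\tau(F_i)$ can cross there; so on-axis crossings are not always self-crossings of one component, although mixed crossings indeed never lie on the axis. These requirements are $\tau$-consistent, and they force $\tau(F_i)$ to be the understrand at each of its crossings with $D_{\rm inv}$ and with every $F_j$. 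The paper phrases this as treating the pairs in order, making each distinguished component pass over at every crossing not already fixed.

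This produces exactly your layered model: the $F_i$ lie in $\{z>C\}$, $D_{\rm inv}$ in $\{|z|<C\}$, and the $\tau(F_i)$ in $\{z<-C\}$. Translating the top slab by $(0,t,0)$ and the bottom slab by $(0,-t,0)$ commutes with $\tau$, so it is an equivariant isotopy. For large $t$ it yields a split diagram $D_{\rm inv}\sqcup_e D'$ with $D'\in\fsid$, and by \cite[Theorem~2.3]{LW21} it is realized by Lobb--Watson moves. No shrinking towards the axis is needed.
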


\begin{proof}
Write $D$ as the (not necessarily disjoint) union $D_{\rm inv} \cup D_{\rm free}$, where  $D_{\rm inv} \subseteq D$ represents the union strongly invertible components and $D_{\rm free} \subseteq D$ is the subdiagram of $D$ where $\tau$ acts freely. Note that the link represented by $D_{\rm free}$ has an even number of components; in fact, $D_{\rm free}$ consists of pairs of sub-diagrams of the form $D' \cup \tau(D')$, where each $D'$ represents a knot. Order these pairs of subdiagrams, and fix a distinguished component for each pair. We perform equivariant crossing changes so that, for the first pair, the distinguished component consistently passes over at every crossing, while its symmetric counterpart passes under. We then repeat this procedure for each of the remaining pairs, in the prescribed order, ensuring that no crossing already considered when treating a preceding pair is altered. This process yields a diagram consisting of $D_{\text{inv}}$ plus a diagram representing the equivariant disjoint union of the components of~$D_{\text{free}}$. Since the latter diagram overpasses at each crossing with $D_{\text{inv}}$, and it represents a disjoint union, it can then be separated from $D_{\text{inv}}$ via Lobb-Watson moves. 
\end{proof}
 
The next lemma reduces the proof of Theorem~\ref{t:gsid=sid} to the case of strongly invertible diagrams.

\begin{lem}\label{l:nfsid-good}
Let $n_0$ be a non-negative integer.
If every strongly invertible diagram with at most $n_0$ crossings is good, then every strongly involutive diagram with at most $n_0$ crossings is good. 
In particular, if \(\nfsid \subseteq \gsid\) then $\sid = \gsid$.
\end{lem}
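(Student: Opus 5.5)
We argue by induction on the number of crossings $n(D)$. Fix $n_0$ and assume every strongly invertible diagram with at most $n_0$ crossings is good. We prove by induction on $n\le n_0$ that every $D\in\sid$ with $n(D)=n$ is good. The base case $n=0$ is Lemma~\ref{l:crossingless}. For the inductive step, let $D\in\sid$ with $n(D)=n\le n_0$, and suppose all diagrams with fewer than $n$ crossings are good. This is exactly the hypothesis of Lemma~\ref{l:good-ccs} for any crossing change sequence starting at $D$.

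By the preceding lemma there is a crossing change sequence $D=D_1,\ldots,D_k$ that separates the strongly invertible part. Then $D_k\stackrel{\rm LW}{\sim} D_{\rm inv}\sqcup_e D'$ with $D'\in\fsid$, where $D_{\rm inv}$ denotes the strongly invertible part of $D_k$. By Lemma~\ref{l:good-ccs} it suffices to show that $D_k$ is good. By Remark~\ref{rs:Q-LW}(b) it is enough to show that $D_{\rm inv}\sqcup_e D'$ is good, and by Remark~\ref{rs:Q-LW}(c) it is enough to show that both $D_{\rm inv}$ and $D'$ are good.

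\emph{The free part.} $D'$ is good by Proposition~\ref{p:fsid-good}, with no crossing bound needed. If $D_{\rm free}$ is empty, then $D'$ is empty and the statement reduces to $D_{\rm inv}$ alone. We interpret the empty diagram as the unit (or simply avoid the union), so no normalisation problem arises.

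\emph{The strongly invertible part.} This is where the crossing count must be controlled. The diagram $D_{\rm inv}$ appearing in the separation has crossings that are a subset of those of $D_k$, so $n(D_{\rm inv})\le n(D_k)=n(D)\le n_0$. To guarantee this, we choose the separating Lobb--Watson moves so that they only move the free components off $D_{\rm inv}$ and leave the subdiagram $D_{\rm inv}$ itself untouched; this is the case in the proof of the preceding lemma, since there the free part is lifted over and pulled away. Hence $D_{\rm inv}\in\nfsid$ has at most $n_0$ crossings and is good by hypothesis.

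Combining these gives $D_k\in\gsid$, and hence $D\in\gsid$. The final claim follows by letting $n_0$ be arbitrary.

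The main subtlety is the crossing bound on $D_{\rm inv}$. The statement of the separation lemma only gives the equivalence $\stackrel{\rm LW}{\sim}$, and Lobb--Watson moves could a priori increase crossings. I would therefore make explicit that the diagram of $D_{\rm inv}$ in $D_k$ is a sub-diagram of $D_k$, so that its crossings are a subset of those of $D_k$, and that the isotopy moves only the free part.
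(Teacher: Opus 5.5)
Your proof is correct and follows the paper's argument. The paper runs an explicit inner induction on the minimal length $\mathrm{sp}_{si}(D)$ of a separating crossing change sequence via Implications~\eqref{e:treenode1-good} and~\eqref{e:treenode2-good}, which is exactly the induction packaged in Lemma~\ref{l:good-ccs}, and then concludes with Proposition~\ref{p:fsid-good} and Remark~\ref{rs:Q-LW}(c) as you do. Your explicit check that the invertible part of the separated diagram still has at most $n(D)\le n_0$ crossings covers a point the paper uses without comment: in its base case $\mathrm{sp}_{si}(D)=0$ it simply writes $D=D_1\sqcup_e D_2$ with $n(D_1)\le n(D)$.
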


\begin{proof}
Let $D\in\nfsid$ be a strongly involutive diagram with $n(D)$ crossings. 
Throughout the proof, we assume that every strongly involutive diagram with at most $n_0$ crossings is good.
We proceed by induction on the number $n(D)$. 
If $n(D)=0$, then $D$ is crossingless and therefore good by Lemma~\ref{l:crossingless}.

Now suppose $1<n(D)\leq n_0$. By the inductive hypothesis, every diagram in $\sid$ with less than $n(D)$ 
crossings is good. Let \( {\rm sp}_{si} (D) \) denote the minimum length of a crossing change sequence that 
separates the strongly involutive part of $D$. To prove the inductive step, we will proceed by induction on~\( {\rm sp}_{si} (D) \).
If \(  {\rm sp}_{si} (D) = 0 \) then \(D=D_1\sqcup_e D_2\) with \(D_1\in\nfsid\) and \(D_2\in\fsid\).
By Proposition~\ref{p:fsid-good}, \( D_2 \) is a good diagram. 
Since~\(n(D_1) \leq n(D) \leq n_0\) and~\( D_1 \in \nfsid \), the diagram \( D_1\) is good by hypothesis. Therefore \( D \) is good by Remark~\ref{rs:Q-LW}.(c). 

Choose a crossing $c$ or an equivariant pair of crossings $(c,\tau(c))$ occurring in a crossing change sequence realizing \(  {\rm sp}_{si}(D) \). 
Smoothing $c$ (or $c \cup \tau(c)$) reduces the number of crossings, and the resulting diagram is good by the inductive hypothesis on $n(D)$. Similarly, changing $c$ (or $c \cup \tau(c)$) reduces~\(  {\rm sp}_{si} \), and the resulting diagram is good by inductive hypothesis on~\(  {\rm sp}_{si} \). 
Therefore $D$ is good by Implications~\eqref{e:treenode1-good} and~\eqref{e:treenode2-good}. This completes the proof. 
\end{proof}

Before proving that $\nfsid \subseteq \sid^{\rm good}$ we need a few technical lemmas.

\begin{lem}\label{l:clasp-skein}
Let $D, D'$ and $D''$ be the diagrams appearing from left to right in Skein 
Relation~\eqref{eq:PNskeincoerentclasp}. If $D'$ and $D''$ are good, then 
$D$ is good and Skein Relation~\eqref{eq:skeinincoerentclasp} holds. 
\end{lem}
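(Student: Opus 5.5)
The argument follows the pattern of the propagation of goodness established above for Implications~\eqref{e:treenode1-good} and~\eqref{e:treenode2-good}, now using the clasp relation. Write $D=D_{++}$, $D'=D_+$, $D''=D_0$ as in Figure~\ref{fig:skeinclaspcoerent}, and let $I_N$ be an equivariant skein invariant of type $(R,\varphi,N)$.

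\emph{First condition of goodness.} Property~\eqref{eq:PNskeincoerentclasp} expresses $I_N(D)$ as $(q+q^{-1})q^N I_N(D')-q^{2N}I_N(D'')$. By Proposition~\ref{p:skeinPN}, $P^e_N$ satisfies the same relation. Since $D'$ and $D''$ are good, $I_N(D')=\varphi(P_N^e(D'))$ and $I_N(D'')=\varphi(P_N^e(D''))$. Applying the ring homomorphism $\varphi$ to the relation for $P^e_N$ then gives $I_N(D)=\varphi(P^e_N(D))$.

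\emph{Bookkeeping of components.} All three diagrams coincide outside a ball around the axis. I expect $\ell(D)=\ell(D'')=\ell(D')-1$: smoothing the single on-axis crossing of $D'$ increases the number of fixed components by one, and the two strands of the clasp in $D$ connect exactly as in $D''$. For the free components, in the notation of \eqref{eq:skeinincoerentclasp} we have $m=m(D)=m(D'')$ and $m_0=m(D')$. I need the inequality $m-m_0+1\ge 0$. This should hold because resolving one on-axis crossing can merge or split components only locally: at most one symmetric pair of free components is created, so $m(D'')\le m(D')+1$. Checking this inequality, together with the parity relation between $\ell$ and $m$ in each local connectivity case, is the step I expect to require care. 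I would do it by listing the possible ways the four endpoints of the local picture are joined outside the ball, using equivariance.

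\emph{Second condition of goodness.} Multiply the relation for $P^e_N(D)$ by $(q-q^{-1})^{\ell(D)}(q^2-q^{-2})^{m(D)}$. On the right-hand side, substitute Equation~\eqref{eq:good} for $D'$ and $D''$. This gives
\[
(q^N-q^{-N})\Bigl(q^N\tfrac{q+q^{-1}}{q-q^{-1}}(q^2-q^{-2})^{m-m_0}Q(D')-q^{2N}Q(D'')\Bigr)\Big|_{a=q^N,\,z=q^2-q^{-2}}.
\]
Using $\frac{q+q^{-1}}{q-q^{-1}}=\frac{q^2-q^{-2}}{(q-q^{-1})^2}$ and $\ell(D')=\ell(D)+1$, the prefactor becomes $(q^2-q^{-2})^{m-m_0+1}$, where a factor $(q-q^{-1})$ coming from $\ell(D')$ cancels against the denominator. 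This is exactly where the exponent $m-m_0+1$ originates. I would therefore define
\[
Q(D):=a\,z^{m-m_0+1}Q(D')-a^2Q(D''),
\]
which lies in $\bbZ[a^{\pm1},z]$ by the inequality above and satisfies Equation~\eqref{eq:good}. Hence $D$ is good. Finally, by Remark~\ref{rs:Q-LW}(a) the polynomial $Q(D)$ is unique, so Skein Relation~\eqref{eq:skeinincoerentclasp} holds.
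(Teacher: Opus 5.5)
Your strategy is the same as the paper's, and your final formula $Q(D)=a\,z^{m-m_0+1}Q(D')-a^2Q(D'')$ is the right one. You transfer $I_N(D)=\varphi(P^e_N(D))$ through \eqref{eq:PNskeincoerentclasp}, multiply by $(q-q^{-1})^{\ell(D)}(q^2-q^{-2})^{m(D)}$, and define $Q(D)$. However, the computation you give to justify that formula does not work, because of a sign slip in the bookkeeping. Your own reasoning (smoothing the on-axis crossing adds one fixed component, and the clasp connects like $D''$) gives $\ell(D)=\ell(D'')=\ell(D')+1$, which is what the paper uses. You instead wrote $\ell(D)=\ell(D')-1$ and computed with $\ell(D')=\ell(D)+1$. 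With that choice the first term carries $q^N\frac{q+q^{-1}}{q-q^{-1}}(q^2-q^{-2})^{m-m_0}=q^N(q^2-q^{-2})^{m-m_0+1}(q-q^{-1})^{-2}$. No spare factor of $q-q^{-1}$ is left to cancel this denominator, since $(q-q^{-1})^{\ell(D')}$ has already been absorbed by Equation~\eqref{eq:good} for $D'$. So your $Q(D)$ would not satisfy \eqref{eq:good} as you derived it. With the correct relation, the normalizing factor supplies exactly one extra $(q-q^{-1})$. The identity $(q+q^{-1})(q-q^{-1})=q^2-q^{-2}$ is where the $+1$ in the exponent actually comes from; no cancellation against a denominator is involved.

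Your polynomiality check is also argued in the wrong direction. For $z^{m-m_0+1}$ to lie in $\bbZ[a^{\pm1},z]$ you need the lower bound $m(D'')\ge m(D')-1$. The bound $m(D'')\le m(D')+1$ only controls the exponent from above. The correct reason is that the oriented resolution of an on-axis crossing replaces it by two $\tau$-invariant arcs, so it never creates free components. If the two strands lie on a swapped pair $K,\tau(K)$, they merge into one invariant component, and $m-m_0+1=0$. If they lie on a single invariant component, it splits into two invariant components, and $m-m_0+1=1$. Once these two points are corrected, your argument coincides with the paper's.
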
 

\begin{proof}
Let $I_N$ be an equivariant skein invariant of type $(R,\varphi,N)$.  
Clearly, $\varphi(P_N^e(D'))=I_N(D')$ and $\varphi(P_N^e(D''))=I_N(D'')$ imply $\varphi(P_N^e(D))=I_N(D)$.
Moreover, we have $\ell(D)=\ell(D'')=\ell(D')+1$ and $m(D)=m(D'')$. 
Multiplying both sides of Equation~\eqref{eq:PNskeincoerentclasp} by 
\[
(q-q^{-1})^{\ell(D)} (q^2 - q^{-2})^{m(D)} = 
(q - q^{-1})^{\ell(D')} (q^2 - q^{-2})^{m(D')} (q - q^{-1}) (q^2 - q^{-2})^{m(D) - m(D')}
\]
and using the assumption that $D'$ and $D''$ are good, yields
\begin{multline*}
(q-q^{-1})^{\ell(D)} (q^2 - q^{-2})^{m(D)} P_N^e(D) = \\
(q^N - q^{-N})\left(q^N (q^2 - q^{-2})^{m(D)-m(D')+1} Q(D')\big|_{a=q^N, z=q^2-q^{-2}} - q^{2N} Q(D'')\big|_{a=q^N, z=q^2-q^{-2}}\right). 
\end{multline*}
If we define $Q(D)\in\bbZ[a^\pm,z]$ as the polynomial 
$a z^{m(D)-m(D')+1} Q(D') - a^2 Q(D'')$, then Equation~\eqref{eq:good} holds for $D$, 
so $D$ is good, and Skein Relation~\eqref{eq:skeinincoerentclasp} holds as well. 
\end{proof}

Recall that two pairs $p_1<p_2$ and $q_1<q_2$ of points on the real line
are~\emph{interleaving} if the associated open intervals $(p_1,p_2)$ and $(q_1,q_2)$ 
overlap, but neither contains the other. Given $D\in\nfsid$, we say that a pair of 
fixed points in $D$ are~\emph{connected} if they belong to a subdiagram of $D$ representing a knot.
Moreover, we define $p^\mathrm{on}(D)$ to be the number of interleaving and connected pairs of fixed points in $D$.
Recall that \(n^\mathrm{on}(D)\) denotes the number of crossings on the axis of a diagram $D$. 

\begin{lem}\label{l:erasecrossesonaxis2}
Let $D\in\nfsid$. Then, by a sequence of equivariant crossing changes and Lobb--Watson equivalences that do not increase the number of crossings, $D$ can be transformed into a diagram $D'\in\nfsid$ with 
\[
p^\mathrm{on}(D')=p^\mathrm{on}(D),\qquad n(D') = n(D) - n^\mathrm{on}(D),\qquad n^\mathrm{on}(D') = 0.
\]
\end{lem}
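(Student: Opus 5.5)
We argue by induction on $n^\mathrm{on}(D)$: each on-axis crossing is removed by at most one crossing change on the axis followed by one Lobb--Watson move of type M2 (or M1/M3 as needed). Such a step lowers $n^\mathrm{on}$ and $n$ by one and leaves the off-axis crossings untouched, so after $n^\mathrm{on}(D)$ steps we reach $D'$ with $n^\mathrm{on}(D')=0$ and $n(D')=n(D)-n^\mathrm{on}(D)$. We also have to check that no step changes the set of fixed points in a way that alters $p^\mathrm{on}$.

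\textbf{Local picture at an on-axis crossing.} Near $c$ the two strands are exchanged by $\tau$, so both cross the axis at $c$. The four germs of fixed points are replaced by a single crossing on the axis. Because $D\in\nfsid$, every component through $c$ is strongly invertible.

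\textbf{Removing the crossing.} We want a local equivariant move that turns $c$ into a picture with two nearby fixed points on the axis and no crossing. The natural candidate is the R1-type move for on-axis kinks, read off from Proposition~\ref{p:R1-behaviour}: an axis crossing whose two strands belong to the same arc near the axis is an equivariant kink. To reach that situation, I would first change $c$ if necessary, using that the sign of an on-axis crossing can be switched by an equivariant crossing change, so that $c$ is the crossing of a Lobb--Watson M-type move. Then one R1/M-move removes $c$. The ordering along the axis of the fixed points produced is the same as before, up to replacing one point by two adjacent points belonging to the same component. Since these two points are adjacent, they interleave with another connected pair exactly when the original germs did. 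Hence $p^\mathrm{on}$ is preserved.

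\textbf{Main obstacle.} An on-axis crossing is generally not a kink: the strand leaving $c$ may wander through the diagram before returning. Here I would try to show that the two arcs through $c$ meet the axis only at $c$ locally. An equivariant planar isotopy then slides $c$ off the axis into a pair of fixed points plus a symmetric pair of crossings (the reverse of M2, Proposition~\ref{p:M2invariance}). That is not allowed, however, since it increases the crossing count. So the real approach must be the following: choose the crossing change at $c$ so that the resulting configuration is exactly the left side of an R1 on-axis move, which exists because reversing the sign of an axis crossing is a local half-twist about the axis.

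Verifying this step, that is, identifying which of the two signs yields a removable configuration and checking that $p^\mathrm{on}$ and the connectivity of fixed points are unaffected, is the step I expect to be hardest. It would need a careful case analysis of the local pictures in \cite[Figure~9]{LW21}.
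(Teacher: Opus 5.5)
Your overall structure matches the paper: induction on $n^\mathrm{on}(D)$, removing one on-axis crossing per step. The step that actually removes a crossing is missing, though, and the fix you propose cannot work. A crossing change at $c$ followed by a local R1/M-move fails in general. What stops $c$ from being a removable kink is not its sign but the other strands of $D$ that pass through the region bounded by the arcs leaving $c$, and changing $c$ does nothing about them. Worse, for an arbitrary on-axis crossing those arcs may next reach the axis at another on-axis crossing, so there need not be any loop to undo.

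The missing idea is to choose $c$ carefully and then use \emph{off-axis} equivariant crossing changes.
\begin{itemize}
\item Both strands at an on-axis crossing lie on the same strongly invertible component $K$, and $\tau|_K$ is a reflection of a circle with two fixed points. So if $K$ has an on-axis crossing, the two arcs $a$ and $\tau(a)$ leaving a fixed point $p$ of $K$ first return to the axis at one and the same on-axis crossing $c$. Otherwise both halves of $K$ would run from $p$ to the other fixed point without touching the axis.
\item The arc $a$ then lies in an open half-plane. Hence every crossing of $a$ with the rest of $D$ is off the axis and is paired by $\tau$ with a crossing involving $\tau(a)$.
\item By equivariant crossing changes you can therefore make $a$ pass under, and $\tau(a)$ over, every other strand. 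The reverse choice is also available, and which one is needed is dictated by $c$.
\item The loop $a\cup\tau(a)$ is then unobstructed. An equivariant isotopy untwists it, cancelling $c$ while keeping every other crossing. By \cite[Theorem~2.3]{LW21} this isotopy is realized by Lobb--Watson moves, and $n$ and $n^\mathrm{on}$ each drop by exactly one.
\end{itemize}

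Your argument for preserving $p^\mathrm{on}$ rests on a wrong local picture. An on-axis crossing contains no fixed points: its two strands are exchanged by $\tau$ and lie at heights $\pm\varepsilon$ above and below the axis, not on it. Equivariant isotopies, and in particular all Lobb--Watson moves, preserve the number of fixed points, since each strongly invertible component has exactly two. So no move can produce two new fixed points at $c$, or replace one fixed point by two adjacent ones. Reconnecting the germs of $c$ through the axis is the oriented smoothing, which raises $\ell$ by one and is not an isotopy. In the correct construction the germs at $c$ end up joined by two arcs, one in each half-plane. Then $p^\mathrm{on}$ is preserved simply because the set of fixed points, and the components they lie on, do not change.
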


\begin{proof}
We argue by induction on $n^\mathrm{on}(D)$. If $n^\mathrm{on}(D)=0$ there is nothing to prove. 
Assume $n^\mathrm{on}(D)>0$. Then there exists a point $p$ of $D$ on the axis and a crossing $c$ of $D$ 
joined to $p$ by the arcs $a$ and $\tau(a)$ emanating from $p$. By equivariant crossing changes, 
we may arrange that $a$ is the understrand and $\tau(a)$ the overstrand at every crossing other than $c$.  
Alternatively, we may arrange the opposite convention, with $a$ always the overstrand 
and $\tau(a)$ the understrand. The appropriate choice (depending on the type of crossing) 
yields the configuration illustrated in Figure~\ref{f:equivisot2}, left. 
\begin{figure}[ht]
    \centering
    \includegraphics[width=0.6\linewidth]{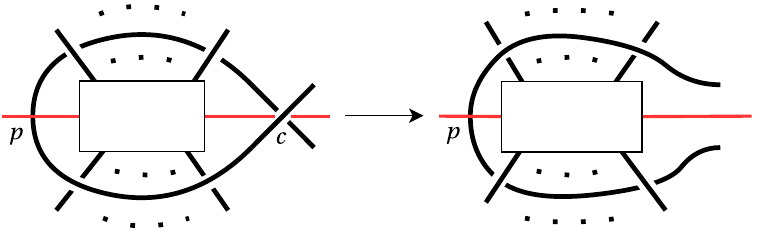}
    \caption{Equivariant isotopy cancelling a crossing.}
    \label{f:equivisot2}
\end{figure}
An equivariant isotopy of the associated link then produces a new strongly invertible link with diagram 
\(\widetilde D\) satisfying 
\[
p^\mathrm{on}(\widetilde D)=p^\mathrm{on}(D),\qquad n(\widetilde D)=n(D)-1,\qquad n^\mathrm{on}(\widetilde D)=n^\mathrm{on}(D)-1
\]
(see Figure~\ref{f:equivisot2}, right). By~\cite[Theorem~2.3]{LW21}, \(D\) and \(\widetilde D\) are related 
by a sequence of Lobb--Watson moves. Applying the inductive hypothesis to \(\widetilde D\), we obtain a diagram 
\( D'\in\nfsid\) with
\[
p^\mathrm{on}(D')=p^\mathrm{on}(\widetilde D)=p^\mathrm{on}(D),\quad 
n(D')=n(\widetilde D)-n^\mathrm{on}(\widetilde D)=n(D)-n^\mathrm{on}(D),\quad 
n^\mathrm{on}(D')=0,
\]
as required.
\end{proof}

\begin{lem}\label{l:p^on=0}
Let \( D\in \nfsid\) be such that \( p^\mathrm{on}(D) = 0\). 
If all strongly involutive diagrams with less than \(n(D)\) crossings are good, then \( D\in \gsid \).     
\end{lem}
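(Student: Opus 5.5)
We reduce $D$ to a crossingless diagram, keeping track of goodness along the way. The tools are Lemma~\ref{l:good-ccs}, Lemma~\ref{l:erasecrossesonaxis2}, Remark~\ref{rs:Q-LW}(b) and Lemma~\ref{l:crossingless}. By hypothesis every diagram with fewer than $n(D)$ crossings is good, so Lemma~\ref{l:good-ccs} applies to any crossing change sequence starting at $D$. The goal is therefore a crossing change sequence from $D$ whose last diagram is Lobb--Watson equivalent to a good diagram.

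First we apply Lemma~\ref{l:erasecrossesonaxis2}. The only subtlety is goodness when a Lobb--Watson equivalence drops crossings. A diagram with fewer crossings is good by hypothesis, and Remark~\ref{rs:Q-LW}(b) transfers goodness back along LW-equivalence. Concretely, the lemma's process consists of crossing changes followed by an LW move that lowers $n$, whenever $n^{\mathrm{on}}(D)>0$. So if $n^{\mathrm{on}}(D)>0$ we get a crossing change sequence $D=D_1,\dots,D_k$ with $D_k\stackrel{\rm LW}{\sim}\widetilde D$ and $n(\widetilde D)<n(D)$. Then $\widetilde D$ is good, hence $D_k$ is good, hence $D$ is good by Lemma~\ref{l:good-ccs}. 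We may thus assume $n^{\mathrm{on}}(D)=0$, and still $p^{\mathrm{on}}(D)=0$.

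Next we use the absence of interleaving. Each strongly invertible component $K_i$ of $D$ meets the axis in exactly two fixed points $x_i<y_i$, because an invertible knot meets the fixed circle in two points. Since $p^{\mathrm{on}}(D)=0$, the intervals $[x_i,y_i]$ form a nested-or-disjoint family. We now perform equivariant crossing changes off the axis to produce a standard layered diagram. We order the components so that nested-inside ones come first, and make every crossing between $K_i$ and $K_j$ have $K_i$ over if $i<j$. Within each component $K_i=a_i\cup\tau(a_i)$, where $a_i$ is the arc in the upper half plane, we make $a_i$ over at every self-crossing between $a_i$ and $\tau(a_i)$. Self-crossings of $a_i$ come in symmetric pairs with those of $\tau(a_i)$, and we change them equivariantly so that $a_i$ becomes a descending arc, i.e.\ unknotted relative to its endpoints.

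The main obstacle is to argue that the result is equivariantly isotopic to a crossingless diagram, namely nested or disjoint circles centered on the axis. The idea for a single component is as follows. After the changes, $a_i$ lies entirely above $\tau(a_i)$ in height, and is descending. It can then be isotoped rel endpoints, through arcs in the half-space $\{y\ge 0\}$ after a tilt, to a planar semicircle. The mirror isotopy moves $\tau(a_i)$ correspondingly. This is essentially the argument used for $|D|=2$ in Proposition~\ref{p:fsid-good}, adapted to an arc with endpoints on the axis.

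For several components, the layering by nesting order is what allows each $K_i$ to be straightened without passing through the others. Because the intervals are nested or disjoint, the target round circles are pairwise disjoint, and the stacking order matches the nesting. Once the straightened diagram $D'''$ is crossingless, it is good by Lemma~\ref{l:crossingless}. By \cite[Theorem~2.3]{LW21} and Remark~\ref{rs:Q-LW}(b), the end of our crossing change sequence is good, and Lemma~\ref{l:good-ccs} gives $D\in\gsid$.

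If making the isotopy rigorous is delicate, we fall back on an induction on the number of components. We separate an innermost (or leftmost) component by LW moves, as in the proof of Proposition~\ref{p:fsid-good}, and conclude with Remark~\ref{rs:Q-LW}(c).
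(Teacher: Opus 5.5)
Your first step is correct, and it is more explicit than the paper about how goodness is transferred. If $n^{\mathrm{on}}(D)>0$, the crossing changes of Lemma~\ref{l:erasecrossesonaxis2} followed by its crossing-reducing equivariant isotopy, together with Remark~\ref{rs:Q-LW}(b) and Lemma~\ref{l:good-ccs}, already give $D\in\gsid$.

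\textbf{The gap is in the layering step.} The involution interchanges over- and under-strands. Suppose $c$ is a crossing between two invariant components $K_i$ and $K_j$ at which $K_i$ is over. Then $\tau(c)\neq c$, because there are no crossings on the axis, and at $\tau(c)$ the component $K_i$ is under. An equivariant crossing change switches $c$ and $\tau(c)$ simultaneously, so $K_i$ is over at exactly one crossing of each $\tau$-pair. Hence ``every crossing between $K_i$ and $K_j$ has $K_i$ over'' cannot be arranged unless $K_i$ and $K_j$ do not cross at all. For the same reason, no invariant component can sit in a height band above another, since $\tau$ negates height and preserves each $K_i$. The global stacking on which your multi-component straightening rests (``the stacking order matches the nesting'') therefore does not exist. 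Your fallback ``as in Proposition~\ref{p:fsid-good}'' fails for the same reason: there $D'$ can overpass everything only because $\tau(D')$ is a different component. (Also, once $n^{\mathrm{on}}(D)=0$, the arcs $a_i$ and $\tau(a_i)$ lie in opposite open half-planes, so there are no crossings between them to arrange.)

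\textbf{What can be arranged, and what the paper does, is a half-plane layering.} Every crossing between components is either between $a_i$ and $a_j$ above the axis or between $\tau(a_i)$ and $\tau(a_j)$ below it. One can make $a_i$ over $a_j$ above the axis for $i<j$, so that $\tau(a_i)$ is under $\tau(a_j)$ below, with each $a_i$ descending. The straightening must then be carried out on the upper tangle in the half-space $\{y\ge 0\}$ and extended by $\tau$. You also have to say where non-interleaving enters. The paper uses the fact that $p^{\mathrm{on}}=0$ forces any two arcs $\gamma_i,\gamma_j$ to cross an even number of times, and then separates and unknots the $\gamma_i$ in order by equivariant pairs of Reidemeister moves. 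Alternatively, with your innermost-first order, $a_1$ lies over everything above the axis and no other fixed point lies in $[x_1,y_1]$. So $a_1$ can be lifted and pushed into a thin neighbourhood of $[x_1,y_1]\times\{0\}$ that no other arc meets, splitting $K_1$ off as a small round circle; this is the same kind of splitting used in the proof of Theorem~\ref{t:gsid=sid}. One then inducts on the number of components. Without some such half-space argument, the central claim of the lemma, that the layered diagram is Lobb--Watson equivalent to a crossingless one, is not established.
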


\begin{proof}
We claim that, by a sequence of equivariant crossing changes and Lobb--Watson equivalences which do not increase the number of crossings, $D$ can be transformed into a diagram of a strongly invertible unlink. 

Write \( D = \De_1 \cup...\cup \De_\ell\), where \(\De_1 ,\, \dots,\, \De_\ell\) are subdiagrams of \( D \) representing strongly invertible knots. 
In view of Lemma~\ref{l:erasecrossesonaxis2}, we may assume that \(D\) has no crossings on the axis. Each \(\De_i\) then decomposes as the union of two arcs \(\gamma_i\) and \(\tau(\gamma_i)\) joining the fixed points \(p_i\) and \(q_i\). Since there are no crossings on the axis, up to relabeling the arcs, we may assume that \(\gamma_i\) lies entirely above the axis and \(\tau(\gamma_i)\) entirely below.

Perform equivariant crossing changes along \(\gamma_1\) so that \(\gamma_1\) is the overpassing strand at every crossing. 
Concretely, starting at \(p_1\), traverse \(\gamma_1\) and, upon first encountering each crossing, change it so that \(\gamma_1\) passes 
over. By symmetry, the corresponding crossings along \(\tau(\gamma_1)\) are changed so that \(\tau(\gamma_1)\) is always the underpassing 
strand. Thus \(\gamma_1\cup\tau(\gamma_1)\) becomes unknotted. We repeat this procedure for the remaining \(\De_i\)'s without altering crossings involving previously treated subdiagrams. This yields a crossing change sequence \( D = D_1 ,..., D_k \) where the last diagram represents a strongly involutive link with unknotted components.

We claim that \(D_k\) represents in fact an unlink. Since \(p^{\mathrm{on}}=0\), any two arcs \(\gamma_i\) and \( \gamma_j\) intersect in an even number of points -- the same holds, symmetrically, for \(\tau(\gamma_i)\) and \(\tau(\gamma_j)\). Therefore, starting from \(\gamma_1\) and \( \tau(\gamma_1)\), and proceeding in the given order, we can separate and unknot all \(\gamma_i\)'s and, symmetrically, all \(\tau(\gamma_i)\)'s by equivariant pairs of Reidemeister moves. The resulting diagram is a crossingless diagram of a strongly invertible unlink. This proves the claim.

The resulting diagram is crossingless and therefore good by Lemma~\ref{l:crossingless}.
Furthermore, by Lemma~\ref{l:nfsid-good}, if all strongly involutive diagrams with less than \(n(D)\) crossings are good, then all diagrams in \(\sid\) with less than \( n(D) \) crossings are good. Then, we can apply Lemma~\ref{l:good-ccs} and obtain that \( D \) is good. 
\end{proof}

We are now ready to show that every strongly involutive link diagram is good. 

\medskip
\noindent\textbf{Strongly invertible diagrams and the double induction.}
We now complete the proof in the strongly invertible case.
The argument proceeds by a double induction: first on the crossing number,
and then on the number $p^{\mathrm{on}}(D)$ of interleaving connected pairs
of fixed points. The key step is the reduction to a clasp configuration,
which allows the equivariant skein relation to decrease the crossing number.

\begin{thm}\label{t:gsid=sid}
Let $D$ be a strongly involutive link diagram and $I_N$ any equivariant skein invariant 
of type $(R,\varphi,N)$ with $N\geq 2$. Then, $\varphi(P_N^e(D))=I_N(D)$, and there is a unique 
polynomial $Q(D)\in\bbZ[a^{\pm 1},z]$ satisfying Equation~\eqref{eq:good}.
\end{thm}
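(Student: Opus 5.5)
The theorem says exactly that $\gsid=\sid$, since uniqueness of $Q(D)$ is Remark~\ref{rs:Q-LW}(a). By Lemma~\ref{l:nfsid-good} it suffices to show $\nfsid\subseteq\gsid$. I will run a double induction: an outer induction on the crossing number $n(D)$, and for fixed $n(D)$ an inner induction on $p^{\mathrm{on}}(D)$. The outer hypothesis is that every strongly invertible diagram with fewer than $n(D)$ crossings is good. By Lemma~\ref{l:nfsid-good} this upgrades to: every strongly involutive diagram with fewer than $n(D)$ crossings is good. That is precisely the hypothesis needed in Lemma~\ref{l:good-ccs} and Lemma~\ref{l:p^on=0}. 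The base case $n(D)=0$ is Lemma~\ref{l:crossingless}.

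For the inductive step, fix $D\in\nfsid$ and first apply Lemma~\ref{l:erasecrossesonaxis2}. This gives a diagram $D'$ with $n^{\mathrm{on}}(D')=0$ and the same $p^{\mathrm{on}}$, reached by crossing changes and Lobb--Watson moves that do not increase the crossing number. If the crossing number drops strictly, $D'$ is good by the outer hypothesis. Otherwise, by Remark~\ref{rs:Q-LW}(b) and Lemma~\ref{l:good-ccs} it suffices to show that $D'$ is good. So I may assume $n^{\mathrm{on}}(D)=0$. If $p^{\mathrm{on}}(D)=0$, Lemma~\ref{l:p^on=0} finishes. If $p^{\mathrm{on}}(D)>0$, choose an interleaving connected pair of fixed points that is minimal, in the sense that the segment of the axis between two adjacent fixed points from the two interleaving components contains no other fixed points.

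The key step, and the main obstacle, is to bring these two adjacent fixed points together into the clasp configuration of Skein Relation~\eqref{eq:PNskeincoerentclasp}. This must be done by equivariant crossing changes and Lobb--Watson moves whose net effect is controlled: each intermediate smoothing must have strictly fewer crossings, and the final diagram $D_{++}$ must be one whose two partners $D_+$ and $D_0$ are good. My plan is as follows. Change crossings equivariantly, as in the proof of Lemma~\ref{l:p^on=0}, so that the arcs near the chosen fixed points lie over everything else. Then slide one fixed point along the axis toward the other by an equivariant isotopy, creating a clasp $D_{++}$. Now apply Lemma~\ref{l:clasp-skein}. The diagram $D_0$ (the oriented resolution) has fewer crossings, so it is good by the outer hypothesis. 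The diagram $D_+$ has one crossing on the axis, and removing it via Lemma~\ref{l:erasecrossesonaxis2} should produce a diagram with smaller $p^{\mathrm{on}}$, since the interleaving between the two chosen components is undone. Hence $D_+$ is good by the inner induction on $p^{\mathrm{on}}$, combined with Lemma~\ref{l:good-ccs}.

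The delicate points are the following. The clasp move must create at most two new crossings beyond those cancelled, so that $D_+$ and $D_0$ have fewer crossings than $D$ or smaller $p^{\mathrm{on}}$. The crossing-change sequence leading to $D_{++}$ must keep all intermediate smoothings below $n(D)$ crossings. I would first try the simplest picture: take two adjacent interleaving fixed points with no strand of the diagram crossing the axis segment between them, and observe that the two arcs leaving them already form a clasp after an equivariant R2 move. After this move $D_{++}$ has $n(D)+2$ crossings, but $D_0$ has $n(D)$ crossings and fewer components, and $D_+$ has lower $p^{\mathrm{on}}$, so a lexicographic induction on $(n,p^{\mathrm{on}})$ should close. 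Once goodness of all diagrams is known, the equality $\varphi(P_N^e(D))=I_N(D)$ and the existence of $Q(D)$ follow from the definition of $\gsid$, and uniqueness from Remark~\ref{rs:Q-LW}(a).
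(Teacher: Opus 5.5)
Your scaffolding is the paper's. You reduce to $\nfsid$ by Lemma~\ref{l:nfsid-good}, induct on $n$ and then on $p^{\mathrm{on}}$, clear on-axis crossings with Lemma~\ref{l:erasecrossesonaxis2}, settle $p^{\mathrm{on}}=0$ with Lemma~\ref{l:p^on=0}, and close with Lemma~\ref{l:clasp-skein}. The gap is the clasp step, which is where the real content lies, and your concrete plan for it fails in two ways.

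First, an equivariant R2 move changes the number of crossings between the two upper arcs by two. So it cannot turn arcs that do not cross into the clasp of \eqref{eq:PNskeincoerentclasp}, whose upper half is a single crossing between them. If the arcs do cross, the clasp is already there and nothing needs to be added.

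Second, adding crossings breaks the induction. With $n(D_{++})=n(D)+2$ you get $n(D_+)=n(D)+1$. Then $D_+$ is covered neither by the outer hypothesis nor by the inner one, which only concerns diagrams with at most $n(D)$ crossings. Your route through Lemma~\ref{l:erasecrossesonaxis2} and Lemma~\ref{l:good-ccs} for $D_+$ would need every diagram with $n(D)$ crossings to be good, which is what you are trying to prove.

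The missing idea is that the clasp is built from crossings $D$ already has. With no crossings on the axis, each component through the axis is $\gamma\cup\tau(\gamma)$ with $\gamma$ in the upper half-plane. If $p_1<p_2<q_1<q_2$ interleave, then $\gamma_1$ (from $p_1$ to $q_1$) and $\gamma_2$ (from $p_2$ to $q_2$) must cross.

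The paper proceeds as follows:
\begin{itemize}
\item It takes $(p_2,q_1)$ minimal under inclusion.
\item It changes crossings equivariantly so the arcs leaving $q_1$ and $p_2$ upward pass over all intermediate strands until their first mutual intersection, and symmetrically below.
\item Lobb--Watson moves that do not increase crossings then give a clasp $D_{++}$ with $n(D_{++})\le n(D)$.
\item Now $D_+$ and $D_0$ have $n(D_{++})-1$ and $n(D_{++})-2$ crossings, so both are good by the outer hypothesis alone.
\item Lemma~\ref{l:clasp-skein} (with Remark~\ref{rs:Q-LW}(d) for a negative clasp) and Lemma~\ref{l:good-ccs} then give goodness of $D$.
\end{itemize}
No control of $p^{\mathrm{on}}(D_+)$ is needed.

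The inner induction on $p^{\mathrm{on}}$ serves a different purpose, one you skip. You cannot in general choose the pair so that no fixed points lie strictly between $p_2$ and $q_1$. Components nested in $(p_2,q_1)$, interleaving nothing outside, can sit there, and fixed points cannot slide past one another along the axis. The paper lets $D'$ be the union of those nested components and notes $p^{\mathrm{on}}(D')<p^{\mathrm{on}}(D)$, so $D'$ is good by the inner hypothesis. After crossing changes it splits $D\stackrel{\rm LW}{\sim}D'\sqcup_e D''$, which reduces the problem to $D''$ via Remark~\ref{rs:Q-LW}(b),(c).
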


\begin{proof}
In view of Remark~\ref{rs:Q-LW}(a) and Lemma~\ref{l:nfsid-good}, it suffices to show that, for each integer 
$n_0\geq 0$, every strongly invertible diagram with at most $n_0$ crossings is good. 

We proceed by induction on $n_0$. If $n_0=0$, then any strongly invertible diagram with at most $n_0$ crossings
is crossingless and hence good by Lemma~\ref{l:crossingless}.

Next, let $D\in\nfsid$ with $n(D)=n_0$, and suppose that all strongly involutive diagrams with at most \( n_0-1\) 
crossings are good. By Lemma~\ref{l:nfsid-good}, this implies that all diagrams in \( \sid\) 
with at most \( n_0-1\) crossings are good. We prove that $D$ is good by induction on 
$p_0 := p^\mathrm{on}(D)$. 

By Lemma~\ref{l:erasecrossesonaxis2}, \(D\) is related by equivariant crossing changes and
Lobb–Watson moves to a diagram with no crossings on the axis,
without increasing the total number of crossings and preserving
$p^{\mathrm{on}}$. By Remark~\ref{rs:Q-LW}(b), goodness is preserved under
Lobb–Watson equivalence. Moreover, by Lemma~\ref{l:good-ccs}, the inductive hypothesis on $n_0$ implies that if the last diagram of a crossing change sequence is good, then also the starting diagram is good. So we may assume that D has no crossings on the axis.
If $p^{\mathrm{on}}(D)=0$, the conclusion follows from Lemma~\ref{l:p^on=0}.

Now we assume that each $D'\in \nfsid$ with \( p^\mathrm{on}(D')< p^\mathrm{on}(D) \) and at most $n_0$ crossings is good. 
Let $p_1 < q_1$ and $p_2 < q_2$ be two interleaving pairs of connected fixed points in $D$,
with $p_1 < p_2$. Since the set of connected pairs of fixed points is finite,
and the corresponding open intervals are partially ordered by inclusion,
we may choose such a pair so that the interval $(p_2,q_1)$ is minimal with
respect to inclusion among those arising from interleaving pairs.
In particular, the pair $(p_2,q_1)$ does not interleave any other connected
pair of fixed points in $D$.

Let $D'$ be the smallest subdiagram of $D$ containing all fixed points
in the open interval $(p_2,q_1)$.
If $D' \neq \emptyset$, then $p^{\mathrm{on}}(D') < p^{\mathrm{on}}(D)$.
Indeed, by construction the pair $(p_1,q_1)$ and $(p_2,q_2)$
is no longer present as an interleaving pair inside $D'$,
while no new interleaving pairs are created.
Therefore, by the inductive hypothesis 
on $p^\mathrm{on}$, $D'$ is good.    

Let $D'' = D \setminus D'$. Up to applying equivariant crossing changes, we may assume that $D'$ overpasses 
at each crossing with $D''$ above the axis, and underpasses at each crossing with $D''$ below the axis. 
Since there are no crossing on the axis, $D \stackrel{\rm LW}{\sim} D' \sqcup_e D''$. 

Since $D'$ is either good or empty, by Remarks~\ref{rs:Q-LW}(b) and (c) it suffices to show that $D''$ is good.  
There exists a sequence $D = D_1, \dots, D_k \in D_{\mathrm{inv}}$
of equivariant crossing changes and Lobb--Watson moves.
These do not increase the total number of crossings and can be chosen
so that the arcs of $D_k$ emanating upwards from $q_1$ and $p_2$
meet at their first intersection above the axis.
This can be achieved by performing equivariant crossing changes
so that these arcs overpass all intermediate strands
until they meet.
A symmetric argument applies below the axis.
Therefore, $D_k$ contains a ``positive'' clasp as on the left-hand side of Equation~\eqref{eq:PNskeincoerentclasp}, or a mirror (negative) version of it. In view of Remark~\ref{rs:Q-LW}.(d),  
we may assume without loss of generality that the clasp is positive. By Lemma~\ref{l:good-ccs}, it suffices to prove $D_k\in\gsid$. 
Let $D'$ and $D''$ be the diagrams appearing on the right-hand side of Skein Relation~\eqref{eq:PNskeincoerentclasp}. Clearly $n(D'),\, n(D'') \leq n(D_k) -1 \leq n_0-1$. 
Therefore, by the inductive hypothesis on $n_0$, $D',D''\in\gsid$ and by Lemma~\ref{l:clasp-skein} it follows that $D_k\in\gsid$, completing the inductive step.	
This shows that $\nfsid\subseteq\gsid$.
By Lemma~\ref{l:nfsid-good} it follows that $\sid = \gsid$,
and hence Equation~\eqref{eq:good} holds for every strongly involutive diagram.
\end{proof}

\begin{rmk}\label{r:algorithm}
Although we do not pursue computational aspects in this paper, the proof of
Theorem~\ref{t:gsid=sid} provides, in a natural way, an explicit recursive procedure for computing
the polynomial $Q(L)$ from a strongly involutive diagram of $L$. Indeed, the reduction
arguments show that any such diagram can be transformed, via equivariant skein relations
and crossing changes, into a finite combination of elementary diagrams whose $Q$--values
are fixed by normalization. In this sense, the uniqueness proof implicitly yields an
algorithm for the computation of $Q$.
\end{rmk}

\begin{cor}\label{c:mainQ}
The polynomial $Q(D)$ of Theorem~\ref{t:gsid=sid} satisfies Properties~\eqref{eq:skeinonaxis}--\eqref{eq:skeinmirror} and defines an invariant of strongly involutive links. 
\end{cor}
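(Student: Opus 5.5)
The plan is to derive everything from the defining identity \eqref{eq:good} together with the uniqueness statement of Remark~\ref{rs:Q-LW}(a). The guiding principle is this: if $F(a,z)\in\bbZ[a^{\pm1},z]$ satisfies $F|_{a=q^N,\,z=q^2-q^{-2}}=0$ for all $N\ge2$, then $F=0$. This follows from Lemma~\ref{l:specialize-vanish} applied to $F(a,q^2-q^{-2})$, together with the fact that $z\mapsto q^2-q^{-2}$ is injective on polynomials in $z$ with coefficients in $\bbZ[a^{\pm1}]$, exactly as in Remark~\ref{rs:Q-LW}(a). So each property of $Q$ should reduce to the corresponding property of $P^e_N$, which holds by Proposition~\ref{p:skeinPN}.

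\emph{Invariance.} If $D\stackrel{\rm LW}{\sim}D'$, then $P^e_N(D)=P^e_N(D')$ by Theorem~\ref{t:LW-invariance}, and $\ell,m$ agree for $D$ and $D'$. Hence both $Q(D)$ and $Q(D')$ satisfy \eqref{eq:good} for $D$, and they coincide by uniqueness. By \cite[Theorem~2.3]{LW21}, $Q$ is therefore an invariant of strongly involutive links.

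\emph{Skein relations.} Properties~\eqref{eq:skeinonaxis} and \eqref{eq:skeinoffaxis} are exactly Remark~\ref{r:skeinonoffaxis}. That remark rests on the computations that precede \eqref{e:treenode1-good} and \eqref{e:treenode2-good}: there the polynomial $Q(D_\mp)$ defined by the skein formula satisfies \eqref{eq:good}, so it equals the unique $Q(D_\mp)$, and rearranging that formula gives the stated relations. Property~\eqref{eq:skeinincoerentclasp} likewise follows from Lemma~\ref{l:clasp-skein} combined with uniqueness. These steps require only the bookkeeping of $\ell$ and $m$ already done in those computations.

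\emph{Normalisation and union.} For \eqref{eq:skeinnormalisation}, use Lemma~\ref{l:crossingless}. A single axis circle has $\ell=1$, $m=0$, which gives $Q=1$. The pair of symmetric circles has $\ell=0$, $m=1$, which gives $(a-a^{-1})^{-1}(a^2-a^{-2})=a+a^{-1}$. For \eqref{eq:skeinunion}, multiply \eqref{eq:good} for $D$ and for $D'$, and use $P^e_N(D\sqcup_eD')=P^e_N(D)P^e_N(D')$ together with the additivity of $\ell$ and $m$. This gives
\[
(q-q^{-1})^{\ell}(q^2-q^{-2})^{m}P^e_N(D\sqcup_e D')=(q^N-q^{-N})\bigl[(a-a^{-1})Q(D)Q(D')\bigr]_{a=q^N,\,z=q^2-q^{-2}},
\]
and uniqueness concludes.

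\emph{Mirror.} The mirror property~\eqref{eq:skeinmirror} is the step needing most care. Apply \eqref{eq:good} to $D^*$ and use $P^e_N(D^*)(q)=P^e_N(D)(q^{-1})$. Under $q\mapsto q^{-1}$ we have $q-q^{-1}\mapsto-(q-q^{-1})$, $q^2-q^{-2}\mapsto-(q^2-q^{-2})$, and $q^N-q^{-N}\mapsto-(q^N-q^{-N})$. Meanwhile the specialisation $a=q^N$, $z=q^2-q^{-2}$ becomes $a=q^{-N}$, $z=-(q^2-q^{-2})$. Collecting the signs $(-1)^{\ell+m}$ on the left and $(-1)$ on the right shows that $(-1)^{\ell+m+1}Q(D)(a^{-1},-z)$ satisfies \eqref{eq:good} for $D^*$. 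Uniqueness then yields \eqref{eq:skeinmirror}.

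The only real obstacle I expect is making sure the uniqueness argument applies verbatim to each candidate polynomial. In particular, every candidate must lie in $\bbZ[a^{\pm1},z]$; this requires the exponents $m-m_0$, $m-m_0+1$, etc. to be nonnegative, which was checked in the propagation arguments.
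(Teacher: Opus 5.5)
Your proposal is correct and follows essentially the same route as the paper. In both, the skein relations come from Remark~\ref{r:skeinonoffaxis} and Lemma~\ref{l:clasp-skein}; the normalisation, union and mirror properties come from \eqref{eq:good}, the corresponding properties of $P^e_N$, and the uniqueness in Remark~\ref{rs:Q-LW}(a); invariance comes from Lobb--Watson invariance of $P^e_N$ together with that same uniqueness. You also make explicit the sign bookkeeping under $q\mapsto q^{-1}$ for the mirror property and the factorisation $(q^N-q^{-N})^2=(q^N-q^{-N})(a-a^{-1})|_{a=q^N}$ for the union, which the paper leaves implicit.
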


\begin{proof}
By Remark~\ref{r:skeinonoffaxis}, Skein Relations~\eqref{eq:skeinonaxis} 
and~\eqref{eq:skeinoffaxis} hold, while Skein 
Relation~\eqref{eq:skeinincoerentclasp} holds by Lemma~\ref{l:clasp-skein}. 
Properties~\eqref{eq:skeinnormalisation}, \eqref{eq:skeinunion} and~\eqref{eq:skeinmirror} follow from 
Equation~\eqref{eq:good} and, respectively, Skein Relations~\eqref{eq:PNskeinnormalisation}, 
\eqref{eq:PNskeinunion} and~\eqref{eq:PNskeinmirror}. This proves the first part of 
the statement. 

Let $D, D'$ be strongly involutive link diagrams such that $D\stackrel{\rm LW}{\sim} D'$. 
Since, by Theorem~\ref{t:gsid=sid}, Equation~\eqref{eq:good} holds for $D$ and $D'$, 
it follows that the Laurent polynomials $Q(D)$ and $Q(D')$ of Theorem~\ref{t:gsid=sid} satisfy 
\[
Q(D)|_{a=q^N,\ z=q^2-q^{-2}} = Q(D')|_{a=q^N,\ z=q^2-q^{-2}}
\]
for every $N\geq 2$. Then, by Remark~\ref{rs:Q-LW}(a), $Q(D) = Q(D')$. 
\end{proof}

\begin{defn}\label{d:P^e}
Given a strongly involutive diagram $D$, we define the Laurent polynomial $P^e(D)\in\bbZ[a^{\pm 1},q^{\pm 1}]$
by setting $P^e(D)(a,z) = z^{-m(D)} Q(D)(a,z)$. 
\end{defn}

\begin{proof}[Proof of Theorem~\ref{t:main}]
Let $D, D'$ be strongly involutive link diagrams such that $D\stackrel{\rm LW}{\sim} D'$. 
By Corollary~\ref{c:mainQ} we have $Q(D)=Q(D')$, and since $m(D)=m(D')$ it follows 
that $P^e(D) = P^e(D')$. 

By Corollary~\ref{c:mainQ}, Properties~\eqref{eq:skeinonaxis}--\eqref{eq:skeinmirror} hold
and therefore, by the definition of $P^e$, so do  
Properties~\eqref{eq:Pskeinonaxis}--\eqref{eq:Pskeinmirror}. 
Now let $\widetilde{P}^e$ be any polynomial invariant satisfying 
Properties~\eqref{eq:Pskeinonaxis}--\eqref{eq:Pskeinmirror}. Then, it is 
straightforward to check that, for each $N\geq 2$, the map $I_N:\sid\to\bbQ(q)$ given by 
\[
I_N(D) = (q-q^{-1})^{-\ell(D)} (q^N - q^{-N}) \widetilde{P}^e(D)\big|_{a=q^N,\ z=q^2-q^{-2}}
\]
is an equivariant skein invariant of type $(\bbQ(q),\iota,N)$, where $\iota:\Lambda\to\bbQ(q)$
is the canonical inclusion, and therefore it coincides with $\iota\circ P^e_N$ by 
Theorem~\ref{t:gsid=sid}. Note that, for a sufficiently large integer 
$M$, we have 
\[
P(a,z) = z^M P^e(D),\ \widetilde{P}(a,z) = z^M \widetilde{P}^e(D)\in\bbZ[a,a^{-1},z].
\]
The equality $I_N = \iota\circ P^e_N$ implies  
\[
P(a,z)\big|_{a=q^N,\ z=q^2-q^{-2}} = \widetilde{P}(a,z)\big|_{a=q^N,\ z=q^2-q^{-2}} 
\]
for each $D\in\sid$. As in Remark~\ref{rs:Q-LW}(a), applying Lemma~\ref{l:specialize-vanish} to 
the Laurent polynomials $P(a,z)\big|_{z=q^2-q^{-2}}$ and $\widetilde{P}(a,z)\big|_{z=q^2-q^{-2}}$
shows that $P(a,z)=\widetilde{P}(a,z)$ and therefore $P^e(D)=\widetilde{P}^e(D)$. 
This proves the uniqueness of $P^e$ and concludes the proof of the first part of the theorem. 

By Proposition~\ref{p:skeinLWCpoly}, $J^e$ is an equivariant skein invariant of type $(\Lambda,{\rm id}_\Lambda,2)$. 
Therefore, by Theorem~\ref{t:gsid=sid} we have $J^e=P_2^e$. The second part of the theorem follows 
at once from Equation~\eqref{eq:good} in the case $N=2$ and the defintion of $P^e$. 
\end{proof}

The following lemma will be used in the proof of Theorem~\ref{t:congruence}.

\begin{lem}\label{l:evalmod2}
Let $f, g\in\bbZ/2\bbZ[a^{\pm 1},z]$ such that 
\[
f(q^N, q+q^{-1}) = g(q^N,(q+q^{-1})^2)
\]
for each $N\geq 2$. Then, $f(a,z) = g(a,z^2)$. 
\end{lem}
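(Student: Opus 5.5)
The plan is to reduce the statement to Lemma~\ref{l:specialize-vanish}, applied with the integral domain $R=\bbZ/2\bbZ$, followed by a short transcendence argument in the variable $z$.

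Set
\[
h(a,z) := f(a,z) - g(a,z^2)\in \bbZ/2\bbZ[a^{\pm 1},z].
\]
The hypothesis says exactly that $h(q^N, q+q^{-1}) = 0$ in $\bbZ/2\bbZ[q^{\pm1}]$ for every $N\geq 2$. The goal is to show $h=0$. Expand $h$ in powers of $a$ as
\[
h(a,z)=\sum_i a^i h_i(z),\qquad h_i\in\bbZ/2\bbZ[z],
\]
with finitely many $i\in\bbZ$.

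Next, substitute $z=q+q^{-1}$ to obtain the Laurent polynomial
\[
H(a,q) := \sum_i a^i\, h_i(q+q^{-1})\in \bbZ/2\bbZ[a^{\pm1},q^{\pm1}].
\]
By hypothesis $H(q^N,q)=0$ for all $N\geq 2$. Since $\bbZ/2\bbZ$ is an integral domain, Lemma~\ref{l:specialize-vanish} gives $H(a,q)=0$. Comparing coefficients of the powers of $a$ then shows that each $h_i(q+q^{-1})$ vanishes in $\bbZ/2\bbZ[q^{\pm1}]$.

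It remains to check that $q+q^{-1}$ is not a root of a nonzero polynomial over $\bbZ/2\bbZ$. This is the only point needing any care, because we are in characteristic $2$. I would argue by degree. If $h_i\neq 0$ has degree $d$ with leading coefficient $1$, then $(q+q^{-1})^d$ has top term $q^d$ with coefficient $1$, and every lower power of $(q+q^{-1})$ contributes only $q$-degrees strictly less than $d$. Hence $h_i(q+q^{-1})$ has nonzero $q^d$-coefficient, which is a contradiction. Therefore every $h_i=0$, so $h=0$, and $f(a,z)=g(a,z^2)$ as claimed.
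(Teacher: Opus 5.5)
Your proof is correct and follows the paper's argument. Like the paper, you take the difference with $g(a,z^2)$, substitute $z=q+q^{-1}$, apply Lemma~\ref{l:specialize-vanish} over $\bbZ/2\bbZ$, and conclude because $q+q^{-1}$ is not a root of a nonzero polynomial. The paper only asserts this last transcendence fact; your coefficient comparison in $a$ combined with the leading-degree argument in characteristic~$2$ actually justifies it.
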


\begin{proof}
Let $h(a,z):=g(a,z^2)$. Then, 
\[
f(q^N,q+q^{-1}) = h(q^N,q+q^{-1}) 
\]
for each $N\geq 2$. Then, the polynomial 
$L(a,q):=f(a,q+q^{-1}) - h(a,q+q^{-1})\in\bbZ/2\bbZ[a^{\pm 1},q^{\pm 1}]$ satisfies 
$L(q^N,q)=0$ for each $N\geq 2$, and by Lemma~\ref{l:specialize-vanish} we have $L(a,q)=0$. 
Since $q+q^{-1}$ is not a zero of a non-vanishing polinomial in $\bbZ/2\bbZ[a^{\pm 1}]$, 
we conclude 
\[
f(a,z) = h(a,z) = g(a,z^2).
\]
\end{proof}

\begin{proof}[Proof of Theorem~\ref{t:congruence}]
Let $D$ be an oriented link diagram and, for $N\geq 2$, let $P_N(D)\in\bbZ[q^{\pm 1/2}]$ the polynomial 
invariant defined in~\cite{MOY98}. Consider the following mod $2$ evaluation of $P_N$: 
\[
\widetilde P_N(D) := P_N(D)\big|_{q\, \mapsto\, q^2} \bmod 2 \in \bbZ/2\bbZ[q^{\pm 1}]
\]
We claim that $\widetilde P_N(D)$ is an equivariant skein invariant of type $(\bbZ/2\bbZ[q^{\pm 1}],\pi,N)$, 
where the map $\pi:\bbZ[q^{\pm 1}]\to\bbZ/2\bbZ[q^{\pm 1}]$ is the mod $2$ reduction. In view of the skein 
relation~\cite[Theorem~3.2]{MOY98} and the normalization of $P_N$ chosen in~\cite{MOY98} (see the definition of $\langle G\rangle_n$ and Lemma~2.1 therein), we have 
\begin{equation}\label{e:skein-mod2}
    q^N \widetilde P_N(D_-) + q^{-N} \widetilde P_N(D_+) = (q + q^{-1}) \widetilde P_N(D_0),\qquad  
    \widetilde P_N(U) = [N] \bmod 2, 
\end{equation}
where $U$ is a diagram of the unknot. Since $P_N$ is invariant under Reidemeister moves so is  
$\widetilde P_N(D)$, and therefore a fortiori invariant under the Lobb--Watson moves. 
Moreover, Equations~\eqref{e:skein-mod2} imply that $\widetilde P_N(D)$ satisfies 
Relations~\eqref{eq:PNskeinonaxis}--\eqref{eq:PNskeinmirror}. Indeed, Relation~\eqref{eq:PNskeinonaxis} is 
immediate, while \eqref{eq:PNskeinoffaxis} can be deduced as an easy exercise using the symmetry of the 
link. 

Switching and resolving the top crossing of the clasp on the left-hand side of~\eqref{eq:PNskeincoerentclasp} and 
using~\eqref{e:skein-mod2} we get  
\begin{equation}\label{e:claspequiv}
q^N \widetilde P_N(\claspRtwo) + q^{-N} \widetilde P_N(\Claspa) = (q + q^{-1}) \widetilde P_N(\claspresol).
\end{equation}
Since 
\[
\widetilde P_N(\claspRtwo) = \widetilde P_N(\orientedresolutiononaxis)\quad\text{and}\quad 
\widetilde P_N(\claspresol) = \widetilde P_N(\poscrossingonaxis),
\]
Equation~\eqref{e:claspequiv} is equivalent to~\eqref{eq:PNskeincoerentclasp}. Moreover, when $\varphi=\pi$ Relations~\eqref{eq:PNskeinnormalisation} become 
\[
I_N(\acirc{}) = [N]\bmod 2,\qquad I_N(\acircs{}) = [N]_{q^2} \bmod 2 =[N]^2 \bmod 2,
\]
which are clearly satisfied by $\widetilde P_N$. Finally, Relations~\eqref{eq:PNskeinunion} and~\eqref{eq:PNskeinmirror} follow easily. This proves the claim that $\widetilde P_N(D)$ is an equivariant skein invariant of type $(\bbZ/2\bbZ[q^{\pm 1}],\pi,N)$. 
Therefore, by Theorem~\ref{t:gsid=sid}, we have 
\[
\widetilde P_N(D) = P^e_N(D) \bmod 2 
\]
for every $N\geq 2$. Now observe that the following identity holds:
\begin{equation}\label{e:tildePN=homflymod2}
\widetilde P_N(D) = [N] P(D)\big|_{a=q^N,\, z=q+q^{-1}}\bmod 2,    
\end{equation}
where $P(D)$ is the standard HOMFLY--PT polynomial of the link represented by $D$, characterized by: 
\[
a^{-1}P(\poscrossing) - a P(\negcrossing) = z P(\orientedresol),\qquad P(\unknot) = 1.
\]
Identity~\eqref{e:tildePN=homflymod2} follows from Theorem~\ref{t:gsid=sid} together with the fact that 
\[
[N] P(D)\big|_{a=q^N,\, z=q+q^{-1}}\bmod 2
\] 
is also an equivariant skein invariant of type $(\bbZ/2\bbZ[q^{\pm 1}],\pi,N)$, 
which can be checked as we just did for $\widetilde P_N(D)$. 
By Theorem~\ref{t:gsid=sid}, Equation~\eqref{eq:good} holds as well, therefore  
\[
(q+q^{-1})^{|D|-1} [N] P(D)\big|_{a=q^N,\, z=q+q^{-1}} 
= [N] Q(D)\big|_{a=q^N,\, z=(q+q^{-1})^2} \bmod 2.
\]
Hence, by Lemma~\ref{l:evalmod2} and since $P^e(D) = z^{-m(D)} Q(D)$, we have 
\[
z^{-\ell(D)+1} P(D) 
= P^e(D)\big|_{z\mapsto z^2}\quad \bmod 2. 
\]
This concludes the proof. 
\end{proof}

\section{Comparison with the Lobb-Watson invariant}\label{s:comparison}
In this section we compare the invariant $Q$ with the Lobb--Watson invariant.
Our goal is to relate the behavior of $Q$ under a natural equivariant
connected sum construction to classical link invariants, and to use this
comparison to detect examples that are indistinguishable by previously
known equivariant invariants.

Define the strongly involutive link $L\#^v L^r$ as the connected sum of an oriented link $L$ with its reverse $L^r$ along a specified component $K$, endowed with the natural involution.
When it is useful to emphasise the choice of $K$, we will write $L\#^v_K L^r$ instead of $L\#^v L^r$.
Given a diagram $D$ of $L$ such that the subdiagram $D_K$, representing $K$, admits an unobstructed path $l$ to the $x$-axis, the link $L\#^v L^r$ can be represented by taking a symmetric copy of $D$ across that axis, reversing its orientation, and joining the two copies along $l\cup\tau(l)$, see Figure~\ref{f:verticalsum}.
Any diagram obtained in this way will be denoted by $D\#^v D^r$.

\begin{figure}[ht]
    \centering
\begin{subfigure}[b]{0.4\textwidth}
\centering
\begin{overpic}[abs,unit=1pt,scale=0.45]{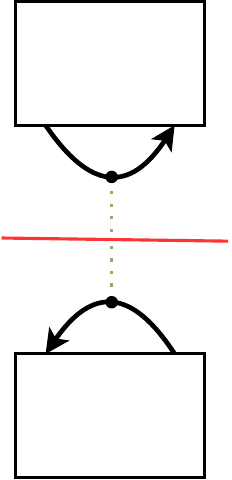}
\put(17,87){$D$}
\put(12,9.5){$\tau(D)^r$}
\put(27.5,56){\textcolor{olive}{\small $l$}}
\put(27.5,42.5){\textcolor{olive}{\small $\tau(l)$}}
\put(15,60){\small $p_1$}
\end{overpic}
\caption*{A $\tau$-equivariant diagram of $D\sqcup \tau(D)^r$}
\end{subfigure}
~~~
\begin{subfigure}[b]{0.4\textwidth}
\centering
\begin{overpic}[abs,unit=1pt,scale=0.45]{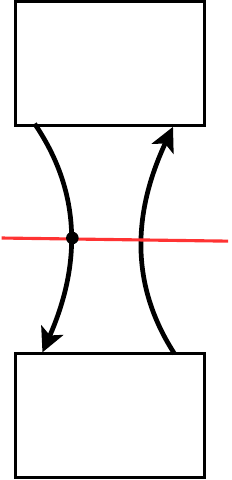}
\put(17,87){$D$}
\put(12,9.5){$\tau(D)^r$}
\put(3.5,57.5){\small $p_2$}
\end{overpic}
\caption*{The diagram $D\#^v D^r$}
\end{subfigure}
    \caption{The construction of $D\#^v D^r$.}
    \label{f:verticalsum}
\end{figure}

\begin{prop}\label{p:Pneofverticalsums}
For any link $L$ with a fixed component $K$, we have
\[
P_N^{e}(L\#^v_{K} L^r)(q) = [N]\, P_{N}(L)(q^2)\,,
\]
where $P_N$ denotes the $\mathfrak{sl}_N$ polynomial of $L$, normalised by $P_N(\bigcirc)(q) = 1$. 
In particular, for any link $L$, we have
\[
J^{e}({L\#^v_{K} L^r})(q) = (q+q^{-1}) J_L(q^2)\,,
\]
where $J_{L}$ denotes the Jones polynomial of $L$, normalised by $J_{\bigcirc}(q) = 1$.
\end{prop}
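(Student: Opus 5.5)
The plan is to use the uniqueness statement of Theorem~\ref{t:gsid=sid}, but in a ``relative'' form, by induction on the number of crossings of $D$. Fix a diagram $D$ of $L$ with an unobstructed path $l$ from $D_K$ to the axis, and form $D\#^v D^r$. Every crossing of $D\#^v D^r$ is off the axis and comes in a $\tau$-symmetric pair $(c,\tau(c))$, with $c$ a crossing of $D$ and $\tau(c)$ its mirror copy in $\tau(D)^r$. An equivariant pair $(c,\tau(c))$ has the same sign in both copies, because reflection and orientation reversal together preserve crossing signs. Switching both crossings of the pair gives $D_-\#^v D_-^r$, and smoothing both gives $D_0\#^v D_0^r$. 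The only point needing care is whether the distinguished component $K$ survives. After smoothing, the path $l$ still lands on a component, namely the one containing the arc of $K$ where $l$ ends, so the vertical sum is again of the same type.

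The key steps are as follows.

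\begin{enumerate}
\item[(i)] Define $F(D):=[N]\,P_N(D)(q^2)$, with $P_N$ normalised so that the unknot has value $1$. Check that $F$ satisfies the off-axis skein relation~\eqref{eq:PNskeinoffaxis}. This holds because the MOY skein relation for $P_N$,
\[
q^{N}P_N(D_-)-q^{-N}P_N(D_+)=(q-q^{-1})P_N(D_0),
\]
becomes exactly~\eqref{eq:PNskeinoffaxis} after the substitution $q\mapsto q^2$.
\item[(ii)] Check that $P^e_N$ satisfies the same recursion on vertical sums, by Proposition~\ref{p:skeinPN}, since~\eqref{eq:PNskeinoffaxis} holds for $P^e_N$.
\item[(iii)] Use crossing changes on $D$ to reduce, by the usual HOMFLY-style induction on crossing number and unlinking number, to diagrams $D$ that are ascending, hence diagrams of unlinks.
\end{enumerate}

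For the base case, take $D$ to be a crossingless diagram of an unlink with $r$ components. Then $D\#^v D^r$ is LW-equivalent to a single circle on the axis, coming from $K\#K^r$, together with $r-1$ symmetric pairs of circles. By~\eqref{eq:PNskeinnormalisation} and~\eqref{eq:PNskeinunion} this gives
\[
P^e_N=[N]\,[N]_{q^2}^{\,r-1}.
\]
On the other side, $P_N(\text{unlink})(q)=[N]^{r-1}$, and so
\[
[N]\,P_N(q^2)=[N]\,[N]_{q^2}^{\,r-1}.
\]
The two values agree. To justify the LW-equivalence, observe that in the ascending diagram the reflected copy is descending, so the two copies lie in disjoint half-spaces apart from the band along $l$; the rest follows from \cite[Theorem~2.3]{LW21}.

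The main obstacle is making sure the induction stays within the class of vertical sums and that the result does not depend on choices. Each step of the HOMFLY unknotting algorithm on $D$ should lift to one equivariant pair move on $D\#^v D^r$, with equivariant planar isotopies and Reidemeister moves lifting to IR moves. I would therefore run the standard argument, ordering the components with $K$ first and choosing base points on $K$ at the end of $l$, so that each crossing change lifts to an equivariant crossing-change pair. This yields $P^e_N(D\#^vD^r)=F(D)$ for every $D$. Since $F$ is an invariant of $L$, the identity also shows independence of the choices of $D$ and $l$.

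The Jones statement is then the case $N=2$. Theorem~\ref{t:main} (see its proof) gives $J^e=P^e_2$, and $P_2(L)$ is the Jones polynomial $J_L$ under this normalisation, possibly up to $q\mapsto q^{-1}$. Finally, $[2]=q+q^{-1}$.
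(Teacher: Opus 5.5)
Your proposal is correct and matches the paper's proof. Both show that $D\mapsto P^e_N(D\#^vD^r)$ and $D\mapsto [N]\,P_N(D)(q^2)$ satisfy the same off-axis skein recursion~\eqref{eq:PNskeinoffaxis} and agree on unlinks, then induct on crossing number and unlinking number. Your version is slightly more careful than the paper's in two places: you track the component on which $l$ lands after a smoothing, and you explicitly treat unlink diagrams that have crossings, via an equivariant isotopy and \cite[Theorem~2.3]{LW21}.
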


\begin{proof} 
Let $D$ be a diagram of $L$. 
Note that any equivariant crossing change or smoothing performed on a diagram of the form 
$D\#^vD^r$ again yields a diagram of the same form. 
We proceed by induction on
\[
\gamma(D) := c(D\#^vD^r) + u(D)\,,
\]
where $c(D)$ is the number of crossings of $D$ and $u(D)$ the minimal number of crossing changes 
turning $D$ into a diagram of the unlink. 

If $\gamma(D)=0$, then $D\#^vD^r$ is a crossingless diagram of the unlink.
The statement follows directly from Proposition~\ref{p:skeinPN},
using the normalization and multiplicativity properties of $P_N$ -- cf.~\cite[Theorem~8.4.1~and~\S~8.5]{Ka96}.

Assume the statement holds for each link $L'$ with a diagram $D'$ such that $\gamma(D')<n$. 
Choose a minimal collection $S$ of crossings of $D$ such that performing crossing changes on all of them 
results in a diagram of the unlink. Let $D_1$ and $D_2$ be the diagrams obtained by performing on $D$ a crossing change 
and an oriented smoothing, respectively, at one of the crossings in $S$. 
Note that $D_K$ remains a subdiagram of each $D_i$, and $\gamma(D) > \gamma(D_i)$ for $i=1,2$. 
Then, by Proposition~\ref{p:skeinPN} and the inductive hypothesis,
\begin{align*}
P_N^e(D\#^vD^r) 
  &= \pm q^{\pm 2N}\, P_N^e(D_1\#^vD_1^r) 
   \mp (q^2 + q^{-2})\, P_N^e(D_2\#^vD_2^r) \\[0.3em]
  &= [N]\bigl(\pm (q^2)^{\pm N} P_N(D_1)(q^2) 
               \mp [2]_{q^2} P_N(D_2)(q^2)\bigr) \\[0.3em]
  &= [N]\, P_N(L)(q^2).
\end{align*}
In the last equality we used the usual skein relations for the $\mathfrak{sl}_N$-polynomials 
(see, for example, \cite[Theorem~8.5.3]{Ka96}). 
\end{proof}

Now we restrict to the special case of a strongly involutive link $L$ with a distinguished strongly invertible component $K$.
In this setting, choosing one of the two fixed points of $K$ yields a reduced version $\widetilde{\rm Kh}_{\tau}^{*,*,*}(L,K)$ of the Lobb--Watson refined invariant. For the sake of completeness we start by recalling the construction here.

Let $D$ be a strongly invertible diagram of $L$. Let $p$ be a fixed base-point, chosen away from any crossing, on the subdiagram~$D_K \subseteq D$ representing $K$. The \emph{reduced Khovanov complex} in quantum grading $j$ is the subcomplex
\[
\widetilde{\rm CKh}^{\ast,j}(D,p) \subseteq {\rm CKh}^{\ast,j-1}(D)
\]
spanned by all enhanced states in which the circle containing $p$ is labelled $x^{-}$.

The reduced homology $\widetilde{\rm Kh}$ does not depend (up to isomorphism) on the choice of base-point, 
though it may depend on the component $K$ containing $p$. 
Since we are working over $\mathbb{F}_2$, there is a splitting
\[
{\rm Kh}^{i,j}(L) \cong \widetilde{\rm Kh}^{i,j-1}(L,K)\oplus\widetilde{\rm Kh}^{i,j+1}(L,K)\,,
\]
for every link $L$ and component $K$. 
One advantage of working with reduced Khovanov homology is that it behaves well under connected sums, 
as summarised by the following proposition:

\begin{prop}[{\cite[Prop.~3.3 and Cor.~3.5]{Kh02}}]\label{p:connectedsumKh}
Let $(L,K)$ and $(L',K')$ be two oriented links with distinguished components. Then
\[
\widetilde{\rm Kh}(L\#_{K,K'} L', K\#K') \cong 
\widetilde{\rm Kh}(L,K)\otimes_{\mathbb{F}_2}\widetilde{\rm Kh}(L',K')\,,
\]
as bi-graded $\mathbb{F}_2$-vector spaces.
\end{prop}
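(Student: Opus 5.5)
The plan is to follow Khovanov's original argument for reduced homology of connected sums, working over $\mathbb{F}_2$ throughout. Fix diagrams $D$ of $L$ and $D'$ of $L'$, with base-points $p\in D_K$ and $p'\in D'_{K'}$ chosen away from crossings. Form the connected sum diagram $D\#D'$ by a band joining $D$ and $D'$ at arcs adjacent to $p$ and $p'$, placing a single base-point $p''$ on the band.

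\textbf{Step 1: split the unreduced complex.} Let $X_{p''}$ denote multiplication by $x$ on the circle through $p''$. This is a chain map, since the Frobenius structure over $\mathbb{F}_2$ is natural under merge and split at marked circles. Over $\mathbb{F}_2$ one gets the standard short exact sequence
\[
0\to \widetilde{\rm CKh}(D\#D',p'')\to {\rm CKh}(D\#D')\to \widetilde{\rm CKh}(D\#D',p'')\to 0,
\]
with the appropriate grading shifts. It is split, which recovers the splitting of ${\rm Kh}$ quoted before the statement.

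\textbf{Step 2: identify the reduced complexes.} View ${\rm CKh}(D)$ as a module over $A=\mathbb{F}_2[x]/(x^2)$ via the marked point $p$, and similarly for $D'$. The crossings of $D\#D'$ are the disjoint union of those of $D$ and $D'$. In every resolution, the marked circle of $D\#D'$ is obtained by merging the marked circles of the two resolutions. Hence there is an isomorphism of complexes
\[
{\rm CKh}(D\#D')\cong {\rm CKh}(D)\otimes_A {\rm CKh}(D'),
\]
which is Khovanov's Prop.~3.3. Reducing, that is, setting $x=0$ at the marked point, gives
\[
\widetilde{\rm CKh}(D\#D')\cong \widetilde{\rm CKh}(D)\otimes_{\mathbb{F}_2}\widetilde{\rm CKh}(D').
\]
Here I would check that the cube differentials and the gradings match. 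The quantum shift by $-1$ in the definition of the reduced complex is exactly what makes the grading of the tensor product add correctly.

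\textbf{Step 3: apply Künneth.} Since $\mathbb{F}_2$ is a field, the Künneth theorem yields the stated isomorphism of bigraded vector spaces on homology. Independence of the choice of band and base-points follows from the invariance of reduced Khovanov homology under Reidemeister moves away from the base-point, which gives the well-definedness claimed in the statement.

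\textbf{Main obstacle.} I expect the delicate step to be Step 2: verifying that the identification of the marked circles is compatible with every edge map of the cube. The critical cases are a merge or split involving the marked circle. There, the $A$-module structure on each factor must intertwine $m$ and $\Delta$, which holds because $m$ and $\Delta$ are $A$-module maps. I would do this check explicitly in the two local cases.
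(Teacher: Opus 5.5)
Your proposal is correct and is essentially Khovanov's own argument from \cite{Kh02}, which the paper cites without proof: identify ${\rm CKh}(D\#D')\cong{\rm CKh}(D)\otimes_A{\rm CKh}(D')$ through the merged marked circle, reduce at that circle, and apply K\"unneth over $\mathbb{F}_2$. Step~1 is not needed for this statement, and its chain-level splitting over $\mathbb{F}_2$ (Shumakovitch's result) is asserted without justification, so you can simply drop it.
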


\medskip

Returning to the equivariant setting: if we chose our base-point $p\in K\subseteq L$ 
to be one of the two fixed points of $\tau$, the reduced sub-complex is also stable under ${\rm id} + \tau$. 
By repeating the construction recalled in Section~\ref{s:LW}, replacing ${\rm Kh}$ with $\widetilde{\rm Kh}$, one obtains a reduced version of the Lobb--Watson refined invariant $\widetilde{\rm Kh}_{\tau}^{*,*,*}(L,K)$.
This reduced version carries the same information as the unreduced one; indeed, there is a splitting
\[
{\rm Kh}_{\tau}^{i,j,k}(L) \cong 
\widetilde{\rm Kh}_{\tau}^{i,j-1,k}(L,K)\oplus\widetilde{\rm Kh}_{\tau}^{i,j+1,k}(L,K)\,,
\]
see~\cite[Proposition~1.5]{LW21}. 
Moreover, the same reasoning in \cite[Sections~4.2 and 4.3]{LW21} gives \emph{mutatis mutandis} an explicit description of $\widetilde{E}^2_{\mathcal{G}}$, the second page of the \emph{reduced} Lobb--Watson $\mathcal{G}$-spectral sequence. We provide here a quick sketch of the argument. The first page of the reduced $\mathcal{G}$-spectral sequence $\widetilde{E}^1_{\mathcal{G}}(D,p)$ is the homology of the reduced Khovanov complex under  the differential $({\rm id} + \tau)$. 
In analogy with the notation in Section~\ref{s:LW}, we denote by $\widetilde{\calS}(D,p)$ and~$\widetilde{\calS}^e(D,p)$ the collections of enhanched states and $\tau$-equivariant enhanched states, respectively, where the circle containing $p$ is labelled by $x_{-}$.
In particular, as $\mathbb{F}_2$-vector spaces we have:
\[ \widetilde{\rm CKh}(D,p) = {\rm Span}\, (\widetilde{\calS}(D,p))\,.\]
Fix a subset $\widetilde{S} \subset \widetilde{\calS}(D,p)$ in such a way that:
\[ \widetilde{\calS}(D,p) = \widetilde{\calS}^e(D,p)\, \sqcup\,  \widetilde{S} \sqcup\, \tau(\widetilde{S})\, . \]
One can proceed as in \cite[Section~4.2]{LW21}, and use Gaussian elimination to cancel out the pairs $(s,\tau(s))$ for each $s \in \widetilde{S}$. Alternatively, one can notice that the kernel of ${\rm id} + \tau$, is spanned by the union of $\widetilde{\calS}^e(D)$ and the set of all elements $x+\tau(x)$, for $x\in \widetilde{\calS}$. 
The latter elements are clearly boundaries since $({\rm id} + \tau)(x) = x+\tau(x)$, while there is no linear combination of elements in $\widetilde{\calS}^e(D)$ that is a boundary with respect to  ${\rm id} + \tau$.
Either way, we have an identification
\[ \widetilde{E}^2_{\mathcal{G}}(D,p) \cong {\rm Span}\, (\widetilde{\calS}^e(D,p))\,,\]
that preserves both the $k$- and $j$-gradings. 
This can be made into an ismorphism of chain complex by endowing ${\rm Span}\, (\widetilde{\calS}^e(D,p))$ with the differential $\tilde{d}$ given by the maps described at the end of \cite[Section~4.3]{LW21}. In the light of this identification, we will abuse the notation and do not distinguish between $({\rm Span}\, (\widetilde{\calS}^e(D,p)),\tilde{d}\,)$ and $\widetilde{E}^2_{\mathcal{G}}(D,p)$.

Recall that a link $L$ is \emph{Khovanov-thin} if its reduced Khovanov homology 
(with respect to any, and hence every, component) is supported in bi-degrees $(i,j)$ such that 
$2i-j = s$, for some fixed integer $s$. 
It turns out that this integer $s$ coincides with Rasmussen’s invariant $s(L)$. 
In particular, the Khovanov homology of thin links is completely determined by the Jones polynomial 
and the Rasmussen invariant.

\begin{prop}\label{p:E3Ghverticalsum}
Let $(L,K)$ be a Khovanov-thin link with a distinguished component, and write 
\[
J_{L}(q) = \sum_r a_r q^r
\]
for the Jones polynomial of $L$, normalised so that the unknot has Jones polynomial $1$. 
Then, the Poincaré polynomial of 
\[
\widetilde{E}^3_{\mathcal{G}} = \widetilde{E}^{3}_{\mathcal{G}}(L\#^v_K L^r,K\#K)
\]
is
\[
P(\widetilde{E}^3_\mathcal{G}) := \sum_{j,k} \dim (\widetilde{E}^3_{\mathcal{G}})^{j,k}\, q^{j}t^{k} 
= \sum_{r} \vert a_r \vert\, q^{2r} t^{\tfrac{1}{2}(r+s(L))}.
\]
In particular, the $\mathcal{G}$-spectral sequence collapses at the third page, 
which is therefore completely determined by the Jones polynomial and the Rasmussen invariant of $L$.
\end{prop}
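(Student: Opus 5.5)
We plan to compute $\widetilde{E}^2_{\mathcal{G}}$ for the diagram $D\#^v D^r$ directly from the state-sum description $\widetilde{E}^2_{\mathcal{G}}(D\#^vD^r,p)\cong{\rm Span}(\widetilde{\calS}^e(D\#^vD^r,p))$ recalled above. We then show that its differential $\tilde d$ is, up to a grading shift, the reduced Khovanov complex of $D$ with quantum grading doubled. Fix the base point $p$ to be one of the two fixed points on the connecting band of $K\#K$.

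\emph{Step 1: identify equivariant states with states of $D$.} Since $D\#^vD^r$ has no crossings on the axis, every crossing occurs in a symmetric pair $(c,\tau(c))$. A $\tau$-equivariant state therefore amounts to a choice of smoothing at each crossing of $D$. The circles of such a state are either symmetric pairs, coming from circles of the corresponding state of $D$ not meeting the band, or a single $\tau$-invariant circle, the one through $p$, obtained by joining the circle of $D$ through the band's foot to its mirror. An equivariant enhancement must give symmetric circles equal labels, and the circle through $p$ is labelled $x_-$. This gives a bijection $\widetilde{\calS}^e(D\#^vD^r,p)\leftrightarrow\widetilde{\calS}(D,p')$, where $p'$ is a base point of $D$ on $K$. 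Under this bijection $|s|^{\rm off}=2|s_D|$ and $h(s)=2h(s_D)$ for circles away from $p$. Comparing the formulas for $j$ and $k$ (with $n^{\rm off}_\pm=2n_\pm(D)$), we expect $j(s)=2\tilde j(s_D)$ and $k(s)=i(s_D)$.

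\emph{Step 2: the differential.} Using the description of $\tilde d$ at the end of \cite[Section~4.3]{LW21}, the $E^2$ differential on equivariant states changes a symmetric pair of crossings. It acts as the composite of two Khovanov merge/split maps performed symmetrically, with the mixed terms cancelled by the Gaussian elimination. We must check that, under the bijection of Step 1, this is a map of $k$-degree $1$ that coincides with, or is at least filtered-isomorphic to, the reduced Khovanov differential of $D$. Over $\mathbb{F}_2$ the doubled merge of pairs of circles should reduce to the ordinary merge/split, because symmetric labels multiply as $x_\pm\otimes x_\pm$. This is the step we expect to be the main obstacle: the Lobb--Watson zig-zag formulas include correction terms, and we would need to verify that they vanish or only contribute terms of higher filtration that do not affect $E^3$. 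If an exact identification fails, we would argue instead that $\widetilde{E}^3_{\mathcal{G}}$ is at least bounded above by $\widetilde{\rm Kh}(L,K)$ with regraded degrees, and use the Euler characteristic to bound it below.

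\emph{Step 3: thinness and the count.} Granting Step 2, we get $\widetilde{E}^3_{\mathcal{G}}\cong\widetilde{\rm Kh}(L,K)$ with $(i,j)\mapsto(k,j)=(i,2j)$. For a thin link, $\widetilde{\rm Kh}$ is supported on $2i-j=s(L)$, i.e.\ $i=\tfrac12(j+s)$. Its dimension in quantum degree $j=r$ equals $|a_r|$, since the Euler characteristic is $J_L$ and each quantum degree has a single homological degree. This yields $\sum_r|a_r|q^{2r}t^{(r+s)/2}$. Because all generators lie on a line where any higher differential $d_n$ (shifting $k$ by $n$ while preserving $j$) must vanish, the sequence collapses at $E^3$. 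As a consistency check, the Euler characteristic agrees with Proposition~\ref{p:Pneofverticalsums}.
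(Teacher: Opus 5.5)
This is essentially the paper's argument. It uses the same bijection $\widetilde{\calS}^e(D\#^vD^r,p_2)\leftrightarrow\widetilde{\calS}(D,p_1)$, with $p_1$ taken at the foot of the connecting band, sending bidegree $(j,k)$ to $(k,j/2)$. It then identifies the $\widetilde{E}^2_{\mathcal{G}}$ differential with the reduced Khovanov differential of $D$, which the paper likewise only asserts by comparison with \cite[Section~4.3]{LW21}, and finishes with the same thinness and collapse count.

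Your Step~2 does go through as you expect. Merges of symmetric pairs give $m(\alpha,\beta)\otimes m(\alpha,\beta)$, and for splits the two cross terms of $\Delta(x_+)\otimes\Delta(x_+)$ are exchanged by $\tau$ and so vanish in $E^1$. The fallback is therefore unnecessary, and its claimed upper bound by $\widetilde{\rm Kh}(L,K)$ would have no justification anyway without knowing the differential.
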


\begin{proof}
Let $D$ be a diagram of $L$. We fix the base-points  $p_1\in D_K \subseteq D$ and $p_2\in D\#^v D^r$ as illustrated in Figure~\ref{f:verticalsum}. 

Since there is precisely a $\tau$-invariant pair of crossings in $D\#^vD^r$ for each crossing in $D$, there is a bijective correspondence between $\tau$-invariant resolutions of $D\#^v D^r$ and resolutions of $D$. 

Note that the vertical arcs in $D\#^v D^r$ always belong to the same circle in each equivariant resolution of $D\#^v D^r$ and, moreover, this is the unique $\tau$-invariant circle in each resolution of~$D\#^v D^r$. All circles not containing $p_1$ in a resolution of $D$ yield a $\tau$-invariant pair of of circles in the corrspoding resolution of $D\#^vD^r$, while the circle containing $p_1$ yields the circle containing~$p_2$.
This allows us to define a one-to-one correspondence
\[ \widetilde{\calS}^e(D\#^v D^r,p_2) \overset{1:1}{\longleftrightarrow} \widetilde{\calS}(D,p_1)\,.\] 
as follows; we label each circle that does not contain~$p_1$ as the corresponding $\tau$-invariant pair of circles, and the circle containing $p_1$ as the circle containing~$p_2$.

It is easy to check that this bijection sends $\tau$-equivariant states of bi-degree $(j,k)$ 
to states of bi-degree $(k,\tfrac{1}{2}j)$ (note $j$ is always even). 
This gives an isomorphism
\[
(\widetilde{E}^{2}_{\mathcal{G}})^{j,k}(D\#^v D,p_2) 
\cong \widetilde{\rm CKh}^{k,\tfrac{1}{2}j}(D,p_1)\,,
\]
of $\mathbb{F}_2$-vector spaces, for each $j,\,k\in\mathbb{Z}$ with $j$ even.
Comparing the Khovanov differential with that on $\widetilde{E}^{2}_{\mathcal{G}}$ 
(see \cite[Section~4.3]{LW21}), one notices that these isomorphisms give an isomorphism of chain complexes. Therefore, we have that 
\[(\widetilde{E}^{3}_{\mathcal{G}})^{j,k} \cong \widetilde{\rm Kh}^{k,\tfrac{1}{2}j}(L,K)\] 
for each $j,\,k\in\mathbb{Z}$ with $j$ even. 
Since $L$ is Khovanov-thin,
\[
\sum_{i,j} \dim \widetilde{\rm Kh}^{i,j}(L,K)\, h^i q^j 
= \sum_{j} \vert a_j \vert\, h^{\tfrac{1}{2}(j+s(L))} q^j.
\]
Differentials on pages $t>2$ increase the $k$-degree while preserving the quantum degree, 
so they must vanish. Therefore the spectral sequence collapses at page $3$ and the statement follows. 
\end{proof}

\begin{prop}\label{p:LWKhverticalsum}
Let $(L,K)$ be a Khovanov-thin link with a fixed component, and write 
\[
J_{L}(q) = \sum_r a_r q^r
\]
for the Jones polynomial of $L$, normalised so that the unknot has Jones polynomial $1$. 
Then both the $\mathcal{F}$- and $\mathcal{G}$-spectral sequences collapse at page~$3$, 
and the Poincaré polynomial of the reduced homology 
$\widetilde{\rm Kh}^{*,*,*}_\tau(L\#^v_K L^r,K\#K)$ is given by 
\[
\widetilde{P}(L,K) := 
\sum_{i,j,k} \dim \bigl(\widetilde{\rm Kh}_\tau^{i,j,k}(L\#^v_K L^r,K\#K)\bigr)\, h^i q^j t^k 
= \sum_{r} \vert a_r \vert\, h^{r+s(L)} q^{2r} t^{\tfrac{1}{2}(r+s(L))}.
\]

In particular, the refined invariant $\widetilde{\rm Kh}_\tau^{*,*,*}(L\#^v_K L^r,K\#K)$, 
as a triply graded vector space, together with all invariant pages of the 
$\mathcal{F}$- and $\mathcal{G}$-spectral sequences, 
are completely determined by the Jones polynomial and the Rasmussen invariant of $L$.
\end{prop}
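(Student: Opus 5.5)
The plan is to start from Proposition~\ref{p:E3Ghverticalsum}, which already gives the collapse of the $\mathcal{G}$-spectral sequence at page $3$. It also identifies $\widetilde{E}^3_{\mathcal{G}}$ with $\widetilde{\rm Kh}(L,K)$, regraded as $(j,k)\mapsto(k,\tfrac12 j)$. Collapse means $\widetilde{E}^3_{\mathcal{G}}=\widetilde{E}^\infty_{\mathcal{G}}$, and this is the associated graded of $\widetilde{\rm Kh}_\tau(L\#^v_K L^r,K\#K)$ with respect to $\mathcal{G}$. In particular the total dimension of $\widetilde{\rm Kh}_\tau$ in each quantum degree equals $\sum_r|a_r|$ restricted to $j=2r$, with $k=\tfrac12(r+s(L))$.

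The remaining task is to determine the $i$-grading and to show that the $\mathcal{F}$-spectral sequence also collapses at page $3$. I would compare with the $\mathcal{F}$-spectral sequence directly. Its $E^1$ page is ${\rm CKh}$ with differential ${\rm id}+\tau$, so $E^2_{\mathcal{F}}$ is the homology with respect to $\partial_{\rm Kh}$ of the $\tau$-equivariant-state complex, now bigraded by $(i,j)$. I would repeat the bijection of Proposition~\ref{p:E3Ghverticalsum}: equivariant states of $D\#^vD^r$ correspond to states of $D$ (reduced at $p_1$). Since off-axis pairs contribute $2$ to $i$ and $1$ to $k$, the homological degree satisfies $i=2k$ up to the fixed writhe normalisation; I would check this normalisation on $n_\pm$. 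Hence $E^3_{\mathcal{F}}\cong\widetilde{\rm Kh}(L,K)$ with $i=2k'$ and $j=2j'$, where $(k',j')$ are the Khovanov bidegrees.

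Thinness then kills later differentials. A differential $d_t$ on $E^t_{\mathcal{F}}$ for $t\ge3$ raises $i$ by $t$ and preserves $j$. On a single thin diagonal, a fixed $j$ determines $i$, so every such $d_t$ vanishes. Thus both spectral sequences collapse at page $3$.

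To assemble the triple grading, note that every generator of $\widetilde{\rm Kh}_\tau$ in quantum degree $2r$ survives with $k$-filtration level $\tfrac12(r+s)$ and $i$-level $2\cdot\tfrac12(r+s)=r+s$. Because each quantum degree supports a single value of $k$ and a single value of $i$, the two associated gradeds together give the stated Poincaré polynomial $\sum_r|a_r|h^{r+s}q^{2r}t^{(r+s)/2}$. The intermediate invariant pages are the $E^2$/$E^3$ pages computed above, so they too are determined by $J_L$ and $s(L)$.

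The main obstacle is the $\mathcal{F}$-side identification: one must verify that the induced differential on $E^2_{\mathcal{F}}$ matches the Khovanov differential of $D$ under the bijection, and pin down the exact grading shift relating $i$ to $k$. I would handle this by reusing the description in \cite[Section~4.3]{LW21}, as in the proof of Proposition~\ref{p:E3Ghverticalsum}. If that fails, a fallback avoids it: total dimension per quantum degree is forced, and the $i$-degree is determined modulo the parity/support constraint $i\equiv 2k$ coming from the grading formulas on equivariant states.
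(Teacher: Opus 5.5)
Your $\mathcal{G}$-side and your final assembly of the two associated gradeds agree with the paper, and your final formula is correct. The gap is the $\mathcal{F}$-side, which is the real content of this statement beyond Proposition~\ref{p:E3Ghverticalsum}: you analyse the wrong spectral sequence.

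You model the $\mathcal{F}$-spectral sequence as starting from $({\rm CKh},{\rm id}+\tau)$ and then passing through the complex of $\tau$-equivariant states. The simplified description in Section~\ref{s:LW} invites that reading. However, the statement uses the convention of \cite[Sections~3.7 and~4.1]{LW21}, which the paper's proof invokes, and there the Khovanov differential is taken first:
\begin{itemize}
\item $\widetilde E^2_{\mathcal{F}}$ is the reduced Khovanov homology $\widetilde{\rm Kh}(L\#^v_K L^r,K\#K)$ of the whole link;
\item $d^2$ is induced by ${\rm id}+\tau$ and preserves $i$;
\item for $t\ge 3$, $d^t$ preserves $j$ and shifts $i$ by $t-2$.
\end{itemize}
Your reading cannot be the intended one. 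Since $D\#^vD^r$ has no crossings on the axis, $i(s)=2k(s)$ for every enhanced state $s$. So filtering $({\rm CKh},\partial_{\rm Kh}+\partial_\tau)$ by $i$ in the way you describe is the same as filtering by $2k$, and your argument merely re-proves Proposition~\ref{p:E3Ghverticalsum}. For the same reason, the chain-level identity $i=2k$ does not let you read off the $\mathcal{F}$-grading of $\widetilde{\rm Kh}_\tau$ from its $\mathcal{G}$-grading. Your fallback only restates that chain-level relation and does not determine $i$ on homology. Matching the equivariant-state bijection with \cite[Section~4.3]{LW21} will not help either, since that section describes the $\mathcal{G}$-side page.

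The missing ingredient is thinness of the connected sum, rather than thinness of $L$ seen through equivariant states.
\begin{enumerate}
\item By Proposition~\ref{p:connectedsumKh}, $\widetilde{\rm Kh}(L\#^v_KL^r,K\#K)\cong\widetilde{\rm Kh}(L,K)\otimes\widetilde{\rm Kh}(L^r,K)$. Since $L$ is thin, this is supported on $2i-j=2s(L)$.
\item $\widetilde E^3_{\mathcal{F}}$ is a subquotient of $\widetilde E^2_{\mathcal{F}}$, so in each quantum degree it lies in a single $i$-degree.
\item Hence every $d^t$ with $t\ge 3$ vanishes. This gives collapse at page $3$ and the support constraint $2i-j=2s(L)$ on $\widetilde{\rm Kh}_\tau$.
\item Combine this with your $\mathcal{G}$-side conclusion ($j=2r$, $k=\tfrac12(r+s(L))$, multiplicity $|a_r|$). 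This forces $i=r+s(L)$ and yields the stated Poincar\'e polynomial.
\end{enumerate}
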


\begin{proof}
The second page of the $\mathcal{F}$-spectral sequence converging to 
$\widetilde{\rm Kh}_\tau(L\#^v L^r,K\#K)$ is the group 
$\widetilde{\rm Kh}(L\#^v_K L^r,K\#K)$. 
The differential on page $t\geq 3$ decreases the homological degree by~$t-2$ 
while preserving the quantum degree (see~\cite[Sections~3.7 and~4.1]{LW21}). 

By Proposition~\ref{p:connectedsumKh}, the connected sum of thin links is thin. 
Since $L$ is thin, so is $L\#^v_K L^r$, and therefore, in each fixed quantum degree, 
the reduced Khovanov homology is supported in a single homological degree. 
It follows that the third page of the $\mathcal{F}$-spectral sequence is 
concentrated in a single homological degree in each quantum grading, and hence 
all higher differentials vanish. Thus the $\mathcal{F}$-spectral sequence 
collapses at the third page. 

In particular, $\widetilde{\rm Kh}_\tau^{i,j,*}(L\#^v_K L^r,K\#K)$ is nontrivial 
only if $2i-j = s(L\# L^r) = 2s(L)$.

To compute the Poincaré polynomial of 
$\widetilde{\rm Kh}^{*,*,*}_\tau(L\#^v_K L^r,K\#K)$, note that by 
Proposition~\ref{p:E3Ghverticalsum} the $\mathcal{G}$-spectral sequence 
also collapses at page~$3$. Therefore, for each quantum degree $2j$ there is 
a unique summand in $k$-degree $(j+s(L))/2$ of dimension $|a_j|$. 

Combining this with the constraint above on the homological grading, namely 
$2i-2j = 2s(L)$, we see that each such summand contributes a term 
$h^{j+s(L)} q^{2j} t^{(j+s(L))/2}$, and the stated formula follows.
\end{proof}

\begin{proof}[Proof of Theorem~\ref{t:comparison}]
Let $K_{n}$ be the connected sum of $2n$ copies of the figure-eight knot $4_1$, 
and let $K'_n$ be the connected sum of $n$ copies of the knot $8_9$. 
These knots are alternating and hence Khovanov-thin (see \cite[Theorem~1.2]{Le05}). 
Since both the Jones polynomial and the $\mathfrak{sl}_3$ polynomial $P_3$ 
are multiplicative under connected sums, being specialisations of the HOMFLY--PT polynomial, 
it follows that the Jones and $P_3$ polynomials of $K_1$ are the squares of those of the figure-eight knot. 
By consulting \cite{KI} and the table at the end of \cite{MOY98}, 
one sees that $K_1$ and $K'_1$ have the same Jones polynomial but are distinguished by $P_3$. 

Moreover, both knots are slice, so their Rasmussen invariant vanishes. 
Since the connected sum of slice knots is slice, the same holds for $K_n$ and $K'_n$ for all $n$. 
It follows that, for every $n$, the knots $K_n$ and $K'_n$ share the same Jones polynomial, 
are slice, and are distinguished by $P_3$. 

Therefore, by Proposition~\ref{p:LWKhverticalsum}, 
the strongly invertible knots $K_n\#^v K_n$ and $K'_n\#^v K'_n$ have the same refined Lobb--Watson invariants 
and isomorphic $\mathcal{F}$- and $\mathcal{G}$-spectral sequences. 
On the other hand, by Proposition~\ref{p:Pneofverticalsums}, 
they are distinguished by $P_3^e$, and hence, by Theorem~\ref{t:gsid=sid} and the definition of $P^e$, 
also by~$P^e$.
\end{proof} 
\appendix
\section{Proofs of Lemmas~\ref{l:bubbles}--\ref{l:M1identity}}\label{app:proofs}

In this appendix we provide the proofs of Lemmas~4.2--4.5 from Section~4, which establish
the local relations satisfied by the equivariant $N$--bracket.

\begin{proof}[Proof of Lemma~\ref{l:bubbles}]
To prove the first equality it suffices to repeat word-for-word the proof 
of~\cite[Lemmas~2.3 and~2.4]{MOY98}, the only difference being that our local graphs consist 
of two copies of the graphs in~\cite{MOY98}, which yield the evaluations of $[N-1]$ at $q^2$.  
 
The last two equalities have essentially the same proof, so we only provide the argument for the first one. 
Partition the states of the MOY graph $\Gamma$ on the left hand side according to 
the values they take on the thin edges in the local picture. Let $\mathcal{S}^e_{x,y}(\Gamma)$ be the collection of equivariant 
states behaving locally as shown in Figure~\ref{fig:eq_state_bubble}. Each $\sigma\in \mathcal{S}^e_{x,y}(\Gamma)$ determines a 
unique state $\sigma'$ of the graph $\Gamma'$ on the right hand side of the equation, which assigns $x$ to the local arc -- 
denote by $\mathcal{S}_{x}^{e}(\Gamma')\subseteq \mathcal{S}^e_N(\Gamma')$ such states. 

\begin{figure}[ht]
\centering
\includegraphics[width=0.25\linewidth]{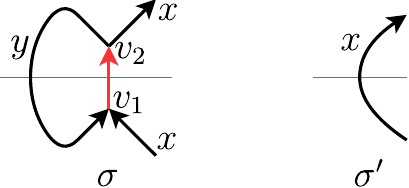}
\caption{The local assignment of an equivariant state $\sigma \in \mathcal{S}^e_{x,y}(\Gamma)$ (left) and of the corresponding $\sigma'\in\mathcal{S}^e_x(\Gamma')$ (right).}
\label{fig:eq_state_bubble}
\end{figure}

To conclude the proof it suffices to compare the two sides of the equation. Using the easily verified equality 
$\sum_{y\in X_{N}\setminus\{x\}} q^{\sgn(x-y) + y} = [N-1]$ we get
\begin{align*}
\langle\, \Roneedgeresol \rangle_N^e & = \sum_{x\in X_N}\, \sum_{y\in X_{N}\setminus\{x\}} \sum_{\sigma \in \mathcal{S}^e_{x,y}(\Gamma)} \left(\prod_{v\in V(\Ga)}\wt(v,\si)\right)q^{\rot(\si)} = \\
& = \sum_{x\in X_N}\, \sum_{y\in X_{N}\setminus\{x\}} q^{\sgn(x-y) + y} \sum_{\sigma' \in \mathcal{S}^e_{x}(\Gamma)} \left(\prod_{v\in V(\Ga')}\wt(v,\si')\right)q^{\rot(\si')} = \\
& = [N-1] \sum_{x\in X_N}\, \sum_{\sigma' \in \mathcal{S}^e_{x}(\Gamma')} \left(\prod_{v\in V(\Ga')}\wt(v,\si')\right)q^{\rot(\si')} = \\
& = [N-1]\langle\, \arcuponaxis \rangle_N^e
\end{align*}
\end{proof}

\begin{proof}[Proof of Lemma~\ref{l:edgesbubbles}]
As in Lemma~\ref{l:bubbles}, the proof of the first equality is a word-for-word repetition of the proof 
of~\cite[Lemma~2.2]{MOY98}, except for the evaluation at $q^2$  which is due to the doubling of the local graphs.
The second equality can be also proven similarly. 
We concentrate on the proof of the rightmost, and last, equality.
Denote by $\Gamma$ and $\Gamma'$, respectively, the equivariant MOY graphs on the right, respectively on the 
left hand side of the equation. 
There is a two-to-one correspondence between the states of $\Gamma$ and the states of $\Gamma'$, which is illustrated in Figure~\ref{fig:eq_state_doubledigon}.

\begin{figure}
\centering
\includegraphics[width=0.35\linewidth]{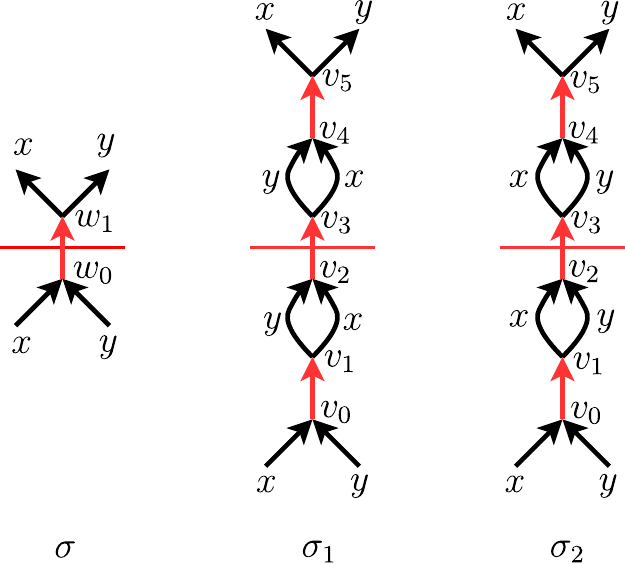}
\caption{An equivariant state $\sigma \in \mathcal{S}^e(\Gamma)$ (left) and the corresponding two equivariant states $\sigma_1$ (center) and $\sigma_2$ (right) in $\mathcal{S}^e(\Gamma')$.}
\label{fig:eq_state_doubledigon}
\end{figure}

Note that the states $\sigma_1, \sigma_2\in \mathcal{S}^e(\Gamma')$ and the corresponding state $\sigma\in \mathcal{S}^e(\Gamma)$ have the same rotation numbers. Moreover, a simple computation shows that
\[ 
\prod_{i=0}^{5} \wt(v_i,\sigma_1) =  q^{{\rm sgn}(x - y)} 
= q^{-2{\rm sgn}(y - x)}\cdot \wt(w_0,\sigma)\cdot \wt(w_1,\sigma)
\]
and
\[ 
\prod_{i=0}^{5} \wt(v_i,\sigma_2) =  q^{3{\rm sgn}(y - x)} 
= q^{2{\rm sgn}(y - x)}\cdot \wt(w_0,\sigma)\cdot \wt(w_1,\sigma)\,. 
\]
Therefore, we have 
\[
\left(\prod_{v\in V(\Ga')}\wt(v,\si_1)\right)q^{\rot(\si_1)} 
+ \left(\prod_{v\in V(\Ga')}\wt(v,\si_2)\right)q^{\rot(\si_2)} 
= [2]_{q^2}\left(\prod_{v\in V(\Ga)}\wt(v,\si)\right)q^{\rot(\si)}\]
and the desired equality follows.
\end{proof}

\begin{proof}[Proof of Lemma~\ref{l:square}]
We give the detail of the proof only for the first equality, the proofs of the second and the third equations, being similar, will be left to the reader. 

Denote by $\Gamma$ the strongly involutive MOY graph  on the left hand side of the equality.
We start by partitioning the equivariant states in $\mathcal{S}^e_N(\Gamma)$ into subsets $\mathcal{S}_{a,b,c}$, with $a,b,c\in X_N$ and~$a,c\neq b$, according to the labels on the boundary of the local picture -- as shown in Figure~\ref{f:squarestates}.(\subref{f:squarestate}). 
\begin{figure}[ht]
\centering
\begin{subfigure}[t]{0.45\textwidth}
\centering
\begin{overpic}[scale=0.5,tics=10]{figurepdf/Rtwodoubleedgeresolution}
\put(0,70){$\scriptstyle a$}
\put(91,70){$\scriptstyle c$}
\put(0,0){$\scriptstyle a$}
\put(91,0){$\scriptstyle c$}
\put(45,75){$\scriptstyle b$}
\put(45,-10){$\scriptstyle b$}
\end{overpic}    
\caption{A state $\sigma$ in $\mathcal{S}_{a,b,c}$.}
\label{f:squarestate}
\end{subfigure}
~
\begin{subfigure}[t]{0.45\textwidth}
\centering
\begin{overpic}[scale=0.5,tics=10]{figurepdf/Rtwodoubleresolution}
\put(0,45){$\scriptstyle a$}
\put(91,45){$\scriptstyle c$}
\put(0,-5){$\scriptstyle a$}
\put(91,-5){$\scriptstyle c$}
\put(45,50){$\scriptstyle b$}
\end{overpic} 
\caption{The contribution to $\rot(\sigma)$}
{\label{f:squarecircle}}
\end{subfigure}
\caption{The local labelling of a state of $\Gamma$ (\subref{f:squarestate}), 
and the associated circles (\subref{f:squarecircle}).}
\label{f:squarestates}
\end{figure}

Similarly, denote by $\Gamma'$  and $\Gamma''$ the strongly involutive MOY graph on first and second summand, respectively, on the right hand side of the equality.
We can partition the equivariant states in $\mathcal{S}^e_N(\Gamma')$ and $\mathcal{S}^e_N(\Gamma'')$ into subsets $\mathcal{S}'_{a}$ and $\mathcal{S}''_{a,c}$, respectively, where $a,c\in X_N$ -- as in Figure~\ref{f:squareresstates}. Note that, if $a\neq c$, the set $\mathcal{S}''_{a,c}$ can be empty -- e.g.~if the two local arcs belong to the same circle.

\begin{figure}[ht]
\centering
\begin{subfigure}[t]{0.45\textwidth}
\centering
\begin{overpic}[scale=0.5,tics=10]{figurepdf/Rtwonocrossings}
\put(0,45){$\scriptstyle a$}
\put(91,45){$\scriptstyle a$}
\put(0,-10){$\scriptstyle a$}
\put(91,-10){$\scriptstyle a$}
\end{overpic}    
\caption{A state $\sigma$ in $\mathcal{S}'_{a}$.}
\label{f:squarestate2}
\end{subfigure}
~
\begin{subfigure}[t]{0.45\textwidth}
\centering
\begin{overpic}[scale=0.5,tics=10]{figurepdf/arcsdownuponaxis}
\put(0,65){$\scriptstyle a$}
\put(91,65){$\scriptstyle c$}
\put(0,-10){$\scriptstyle a$}
\put(91,-10){$\scriptstyle c$}
\end{overpic}    
\caption{A state $\sigma$ in $\mathcal{S}''_{a,c}$.}
\label{f:squarestate3}
\end{subfigure}
\caption{The local labelling in the states of $\Gamma'$ and $\Gamma''$.}
\label{f:squareresstates}
\end{figure}

There is an obvious bijection, say $\varphi$, between states in $\mathcal{S}''_{a,a}$ and states in $\mathcal{S}'_a$. An easy check shows that $\rot(\sigma) =\rot(\varphi(\sigma)) - a$ for each sigma $\sigma \in \mathcal{S}''_{a,a}$.

We are now ready to prove the statement:
\begin{align*}
\langle\Rtwodoubleedgeresolution\rangle^e_N  & =   
\sum_{\scriptstyle \begin{array}{c} a, b, c\in X_{N}\\ b\neq a,c\end{array}}
\sum_{\si\in\calS_{a,b,c}}\left(\prod_{v\in V(\Ga)}\wt(v,\si)\right)q^{\rot(\si)} \\
 & =  \sum_{a, c\in X_{N}}  
\sum_{\scriptstyle \begin{array}{c} b\in X_{N}\\ b\neq a,c\end{array}}
\sum_{\si''\in\calS''_{a,c}} q^{b + \sgn(a-b)+\sgn(c-b)}\left(\prod_{v\in V(\Ga'')}\wt(v,\si'')\right)q^{\rot(\si'')} \\
 & \overset{(\star)}{=}   
 \sum_{a, c\in X_{N}} ([N-2] + \delta_{a,c} q^{a})\sum_{\si''\in\calS''_{a,c}}\left(\prod_{v\in V(\Ga'')}\wt(v,\si'')\right)q^{\rot(\si'')}
\end{align*}
where $\delta_{i,j}$ is Kronecker's delta, and the equality marked by ($\star$) follows from the formula:
\[ 
\sum_{x\in X_{N}\setminus \{ y,z\}} q^{x + \sgn(y-x) + \sgn(z-x) 
 } = [N-2] +  \delta_{y,z}q^y, \qquad \forall y,z\in X_N.
\]
We can now resume our computation:
\begin{align*}
\phantom{\langle\Rtwodoubleedgeresolution\rangle^e_N}  & = [N-2] \langle\arcsdownuponaxis \rangle^e_N + \sum_{a\in X_{N}}  q^{a}\sum_{\si'\in\calS'_{a}}\left(\prod_{v\in V(\Ga')}\wt(v,\si')\right)q^{\rot(\varphi^{-1}(\si'))} \\
& = [N-2] \langle\arcsdownuponaxis \rangle^e_N + \langle\Rtwonocrossings\rangle^e_N  \\
\end{align*}
as desired.
\end{proof}

\begin{proof}[Proof of Lemma~\ref{l:M1identity}]
We prove the first equality; the second, being analogous, is left to the reader.
The equivariant states on both sides of the equality can be grouped into three classes—labeled I through III—as shown 
in Table \ref{tab:statesIR3}, where we use the notation $\ep_{x,y} = 1 - \de_{x,y}$, with $\de_{x,y}$ equal to 
Kronecker's delta. To establish the claim, it suffices to show that, for each choice of $a,b,c\in X_N$, 
the total contribution to the state sum from the states in the left column equals that from the corresponding states 
in the right column. Since the rotational contribution is identical across all local configurations, it is enough to consider only the vertex weights.  
\begin{table}[ht]
\centering
\resizebox{13.5cm}{!}{
\input{tabella} 
}
\caption{States}
\label{tab:statesIR3}
\end{table}   
The difference between the LHS and the RHS contributions is given by 
\begin{equation}\label{e:contribs-diff}
\begin{split}
\ep_{b,c} q^{\sgn(c-b)} - \ep_{a,b} q^{\sgn(b-a)} + \ep_{a,c}(\ep_{a,b}-\ep_{b,c}) q^{\sgn(c-a)}\\
+\ep_{a,b}\ep_{b,c} q^{\sgn(b-a)} q^{\sgn(c-b)} (q^{\sgn(b-a)} - q^{\sgn(c-b)})\,.
\end{split}
\end{equation}
When $a,b$ and $c$ are pairwise distinct, Expression~\eqref{e:contribs-diff} becomes
\[
q^{\sgn(c-b)} - q^{\sgn(b-a)} 
+ q^{\sgn(b-a)} q^{\sgn(c-b)} (q^{\sgn(b-a)} - q^{\sgn(c-b)})\,.
\]
Setting $\eta_1 = \sgn(b-a)$ and $\eta_2 = \sgn(c-b)$, the latter expression can be written as 
\[
(q^{\eta_2} - q^{\eta_1}) (1 - q^{\eta_1} q^{\eta_2})\,,
\]
which vanishes for every $\eta_1,\eta_2\in\{\pm 1\}$. 
When $a=b$, Expression~\eqref{e:contribs-diff} becomes  
\[
\ep_{b,c} (q^{\sgn(c-b)} - q^{\sgn(b-c)} + q^{\sgn(b-c)} - q^{\sgn(c-b)}) = 0\,.
\]
When $a=c$, Expression~\eqref{e:contribs-diff} reduces to 
\[
\de_{b,c} (q^{\sgn(b-c)} - q^{\sgn(c-b)}) = 0\,, 
\]
and when $b=c$ to 
\[
- \ep_{a,b} q^{\sgn(b-a)} + \ep_{a,b} q^{\sgn(b-a)} = 0\,.
\]
\end{proof}

\section{\texorpdfstring{$P^e$}{Pe} polynomials of low-crossing strongly invertible knots}\label{app:tablePe}

In this appendix we list the values of $P^e$ for low--crossing strongly invertible prime knots. 
As remarked by Lobb and Watson in Section~6.5 of \cite{LW21}, knots with less than $8$ crossings are $2$--bridge knots, and hence each admits a pair of strong inversions unless it is a torus knot -- in which case it admits only one strong inversion.
For a knot $K$, the notation $K^L$ and $K^R$ refers to the strongly invertible
knots determined by the left-- and right--hand diagrams, respectively,
appearing in the table 
of Section~6.5 of \cite{LW21}.
The remaining knots, that is $3_1$, $5_1$, and $7_1$, are (positive) torus knots and we list the polynomial corresponding to their unique strong inversion. 
Evaluating $P^e$ at $a=q^2$ and $z=q^2-q^{-2}$ recovers the polynomial $J^e$, in accordance with
Theorem~\ref{t:main}.
With the exception of the pair $4_1^L$ and $4_1^R$, these specializations agree
with the values computed in~\cite{LW21}.
In that case, the discrepancy suggests that the left-- and right--hand diagrams
may have been interchanged in~\cite{LW21}, a conclusion also consistent with
the support constraint of Lemma~6.1 therein.

In \cite[\S 3.2]{Co08}, Couture lists the values of the polynomial $W$ for low--crossing strongly invertible prime knots. Other than for the strongly invertible knots of type $7_6$ and $7_7$, which are the mirror images of the corresponding strongly invertible knots in the table below, Couture uses the same conventions as we do; this can be checked, for instance, by computing the Jones polynomial modulo~$2$. In light of this observation, the table in \cite[\S 3.2]{Co08} and the table below are consistent with the formula
\begin{equation}
\label{e:PeandW}
P^e(K)|_{a = t,\, z = t - t^{-1}} = W_K(t)\, .
\end{equation}
This is in accordance with \cite[Remark~6.4]{LW21}, which states that $W$ and $J^e$ are equivalent.
\begin{longtable}{@{} l l @{\qquad} l l @{} }
\toprule
\endfirsthead
\toprule
\endhead
\bottomrule
\endfoot
\bottomrule
\endlastfoot
$3_{1}$ & $- a^2z + a^4$ &   &  \\[0.6em]

$4_{1}^{L}$ & $z + a^{-2} + 1 - a^{2}$ & $4_{1}^{R}$ & $-z - a^{-2} + 1 + a^{2}$ \\[0.6em]

$5_{1}$ & $a^4z^2 - a^6z + a^4$ &   &  \\[0.6em]

$5_{2}^{L}$ & $(-a^{2} - a^{4})z - a^{2} + a^{4} + a^{6}$ & $5_{2}^{R}$ & $(-a^{2} + a^{4})z + a^{2} + a^{4} - a^{6}$ \\[0.6em]

$6_{1}^{L}$ & $(-1 - a^{2})z - a^{-2} + a^{2} + a^{4}$ & $6_{1}^{R}$ & $(1 - a^{2})z + a^{-2} - a^{2} + a^{4}$ \\[0.6em]

$6_{2}^{L}$ & $a^{2}z^{2} + (1 - a^{2} - a^{4})z + a^{4}$ & $6_{2}^{R}$ & $-a^{2}z^{2} + (-1 - a^{2} + a^{4})z + a^{4}$ \\[0.6em]

$6_{3}^{L}$ & $-z^{2} + (-a^{-2} + 1 + a^{2})z + a^{-2} + 1 - a^{2}$ & $6_{3}^{R}$ & $-z^{2} + (-a^{-2} - 1 + a^{2})z - a^{-2} + 1 + a^{2}$ \\[0.6em]

$7_{1}$ & $ - a^{6} z^{3} + a^{8} z^{2} - 2 a^{6} z + a^{8}$ &   &  \\[0.6em]

$7_{2}^{L}$ & $(-a^{2} - a^{4} - a^{6})z - a^{2} + a^{6} + a^{8}$ & $7_{2}^{R}$ & $(-a^{2} + a^{4} - a^{6})z + a^{2} - a^{6} + a^{8}$ \\[0.6em]

$7_{3}^{L}$ & $(a^{4} + a^{6})z^{2} + (a^{4} - a^{6} - a^{8})z + a^{4}$ & $7_{3}^{R}$ & $(a^{4} - a^{6})z^{2} + (-a^{4} - a^{6} + a^{8})z + a^{4}$ \\[0.6em]

$7_{4}^{L}$ & $(-a^{2} + a^{6})z + 2a^{4} - a^{8}$ & $7_{4}^{R}$ & $(-a^{2} - a^{6})z + a^{8}$ \\[0.6em]

$7_{5}^{L}$ & $(a^{4} + a^{6})z^{2} + (a^{4} - 2a^{6} - a^{8})z + a^{8}$ & $7_{5}^{R}$ & $(a^{4} - a^{6})z^{2} + (-a^{4} - 2a^{6} + a^{8})z + a^{8}$ \\[0.6em]

$7_{6}^{L}$ & $a^{-2}z^{2} + (2a^{-4} - 1)z + a^{-6} - a^{-2} + 1$ & $7_{6}^{R}$ & $-a^{-2}z^{2} + (-2a^{-4} + 1)z - a^{-6} + a^{-2} + 1$ \\[0.6em]

$7_{7}^{L}$ & $-z^{2} + (-2a^{-2} + a^{2})z - a^{-4} + 2$ & $7_{7}^{R}$ & $-z^{2} + a^{2}z + a^{-4}$ \\[0.6em]

\end{longtable}

\printbibliography
\Addresses
\end{document}

%% file: tabella.tex
\begin{tabular}{|C{0.5cm}|C{2.2cm} C{2.7cm} C{2.2cm}|C{2.2cm} C{2.7cm} C{2.2cm}|}
			\hline
			     & \multicolumn{3}{c|}{\textbf{LHS}} & \multicolumn{3}{c|}{\textbf{RHS}}\\
			\hline
			 & 	local state &  $\prod_{i} {\rm wt} (v_i)$ & ${\rm rot} (\sigma)$  
			 & local state &  $\prod_{i} {\rm wt} (v_i)$ & ${\rm rot} (\sigma)$ \\
			
			\hline
			
			{I} & 
			\begin{tikzpicture}[scale=0.3, thick, inner sep=0.4mm]
			\draw[line width =1, -latex] (-29,20) node[below] {$a$} -- (-29,27) node[above] {$a$};
			
			\draw[red, thin] (-29.5, 23.5) -- (-23.5,23.5);
			\node (v0) at (-26,22) {};
			\node[above right] at (-26,22) {$v_0$};
			\draw[fill] (v0) circle (.1);
			
			\node (v1) at (-26,25) {};
			\node[below right] at (-26,25) {$v_1$};
			\draw[fill] (v1) circle (.1);
			
			\draw[line width =1, -latex] (-28,20)  node[below] {$b$} .. controls +(0,1) and +(-.75,-.5) .. (v0);
			\draw[line width =1, -latex] (-24,20)  node[below] {$c$} .. controls +(0,1) and +(.75,-.5) .. (v0);
			
			\draw[line width =1, latex-] (-28,27)  node[above] {$b$} .. controls +(0,-1) and +(-.75,.5) .. (v1);
			\draw[line width =1, latex-] (-24,27)  node[above] {$c$} .. controls +(0,-1) and +(.75,.5) .. (v1);
			
			\draw[line width =1.25, red, -latex] (v0) -- (v1); 
			\end{tikzpicture}	&
		    $\ep_{b,c} q^{\sgn(c-b)}$ & 
		    \begin{tikzpicture}[scale=0.3]
		    \draw[line width =1.5, blue, -latex]  (-29,20) node[below] {$a$} -- (-29,27) node[above] {$a$};
		    
		    \draw[red, thin] (-29.5, 23.5) -- (-23.5,23.5); 
		   
		    \draw[line width =1.5, orange, -latex]  (-28,20)  node[below] {$b$} .. controls +(0,1) and +(-.75,-.5) .. (-26.25,22) -- (-26.25,25).. controls +(-.75,.5) and +(0,-1) .. (-28,27) node[above] {$b$};
		    \draw[line width =1, -latex] (-24,20)  node[below] {$c$} .. controls +(0,1) and +(.75,-.5) .. (-25.75,22) -- (-25.75,25) .. controls +(.75,.5) and  +(0,-1) ..  (-24,27)node[above] {$c$};
		    \end{tikzpicture}  &
	        \begin{tikzpicture}[scale=0.3, thick, inner sep=0.4mm,rotate=180,yscale=-1] 
	       \draw[line width =1, -latex] (-29,20) node[below] {$c$} -- (-29,27) node[above] {$c$};
	       
	       \draw[red, thin] (-29.5, 23.5) -- (-23.5,23.5);
	       \node (v0) at (-26,22) {};
	       \node[above right] at (-26,22) {$v_0$};
	       \draw[fill] (v0) circle (.1);
	       
	       \node (v1) at (-26,25) {};
	       \node[below right] at (-26,25) {$v_1$};
	       \draw[fill] (v1) circle (.1);
	       
	       \draw[line width =1, -latex] (-28,20)  node[below] {$b$} .. controls +(0,1) and +(-.75,-.5) .. (v0);
	       \draw[line width =1, -latex] (-24,20)  node[below] {$a$} .. controls +(0,1) and +(.75,-.5) .. (v0);

	       \draw[line width =1, latex-] (-28,27)  node[above] {$b$} .. controls +(0,-1) and +(-.75,.5) .. (v1);
	       \draw[line width =1, latex-] (-24,27)  node[above] {$a$} .. controls +(0,-1) and +(.75,.5) .. (v1);

	       \draw[line width =1.25, red, -latex] (v0) -- (v1);
	       \end{tikzpicture}	& 
	        $\ep_{a,b} q^{\sgn(b-a)}$ & 
	        \begin{tikzpicture}[scale=0.3, thick, inner sep=0.4mm,rotate=180,yscale=-1] 
	        \draw[line width =1.5, -latex]  (-29,20) node[below] {$c$} -- (-29,27) node[above] {$c$};
	        
	        \draw[red, thin] (-29.5, 23.5) -- (-23.5,23.5); 
	        
	        \draw[line width =1.5, orange, -latex]  (-28,20)  node[below] {$b$} .. controls +(0,1) and +(-.75,-.5) .. (-26.25,22) -- (-26.25,25).. controls +(-.75,.5) and +(0,-1) .. (-28,27) node[above] {$b$};
	        \draw[line width =1.5,blue, -latex] (-24,20)  node[below] {$a$} .. controls +(0,1) and +(.75,-.5) .. (-25.75,22) -- (-25.75,25) .. controls +(.75,.5) and  +(0,-1) ..  (-24,27)node[above] {$a$};
        	\end{tikzpicture} \\
			\hline
			
			{II} & 
			\begin{tikzpicture}[scale=0.35, thick, inner sep=0.4mm]  
			 \draw[red, thin] (-29.5, 8.5) -- (-23.5,8.5); 
			
			\node (v5) at (-28,13.5) {};
			\node[below left] at (-28,13.5) {$v_5$};
			\draw[fill] (v5) circle (.1);

			\node (v4) at (-28,11) {};
			\node[below left] at (-28,11) {$v_4$};
			\draw[fill] (v4) circle (.1);

			\node (v1) at (-28,6.5) {};
			\node[above left] at (-28,6.5) {$v_1$};
			\draw[fill] (v1) circle (.1);

			\node (v0) at (-28,4) {};
			\node[above left] at (-28,4) {$v_0$};
			\draw[fill] (v0) circle (.1);

			\node (v3) at (-25,9.5) {};
			\node[above right] at (-25,9.5) {$v_3$};
			\draw[fill] (v3) circle (.1);

			\node (v2) at (-25,7.5) {};
			\node[below right] at (-25,7.5) {$v_2$};
			\draw[fill] (v2) circle (.1);
			
			\draw[line width =1.25, red, -latex] (v0) -- (v1);
			
			\draw[line width =1.25, red, -latex] (v4) -- (v5);
			
			\draw[line width =1.25, red, -latex] (v2) -- (v3);

			\draw[line width =1, -latex] (v1) -- (v2) node[midway,below]{$a$};
			\draw[line width =1, -latex] (v1) -- (v4) node[midway,left]{$b$};

			\draw[line width =1, -latex] (v3) -- (v4) node[midway,above]{$a$};
			\draw[line width =1, -latex] (v3) -- (-25,15) node[above] {$c$};
			
			\draw[line width =1, -latex] (-25,2.5) node[below] {$c$} -- (v2);
			
			\draw[line width =1, -latex] (-27,2.5) node[below] {$b$} -- (v0);
			\draw[line width =1, -latex] (-29,2.5) node[below] {$a$} -- (v0);

			\draw[line width =1, -latex] (v5) -- (-27,15) node[above] {$b$};
			\draw[line width =1, -latex] (v5) -- (-29,15) node[above] {$a$};
			\end{tikzpicture} & 
			 $\ep_{a,b} \ep_{a,c} q^{\sgn(c-a)}$  & 
			\begin{tikzpicture}[scale=0.3, thick, inner sep=0.4mm]  
	        	\draw[red, thin] (-29.5, 8.5) -- (-23.5,8.5); 
				\draw[line width =1.5, -latex] (-25,2.5) node[below] {$c$} -- (-25,15) node[above] {$c$};
				\draw[line width =1.5, orange, -latex] (-27.5,2.5) node[below] {$b$} -- (-27.5,4) --  (-27.5,13.5) -- (-27.5,15) node[above] {$b$};
				\draw[line width =1.5, blue, -latex] (-28.5,2.5) node[below] {$a$} -- (-28.5,4) -- (-28.5,6.5) --  (-25.5,7.5) --  (-25.5,9.5)  -- (-28.5,11) -- (-28.5,13.5) -- (-28.5,15) node[above] {$a$};
			\end{tikzpicture} & 
			\begin{tikzpicture}[scale=0.3, thick, inner sep=0.4mm,rotate=180,yscale=-1]  
			\draw[red, thin] (-29.5, 8.5) -- (-23.5,8.5); 
			
			\node (v5) at (-28,13.5) {};
			\node[below right] at (-28,13.5) {$v_5$};
			\draw[fill] (v5) circle (.1);

			\node (v4) at (-28,11) {};
			\node[below right] at (-28,11) {$v_4$};
			\draw[fill] (v4) circle (.1);

			\node (v1) at (-28,6.5) {};
			\node[above right] at (-28,6.5) {$v_1$};
			\draw[fill] (v1) circle (.1);

			\node (v0) at (-28,4) {};
			\node[above right] at (-28,4) {$v_0$};
			\draw[fill] (v0) circle (.1);

			\node (v3) at (-25,9.5) {};
			\node[above left] at (-25,9.5) {$v_3$};
			\draw[fill] (v3) circle (.1);

			\node (v2) at (-25,7.5) {};
			\node[below left] at (-25,7.5) {$v_2$};
			\draw[fill] (v2) circle (.1);
			
			\draw[line width =1.25, red, -latex] (v0) -- (v1);
			
			\draw[line width =1.25, red, -latex] (v4) -- (v5);
			
			\draw[line width =1.25, red, -latex] (v2) -- (v3);

			\draw[line width =1, -latex] (v1) -- (v2) node[midway,below]{$c$};
			\draw[line width =1, -latex] (v1) -- (v4) node[midway,right]{$b$};

			\draw[line width =1, -latex] (v3) -- (v4) node[midway,above]{$c$};
			\draw[line width =1, -latex] (v3) -- (-25,15) node[above] {$a$};
			
			\draw[line width =1, -latex] (-25,2.5) node[below] {$a$} -- (v2);
			
			\draw[line width =1, -latex] (-27,2.5) node[below] {$b$} -- (v0);
			\draw[line width =1, -latex] (-29,2.5) node[below] {$c$} -- (v0);

			\draw[line width =1, -latex] (v5) -- (-27,15) node[above] {$b$};
			\draw[line width =1, -latex] (v5) -- (-29,15) node[above] {$c$};
			\end{tikzpicture} & 
			$\ep_{a,c} \ep_{b,c} q^{\sgn(c-a)}$ &
			\begin{tikzpicture}[scale=0.3, thick, inner sep=0.4mm,,rotate=180,yscale=-1]  
			\draw[red, thin] (-29.5, 8.5) -- (-23.5,8.5);

			\draw[line width =1.5,blue, -latex] (-25,2.5) node[below] {$a$} -- (-25,15) node[above] {$a$};
			
			\draw[line width =1.5, orange, -latex] (-27.5,2.5) node[below] {$b$} -- (-27.5,4) --  (-27.5,13.5) -- (-27.5,15) node[above] {$b$};
			
			\draw[line width =1.5, -latex] (-28.5,2.5) node[below] {$c$} -- (-28.5,4) -- (-28.5,6.5) --  (-25.5,7.5) --  (-25.5,9.5)  -- (-28.5,11) -- (-28.5,13.5) -- (-28.5,15) node[above] {$c$};
			\end{tikzpicture} \\
			\hline
			{III} & 
			\begin{tikzpicture}[scale=0.3,thick, inner sep=0.4mm]  
			\draw[red, thin] (-29.5, 8.5) -- (-23.5,8.5); 
			
			\node (v5) at (-28,13.5) {};
			\node[below left] at (-28,13.5) {$v_5$};
			\draw[fill] (v5) circle (.1);

			\node (v4) at (-28,11) {};
			\node[below left] at (-28,11) {$v_4$};
			\draw[fill] (v4) circle (.1);

			\node (v1) at (-28,6.5) {};
			\node[above left] at (-28,6.5) {$v_1$};
			\draw[fill] (v1) circle (.1);

			\node (v0) at (-28,4) {};
			\node[above left] at (-28,4) {$v_0$};
			\draw[fill] (v0) circle (.1);

			\node (v3) at (-25,9.5) {};
			\node[above right] at (-25,9.5) {$v_3$};
			\draw[fill] (v3) circle (.1);

			\node (v2) at (-25,7.5) {};
			\node[below right] at (-25,7.5) {$v_2$};
			\draw[fill] (v2) circle (.1);
			
			\draw[line width =1.25, red, -latex] (v0) -- (v1);
			
			\draw[line width =1.25, red, -latex] (v4) -- (v5);
			
			\draw[line width =1.25, red, -latex] (v2) -- (v3);

			\draw[line width =1, -latex] (v1) -- (v2) node[midway,below]{$b$};
			\draw[line width =1, -latex] (v1) -- (v4) node[midway,left]{$a$};

			\draw[line width =1, -latex] (v3) -- (v4) node[midway,above]{$b$};
			\draw[line width =1, -latex] (v3) -- (-25,15) node[above] {$c$};
			
			\draw[line width =1, -latex] (-25,2.5) node[below] {$c$} -- (v2);
			
			\draw[line width =1, -latex] (-27,2.5) node[below] {$b$} -- (v0);
			\draw[line width =1, -latex] (-29,2.5) node[below] {$a$} -- (v0);

			\draw[line width =1, -latex] (v5) -- (-27,15) node[above] {$b$};
			\draw[line width =1, -latex] (v5) -- (-29,15) node[above] {$a$};
			\end{tikzpicture} & 
			$\ep_{a,b} \ep_{b,c} q^{f(a,b,c)}$ &
			\begin{tikzpicture}[scale=0.3,thick, inner sep=0.4mm]  
			\draw[red, thin] (-29.5, 8.5) -- (-23.5,8.5); 

			\draw[line width =1.5, -latex] (-25,2.5) node[below] {$c$} -- (-25,15) node[above] {$c$};
			
			\draw[line width =1.5, orange, -latex]  (-27.25,2.5) node[below] {$b$} -- (-28.25,4) -- (-28.25,6.5) --  (-25.5,7.5) --  (-25.5,9.5)  -- (-28.25,11) -- (-28.25,13.5) -- (-27.25,15) node[above] {$b$};
			
			\draw[line width =1.5, blue, -latex]  (-28.75,2.5) node[below] {$a$} -- (-27.75,4)  -- (-27.75,13.5) -- (-28.75,15) node[above] {$a$};
			\end{tikzpicture} & 
			\begin{tikzpicture}[scale=0.3,,thick, inner sep=0.4,rotate=180,yscale=-1]  
			\draw[red, thin] (-29.5, 8.5) -- (-23.5,8.5); 
			
			\node (v5) at (-28,13.5) {};
			\node[below right] at (-28,13.5) {$v_5$};
			\draw[fill] (v5) circle (.1);

			\node (v4) at (-28,11) {};
			\node[below right] at (-28,11) {$v_4$};
			\draw[fill] (v4) circle (.1);

			\node (v1) at (-28,6.5) {};
			\node[above right] at (-28,6.5) {$v_1$};
			\draw[fill] (v1) circle (.1);

			\node (v0) at (-28,4) {};
			\node[above right] at (-28,4) {$v_0$};
			\draw[fill] (v0) circle (.1);

			\node (v3) at (-25,9.5) {};
			\node[above left] at (-25,9.5) {$v_3$};
			\draw[fill] (v3) circle (.1);

			\node (v2) at (-25,7.5) {};
			\node[below left] at (-25,7.5) {$v_2$};
			\draw[fill] (v2) circle (.1);
			
			\draw[line width =1.25, red, -latex] (v0) -- (v1);
			
			\draw[line width =1.25, red, -latex] (v4) -- (v5);
			
			\draw[line width =1.25, red, -latex] (v2) -- (v3);

			\draw[line width =1, -latex] (v1) -- (v2) node[midway,below]{$b$};
			\draw[line width =1, -latex] (v1) -- (v4) node[midway,right]{$c$};

			\draw[line width =1, -latex] (v3) -- (v4) node[midway,above]{$b$};
			\draw[line width =1, -latex] (v3) -- (-25,15) node[above] {$a$};
			
			\draw[line width =1, -latex] (-25,2.5) node[below] {$a$} -- (v2);
			
			\draw[line width =1, -latex] (-27,2.5) node[below] {$b$} -- (v0);
			\draw[line width =1, -latex] (-29,2.5) node[below] {$c$} -- (v0);

			\draw[line width =1, -latex] (v5) -- (-27,15) node[above] {$b$};
			\draw[line width =1, -latex] (v5) -- (-29,15) node[above] {$c$};
			\end{tikzpicture} & 
			$\ep_{a,b} \ep_{b,c} q^{g(a,b,c)}$ &
			\begin{tikzpicture}[scale=0.3,,thick, inner sep=0.4,rotate=180,yscale=-1]  
			\draw[red, thin] (-29.5, 8.5) -- (-23.5,8.5);

			\draw[line width =1.5,blue, -latex] (-25,2.5) node[below] {$a$} -- (-25,15) node[above] {$a$};
			
			\draw[line width =1.5, orange, -latex]  (-27.25,2.5) node[below] {$b$} -- (-28.25,4) -- (-28.25,6.5) --  (-25.5,7.5) --  (-25.5,9.5)  -- (-28.25,11) -- (-28.25,13.5) -- (-27.25,15) node[above] {$b$};
			
			\draw[line width =1.5,  -latex]  (-28.75,2.5) node[below] {$c$} -- (-27.75,4)  -- (-27.75,13.5) -- (-28.75,15) node[above] {$c$};
			\end{tikzpicture}
			\\
			\hline
			& \multicolumn{3}{c|}{$f(a,b,c) = 2\sgn(b-a)+\sgn(c-b)$} 
			& \multicolumn{3}{c|}{$g(a,b,c) = \sgn(b-a) + 2\sgn(c-b)$}\\
			\hline
\end{tabular}